\newtheorem{theorem}{Theorem}            
\newtheorem{corollary}[theorem]{Corollary}
\newtheorem{lemma}[theorem]{Lemma}
\newtheorem{prop}[theorem]{Proposition}
\newenvironment{maintheorem}[1]
  {\mainthm}
  {\endmainthm}
\theoremstyle{definition}              
\theoremstyle{remark}                  
\newtheorem{step}{Step}
\newtheorem{case}{Case}
\newtheorem{remark}{Remark}
\newcommand{\N}{\mathbb{N}}                 
\newcommand{\Z}{\mathbb{Z}}                 
\newcommand{\R}{\mathbb{R}}                 
\newcommand{\Rt}{\R^3}                          
\newcommand{\C}{\mathbb{C}}                 
\renewcommand{\H}{\mathscr{H}}              
\renewcommand{\S}{\mathbb{S}}               
\newcommand{\Wude}{W^{1,2}_{\varepsilon}}    
\newcommand{\n}{\mathbf{n}}                 
\renewcommand{\u}{\mathbf{u}}
\newcommand{\w}{\mathbf{w}}
\newcommand{\g}{\mathbf{g}}
\newcommand{\nnu}{{\boldsymbol{\nu}}}
\newcommand{\ttau}{{\boldsymbol{\tau}}}
\newcommand{\ggamma}{{\boldsymbol{\gamma}}}
\newcommand{\A}{\mathbf{A}}
\renewcommand{\SS}{\mathcal{S}}
\newcommand{\e}{\mathbf{e}}
\newcommand{\W}{\mathbb{W}}
\newcommand{\T}{\mathrm{T}}
\renewcommand{\v}{\mathbf{v}}               
\newcommand{\h}{\mathbf{h}}
\renewcommand{\d}{\mathrm{d}}               
\newcommand{\D}{\mathrm{D}}
\newcommand{\KK}{\mathscr{K}}
\newcommand{\WW}{\mathbb{W}}
\newcommand{\TT}{\mathcal{T}}
\newcommand{\ve}{\v_\varepsilon} 
\newcommand{\we}{\w_\varepsilon} 
\newcommand{\epsi}{\varepsilon}
\newcommand{\eps}{\epsi}
\newcommand{\nablas}{\nabla_{\mathrm{s}}}
\newcommand{\nablae}{\nabla_{\varepsilon}}
\newcommand{\argmin}{\hbox{argmin}}
\renewcommand{\j}{\jmath}
\DeclareMathOperator{\fflat}{flat}                                  
\DeclareMathOperator{\rad}{rad}
\DeclareMathOperator{\infess}{ess\,inf}                             
\DeclareMathOperator{\supess}{ess\,sup}                             
\DeclareMathOperator{\Id}{Id}                                       
\DeclareMathOperator{\dist}{dist}                                   
\DeclareMathOperator{\diam}{diam}                                   
\DeclareMathOperator{\sign}{sign}                                   
\DeclareMathOperator{\spt}{spt}                                    
\DeclareMathOperator{\Lip}{Lip}                                     
\DeclareMathOperator{\ind}{ind}                                     
\DeclareMathOperator{\curl}{\curl}                                  
\DeclareMathOperator{\Iso}{Iso}
\newcommand{\abs}[1]{\left| #1 \right|}                             
\newcommand{\norm}[1]{\left\| #1 \right\|}                          
\newcommand{\mres}{\mathbin{\vrule height 1.6ex 
depth 0pt width 0.13ex\vrule height 0.13ex depth 0pt width 1.3ex}}  
\newcommand{\BBB}{\color{blue}} 
\newcommand{\RRR}{\color{red}}
\numberwithin{equation}{section}
\numberwithin{definition}{section}
\numberwithin{theorem}{section}
\numberwithin{remark}{section}
\title[Defects in Nematic Shells]
{Defects in Nematic Shells:\\ a $\Gamma$-convergence discrete-to-continuum approach}
\author{Giacomo Canevari}
\address{Mathematical Institute,
        University of Oxford,
        Andrew Wiles Building,
        Radcliffe Observatory Quarter,
        Woodstock Road,
        \mbox{OX2 6GG} Oxford, United Kingdom.}
\email[G. Canevari]{canevari@maths.ox.ac.uk}
\author{Antonio Segatti}
\address{Dipartimento di Matematica ``F. Casorati'',
  Universit\`a di Pavia,
  Via Ferrata 1, \mbox{27100} Pavia, Italy.}
\email[A. Segatti]{antonio.segatti@unipv.it}
\date{}
\subjclass[2010]{74K25, (82B20, 49J45, 53Z03, 76A15)}
\keywords{Nematic Shells, XY model, Index of a vector field, Poincar\'e-Hopf Theorem, Defects, Renormalized Energy, $\Gamma$-convergence}
\begin{document}

\begin{abstract}
  In this paper we rigorously investigate the emergence of defects on Nematic Shells with genus
  different from one. 
  This phenomenon is related to a non trivial interplay between the topology of the shell and
  the alignment of the director field. 
  To this end, we consider a discrete $XY$ system on the shell $M$, described by a tangent vector field 
  with unit norm sitting at the vertices of a triangulation of the shell. Defects emerge when we let
  the mesh size of the triangulation go to zero, namely in the discrete-to-continuum limit. 
  In this paper we investigate the discrete-to-continuum limit in terms of $\Gamma$-convergence
  in two different asymptotic regimes. The first scaling promotes the appearance
  of a finite number of defects whose charges are in accordance with the topology of shell $M$, via 
  the Poincar\'e-Hopf Theorem. The second scaling produces the so called Renormalized Energy that 
  governs the equilibrium of the configurations with defects. 
\end{abstract}

\maketitle

\tableofcontents


\section{Introduction}


Nematic Liquid Crystals offer many intriguing and fascinating
examples of a non trivial interplay between 
topology, geometry, partial differential equations and physics (see the recent survey \cite{ball_survey}). 
Interestingly, Liquid Crystals manifest several visual representations
of the underlying geometric constraints. For instance, the word Nematic itself
originates from the Greek word $\nu\eta\mu\alpha$ (thread) and refers to a particular
type of thread-like topological defects that these type of Liquid Crystals
exhibit.

\smallskip
In this paper, we are interested in exploring these interplays for 
{\itshape Nematic Shells}. 
A {\itshape Nematic Shell} is a rigid colloidal particle with a typical dimension 
in the micrometer scale coated with a thin film of nematic liquid crystal
whose molecular orientation is subjected to a tangential anchoring. 
The study of these structures has recently received a good deal of interest. As suggested by Nelson \cite{Nelson02},
the interest in {\itshape Nematic Shells} is related to the possibility
of using them as building blocks of mesoatoms with a controllable valence.

From a mathematical point of view, a {\itshape Nematic
Shell} is usually represented as a two dimensional 
compact surface $M$ (without boundary, for simplicity) embedded in $\mathbb{R}^3$. 
As it happens for nematic liquid crystals occupying a domain in $\mathbb{R}^2$ or in
$\mathbb{R}^3$, the basic mathematical description is given in terms 
of a unit-norm vector field named director, describing the local orientation of the rod-shaped
molecules of the crystal (\cite{virga94}). When dealing 
with nematic shells, the local orientation of the molecules described
via a unit-norm tangent vector field $\n:M \to \mathbb{R}^3$ with $\n(x)\in \T_x M$ for any $x\in M$, $\T_x M$ being the 
tangent plane at the point~$x$
(\cite{Straley71}, \cite{LubPro92}, \cite{Nelson02}, \cite{NapVer12L}, \cite{NapVer12E}, \cite{ssvPRE}, \cite{ssvM3AS}). 

\smallskip
The study of these structures offers a non trivial interplay between the geometry and
the topology of the fixed substrate and the tangential anchoring constraint. Indeed, as 
observed in~\cite{VitNel06} and \cite{BowGio2009}, the liquid crystal 
equilibrium (and all its stable configurations, in general) is the result of 
the competition between two driving principles: on the one hand the 
minimization of the ``curvature of the texture'' penalized by the elastic 
energy, and on the other the frustration due to constraints of geometrical 
and topological nature, imposed by anchoring the nematic to the surface 
of the underlying particle. 

Moreover, the interaction between the local orientation of the molecules
and the topology of the surface $M$ (and possibly of the boundary
conditions, if any) can induce the formation of 
topological defects, that is regions of rapid changes in the director
field $\n$. It is important to note that point of defects play the role
of hot spots for the formation of the mesoatoms suggested by Nelson
in \cite{Nelson02}. Thus understanding the formation and, possibly, the energetics
of defect configurations is extremely significant for applications. 
This type of problems have been already discussed
in the physics community, see e.g. \cite{VitelliNelson} and \cite{BowGio2009}
and references therein.

When dealing with smooth vector fields, 
the classical Poin\-ca\-r\'e-Hopf Theorem establishes 
a link between the existence of a continuous tangent vector field
with unit norm on a surface $M$ and the topology of the surface itself. 
To have a clue on what happens for Nematic Shells, let us consider 
the simplest form of the energy (see \cite{Straley71}, \cite{LubPro92}, \cite{Nelson02}):
\begin{equation}
\label{eq:energy_intro}
E(\v) := \frac 12\int_{M}\vert \D\v\vert^2 \d S,
\end{equation}
where $\D$ is the covariant derivative on $M$. 
It turns out that 
the rigorous analysis of nematic shells has to face with 
possible weak forms of the Poincar\'e-Hopf Theorem. 
In particular, 
introducing the ``Sobolev set''
\[
W^{1,2}_{\tan}(M;\mathbb{S}^2):= \left\{\v: M\to \mathbb{R}^3, \,\,\,\vert \v\vert =1, \,\,
\v(x)\in \T_x M\,\,\hbox{ for a.a.}\,\, x\in M, \,\,\vert \D\v\vert\in L^2(M)\right\} \! ,
\]
we have that (see \cite{ssvM3AS} and \cite{AGM-VMO})
\begin{equation}
\label{eq:functional_frame}
W^{1,2}_{\tan}(M;\mathbb{S}^2)\neq \emptyset\,\,\, \Leftrightarrow \,\,\, \chi(M) = 0,
\end{equation}
where $\chi(M)$ is the Euler Characteristic of $M$. Consequently, the emergence
of defects is exactly related to the choice of the topology of $M$ via the Euler characteristic.
More in detail (see \cite{AGM-VMO}), the precise relation between the topology of the surface and 
the topological charge of the defects is given by
 the Poincar\'e-Hopf Theorem: If the unit-norm vector field $\v$ on $M$
 has singularities of degree $d_i$ located at the point $x_i$ for $i=1,\ldots, k$, then 
\begin{equation*}
\label{eq:p-h}
\sum_{i=1}^{k}d_i = \chi(M).
\end{equation*}

\smallskip
The goal of this paper is to understand the emergence of defects for shells of genus different from one
(that is, non zero Euler Characteristic) and their energetics.
The defect generation is related to the impossibility, for shells with $\chi\neq 0$, of supporting
a tangent, unit-norm vector field with the Sobolev regularity above. Thus, a possibile strategy would be 
to relax one the above constraints, for instance the unit-norm constraint as in the Ginzburg-Landau theory 
(see, for instance, \cite{BBH}, \cite{Jerrard}, \cite{Sandier}, \cite{JerrardSoner-GL}, \cite{SandierSerfaty}).

In this paper, we choose another point of view and instead of a continuous model we rather
consider a discrete one with the molecules sitting at the vertices of a triangular mesh approximating 
the surface $M$. 
One of the advantages of this approach is that it paves the way for a computational analysis 
in terms of finite elements. 

The model we consider here is a variant of the well-known $XY$-spin model, which is widely regarded as
a prototypical example of a discrete spin system where phase transitions that are mediated by topological defects occur.
Such phase transitions were first identified by Kosterlitz and Thouless~\cite{KosterlitzThouless} 
(also based on previous work by Berezinskii~\cite{Berezinskii}), who were awarded the 2016 Nobel Prize for Physics, together with Haldane, 
in recognition of their discoveries on topological phases of matter.  $XY$-models have also attracted
attention in the mathematical community; see, for instance, \cite{AlicandroCicalese}, \cite{ADGP}, \cite{BCSarxiv}, where 
the discrete-to-continuum limit of such models, and their connection with the continuum Ginzburg-Landau theory, is explored.
The aforementioned papers are concerned with the study of ``flat'' situations, 
i.e. the model is set on a domain~$\Omega\subseteq\R^2$; the dynamics for an $XY$-model on a ``curved''
torus has been numerically explored e.g. in~\cite{S_K_T_S11}, via a Monte-Carlo approach.

In this paper, we aim to address the mathematical analysis of an $XY$-model on surfaces.
More precisely, given a closed surface~$M\subseteq\R^3$ with $\chi(M) \neq 0$,
we consider a family of triangulations $\TT_\eps$ of~$M$ 
with the vertices $i\in \TT^{0}_\eps$ lying on $M$ and with mesh size $\eps$, i.e.
$\eps = \max_{T\in \TT_\eps}\hbox{diam}(T)$ (see Section \ref{sec:discrete_model} for the details).
Any point $i\in \TT^0_{\eps}$ is occupied by a unit-norm tangent vector $\ve(i)\in\T_i M$.
Our energy functional takes the form
\begin{equation*}
XY_\eps(\ve) := \frac{1}{2} \sum_{i\neq j\in \TT^0_\eps} \kappa^{ij}_\eps \abs{\v_\eps(i) - \v_\eps(j)}^2,
\end{equation*}
where the coefficients $\kappa_{\eps}^{ij}$ are the entries of 
the stiffness matrix of~$M$, that is, the finite-element discretization of the Laplace-Beltrami operator 
(see~\eqref{hp:weakly_acute} for the definition). 
 
As we will see later on in this paper, the $XY_\eps$ energy is intimately related
to the Dirichlet energy of the piece wise interpolation of the discrete vectors $\v_\eps(i)$. 
Consequently, when $\eps\to 0$ we expect that the behavior of $XY_\eps$ is dictated by 
\eqref{eq:functional_frame}, and a uniform bound on the energy of a minimizer can hold
if and only if the Euler Characteristic is equal to zero.    
It is important to note that the discrete (tangent) vectors $\v(i)$ are identified with 
tangent vectors in~$\mathbb{R}^3$ and thus the difference appearing in the definition 
of the energy is exactly a difference between vectors in $\mathbb{R}^3$. When $\chi(M)=0$, this
fact brings important consequences in the distorsion effect in the limit energy (see Proposition
\ref{prop:gamma_smooth}) and in particular produces a macroscopic energy that is capable 
of describing also the extrinsic effects. 

When $\chi(M)\neq 0$, configurations with defects emerge when we let $\eps\to 0$, namely in the discrete-to-continuum limit. 
Thus, the very goal of this paper is to analyze this limit in terms of $\Gamma$-convergence,
in the spirit of~\cite{AlicandroCicalese}, \cite{ADGP}. 
As it is typical in $\Gamma$-convergence results, one obtains possibly different limits according 
to the chosen scaling of the energy.

Our first main result (see Theorem~\ref{th:gamma0}) exactly relates the emergence 
of defects with the topology of the shell~$M$ and is expressed in terms of the $\Gamma$-convergence (in a suitable topology)
of $\frac{XY_\eps(\cdot)}{\vert \log\eps\vert}$.
Following the flat case (see \cite{JerrardSoner-GL}, \cite{AlicandroCicalese}, \cite{ADGP}), 
we introduce the so-called vorticity measure $\widehat{\mu}_\eps(\ve)$ of a discrete field $\ve$,
which is a kind of discrete notion of the Jacobian. This quantity captures all the relevant ``topological information'' of~$\ve$.
For sequences~$(\ve)_{\varepsilon}$ that satisfy a logarithmic energy bound (e.g., minimizers),
we show that~$\widehat{\mu}_\varepsilon(\ve)$ converges, in a suitable topology, to a measure of the form
$2\pi\sum_{i=1}^{\KK} d_i \delta_{x_i} - G\d S$, where the points~$x_i\in M$ correspond to the position of the defects,
the coefficients $d_i\in \mathbb{Z}$ are the topological charges, $G$ is the Gauss curvature and~$\d S$ is the area element on~$M$.
The proof 
follows the steps of analogous results in the Ginzburg-Landau literature
(see, in particular, \cite{JerrardSoner-GL}, \cite{Jerrard} for the continuous setting and 
\cite{AlicandroCicalese}, \cite{ADGP} for the discrete~$XY$-setting).

Our second main result (see Theorem \ref{th:gamma1}) 
is on the energetics of configurations with defects.
More precisely, we investigate the $\Gamma$-convergence of 
$XY_\eps(\cdot) - \pi\KK \vert\log\eps\vert$, where $\KK$ is the total variation of the measure
$\sum_{i=1}^{\KK} d_i \delta_{x_i}$ when $\vert d_i\vert =1$ for all $i=1, \ldots, \KK$. 
What emerges in the $\Gamma$-limit is the so called 
Renormalized Energy $\W$ that has been first studied in a rigorous way for the 
Ginzburg-Landau theory by Brezis, Bethuel and H\'elein in \cite{BBH}.
The literature on this topic is very vast (see \cite{AlicandroPonsiglione} and references therein)
and includes also results for the discrete $XY$ model on the plane, see \cite{ADGP}. 
In the euclidean situation
this energy depends on the position of the singularities via the Green function of
the laplacian on $\mathbb{R}^2$.
When dealing with a curved substrate $M$,
one may expects that the curvature properties of $M$ intervene in the limit (differently from the zeroth order
$\Gamma$-convergence in which only the topological properties of $M$ come into play). 
In particular, we expect that the curvature of $M$
(more precisely the Gaussian curvature) enters in the expression of the 
Renormalized Energy and acts as a further geometric
driving principle in the location of defects. 
This intuition is indeed true, as the analysis in \cite{VitelliNelson} shows for a corrugated plane, that is
the graph of a gaussian function. For this specific surface Vitelli and Nelson in~\cite{VitelliNelson} 
show that the Renormalized Energy is given by a sum of different terms including 
the Green function of the Laplace Beltrami on the surface and contributions coming 
from the Gaussian curvature of $M$. Interestingly, Vitelli and Nelson exhibit a term in the Renormalized energy 
given by interaction between the charge of the defects and the curvature of $M$.
Instead, our Theorem \ref{th:gamma1} deals with a compact surface $M$ and rigorously prove
the $\Gamma$-convergence of  $XY_\eps(\cdot) - \pi\KK \vert\log\eps\vert$,
to a Renormalized Energy
that takes into account also how $M$ sits in the three dimensional space.
In fact, $\mathbb{W}$
is given by the sum of a purely intrinsic part and and extrinsic
part related with the shape operator of $M$.

As a first consequence of our result, we have the following
asymptotic expansion for the minima of $XY_\eps$ (thanks to classical results in $\Gamma$-convergence):
\begin{equation}
\label{eq:conv_minima_intro}
\min XY_\eps = \pi|\chi(M)|\vert\log\eps\vert+ \W(\v) 
+ \sum_{i=1}^{|\chi(M)|}\gamma(x_i) + \mathrm{o}_{\eps\to 0}(1),
\end{equation}
where $\v\in W^{1,2}_{\tan, \mathrm{loc}}(M\setminus\{x_1,  \ldots, x_{|\chi(M)|}\}; \, \S^2)$ 
is the ``continuum limit'' of the sequence of discrete minimizers and
$\gamma(x_i)$ is a positive quantity that takes into account 
 the energy located in the core of the defects~$x_i$ of~$\v$.
It is interesting to note that $\sum_{i=1}^{|\chi(M)|}\gamma(x_i)$
has memory of the discrete structure around any singularity $x_i$ in the sense that 
$\gamma(x_i)$ will depend on the (limit) triangulation around each defect $x_i$. 
This is a purely discrete phenomenon which is essentially due 
to the fact that for a curved shell the vertices of the triangulation do not 
necessarily sit on a structured lattice. Another striking difference from the planar case, 
both continuous and discrete (see \cite{BBH} and \cite{ADGP}), and
from the curved continuous case (see \cite{JerrardIgnat}),
is that the core energy $\gamma(x_i)$
depends on the singularity $x_i$.

It is important to note 
that the Renormalized energy we introduce in this paper is defined on a proper class of vector fields 
(see \eqref{eq:classV}, \eqref{eq:renormalized_ene_def} in the present paper and \cite[Eq. 4.21]{ADGP}). 
In particular, our choice is different from the classical choice of Brezis, Bethuel and H\'elein \cite{BBH} that  
rather define the Renormalized Energy as a functional of the configuration and of the charge of the singularities. 
However, the two definitions are indeed intimately linked (at least on the planar case, see \cite[Eq. 4.23]{ADGP},
and for the intrinsic part of the Rernomalized Energy, see \eqref{eq:relation_renormalized})
by a minimization procedure. 
For a general surface $M$, the form of the Renormalized Energy $\W$ 
is rather implicit and, as expected,
one should work down its expression for any given surface $M$. In analogy with 
the well established planar case and guided by the computation in \cite{VitelliNelson}
for the corrugated plane, we expect in particular that the Renormalized energy should comprise
a term related with the Green function of the Laplace Beltrami operator on $M$ whose explicit expression
heavily depends on the form of $M$.

Even if our analysis was motivated by Nematic Shells, the study of the 
interplay between the topological properties of the domain and the possible formation of 
singularities with infinite energy, is common to other models as 
the the emergence of (topological) defects is ubiquitous in nature (see \cite{kibble03} and references therein). In particular, their topological origin often 
give the system configurations exhibiting defects a universal feature.
Consequently, the issues we aim at addressing have a more general flavor and 
are independent of the particular system.  
Moreover, energy functionals such as~\eqref{eq:energy_intro}
are commonly used also to model Amphiphilic molecules exhibiting an
hexatic bond orientational order (\cite{Mutz}, \cite{BNDT04}).  
Thus, the horizon of our analysis and results is much wider than just Nematic Shells.

\medskip
During the preparation of the present paper, we we informed that R. Jerrard and R. Ignat (see \cite{JerrardIgnat})
were obtaining, independently from us, similar results (among the others, the Renormalized Energy)
for a Ginzburg-Landau type functional on a two dimensional surface $M\subseteq\mathbb{R}^3$.

\section{Differential geometry notation}
\label{sec:diff_geo}
In this section we briefly introduce the differential geometry formalism that we use in the paper. We refer to
\cite{doCarmo} and \cite{lee} for basic references on this subject.

Let~$M\subseteq\R^3$ be a smooth, compact, connected surface without boundary, oriented by the choice of a 
smooth, unit normal field~$\ggamma\colon M \to\R^3$.
We denote by~$\boldsymbol{\omega}$ the volume form induced by this choice of the orientation
and by~$\d S$ the area element on~$M$, i.e., the positive Borel measure induced by~$\boldsymbol{\omega}$.
We let~$G$ be the Gauss curvature of~$M$. By abuse of notation, we identify~$G$
with the Borel measure~$G\d S$ and, if no confusion is possible, we write~$G$ in place of~$G\d S$.
For any point $x\in M$, we let $\T_xM$ denote the tangent plane at $x$ and we denote
with $\T M$ the tangent bundle of $M$, namely the disjoint union $\bigsqcup_{x\in M}\T_x M$. We denote with $\pi: \T M\to M$ 
the smooth map that assigns to any tangent vector its application point.
A vector field $\v$ 
on an open neighbourhood of $A\subset M$ is a section of $\T M$, namely a map 
$\v:A\to \T M$ for which $\pi\circ \v$ is the identity on $M$. 
Let~$U\subseteq\R^3$ be an open tubular neighbourhood of~$M$ of thickness $h$ such that 
\[
h \le \min_{x\in M} \Big( \max(\vert \kappa_1(x)\vert, \vert \kappa_2(x)\vert)\Big)^{-1}
\]
where $\kappa_1$ and $\kappa_2$ are the two principal curvatures of $M$. For a such a tubular 
neighbourhood the nearest-point projection~$P\colon U\to M$ is well-defined and smooth.

Let $\nabla$ be the connection with respect to the standard metric of $\Rt$,
i.e., given two smooth vector fields $Y$ and $X$ in $\R^3$ (identified with its tangent space),
$\nabla_{X}Y$ is the vector field whose components are
the directional derivatives of the components of $Y$ in the direction $X$. 
We denote with $\D_{\v}\u$ the covariant derivative of $\u$ in the direction $\v$ 
($\u$ and $\v$ are smooth tangent vector fields in $M$),
with respect to the Levi Civita (or Riemannian) connection $\D$ of the metric $g$ on $M$. 
   
Now, if $\u$ and $\v$ are extended arbitrarily to smooth vector fields on $\Rt$, we have the Gauss Formula  : 
\begin{equation}
\label{eq:gauss}
\nabla_{\v} \u = \D_{\v} \u +  \langle \d\ggamma[\u],\v\rangle \ggamma.
\end{equation}
This decomposition is orthogonal, thus there holds
\begin{equation}
\label{eq:gauss_norm}
\vert\nabla \u\vert^2 = \vert\D \u\vert^2 + \vert \d\ggamma[\u]\vert^2.
\end{equation}

Beside the covariant derivative, we introduce another differential operator for vector fields on $M$,
which takes into account also the way that $\Sigma$ embeds in $\Rt$. 
Let $\u$ be a smooth vector field on $M$. We extend it smoothly to a vector field $\tilde \u$ on $\Rt$ and we 
denote its standard gradient by $\nabla \tilde \u$ on $\Rt$. 
For $x\in M$, we define
\begin{equation}
\label{eq:surface_grad_vector}
	\nablas \u(x):=\nabla \tilde \u(x)P_{M}(x),
\end{equation}
where $P_M(x):=(\Id-\ggamma \otimes \ggamma)(x)$ is the orthogonal projection on $\T_x M$.
In other words, $\nablas$ is the restriction of the standard
derivative in $\mathbb{R}^3$ to directions that are tangent to $M$. 
Note that $\nablas \u$ is well-defined, as it does not depend on the particular extension $\tilde \u$. The object just defined is a smooth mapping 
$\nablas \u\colon M \to \R^{3 \times 3}$, or equivalently $\nablas \u\colon M\to \mathcal L(\R^3,\R^3)$ (the space of linear continuous operators on $\R^3$). In general, $\nablas \u \neq \D\u = P_M(\nabla \u)$
 since the matrix product is non commutative.
Moreover, thanks to \eqref{eq:gauss} and \eqref{eq:gauss_norm} there holds
\begin{equation}
\label{eq:gauss_surface}
\vert \nablas \u\vert^2 = \vert\D\u\vert^2 + \vert \d\ggamma[\u]\vert^2.
\end{equation}
Note that, by identifying~$\u$ with a map~$\u = (\u^1, \, \u^2, \, \u^3)\colon M\to\R^3$,
the $k$-th row of the matrix representing $\nablas\u$ coincides with the Riemannian gradient of~$\u^k$.
In other words, while~$\D$ is a connection on the tangent bundle~$\T M$,
$\nablas$ arises naturally as a connection on the trivial bundle~$M\times\R^3$.

Let~$x_0\in M$ and let~$\v$ be a vector field defined in a neighbourhood of~$x_0$ which is continuous except,
possibly, at~$x_0$ and satisfies $\inf|\v| >0$.
By taking local coordinates~$\varphi\colon B_\delta\subseteq\R^2\to M$ with~$\varphi(0) = x_0$,
we can identify~$\v$ with a map $\varphi^*\v\colon B_\delta\subseteq\R^2\to\R^2$, namely
$\varphi^*\v(z) := \langle \d\varphi^{-1}(\varphi(z)), \, \v(\varphi(z))\rangle$ for~$z\in B_\delta$. We define 
the local index~$\ind(\v, \, x_0)$ as the topological degree of the map
\[
 \frac{\varphi^*\v}{\abs{\varphi^*\v}}_{|\partial B_\delta} \colon \partial B_\delta\simeq\S^1 \to \S^1.
\]
It is easily checked that~$\ind(\v, \, x_0)$ does not depend on
the choice of~$\delta$ nor~$\varphi$. 
The index is well-defined even if~$\v$ is a field of Sobolev regularity~$W^{1,2}$
because the restriction of $\varphi^*\v$ to the circle~$\partial B_\delta$ is continuous
for a.e.~$\delta$, thanks to Fubini theorem and Sobolev embedding.
For further properties of the index, we refer the reader to, e.g.,~\cite{AGM-VMO}.

\section{The discrete model and main results}
\label{sec:discrete_model}
In this section we introduce the discrete setting we will use in the rest of the paper and we state our main results. 
This Section is organized as follows. 
First of all, we will introduce the discretization of the surface $M$. 
As the mathematical analysis of \eqref{eq:energy_intro} bears some analogy with 
the analysis of harmonic maps (see \cite{ssvM3AS}) the discretization of the surface
is based on the formalism developed in \cite{Bartels}. 
Then, we define the starting point of our analysis, namely 
the discrete energy \eqref{XY}. Finally, we discuss the (simpler) case of defects-free textures and then we state our Main results
on the emergence of defects (Theorem \ref{th:gamma0}) and on their energetics (Theorem \ref{th:gamma1}).
To ease the presentaion, we will briefly introduce two fundamental objects, namely 
a discrete version of the jacobian and the Renormalized Energy. Their rigorous definitions together with 
their properties will be postponed in the forthcoming Subsections \ref{ssec:dic_meas} and \ref{sect:renormalized}.

\subsection{Triangulations of a surface}
\label{sect:triangulations}

For any $\eps\in(0, \, \varepsilon_0]$, 
we let~$\TT_\varepsilon$ be a triangulation of~$M$, that is, a finite collection of non-degenerate affine triangles $T\subseteq\R^3$
with the following property: the intersection of any two triangles~$T$, $T^\prime\in\TT_\varepsilon$
is either empty or a common subsimplex of~$T$, $T^\prime$. The parameter~$\varepsilon$ is the mesh size,
namely we assume $\eps=\max_{T\in\TT_\varepsilon}\hbox{diam}(T)$.
The sets of vertices and edges of~$\TT_\varepsilon$ will be denoted by $\TT^0_\varepsilon$, $\TT^1_\varepsilon$, respectively.
We will always assume that $\TT^0_\varepsilon\subseteq M$.
We set~$\widehat{M}_\varepsilon := \cup_{T\in\TT_\varepsilon} T$, so $\widehat{M}_\varepsilon$
is the piecewise-affine approximation of~$M$ induced by~$\TT_\varepsilon$.
Given an open set $\Omega\subseteq M$ (or possibly in $\mathbb{R}^2$ when expressed in local coordinates),
we set
$\Omega_\eps:=\left\{T\in \TT_\eps: T\subset \Omega \right\}$. Moreover, we denote with 
$\partial_\eps \Omega$ the discrete boundary of $\Omega$, namely
\begin{equation}
\label{eq:discrete_bd}
\partial_\eps\Omega:= \partial \Omega_\eps \cap \TT^0_{\eps}.
\end{equation}
Given a piecewise-smooth function~$\u\colon\widehat{M}_\varepsilon\to\R^k$, we denote by
$\nablae\u$ the restriction of the derivative $\nabla\u$ to directions that
lie in the triangles of $\widehat{M}_\eps$.



We assume that the family of triangulations~$(\TT_\varepsilon)$ satisfies the following conditions.
\begin{enumerate}[label=(H\textsubscript{\arabic*}), ref=H\textsubscript{\arabic*}]
 \item \label{hp:quasiuniform} Let~$T_{\mathrm{ref}}\subseteq\R^2$ be a reference triangle of vertices~$(0, \, 0)$, $(1, \, 0)$ and~$(0, \, 1)$.
 There exists a constant~$\Lambda >0$ such that, for any $\varepsilon\in(0, \, \varepsilon_0]$ and any~$T\in\TT_\varepsilon$,
 the (unique) affine bijection~$\phi\colon T_{\mathrm{ref}}\to T$ satisfies 
 \[
  \Lip(\phi)\leq\Lambda\varepsilon, \qquad \Lip(\phi^{-1})\leq \Lambda\varepsilon^{-1}.
 \]
 Here~$\Lip(\phi)$ denotes the Lipschitz constant of~$\phi$, $\Lip(\phi) := \sup_{x\neq y} |x-y|^{-1} |\phi(x) - \phi(y)|$.
 
 \item \label{hp:weakly_acute} For any~$\varepsilon\in(0, \, \varepsilon_0]$ and any~$i$, $j\in \TT^0_\varepsilon$ with $i\neq j$, there holds
 \[
  \kappa_\varepsilon^{ij} := -\int_{\widehat{M}_\varepsilon} \nablae\widehat{\varphi}_{\varepsilon, i}
  \cdot\nablae\widehat{\varphi}_{\varepsilon, j} \,\d S \geq 0,
 \]
 where the hat function $\widehat{\varphi}_{\varepsilon, i}$
 is the unique piecewise-affine, continuous function $\widehat{M}_\varepsilon\to\R$
 such that~$\widehat{\varphi}_{\varepsilon, i}(j) = \delta_{ij}$ for any~$j\in \TT^0_\varepsilon$.
 
 \item \label{hp:bilipschitz} For any~$\varepsilon\in(0, \, \varepsilon_0]$, $\widehat{M}_\varepsilon\subseteq U$
 and the restriction of the nearest-point 
 projection~$\widehat{P}_\varepsilon := P_{|\widehat{M}_\varepsilon}\colon\widehat{M}_\varepsilon\to M$ has a Lipschitz
 inverse. Moreover, we have $\Lip(\widehat{P}_\varepsilon) + \Lip(\widehat{P}_\varepsilon^{-1})\leq\Lambda$
 for some~$\varepsilon$-independent constant~$\Lambda$.
\end{enumerate}

\begin{remark}
  \eqref{hp:quasiuniform} is equivalent to the following condition: there exists a constant~$\Lambda>0$ 
  such that, for any~$\varepsilon\in (0, \, \varepsilon_0]$ and any triangle~$T\in\TT_\varepsilon$, there holds
  \[
   \Lambda^{-1}\varepsilon\leq\diam(T)\leq\Lambda\varepsilon \qquad \textrm{and} \qquad
   \alpha_{\min}(T) \geq \Lambda^{-1},
  \]
  where $\alpha_{\min}(T)$ stands for the minimum of the angles of~$T$.
  Meshes that satisfy this condition are called \emph{quasi-uniform} in the numerical literature.
  Since the manifold~$M$ is compact and smooth, and hence has bounded curvature, $\alpha_{\min}(T)\geq\Lambda^{-1}\varepsilon$
  implies that the number of neighbours of a given vertex is uniformly bounded with respect to~$\varepsilon$.
\end{remark}
\begin{remark} 
  A sufficient condition for~\eqref{hp:weakly_acute} is the following: for any pair of triangles~$T_1$, $T_2\in\TT_\varepsilon$ 
  that share a common edge $e$, let $\alpha_i$ be the angle in $T_i$ opposite to $e$ (for~$i\in\{1, \, 2\}$). If $\alpha_1 + \alpha_2 \leq\pi$
  for every edge~$e$ as above, then~\eqref{hp:weakly_acute} holds (see e.g.~\cite[Lemma~1.4.1]{Bartels}).
  Triangular meshes that satisfy~\eqref{hp:weakly_acute} are called \emph{weakly acute}.
\end{remark}
\begin{remark} 
  If~$\TT_\varepsilon$ satisfies~\eqref{hp:weakly_acute} and if~$\widehat{\varphi}$, $\widehat{\tau}\in C(\widehat{M}_\varepsilon, \,\R)$ 
  are piecewise-affine functions on the triangles of~$\TT_\varepsilon$, then 
  \begin{equation} \label{exact_integration}
   \int_{\widehat{M}_\varepsilon} \nablae\widehat{\varphi}\cdot\nablae\widehat{\tau} \,\d S
   = \sum_{i, j\in \TT^0_\varepsilon} \kappa_\varepsilon^{ij} \left(\widehat{\varphi}(i) - \widehat{\varphi}(j)\right) 
   \left(\widehat{\tau}(i) - \widehat{\tau}(j)\right) \! .
  \end{equation}
\end{remark}
\begin{remark} 
  The condition~\eqref{hp:bilipschitz} is introduced to rule out pathological examples, and to make sure
  that~$\widehat{M}_\varepsilon$ is indeed a good approximation of~$M$. 
  It is not meant to be sharp. There are algorithmic ways to construct triangulations that are quasi-uniform, weakly acute
  and satisfy~\eqref{hp:bilipschitz}, for instance, Delaunay meshes (see e.g.~\cite{Shewchuk}, \cite{Amenta_et_al}).
\end{remark}

Beside Hypothesis \eqref{hp:quasiuniform}-\eqref{hp:bilipschitz}, for the validity 
of Theorem \ref{th:gamma1} we will need a refined control on the triangulation
$\TT_\eps$ around the singularities (see Proposition \ref{prop:core_energy}) in the limit $\eps\to 0$.
At base, we require that 
that our triangulation $\TT_\eps$  is somehow scale invariant. We express this requirement as follows.
Fix~$x\in M$ and let~$\delta>0$ be smaller than the injectivity radius of~$M$.
Using geodesic coordinates, $\varphi\colon B_\delta\subseteq\T_xM\simeq\mathbb{R}^2\to M$ 
such that~$\varphi(0) = \bar x$, we pull~$\TT_\eps$ back and define a triangulation~$\overline{\TT}_\eps$ 
on~$B_\delta\subseteq\R^2$. (The set of vertices of~$\overline{\TT}_\eps$ 
is~$\varphi^{-1}(B_\delta(\bar x)\cap\TT_\eps^0)$; three vertices in~$\overline{\TT_\eps}$ 
span a triangle in~$\overline{\TT}_\eps$ if and only if their images via~$\varphi$ do.) 
We scale the triangulation~$\overline{\TT}_\eps$ of a factor~$1/\eps$ and define a triangulation
on~$B_{\delta/\varepsilon}\subseteq\R^2$, namely
\[
 \SS_\eps := \left\{\frac{1}{\varepsilon}T \colon T\in\overline{\TT}_\eps \right\} \! .
\]
%
Given another triangulation~$\SS$ on~$\R^2$, we denote by~$\SS_{|B_{\delta/\eps}}$
the collection of triangles~$T\in\SS_\eps$ such that~$T\subseteq B_{\delta/\eps}$.
We express the distance between~$\SS_\eps$ and~$\SS_{|B_{\delta/\eps}}$ as
\[
 d(\SS_\eps, \, \SS_{|B_{\delta/\eps}}) := \inf_{\phi} \max_{i\in \SS_\eps^0} |i - \phi(i)|,
\]
where the infimum is taken over all simplicial isomorphisms~$\phi$ from~$\SS_\eps$ to~$\SS_{|B_{\delta/\eps}}$
(that is, piecewise-affine maps that preserves the combinatorial structure of the mesh;
see Section~\ref{sect:mesh_distance} for more details).
In addition to \eqref{hp:quasiuniform}, \eqref{hp:weakly_acute} and \eqref{hp:bilipschitz}, 
for the validity of Theorem \ref{th:gamma1} we assume that the following condition holds.
\begin{enumerate}[label=(H\textsubscript{\arabic*}), ref=H\textsubscript{\arabic*}, resume]
 \item \label{hp:convergence} For any~$x\in M$ there exists a triangulation~$\SS = \SS(x)$ on~$\R^2$
 such that, for any~$\delta > 0$ smaller than the injectivity radius of~$M$, there holds
 \[
  \lim_{\eps\searrow 0} \  d(\SS_{\eps}, \, \SS_{|B_{\delta/\eps}}) \, \abs{\log\eps} = 0.
 \]
\end{enumerate}
This assumption is only used in the arguments of Section~\ref{sect:core}, and in particular in the proof 
of Proposition~\ref{prop:core_energy}.
\begin{remark}
 The construction of a sequence of triangulations satisfying~\eqref{hp:convergence},
 as well as~\eqref{hp:quasiuniform}--\eqref{hp:bilipschitz}, is illustrated in Figure~\ref{fig:h4},
 in case~$M$ is a sphere centred at the origin.
 We take a cube~$Q$ inscribed in the sphere, and subdivide each face into a square grid
 with spacing comparable to~$\varepsilon$. This induces a triangulation on~$\partial Q$ by isosceles right triangles,
 whose restriction to each face satisfies~\eqref{hp:convergence}.
 We pull back this triangulation to the sphere using the Lipschitz map~$x\in \partial Q\mapsto x/|x|$. 
 A similar construction can be carried out for any closed surface~$M$, by noting that~$M$ is bilipschitz
 equivalent to the boundary of a polyhedron whose faces are finite unions of squares. 
 In the left Figure~\ref{fig:ico-uv} we present another triangulation of the sphere that satisfy our set 
 of hypothesis \eqref{hp:quasiuniform}--\eqref{hp:convergence}. 
 A triangulation that does not satisfies~\eqref{hp:convergence} is illustrated in the Figure~\ref{fig:ico-uv} on the 
 right.
\end{remark}

\begin{figure}[t] 
\centering
 \begin{minipage}{.47\textwidth}
  \includegraphics[height=.3\textheight]{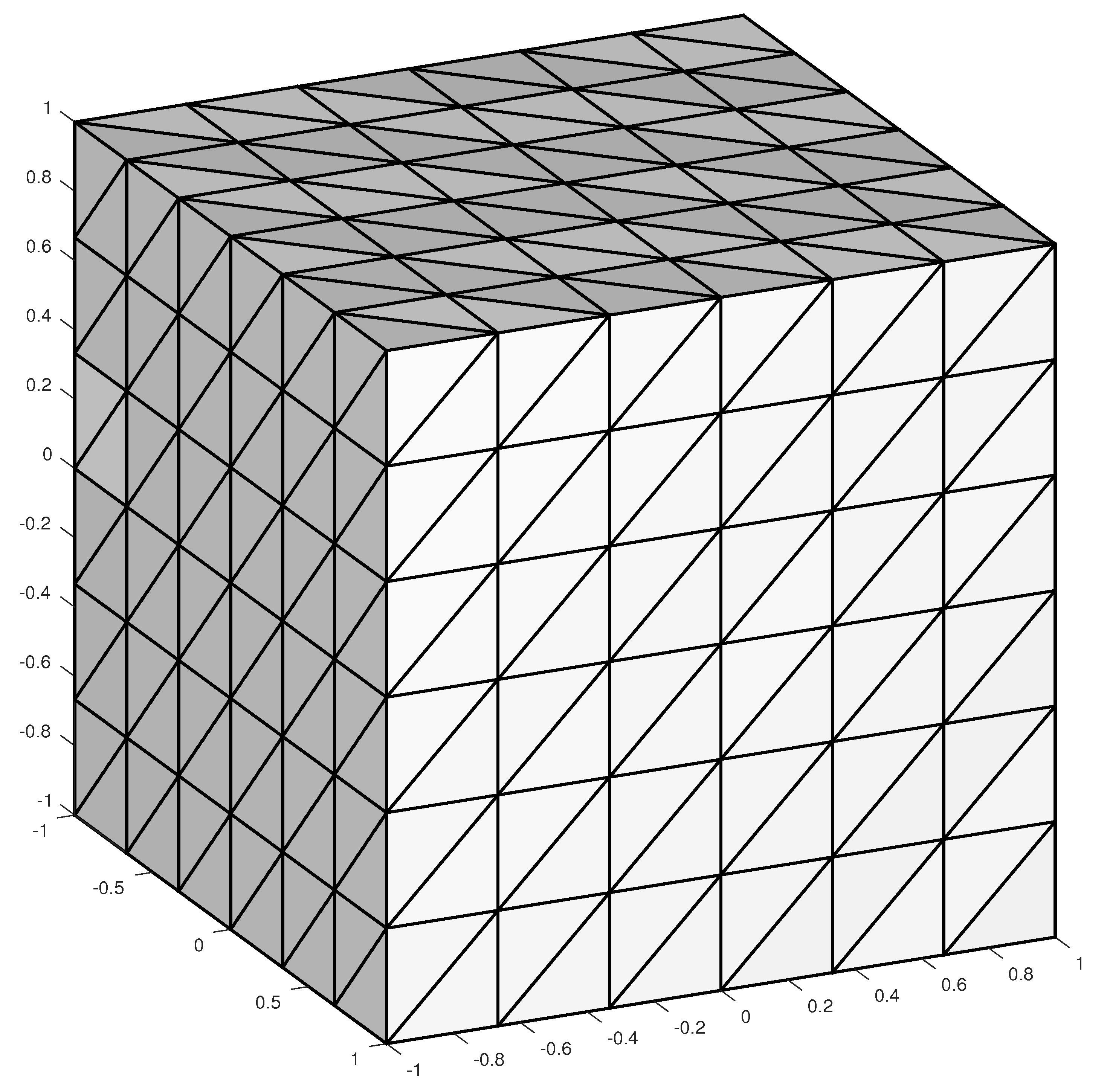}
 \end{minipage}%
 \quad
 \begin{minipage}{.47\textwidth}%
  \includegraphics[height=.3\textheight]{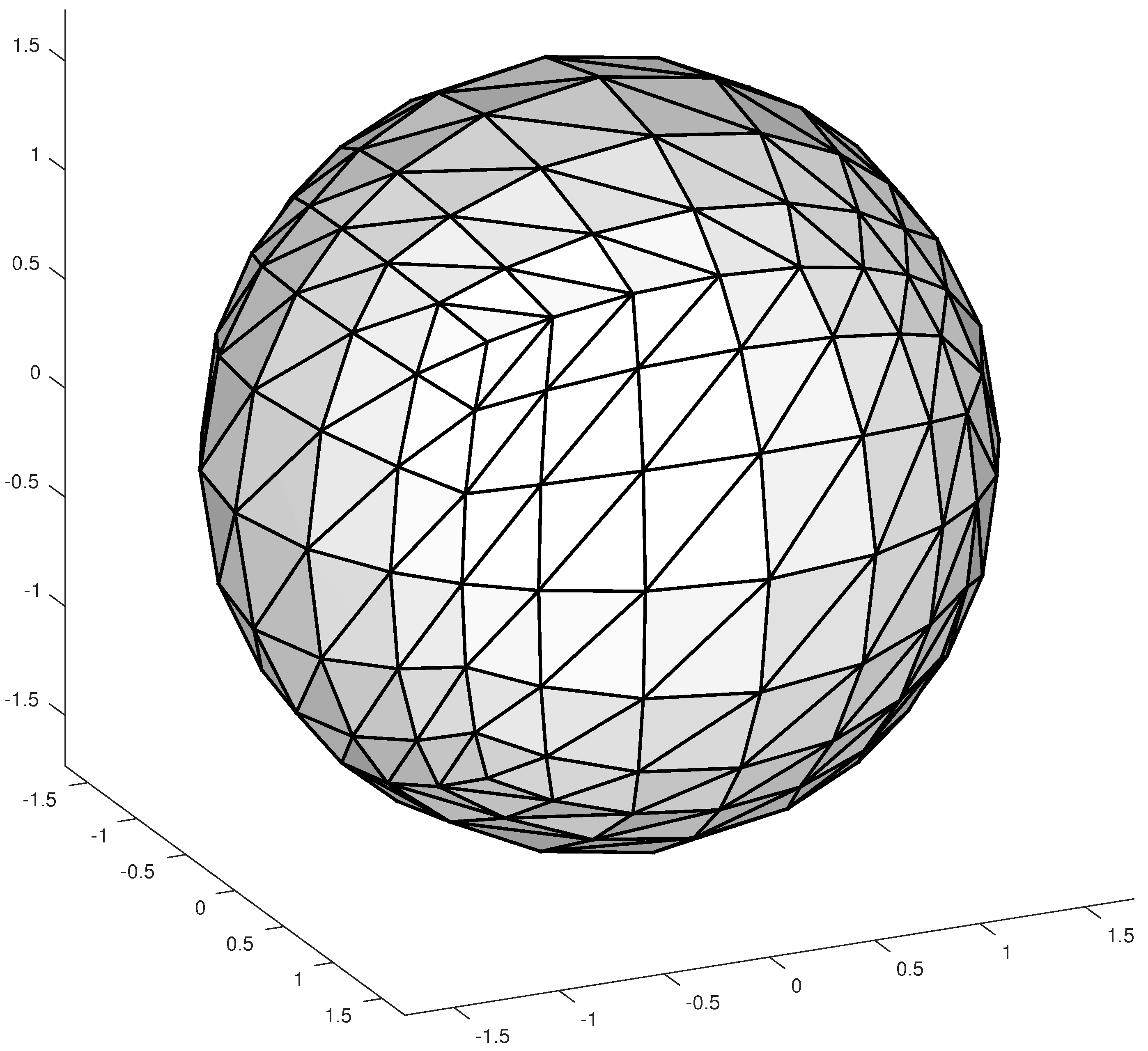}
 \end{minipage}%
        \caption{Left: a uniform triangulation by isosceles right triangles on the boundary of a cube.
        Right: the same triangulation is mapped to a sphere, by renormalizing the coordinates of each vertex.}
        \label{fig:h4}
\end{figure}

\begin{figure}[ht] 
\centering
 \begin{minipage}{.47\textwidth}
  \includegraphics[height=.28\textheight]{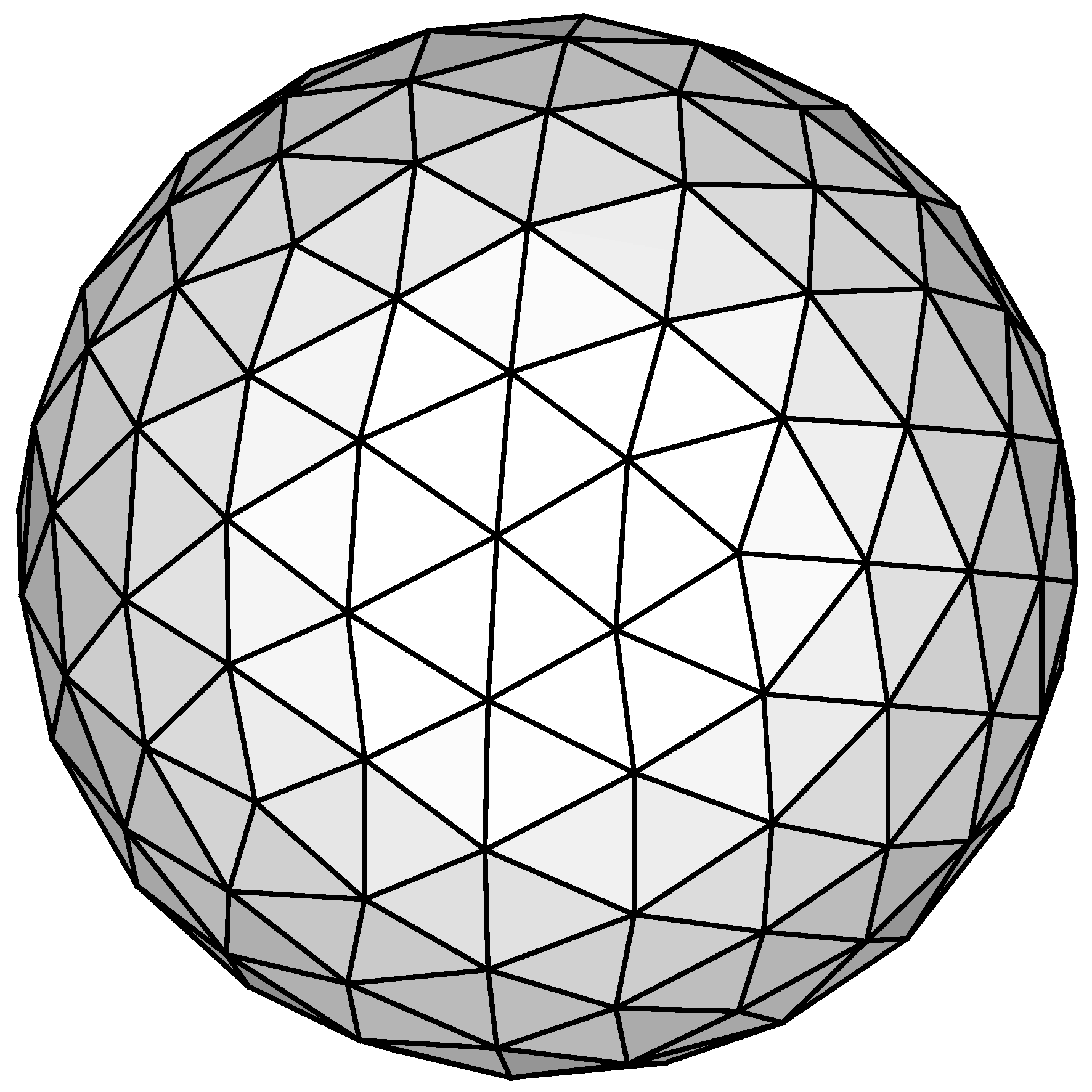}
 \end{minipage}%
 \quad
 \begin{minipage}{.47\textwidth}%
  \includegraphics[height=.28\textheight]{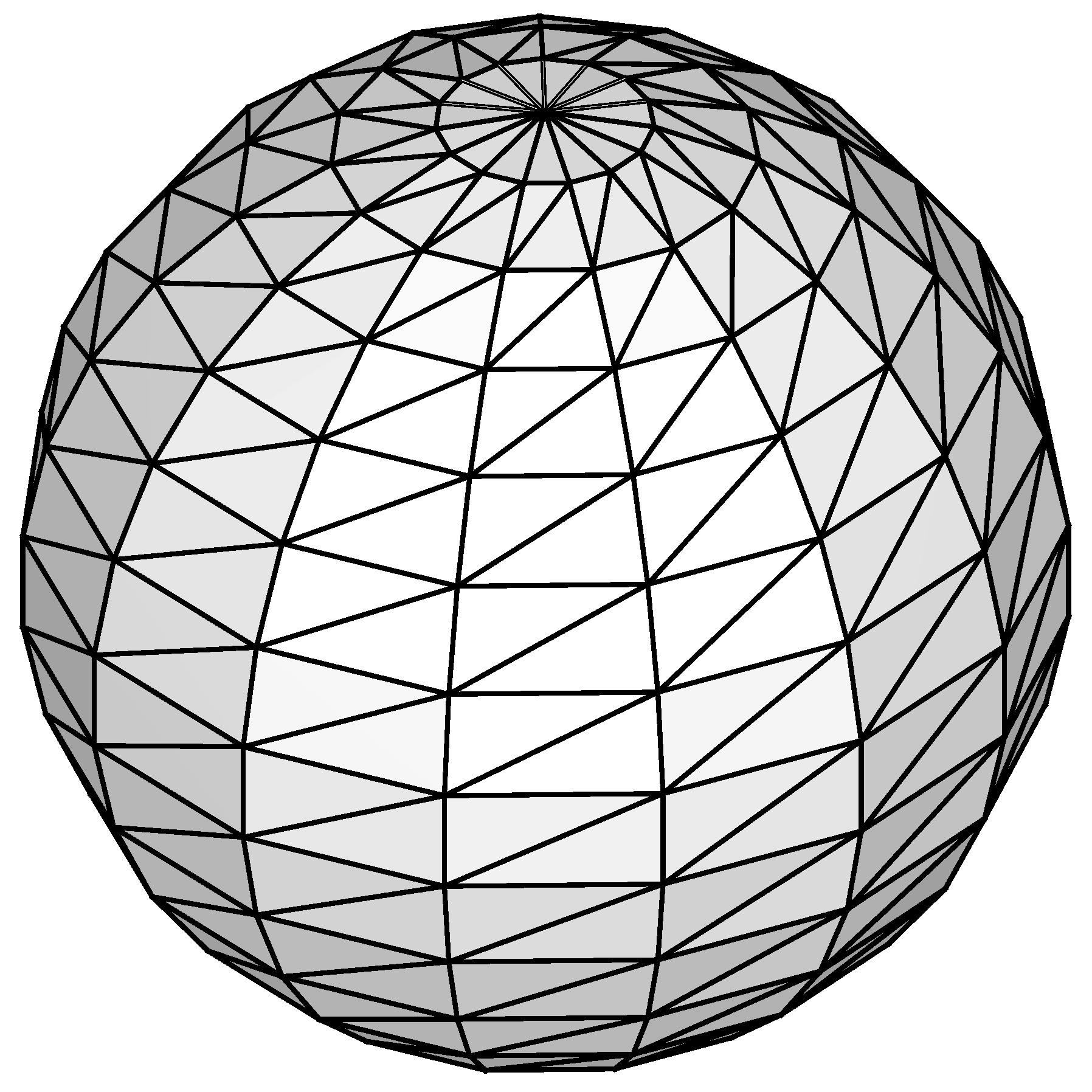}
 \end{minipage}%
        \caption{Left: an \emph{icosphere}. This triangulation is obtained by subdividing the faces 
        of a regular icosahedron into smaller triangles, then projecting the vertices to the sphere. 
        By refining this construction, one obtains a sequence of triangulations that satisfies
        \eqref{hp:quasiuniform}--\eqref{hp:convergence}. Right: an~\emph{UV-sphere}, obtained by
        mapping a uniform grid on the square via spherical coordinates. The meshes produced by this method 
        do not satisfy~\eqref{hp:convergence} nor~\eqref{hp:quasiuniform}, because the number of neighbours
        of the north and south poles blows up as the mesh is refined.}
        \label{fig:ico-uv}
\end{figure}

The main characters of our analysis will be unit-norm tangent discrete
vector fields on $M\cap \TT^{0}_{\eps}$, namely maps
\begin{equation} \label{discrete_fields}
 \v_\varepsilon\colon \TT^0_\varepsilon\to \mathbb{R}^3 \hbox{ s.t. }\vert \v\vert = 1 \,\,\hbox{ and }
 \v_\varepsilon(i)\cdot\ggamma(i) = 0 \quad \textrm{for any } i\in \TT^{0}_{\eps}.
\end{equation}
We will denote with $\T(\TT_\varepsilon; \, \S^2)$ the space of such discrete vector fields.
Given~$\v_\varepsilon\in\T(\TT_\varepsilon;\, \S^2)$, $\widehat{\v}_\varepsilon\colon\widehat{M}_\varepsilon\to\R^3$ 
denotes the piecewise-affine interpolant of~$\v_\varepsilon$. Note that $\widehat{\v}_\eps$
can be represented using the basis functions $\widehat{\varphi}_{\varepsilon, i}$ in this way:
\begin{equation}
 \label{eq:aff_interpol}
 \widehat{\v}_\eps = \sum_{j\in \TT^0_{\eps}}\ve(j) \widehat{\varphi}_{\varepsilon, j}.
\end{equation}
In a similar way, $\widehat{\ggamma}_\varepsilon$ denotes the
piecewise-affine interpolant of~$\ggamma$ restricted to~$\TT_\varepsilon^0$.

\begin{remark}
 In computational applications, it might be convenient to define the set of discrete vector fields~\eqref{discrete_fields}
 using some numerical approximation~$\ggamma_\varepsilon$ of~$\ggamma$, instead of~$\ggamma$ itself. 
 The arguments in this paper could easily be adapted to cover this case as well, provided that the approximation~$\ggamma_\varepsilon$
 satisfies an a priori bound such as
 \[
  \sup_{i\in\TT^0_\varepsilon} \abs{\ggamma_\varepsilon(i) - \ggamma(i)} \leq C\varepsilon.
 \]
\end{remark}

\subsection{The discrete energy}
\label{ssec:discrete_energy}
Given a discrete field~$\v_\varepsilon\in\T(\TT_\varepsilon, \, \S^2)$, we consider the discrete XY-energy
\begin{equation} \label{XY} \tag{XY$_\varepsilon$}
 XY_\varepsilon(\v_\varepsilon) := \frac{1}{2} \sum_{i\neq j\in \TT^0_\varepsilon} \kappa^{ij}_\varepsilon \abs{\v_\varepsilon(i) - \v_\varepsilon(j)}^2.
\end{equation}
Because the support of the hat function~$\widehat{\varphi}_{\varepsilon,i}$ only intersects the triangles that are adjacent to~$i$,
we have~$\kappa_\varepsilon^{ij}=0$ if the vertices~$i$, $j$ are not adjacent. Hence, the XY-energy is indeed defined 
by a nearest-neighbours interaction. 
Moreover, due to~\eqref{eq:aff_interpol} we have
\begin{equation} \label{affine_dirichlet}
 XY_\varepsilon(\v_\varepsilon) = \frac{1}{2} \int_{\widehat{M}_\varepsilon} |\nablae\widehat{\v}_\varepsilon|^2 \,\d S.
\end{equation}

\subsection{The defect-free case}

Before addressing the problem of generation of defects, it is important to understand what happens
for a shell $M$ with $\chi(M)=0$. 
In this case, the topology of $M$ does not forces the formation of singularities 
as the Poincar\'e-Hopf Theorem admits configurations with $d_i=0$ for any $i$. 
Moreover, the results in \cite{ssvM3AS} and \cite{AGM-VMO} guarantee that
the set 
\begin{equation*}
W^{1,2}_{\tan}(M;\mathbb{S}^2):= \left\{\v: M\to \mathbb{R}^3, \,\,\,\vert \v\vert =1, \,\,
\v(x)\in \T_x M\,\,\hbox{ for a.a.}\,\, x\in M, \,\,\vert \D\v\vert\in L^2(M)\right\} \! ,
\end{equation*}
is non empty. 
Consequently, for a compact surface without boundary and $\chi(M)=0$, the following
energy $E_{\mathrm{extr}}$ is well defined
\begin{equation}
\label{eq:extrinsic_energy}
E_{\mathrm{extr}}(\v) : = \int_{M}e(\v) \d S, \,\,\,\, e(\v) := \vert\D\v\vert^2 + \frac 12\vert \d \ggamma[\v]\vert^2.
\end{equation}
When $\v$ is of unit norm, this energy governs 
the statics of a Nematic Shell (of genus $1$)
in the one constant approximation when one takes into account also extrinsic effects
(see \cite{NapVer12L}, \cite{NapVer12E} and \cite{ssvM3AS}). 
Indeed, differently from \eqref{eq:energy_intro}, this energy includes the 
term with $\d\ggamma$ that takes into account how the surface $M$ sits 
in the tridimensional space $\mathbb{R}^3$. We refer to the papers 
\cite{NapVer12L}, \cite{NapVer12E} and \cite{ssvM3AS} for a detailed analysis
of $E_{\mathrm{extr}}$ and for a discussions about 
the differences in selecting minimizers between \eqref{eq:energy_intro} and 
\eqref{eq:extrinsic_energy}. Interestingly, the energy \eqref{eq:extrinsic_energy}
emerges as the discrete to continuum limit of the energies $XY_\eps$. 
In fact, suppose $M$ is a compact surface without boundary with $\chi(M)=0$. Let
$\TT_\varepsilon$ be a triangulation of $M$ satisfying the conditions 
\eqref{hp:quasiuniform}, \eqref{hp:weakly_acute} and~\eqref{hp:bilipschitz}. Now, 
given a smooth vector field $\v$ of unit norm we consider the discrete vector field
given by the restriction of $\v$ to the nodes of the triangulation, namely
$\ve(i):= \v(i)$ for $i\in \TT^{\eps}_{0}$. It is not difficult to realize that 
\begin{equation*}
\lim_{\eps\to 0}XY_\eps(\ve) = \lim_{\eps\to 0}
\frac{1}{2} \int_{\widehat{M}_\varepsilon} |\nablae\widehat{\v}_\varepsilon|^2 \,\d S
=\frac 12 \int_{M}\vert \nablas \v\vert^2 \d S = E_{\mathrm{extr}}(\v),
\end{equation*}
where we used the fact that the directions tangent to $\widehat{M}_\varepsilon$
uniformly converge to directions that are tangent to $M$ together with \eqref{eq:gauss_surface}.
This discrete to continuum limit can be actually made rigorous in terms
of $\Gamma$-convergence. 
More precisely, 
we have 
\begin{prop}
\label{prop:gamma_smooth}
Suppose that the assumptions~\eqref{hp:quasiuniform}, \eqref{hp:weakly_acute} and~\eqref{hp:bilipschitz} are satisfied.
 Then, $XY_\eps$ $\Gamma$-converges with respect to weak convergence of $L^2(M;\mathbb{R}^3)$ 
 to the functional
 \begin{equation*}
 E_{\mathrm{extr}}(\v):=
 \begin{cases}
 \int_{M}\vert\D\v\vert^2 + \vert \d\ggamma[\v]\vert^2 \d S, &\,\,\hbox{ if } \v\in
  W^{1,2}_{\tan}(M;\mathbb{S}^2)\\
 + \infty, &\,\,\hbox{ otherwise in } L^2(M;\mathbb{R}^3).
 \end{cases}
 \end{equation*}
 \end{prop}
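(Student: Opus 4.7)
The plan is the standard two-step $\Gamma$-convergence argument—a liminf inequality and a recovery sequence—after first identifying $XY_\eps$ with a functional on $L^2(M;\R^3)$. Given $\ve\in\T(\TT_\eps;\S^2)$, I would associate the element $\widetilde\v_\eps:=\widehat\v_\eps\circ\widehat P_\eps^{-1}\in L^2(M;\R^3)$, built from the piecewise-affine interpolant on~$\widehat M_\eps$ and the bilipschitz projection from~\eqref{hp:bilipschitz}; on elements of $L^2$ not of this form, $XY_\eps$ is set to $+\infty$. A change of variables, together with the uniform convergence (as $\eps\searrow 0$) of the tangent planes of~$\widehat M_\eps$ to those of~$M$ along~$\widehat P_\eps$, gives
\[
XY_\eps(\ve)=\tfrac12\int_{\widehat M_\eps}|\nablae\widehat\v_\eps|^2\,\d S=(1+\mathrm{o}(1))\,\tfrac12\int_M|\nablas\widetilde\v_\eps|^2\,\d S,
\]
with $\mathrm{o}(1)$ independent of~$\ve$.

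For the liminf inequality, given $\widetilde\v_\eps\rightharpoonup\v$ in $L^2(M;\R^3)$ with $\liminf XY_\eps(\ve)<\infty$, the display above yields a uniform $H^1(M;\R^3)$-bound on~$\widetilde\v_\eps$; weak compactness and Rellich then give weak $H^1$ and strong $L^2$ convergence to~$\v$. On each triangle with vertices $i,j,k$ and barycentric coordinates $\alpha,\beta,\gamma$ one has the identity
\[
1-|\widehat\v_\eps|^2=\alpha\beta|\ve(i)-\ve(j)|^2+\alpha\gamma|\ve(i)-\ve(k)|^2+\beta\gamma|\ve(j)-\ve(k)|^2,
\]
whose right-hand side is controlled triangle-wise by $C\eps^2|\nablae\widehat\v_\eps|^2$; integrating, $\int_{\widehat M_\eps}(1-|\widehat\v_\eps|^2)\,\d S\le C\eps^2\,XY_\eps(\ve)\to 0$, so $|\v|=1$ a.e. Since $\ve(i)\cdot\ggamma(i)=0$ at every vertex and $\ggamma$ is Lipschitz, we also get $|\widehat\v_\eps\cdot(\ggamma\circ\widehat P_\eps)|\le C\eps$ uniformly, so $\v\cdot\ggamma=0$ a.e. Thus $\v\in W^{1,2}_{\tan}(M;\S^2)$; weak $L^2$-lower semicontinuity of $\int_M|\nablas\cdot|^2$, applied to the display above, followed by the Gauss identity~\eqref{eq:gauss_surface} applied to the tangent limit~$\v$, closes the liminf.

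For the recovery sequence, my plan is: first approximate an arbitrary $\v\in W^{1,2}_{\tan}(M;\S^2)$ by a sequence $(\v_k)$ of smooth unit-norm tangent fields; for each~$\v_k$, set $\v_{k,\eps}(i):=\v_k(i)$ at every vertex (automatically in $\T_iM\cap\S^2$); and conclude by a diagonal extraction. Standard $P^1$-interpolation estimates for smooth targets give $\nablae\widehat\v_{k,\eps}\to\nablas\v_k$ uniformly on~$\widehat M_\eps$, and hence, via the display above and~\eqref{eq:gauss_surface}, $XY_\eps(\v_{k,\eps})\to E_{\mathrm{extr}}(\v_k)$.

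The most delicate step I expect is the density of smooth tangent unit-norm vector fields in $W^{1,2}_{\tan}(M;\S^2)$, without which the nodal interpolation at the start of the recovery construction is not even meaningful for a general Sobolev $\v$. This density is available precisely because $\chi(M)=0$: a global trivialization of~$\T M$ reduces the matter to approximating a map $M\to\S^1$ by smooth ones in~$W^{1,2}$, a Bethuel-type result valid in dimension two since $\pi_2(\S^1)=0$; compare~\cite{ssvM3AS} and~\cite{AGM-VMO}. Once this is granted, the remaining ingredients—the asymptotic identification between integrals on~$\widehat M_\eps$ and~$M$, the passage of the pointwise constraints to the weak limit, and the uniform interpolation estimate for smooth targets—are routine consequences of~\eqref{hp:quasiuniform}--\eqref{hp:bilipschitz} and the Gauss identity~\eqref{eq:gauss_surface}.
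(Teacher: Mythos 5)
The paper does not actually write out a proof of this Proposition: it declares the argument ``routine'' and refers to \cite{AlicandroCicalese}, \cite{BCSarxiv} and \cite{LM}. Your proposal is a correct, self-contained filling-in of that omitted argument, and the ingredients you use are exactly the ones the paper has already set up for later use: the passage from $\widehat{M}_\varepsilon$ to $M$ via $\widehat{P}_\varepsilon$ and the metric distorsion bound (Lemma~\ref{lemma:metric_distorsion}); the algebraic identity yielding $1-|\widehat{\v}_\varepsilon|^2\le C\varepsilon^2|\nablae\widehat{\v}_\varepsilon|^2$ (which is a sharper, unsquared version of Lemma~\ref{lemma:GL-pointwise}); the tangency estimate (Lemma~\ref{lemma:proiection-pointwise}); weak lower semicontinuity of the Dirichlet energy for the liminf; and nodal interpolation of smooth approximants plus diagonalization for the limsup. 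You also correctly isolate the only genuinely surface-specific step, namely the density of smooth unit-norm tangent fields in $W^{1,2}_{\tan}(M;\S^2)$, which is where $\chi(M)=0$ enters: triviality of $\T M$ reduces it to Schoen--Uhlenbeck density of $C^\infty(M;\S^1)$ in $W^{1,2}(M;\S^1)$, cf.~\cite{SU} and~\cite{AGM-VMO}. Two cosmetic points: the uniform convergence $\nablae\widehat{\v}_{k,\varepsilon}\to\nablas\v_k$ should be read after composing with $\widehat{P}_\varepsilon$, since the two sides live on different surfaces; and the constant your argument delivers is $\tfrac12\int_M|\nablas\v|^2 = \tfrac12\int_M\bigl(|\D\v|^2+|\d\ggamma[\v]|^2\bigr)\d S$, consistent with the unnumbered display immediately preceding the Proposition (the displayed statement of the Proposition itself appears to have dropped the factor $\tfrac12$).
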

The proof of this Proposition follows routine arguments in the analysis 
of discrete to continuum limits. Therefore we skip it and we refer to \cite{AlicandroCicalese},
\cite{BCSarxiv} and \cite[Theorem~III.2]{LM}.

\subsection{Configuration with defects: the zeroth order $\Gamma$-limit}

\smallskip
Towards the analysis of defects, we 
briefly introduce the important notion of discrete vorticity measure.
This measure will be a kind of discrete notion of jacobian for
the discrete vector field $\ve$ in $\T(\TT_\varepsilon; \, \S^2)$. 
As it happens for the discrete flat case and for Ginzburg Landau case, the vorticity measure
of the sequence $\ve$ will provide all the informations regarding the emergence of the defects in 
the $\eps\searrow 0$ limit.  
Even if we will precisely introduce this measure in the next Subsection \ref{ssec:dic_meas}, 
we briefly present it now for the sake of clarity.
Given a triangle ~$T\in\TT_\varepsilon$ we let~$(i_0, \, i_1, \, i_2)$ be the vertices of~$T$,
sorted in counter-clockwise order with respect to the orientation induced by~$\ggamma$,
and let~$i_3 := i_0$. For any triangle $T$, $\widehat{\mu}_\varepsilon(\ve)\mres T$ is supported on the barycenter of~$T$ and 
\begin{equation} \label{mu_hat_def}
 \begin{split}
  \widehat{\mu}_\varepsilon(\v_\varepsilon)[T] := 
  \sum_{k = 0}^2 \left(\frac{\ggamma(i_k) + \ggamma(i_{k+1})}{2}, \, \v_\varepsilon(i_k)\times\v_\varepsilon(i_{k+1})\right) \! .
 \end{split}
\end{equation}
In the limit $\eps\to 0$, the appearance of defects is related to a measure
concentrated on a finite number of points $\left\{x_1,\ldots,x_k\right\}$ in $M$. 
We will denote by~$X$ the set of measures on~$M$ of the form
\begin{equation*} 
 \mu = \sum_{i = 1}^k d_i\delta_{x_i},
\end{equation*}
where~$k\in\N$, $d_i\in\Z$ are such that~$\sum_i d_i = \chi(M)$
and~$x_i\in M$ for~$i\in\{1, \, \ldots, \, k\}$. The space~$X$ will be endowed with the topology of flat convergence, 
that is, the topology induced by the dual norm of Lipschitz functions.

Here is the precise statement of the first main result of the paper.

\begin{maintheorem}{A} \label{th:gamma0}
Suppose that the assumptions~\eqref{hp:quasiuniform}, \eqref{hp:weakly_acute} and~\eqref{hp:bilipschitz} are satisfied.
 Then, the following results hold.
 \begin{enumerate}[label=(\roman*), leftmargin=*]
  \item \emph{Compactness.} If~$(\v_\varepsilon)$ is a sequence in~$\T(\TT_\varepsilon; \, \S^2)$
  that satisfies the energy bound
  \begin{equation} \label{H} \tag{H}
   XY_\varepsilon(\v_\varepsilon) \leq \Lambda |\log\varepsilon|
  \end{equation}
  then, up to subsequences, $\hat{\mu}_\varepsilon(\v_\varepsilon) \xrightarrow{\fflat}2\pi\mu - G\d S$ for some~$\mu\in X$.
  
  \item \emph{$\Gamma$-liminf inequality.} Let~$(\v_\varepsilon)$ be a sequence in~$\T(\TT_\varepsilon; \, \S^2)$
  such that $\hat{\mu}_\varepsilon(\v_\varepsilon) \xrightarrow{\fflat}2\pi\mu - G\d S$ for some~$\mu\in X$. Then, there holds
  \[
   \liminf_{\varepsilon\to 0} \frac{XY_\varepsilon(\v_\varepsilon)}{|\log\varepsilon|} \geq \pi|\mu|(M).
  \]
  
  \item \emph{$\Gamma$-limsup inequality.} For any~$\mu\in X$ there exists a sequence~$(\v_\varepsilon)$ in~$\T(\TT_\varepsilon; \, \S^2)$
  such that $\hat{\mu}_\varepsilon(\v_\varepsilon) \xrightarrow{\fflat}2\pi\mu - G \d S$ and
  \[
   \limsup_{\varepsilon\to 0} \frac{XY_\varepsilon(\v_\varepsilon)}{|\log\varepsilon|} \leq \pi|\mu|(M).
  \]
 \end{enumerate}
\end{maintheorem}
This Theorem will be proved in the next Section \ref{sec:provaGamma0} (see Proposition~\ref{prop:0Gamma-nu}).
In this Proposition we will actually prove a slightly stronger result, 
where the $\Gamma$-liminf inequality~(ii) is replaced by (a local version of)
\[
 XY_\varepsilon(\v_\varepsilon) \geq \pi|\mu|(M) |\log\varepsilon| - C.
\]
%

\subsection{Location and energetics of defects: the Renormalized Energy and the core energy}
\label{ssec:renormalized_energy}

Beside discussing the generation of defects, we are interested in understanding 
the energetics of defects configurations and, consequently, locate the defects
on the surface $M$. This program is achieved by the analysis of the so called
Renormalized Energy $\mathbb{W}$ introduced by Brezis, Bethuel and H\'elein 
for the Ginzburg Landau equation in \cite{BBH}. In this paper, we obtain 
the Rernomalized Energy as the (first order) $\Gamma$-limit of the discrete energy $XY_\eps$ 
as in \cite{SandierSerfaty}, \cite{AlicandroPonsiglione}, \cite{ADGP} for the euclidean case. 

The Renormalized Energy emerges as the $\Gamma$-limit with respect to the strong $L^2$
convergence of the following rescaled energy
\begin{equation}
\label{eq:rescaled_energy}
XY_\eps (\ve ) - \pi\KK \vert \log\eps\vert,
\end{equation}
where $\KK$ is a positive, even integer such that $\vert \chi(M)\vert \le \KK$.
Now, we introduce the concept of Renormalized Energy. 
Following \cite{ADGP}, we introduce the following class of vector fields in $M$:
\begin{equation} \label{eq:classV}
 \begin{split}
  \mathcal{V}_k: = \left\{\v\in L^2(M; \, \S^2)\colon \textrm{there exist } (x_i)_{i=1}^k\in M^k,
  \ (d_i)_{i=1}^k \in \{-1, \, 1\} \textrm{ such that } \right. \\
  \left. \v\in W^{1,2}_{\tan, \mathrm{loc}}\left(M\setminus \bigcup_{i=1}^{k}x_i; \, \mathbb{S}^2\right) \textrm{ and }
  \star\d\j(\v) = 2\pi\sum_{i = 1}^k d_i \delta_{x_i} - G \right\} \! ,
 \end{split}
\end{equation}
for any~$k\in\N$. Here $\star\d\j$ is a differential operator, defined in Section~\ref{sect:jacobians},
which generalizes the distributional Jacobian, that is, $\star\d\j(\v) = 2\det\nabla\v$ 
if~$\v\in C^2(\R^2, \, \R^2)$.
Given an even number~$\KK\in\N$ such that~$\KK\geq |\chi(M)|$,
we define the intrinsic Renormalized Energy as (see \cite[Eq.~(4.22)]{ADGP}):
\begin{equation}
\label{eq:renormalized_ene_def}
\W_{\mathrm{intr}}(\v) := 
\begin{cases}
\lim_{\delta \to 0} \left(\dfrac{1}{2}\displaystyle\int_{M_\delta}\vert \D\v\vert^2\d S - \KK\pi\vert \log\delta\vert\right) &\hbox{for }\v\in \mathcal{V}_{\KK}\\
-\infty &\hbox{for }\v\in \mathcal{V}_{k},\,\,k <\KK,\\
+\infty &\hbox{otherwise in } L^2(M; \, \mathbb{R}^3),
\end{cases}
\end{equation}
where, given~$\v\in\mathcal{V}_{\KK}$ and $\delta>0$ so small that the balls $B_\delta(x_i)$
are pairwise disjoint, we have set $M_\delta:=M\setminus\bigcup_{i=1}^{\KK}B_\delta(x_i)$.
As we will see in Subsection \ref{sect:renormalized}, the functional $\W_{\mathrm{intr}}$ is well defined.

Now, assume that $\v\in \mathcal{V}_{\KK}$.  
Since in particular, $\vert\v\vert=1$, we have 
\begin{equation}
\label{eq:stima_shape}
\vert \d \ggamma[\v]\vert \le C \,\,\,\,\hbox{ a.e. in } M,
\end{equation}
where the constant $C$ depends only on $M$. 
 Thus, the following quantity exists in $[-\infty,+\infty]$:
\begin{equation}
\label{eq:reno_intr/ex}
 \W(\v) := \W_{\mathrm{intr}}(\v) + \frac{1}{2}\int_{M}\vert \d \ggamma[\v]\vert^2 \d S.
\end{equation}
$\W$ will be called the Renormalized Energy. Note that~$\W$ contains both an intrinsic and an extrinsic term
but, due to~\eqref{eq:stima_shape}, the latter is always finite.
This shows, as expected, that the concentration of the energy is due to the Dirichlet part of
$E_{\mathrm{extr}}$ in \eqref{eq:extrinsic_energy}. 

\begin{remark}
 The Renormalized Energy, as defined by Bethuel, Brezis and H\'elein in~\cite{BBH},
 only depends on the locations~$x_i$ of the defects and their charge~$d_i$. The relation between 
 their Renormalized Energy~$W$ and ours is the following (compare with~\cite[Eq.~(4.23)]{ADGP}):
 \begin{equation}
 \label{eq:relation_renormalized}
  W(x_1, \, \ldots, x_{\KK}; \, d_1, \, \ldots, \, d_{\KK}) = 
  \inf\left\{\W_{\mathrm{intr}}(\v)\colon \v\in\mathcal{V}_{\KK}, \:
  \star\d\j(\v) = 2\pi\sum_{i = 1}^k d_i \delta_{x_i} - G \right\} \! .
 \end{equation}
 The proof of this property is analogous to the proof of the corresponding relation in Euclidean setting, therefore 
 we skip it. 
 Interestingly, 
 the quantity~$W$ can be expressed in terms of the Green function for the Laplace-Beltrami operator of~$M$; 
 see~\cite[Proposition~2]{JerrardIgnat}.
\end{remark}

If we assume that also Hypothesis \eqref{hp:convergence} holds,
for each singularity $x_i$ we can construct 
the so-called core energy, a positive number that takes into account
the energy in the core of the defect at~$x_i$
(see Section~\ref{sect:core}, and in particular Proposition~\ref{prop:core_energy}, for the definition).
As anticipated in the Introduction, the core energy is influenced by the discrete behavior of the triangulation
in the vicinity of each singular point $x_i$ and hence depends explicitly on the particular defect $x_i$.  
The core energy around the singularity $x_i$ will be named $\gamma(x_i)$. 

To correctly state our result, we finally need a proper continuous interpolation of the discrete vector field $\ve$. 
Thus, we define (see \eqref{eq:def_w}) $\w_\varepsilon\colon M\to\R^3$ as 
\[
\w_\varepsilon:=\widehat{\v}_\varepsilon\circ\widehat{P}_\varepsilon^{-1}, 
\]
where $\widehat{\v}_\varepsilon\colon\widehat{M}_\varepsilon\to\R^3$ is the affine interpolant of~$\v_\varepsilon$
and $\widehat{P}_\varepsilon^{-1}$ is the inverse of the nearest-point projection, see \eqref{hp:bilipschitz}.

The second main result is thus the following
\begin{maintheorem}{B}
\label{th:gamma1}
Suppose that the assumptions \eqref{hp:quasiuniform}, \eqref{hp:weakly_acute},
\eqref{hp:bilipschitz} and~\eqref{hp:convergence} are satisfied.
Then the following $\Gamma$-convergence result holds.
\begin{itemize}
 \item[(i)] \emph{Compactness.}
 Let $\KK\in \mathbb{N}$ 
 and let $\ve$ be a sequence in~$\T(\TT_\varepsilon; \, \S^2)$ for which
 there exists a positive constant $C_{\KK}$ such that
 \begin{equation}
 \label{eq:energy_estimate}
 XY_\epsi(\ve) - \KK\pi \vert \log\epsi\vert \le C_{\KK}.
 \end{equation}
 Then, up to a subsequence, there holds
 \begin{equation}
 \label{eq:conv_vort}
 \hat{\mu}_\epsi(\ve)\xrightarrow{\fflat} 2\pi\mu - G \d S
 \end{equation}
  for some 
 $\mu\in X$ with $\sum_{i=1}^k \vert d_i\vert \le \KK$. If $\lvert\mu\rvert = \KK$, 
 then $k=\KK\equiv\chi(M) \mod 2$, $\vert d_i\vert =1$ for any $i$. Moreover,
 there exists $\v\in \mathcal{V}_{\KK}$ and a subsequence such that 
 \begin{equation}
 \label{eq:conv_forte/debole}
 \we \to \v \hbox{ strongly in } L^2(M;\mathbb{R}^3) \hbox{ 
 and weakly in } W^{1,2}_{\mathrm{loc}}(M\setminus \bigcup_{i=1}^{\KK}x_i;
 \mathbb{R}^3).
 \end{equation}
 \item[(ii)] \emph{$\Gamma$-$\liminf$ inequality.}
 Let $\ve\in \T(\TT_\varepsilon; \, \S^2)$ be a sequence
 satisfying \eqref{eq:energy_estimate} with $\KK\equiv\chi(M)\mod 2$
 and converging to some $\v\in \mathcal{V}_{\KK}$
 as in \eqref{eq:conv_vort}-\eqref{eq:conv_forte/debole}. Then, there holds
 \begin{equation}
 \label{eq:gamma_liminf1}
  \liminf_{\epsi\to 0} \left(XY_\epsi(\ve) - \KK \pi\vert \log\epsi\vert\right)
  \ge \WW(\v) +  \sum_{i=1}^{\KK}\gamma(x_i),
 \end{equation}
 where $\gamma(x_i)$ is the core energy around each defect $x_i$ (see Proposition \ref{prop:core_energy}).
\item[(iii)] \emph{$\Gamma$-$\limsup$ inequality.}
 Given $\v\in \mathcal{V}_{\KK}$, 
 there exists $\ve\in \T(\TT_\varepsilon; \, \S^2)$ such that
 $\hat{\mu}_\epsi(\ve)\xrightarrow{\fflat}\star\d\j(\v)$, $\we \to \v$ as in~\eqref{eq:conv_forte/debole} and
 \begin{equation}
 \label{eq:gamma_limsup1}
  \lim_{\eps\to 0}\left(XY_\epsi(\ve) - \KK \pi\vert \log\epsi\vert\right) 
  = \WW(\v) + \sum_{i=1}^{\KK}\gamma(x_i).
 \end{equation}
\end{itemize}
\end{maintheorem}
As anticipated in the Introduction of the paper, the above Theorem
entails a precise convergence result for the minima of $XY_\eps$.
More precisely, the Fundamental Theorem of $\Gamma$-convergence gives that
if we set $\ve^{*}\in \argmin_{\ve \in \T(\TT_\varepsilon; \, \S^2)}XY_\eps(\ve)$
and $\we^*$ as in \eqref{eq:def_w}, there exist points $x_1, \, \ldots, \, x_{|\chi(M)|}$
and a unit-valued tangent field $\v^*$ such that
\begin{equation}
\label{eq:conv_minimizers}
\we^* \to \v^* \qquad
 \hbox{weakly in } W^{1,2}_{\mathrm{loc}}(M\setminus \bigcup_{i=1}^{\vert \chi(M)\vert}x_i;
\mathbb{R}^3).
\end{equation}
Moreover, there holds
\begin{equation}
\label{eq:conv_minima}
\min_{\ve \in \T(\TT_\varepsilon; \, \S^2) } 
XY_\eps(\ve) = \pi\vert \chi(M)\vert\vert\log\eps\vert+\W(\v^*) + \sum_{i=1}^{\vert \chi(M)\vert} \gamma(x_i) + \mathrm{o}_{\eps\to 0}(1).
\end{equation}
Note that the fact that $\KK = \vert \chi(M)\vert$ implies 
that there are only two circumstances for the charge of the defects: either $d_i=1$ for any $i=1\ldots,\KK$ 
either $d_i=-1$ for any $i=1,\ldots, \KK$.

\section{Preliminaries}
\subsection{Distance between triangulations}
\label{sect:mesh_distance}

Let~$\SS$, $\TT$ be two (finite) triangulations on~$\R^2$.
We assume that~$\SS$, $\TT$ are quasi-uniform of size~$1$, i.e., there exists a constant~$\Lambda$
such that, for any~$T\in\SS\cup\TT$, the affine bijection~$\phi_T$ 
from the reference triangle~$T_{\mathrm{ref}}$ to~$T$ satisfies
\begin{equation} \label{quasiuniform-1}
 \max\left\{\Lip(\phi_T), \, \Lip(\phi_T^{-1}) \right\} \leq \Lambda
\end{equation}
(compare with~\eqref{hp:quasiuniform}). We define an isomorphism from~$\SS$ to~$\TT$
as a transformation
\[
 \phi\colon \bigcup_{T\in\SS} T \to \bigcup_{T\in\TT} T
\]
that satisfies the following conditions:
\begin{enumerate}[label = (\roman*)]
 \item for any~$T\in\SS$, restricts to an affine map~$T\to\R^2$;
 \item $\phi$ restricts to a bijection $\SS^0\to\TT^0$;
 \item any three vertices~$i$, $j$, $k\in\SS^0$ span a triangle 
 in~$\SS$ if and only if~$\phi(i)$, $\phi(j)$, $\phi(k)$ span a triangle in~$\TT$.
\end{enumerate}
We denote by~$\Iso(\SS, \, \TT)$ the set of isomorphisms from~$\SS$ to~$\TT$. 
Note that~$\Iso(\SS, \, \TT)\subseteq C\cap W^{1,2}$. We also define
\begin{equation} \label{mesh_distance}
 d(\SS, \, \TT) := \inf_{\phi\in\Iso(\SS, \, \TT)} \max_{i\in\SS^0} |i - \phi(i)|
\end{equation}
(the infimum being equal to~$+\infty$ if~$\Iso(\SS, \, \TT)=\emptyset$). 
The function~$d$ defines a metric on the triangulations of the plane.

\begin{lemma} \label{lemma:mesh_distance}
 Let~$\SS$, $\TT$ be two triangulations such that~$\Iso(\SS, \, \TT)\neq\emptyset$.
 Suppose~\eqref{quasiuniform-1} is satisfied for some~$\Lambda >0$.
 Then, there exists~$\phi\in\Iso(\SS, \, \TT)$ such that
 \[
  \max\left\{\Lip(\phi), \, \Lip(\phi^{-1}) \right\} \leq 1 + C \, d(\SS, \, \TT),
 \]
 where~$C$ is a positive constant that depends only on~$\Lambda$.
\end{lemma}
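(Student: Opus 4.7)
The plan is to show that any isomorphism $\phi_0\in\Iso(\SS,\TT)$ achieving the infimum in \eqref{mesh_distance} already satisfies the required Lipschitz bound. Since $\SS^0$ is finite, $\Iso(\SS,\TT)$ is a finite set and the infimum is attained; pick such a $\phi_0$, set $\delta:=\max_{i\in\SS^0}\lvert i-\phi_0(i)\rvert=d(\SS,\TT)$, and consider $\psi:=\phi_0-\Id$.

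On every triangle $T\in\SS$ with vertices $i_0, i_1, i_2$, $\psi$ is affine with $\lvert\psi(i_k)\rvert\leq\delta$ for each $k$; denoting by $A_T$ its gradient on $T$, the identity
$$
A_T\,\bigl[\,i_1-i_0,\ i_2-i_0\,\bigr] = \bigl[\,\psi(i_1)-\psi(i_0),\ \psi(i_2)-\psi(i_0)\,\bigr],
$$
combined with the observation that $M_T:=[i_1-i_0,\ i_2-i_0]$ is a constant multiple of the Jacobian of the affine bijection $\phi_T\colon T_{\mathrm{ref}}\to T$ appearing in \eqref{quasiuniform-1}, yields $\lVert M_T^{-1}\rVert\leq\Lip(\phi_T^{-1})\leq\Lambda$, whence $\lVert A_T\rVert\leq 2\Lambda\,\delta$. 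The affine pieces of $\phi_0$ on two adjacent triangles agree at the endpoints of their shared edge and therefore coincide on the whole edge, so $\psi$ is continuous and piecewise affine on $\bigcup_{T\in\SS}T$. A standard path argument (connect any two points by a polygonal path crossing finitely many edges, use the gradient bound on each segment, and use quasi-uniformity to control the total path length by the Euclidean distance) then gives $\Lip(\psi)\leq C(\Lambda)\,\delta$, and hence $\Lip(\phi_0)\leq 1+C(\Lambda)\,d(\SS,\TT)$.

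For the inverse, note that $\phi_0$ is a piecewise-affine bijection between the supports of $\SS$ and $\TT$, so $\phi_0^{-1}\in\Iso(\TT,\SS)$ is well-defined and piecewise affine, and at every vertex $j=\phi_0(i)\in\TT^0$ one has $\lvert j-\phi_0^{-1}(j)\rvert=\lvert\phi_0(i)-i\rvert\leq\delta$. Since $\TT$ also satisfies \eqref{quasiuniform-1} with the same constant~$\Lambda$, applying the same reasoning to $\phi_0^{-1}-\Id$ yields $\Lip(\phi_0^{-1})\leq 1+C(\Lambda)\,d(\SS,\TT)$. The main (mild) technical point is the upgrade from a triangle-wise gradient bound to a global Lipschitz estimate; the linear-algebra core of the proof---recovering the gradient of $\psi$ from its vertex values via the reference-triangle estimates in \eqref{quasiuniform-1}---is elementary.
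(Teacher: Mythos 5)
Your proposal is correct and essentially follows the paper's argument. Both proofs isolate a near-optimal isomorphism $\phi$, observe that $\phi-\Id$ is affine on each triangle with vertex values of size $\mathrm{O}(d(\SS,\TT))$, and then convert this vertex bound into a gradient bound on each triangle using quasi-uniformity. The paper does the conversion via the abstract remark that $L^\infty$ and $W^{1,\infty}$ norms of affine maps on the reference triangle are equivalent (after pulling back through $\phi_T$), whereas you carry out the linear algebra explicitly by inverting the edge matrix $M_T$ and bounding $\|M_T^{-1}\|\le\Lambda$; these are the same estimate in two guises. You are also more explicit than the paper about two minor points the paper leaves implicit: that the inverse $\phi^{-1}$ can be treated symmetrically (since $\TT$ satisfies the same quasi-uniformity hypothesis), and that the passage from a triangle-wise gradient bound to a global Lipschitz bound requires connecting points by paths of controlled length in $\spt\SS$. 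That last point is the only genuine subtlety in the lemma, and the paper glosses over it (``the lemma follows''); note, however, that the place where the lemma is actually used (Step~3 of the proof of Proposition~\ref{prop:core_energy}) only needs the triangle-wise gradient bound to compare Dirichlet energies via a change of variables, so the path argument never really enters. In short: same proof, with you writing out the details the paper compresses.
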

\begin{proof}
 Let~$\phi\in\Iso(\SS, \, \TT)$ be such that~$|i - \phi(i)| \leq 2 d(\SS, \, \TT)$ for any~$i\in\SS^0$.
 Since the restriction of~$\phi$ to any triangle of~$\SS$ is affine, we deduce that
 \[
  \norm{\Id - \phi}_{L^\infty(T)} \leq 2 d(\SS, \, \TT)
 \]
 on each~$T\in\SS$. Using the assumption~\eqref{quasiuniform-1}, and up to composition with an affine 
 bijection, we can assume without loss of generality that~$T$
 is the triangle of vertices~$(0, \, 0)$, $(1, \, 0)$, $(0, \, 1)$.
 Since the space of affine functions on~$T$ is finite-dimensional,
 the $L^\infty$- and the~$W^{1,\infty}$-norm of an affine map on~$T$ are equivalent. Thus, 
 $\|\Id - \phi\|_{W^{1,\infty}(T)} \leq C d(\SS, \, \TT)$ and the lemma follows.
\end{proof}

\subsection{The metric distorsion tensor}

By the assumption~\eqref{hp:bilipschitz}, the restriction of the nearest-point projection~$\widehat{P}_\varepsilon\colon \widehat{M}_\varepsilon\to M$ 
has a Lipschitz inverse~$\widehat{P}_\varepsilon^{-1}\colon M \to \widehat{M}_\varepsilon$. 
Following~\cite{Hildebrandt-al}, we use this pair of maps to compare~$M$ with its polyhedral approximation~$\widehat{M}_\varepsilon$.
For any~$x\in M$ such that~$\widehat{P}_\varepsilon^{-1}(x)$ falls in the interior of a triangle of~$\widehat{M}_\varepsilon$
(so that~$\widehat{P}_\varepsilon^{-1}$ is smooth in a neighbourhood of~$x$), we let~$\A_\varepsilon(x)$
be the unique linear operator~$\T_x M\to \T_x M$ that satisfies
\begin{equation} \label{metric_distorsion}
 \left(\A_\varepsilon(x)\mathbf{X}, \, \mathbf{Y}\right) 
 = \left(\d \widehat{P}_\varepsilon^{-1}(x) [\mathbf{X}], \, \d \widehat{P}_\varepsilon^{-1}(x) [\mathbf{Y}]\right)
\end{equation}
for any~$\mathbf{X}$, $\mathbf{Y}\in\T_x M$. This defines (almost everywhere) a~$(1, \, 1)$-tensor
field~$\A_\varepsilon\in L^\infty(M; \, \T M\otimes\T^* M)$,
which is called \emph{metric distorsion tensor} in the terminology of~\cite{Hildebrandt-al}.
The metric distorsion tensor is symmetric and positive definite, since the right-hand side of~\eqref{metric_distorsion} is.
We introduce a norm~$\|\cdot\|_{L^\infty(M)}$ on~$L^\infty(M; \, \T M\otimes\T^* M)$ by
\[
 \|\A\|_{L^\infty(M)} := \underset{x\in M}{\supess} \ \|\A(x) \|_{\T M\otimes\T^* M},
\]
where~$\|\cdot\|_{\T M\otimes\T^* M}$ is the operator norm.

\begin{lemma} \label{lemma:metric_distorsion}
 Suppose that~$(\TT_\varepsilon)$ satisfies~\eqref{hp:quasiuniform} and~\eqref{hp:bilipschitz}. Then, there holds
 \[
  \|\A_\varepsilon - \Id\|_{L^\infty(M)} + \|\A^{-1}_\varepsilon - \Id\|_{L^\infty(M)} \leq C\varepsilon.
 \]
\end{lemma}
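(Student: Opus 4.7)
The plan is a local computation: for almost every $x\in M$, the point $y := \widehat{P}_\varepsilon^{-1}(x)$ lies in the interior of a unique triangle $T\in\TT_\varepsilon$, with plane $\Pi_T$ and unit normal $\nu_T$. By \eqref{hp:quasiuniform} all three vertices of $T$ lie on $M$ within distance $C\varepsilon$ of $x$, and a Taylor expansion of the embedding of $M$ (using the $C^2$ regularity of $M$ together with the uniform lower bound on $\alpha_{\min}(T)$ from \eqref{hp:quasiuniform}) yields
\begin{equation*}
|\nu_T - \ggamma(x)| \le C\varepsilon, \qquad |y - x| \le C\varepsilon^2 .
\end{equation*}

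I would then compute $L := \d\widehat{P}_\varepsilon^{-1}(x)\colon \T_xM\to \Pi_T$ explicitly. Since $\widehat{P}_\varepsilon = P|_{\widehat{M}_\varepsilon}$ and $P$ has the well-known differential formula $\d P(y) = (\Id - sW(x))^{-1} P_M(x)$ on the tubular neighbourhood $U$ (with $s := (y-x)\cdot\ggamma(x)$ and $W$ the Weingarten map at $x$), inverting $\d P(y)|_{\Pi_T}$ yields
\begin{equation*}
L\mathbf{X} = \mathbf{X}' - \frac{\mathbf{X}'\cdot\nu_T}{\ggamma(x)\cdot\nu_T}\,\ggamma(x), \qquad \mathbf{X}' := (\Id - sW(x))\mathbf{X},
\end{equation*}
for $\mathbf{X}\in \T_xM$; invertibility for small $\varepsilon$ is ensured by $\ggamma(x)\cdot\nu_T\ge 1/2$. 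Since $\mathbf{X}\cdot\ggamma(x)=0$, one can rewrite $\mathbf{X}\cdot\nu_T = \mathbf{X}\cdot(\nu_T - \ggamma(x))$, so $|\mathbf{X}\cdot\nu_T|\le C\varepsilon|\mathbf{X}|$, and combined with $|\mathbf{X}'-\mathbf{X}|\le C\varepsilon^2|\mathbf{X}|$ this gives the key pointwise bound $|L\mathbf{X}-\mathbf{X}|\le C\varepsilon|\mathbf{X}|$.

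From the defining relation $(\A_\varepsilon(x)\mathbf{X},\mathbf{Y}) = L\mathbf{X}\cdot L\mathbf{Y}$ and the decomposition
\begin{equation*}
(\A_\varepsilon(x)\mathbf{X},\mathbf{Y}) - (\mathbf{X},\mathbf{Y}) = (L\mathbf{X}-\mathbf{X})\cdot L\mathbf{Y} + \mathbf{X}\cdot(L\mathbf{Y}-\mathbf{Y}),
\end{equation*}
the bound $\|\A_\varepsilon - \Id\|_{L^\infty(M)}\le C\varepsilon$ follows at once; the analogous bound for $\A_\varepsilon^{-1}$ is then deduced from the identity $\A_\varepsilon^{-1} - \Id = \A_\varepsilon^{-1}(\Id - \A_\varepsilon)$ together with the uniform ellipticity of $\A_\varepsilon$, which is a direct consequence of \eqref{hp:bilipschitz} (giving $\|\A_\varepsilon^{-1}\|_{L^\infty}\le \Lambda^2$). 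The only delicate point is the estimate $|\nu_T - \ggamma(x)|\le C\varepsilon$: this is where one genuinely needs the $C^2$ regularity of $M$ in combination with the lower angle bound from \eqref{hp:quasiuniform}, since three nearly collinear vertices on $M$ could otherwise span a plane whose normal is far from $\T_xM$, independently of how close the vertices are to $x$.
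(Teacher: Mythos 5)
Your proposal is correct and reaches the same conclusion, but via a genuinely different route than the paper. Both proofs rest on the same geometric preliminaries — namely that, by \eqref{hp:quasiuniform} and the $C^2$ regularity of $M$, the normal $\widehat{\nnu}_\varepsilon$ to the inscribed triangle $T$ deviates from $\ggamma$ by $O(\varepsilon)$ and $\dist(\cdot,\widehat{M}_\varepsilon) = O(\varepsilon^2)$, with constants made uniform in $x$ by compactness (the paper only records the weaker bound $O(\varepsilon)$ on the distance, which is all it needs). Where the two diverge is in how the estimate on $\A_\varepsilon$ is extracted from these ingredients: the paper treats the dependence as a black box by invoking a closed-form expression for $\A_\varepsilon$ from Hildebrandt--Polthier--Wardetzky (their Theorem~1), which packages the distortion tensor in terms of $(\widehat{\nnu}_\varepsilon\circ\widehat{P}_\varepsilon^{-1},\ggamma)$ and the signed distance; you instead compute $\d\widehat{P}_\varepsilon^{-1}(x)$ directly by inverting the classical formula $\d P(y) = (\Id - sW(x))^{-1}P_M(x)$, arriving at the explicit expression $L\mathbf{X} = \mathbf{X}' - \frac{\mathbf{X}'\cdot\nu_T}{\ggamma(x)\cdot\nu_T}\ggamma(x)$ and the pointwise bound $|L\mathbf{X}-\mathbf{X}|\le C\varepsilon|\mathbf{X}|$. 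Your route is more self-contained (no external citation needed) and makes transparent exactly which geometric quantities govern the error; the paper's route is shorter on paper and delegates the linear algebra to a reference. The concluding step for $\A_\varepsilon^{-1}$ — the identity $\A_\varepsilon^{-1}-\Id = \A_\varepsilon^{-1}(\Id-\A_\varepsilon)$ together with the uniform ellipticity supplied by \eqref{hp:bilipschitz} — is identical in both.

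One small caveat: when you estimate the correction term, what you actually need is a bound on $\mathbf{X}'\cdot\nu_T$, not just $\mathbf{X}\cdot\nu_T$; this is fine since $\mathbf{X}'\cdot\nu_T = \mathbf{X}\cdot(\nu_T-\ggamma(x)) + (\mathbf{X}'-\mathbf{X})\cdot\nu_T$, with the two terms $O(\varepsilon)|\mathbf{X}|$ and $O(\varepsilon^2)|\mathbf{X}|$ respectively, but it's worth writing the decomposition out to avoid the appearance of a dropped term.
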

\begin{proof}
 Let~$\widehat{\nnu}_\varepsilon\colon\widehat{M}_\varepsilon\to\R^3$ be a unit normal field to~$\widehat{M}_\varepsilon$,
 which is well defined (and constant) in the interior of each triangle. The assumption~\eqref{hp:quasiuniform} implies that
 \[
  \|\widehat{\nnu}_\varepsilon\circ\widehat{P}_\varepsilon^{-1} - \ggamma \|_{L^\infty(M)} \leq C\varepsilon, \qquad
  \|\dist(\cdot, \, \widehat{M}_\varepsilon)\|_{L^\infty(M)} \leq C\varepsilon
 \]
 for some $\varepsilon$-independent constant~$C$. (One can write~$M$ as a smooth graph locally around a point~$x\in M$, 
 then use a Taylor expansion; the constant~$C$ can be chosen uniformly with respect to~$x$, by a compactness argument.) 
 Thanks to~\cite[Theorem~1]{Hildebrandt-al}, which gives a formula for~$\A_\varepsilon$ in terms
 of~$(\widehat{\nnu}_\varepsilon\circ\widehat{P}_\varepsilon^{-1}, \, \ggamma)$
 and~$\dist(\cdot, \, \widehat{M}_\varepsilon)$, we deduce
 \begin{equation} \label{metric_distorsion1}
  \norm{\A_\varepsilon - \Id}_{L^\infty(M)} \leq C\varepsilon.
 \end{equation}
 Now, the definition~\eqref{metric_distorsion} of~$\A_\varepsilon$, together with the fact that~$\widehat{P}_\varepsilon^{-1}$
 has a Lipschitz inverse~$\widehat{P}_\varepsilon$ and~$\Lip(\widehat{P}_\varepsilon)\leq\Lambda$ by~\eqref{hp:bilipschitz}, imply that
 \[
  |\A_\varepsilon(x)\mathbf{X}| \geq C|\mathbf{X}|
 \]
 for some constant~$C= C(\Lambda)$, a.e.~$x\in M$ and all~$\mathbf{X}\in\T_x M$, whence~$\|\A_\varepsilon^{-1}\|_{L^\infty(M)}\leq C$.
 Thus, we have
 \[
  \|\A^{-1}_\varepsilon - \Id\|_{L^\infty(M)} \leq \|\A^{-1}\|_{L^\infty(M)} \|\Id - \A_\varepsilon\|_{L^\infty(M)} 
  \stackrel{\eqref{metric_distorsion1}}{\leq} C\varepsilon. \qedhere
 \]
\end{proof}

Let~$g_\varepsilon\in L^\infty(M;\, \T^* M^{\otimes 2})$ be the metric on~$M$ defined 
by~$g_\varepsilon(\mathbf{X}, \, \mathbf{Y}) := (\A_\varepsilon\mathbf{X}, \, \mathbf{Y})$, for any smooth fields~$\mathbf{X}$ and~$\mathbf{Y}$ on~$M$.
Given a function~$u\in W^{1,2}(M)$, one can define the Sobolev $W^{1,2}$-seminorm of~$u$ with respect to~$g_\varepsilon$, i.e.
\begin{equation} \label{Sobolev_eps}
 |u|_{W^{1,2}_\varepsilon(M)}^2 := \int_M \left(\A_\varepsilon^{-1}\nablas u, \, \nablas u\right) (\det \A_\varepsilon)^{1/2} \, \d S.
\end{equation}
By construction~\eqref{metric_distorsion}, the map $\widehat{P}_\varepsilon^{-1}$ is an isometry between~$M$, equipped with the metric~$g_\varepsilon$,
and~$\widehat{M}_\varepsilon$, with the metric induced by~$\R^3$.
Therefore, given  $v\in W^{1, 2}(\widehat{M}_\varepsilon; \, \R)$ and a Borel set~$U\subseteq M$, there holds
\begin{equation} \label{sobolev_isometry}
 |v\circ\widehat{P}_\varepsilon^{-1}|_{W^{1,2}_\varepsilon(U)}^2 =
 \int_{\widehat{P}_\varepsilon^{-1}(U)} |\nablas v|^2 \,\d S.
\end{equation}
Arguing component-wise, we see that the same equality holds for a (not necessarily tangent) vector field~$\v\colon\widehat{M}_\varepsilon\to\R^3$ in place of~$v$. 

\subsection{Interpolants of discrete fields}

Using assumption~\eqref{hp:bilipschitz}, to any discrete vector field~$\v_\varepsilon\in\T(\TT_\varepsilon; \, \S^2)$
we can associate a continuous field~$\w_\varepsilon\colon M\to\R^3$ by setting
\begin{equation}
\label{eq:def_w}
\w_\varepsilon:=\widehat{\v}_\varepsilon\circ\widehat{P}_\varepsilon^{-1}, 
\end{equation}
where $\widehat{\v}_\varepsilon\colon\widehat{M}_\varepsilon\to\R^3$ is the affine interpolant of~$\v_\varepsilon$.
The field~$\w_\varepsilon$ is Lipschitz-continuous and satisfies~$\w_\varepsilon = \v_\varepsilon$ on~$\TT^0_\varepsilon$,
but it is not tangent to~$M$ nor unit-valued, in general. However, one can still prove some useful properties.

\begin{lemma} \label{lemma:norm-interpolant}
 Suppose that~\eqref{hp:quasiuniform}, \eqref{hp:weakly_acute}, \eqref{hp:bilipschitz} are satisfied. Then, for any~$\varepsilon\in(0, \, \varepsilon_0]$
 and any discrete field~$\v_\varepsilon\in\T(\TT_\varepsilon; \, \S^2)$, 
 $\we$ is Lipschitz-continuous with Lipschitz constant 
 \begin{equation}
 \label{eq:lip_we}
 \Lip(\w_\varepsilon)\leq C\varepsilon^{-1}
 \end{equation}
 and, for any subset~$\widehat{U}\subseteq\widehat{M}_\varepsilon$ that can be written 
 as union of triangles of~$\TT_\eps$, there holds
 \begin{equation}
 \label{eq:energy_normwe}
  XY_\varepsilon(\v_\varepsilon, \, \widehat{U}) := 
  \frac12 \sum_{i, j\in \TT^0_\eps\cap\widehat{U}}\kappa_\eps^{ij} \abs{\v_\eps(i) - \v_\eps(j)}^2
  = \frac12 |\w_\varepsilon|^2_{W^{1,2}_\varepsilon(P(\widehat{U}))}.
 \end{equation}
\end{lemma}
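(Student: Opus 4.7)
The plan is to handle the two assertions separately, since each reduces to a short direct computation using the hypotheses and the tools built in the previous subsections.

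For the Lipschitz bound \eqref{eq:lip_we}, I would first estimate $\Lip(\widehat{\v}_\varepsilon)$ on the polyhedral surface and then post-compose with $\widehat{P}_\varepsilon^{-1}$. On any triangle $T\in\TT_\varepsilon$ the map $\widehat{\v}_\varepsilon$ is affine with nodal values in $\S^2$, so its increments are bounded by $2$ on $T$; the quasi-uniformity assumption \eqref{hp:quasiuniform} forces the smallest height of $T$ to be $\geq c\varepsilon$, hence the piecewise constant gradient satisfies $|\nablae\widehat{\v}_\varepsilon| \leq C\varepsilon^{-1}$ with a constant depending only on $\Lambda$. Because $\widehat{\v}_\varepsilon$ is globally continuous across shared edges, this pointwise bound upgrades to a global Lipschitz bound on $\widehat{M}_\varepsilon$. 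The conclusion \eqref{eq:lip_we} then follows by composing with $\widehat{P}_\varepsilon^{-1}$, whose Lipschitz constant is controlled by $\Lambda$ thanks to \eqref{hp:bilipschitz}.

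For the identity \eqref{eq:energy_normwe}, I would chain two facts. First, applying the exact integration formula \eqref{exact_integration} componentwise to $\widehat{\v}_\varepsilon$ and localising to $\widehat{U}$ yields
\[
 \frac12 \int_{\widehat{U}} |\nablae\widehat{\v}_\varepsilon|^2 \,\d S
 = \frac12\sum_{i, j\in \TT^0_\varepsilon\cap\widehat{U}} \kappa_\varepsilon^{ij}\, |\v_\varepsilon(i) - \v_\varepsilon(j)|^2,
\]
the key point being that $\kappa_\varepsilon^{ij}$ is supported on the (at most two) triangles containing the edge $ij$, all of which belong to $\widehat{U}$ because $\widehat{U}$ is a union of mesh triangles. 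Second, the isometry relation \eqref{sobolev_isometry}, applied componentwise to the $\R^3$-valued field $\widehat{\v}_\varepsilon$ together with the fact that $\widehat{P}_\varepsilon^{-1}(P(\widehat{U})) = \widehat{U}$, rewrites the left-hand side as $\tfrac12|\w_\varepsilon|^2_{W^{1,2}_\varepsilon(P(\widehat{U}))}$.

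The main obstacle, or rather the main bookkeeping point, is ensuring the correct localisation in the second step: one must verify that when the sum is restricted to pairs of vertices lying in $\widehat{U}$, the coefficients $\kappa_\varepsilon^{ij}$ agree with those computed from integration over $\widehat{U}$ alone, with no over- or under-counting near $\partial \widehat{U}$. This is automatic under the standing hypothesis that $\widehat{U}$ is a union of whole triangles, since then for every edge $ij$ interior to $\widehat{U}$ both adjacent triangles lie in $\widehat{U}$ and hence contribute in full, whereas pairs $(i,j)$ that are not endpoints of an edge carry $\kappa_\varepsilon^{ij}=0$ and drop out. Everything else is routine.
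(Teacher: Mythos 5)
Your proof takes the same route as the paper's: bound $\Lip(\widehat{\v}_\varepsilon)$ using the unit nodal values and the shape regularity from~\eqref{hp:quasiuniform}, then compose with $\widehat{P}_\varepsilon^{-1}$ via~\eqref{hp:bilipschitz}; and obtain~\eqref{eq:energy_normwe} by applying~\eqref{exact_integration} componentwise and then the isometry~\eqref{sobolev_isometry}. Both steps are correct and match the paper's (one-line) argument, and reasoning via the minimal height is just a more geometric rendition of the edge-sup bound the paper writes down.

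Your bookkeeping paragraph about the localisation, however, has a gap exactly where you declare it ``automatic''. You dispose of interior edges and non-adjacent vertex pairs, but an edge $[i,j]\subseteq\partial\widehat{U}$ is left out: since $\widehat{U}$ is a union of \emph{closed} triangles, both $i$ and $j$ lie in $\TT^0_\varepsilon\cap\widehat{U}$, so the pair $(i,j)$ enters the sum, yet only one of the (generically two) triangles adjacent to $[i,j]$ is contained in $\widehat{U}$. The globally defined $\kappa_\varepsilon^{ij}$ therefore differs from the stiffness coefficient one would compute by integrating over $\widehat{U}$ alone, whereas the right-hand side, through~\eqref{sobolev_isometry}, equals $\tfrac12\int_{\widehat{U}}|\nablae\widehat{\v}_\varepsilon|^2\,\d S$ and only sees the inside triangle. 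With the global $\kappa_\varepsilon^{ij}$ the displayed identity is exact for $\widehat{U}=\widehat{M}_\varepsilon$ but carries a boundary-edge discrepancy for proper subsets; it holds in general only if $\kappa_\varepsilon^{ij}$ is read as the stiffness coefficient localised to $\widehat{U}$. The paper's proof does not address this either, so treat it as a shared tacit convention in the lemma's statement rather than a defect of your argument alone --- but you should not present the localisation step as automatic when the boundary-edge case is precisely the one that is not.
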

\begin{proof}
   From the very definition of~$\w_\varepsilon:=\widehat{\v}_\varepsilon\circ\widehat{P}_\varepsilon^{-1}$, it follows that
  \begin{equation*} 
   \Lip(\w_\varepsilon) \stackrel{\eqref{hp:bilipschitz}}{\leq} C\Lip(\widehat{\v}_\varepsilon)
   \leq C \sup_{[i, \, j]\in\TT^1_\varepsilon}\frac{|\v_\varepsilon(i) - \v_\varepsilon(j)|}{|i-j|}
   \stackrel{\eqref{hp:quasiuniform}}{\leq} C\varepsilon^{-1}.
  \end{equation*}
 To prove \eqref{eq:energy_normwe}, it is enough to combine~\eqref{exact_integration} with~\eqref{sobolev_isometry}.
\end{proof}


\begin{lemma} \label{lemma:proiection-pointwise}
 Suppose that~$(\TT_\varepsilon)$ satisfies~\eqref{hp:quasiuniform}, \eqref{hp:bilipschitz}. Then, there exists a contant~$C$ such that, 
 for any~$\varepsilon\in(0, \, \varepsilon_0]$ and any~$\v_\varepsilon\in\T(\TT_\varepsilon; \, \S^2)$, there holds
 \[
  \norm{\left(\w_\varepsilon, \, \ggamma\right)}_{L^\infty(M)} \leq C\varepsilon.
 \]
\end{lemma}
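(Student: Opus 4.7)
The plan is to exploit pointwise orthogonality of $\v_\varepsilon$ to $\ggamma$ at the vertices and the smoothness of $\ggamma$. Fix $x \in M$, let $y := \widehat{P}_\varepsilon^{-1}(x) \in \widehat{M}_\varepsilon$, and let $T \in \TT_\varepsilon$ be a triangle containing $y$, with vertices $i_0, i_1, i_2 \in \TT^0_\varepsilon$. Writing $y$ in barycentric coordinates, $y = \sum_{k=0}^2 \lambda_k i_k$ with $\lambda_k \geq 0$ and $\sum_k \lambda_k = 1$, the affine interpolant satisfies $\widehat{\v}_\varepsilon(y) = \sum_k \lambda_k \v_\varepsilon(i_k)$, so
\[
 (\w_\varepsilon(x), \, \ggamma(x)) = \sum_{k=0}^2 \lambda_k \bigl(\v_\varepsilon(i_k), \, \ggamma(x)\bigr).
\]

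Next I would use the tangency constraint $\v_\varepsilon(i_k) \cdot \ggamma(i_k) = 0$ from~\eqref{discrete_fields} to rewrite each term as $(\v_\varepsilon(i_k), \, \ggamma(x) - \ggamma(i_k))$. Since $|\v_\varepsilon(i_k)| = 1$ and $\ggamma \in C^\infty(M; \, \S^2)$ with $M$ smooth and compact, $\ggamma$ is Lipschitz, hence
\[
 |(\w_\varepsilon(x), \, \ggamma(x))| \leq \sum_{k=0}^2 \lambda_k \, \Lip(\ggamma) \, |x - i_k|.
\]

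It remains to bound $|x - i_k|$ by $C\varepsilon$. By the triangle inequality, $|x - i_k| \leq |x - y| + |y - i_k|$. The first term is controlled because $y = \widehat{P}_\varepsilon^{-1}(x)$, and the smoothness of~$M$ combined with \eqref{hp:quasiuniform} gives $\dist(x, \widehat{M}_\varepsilon) \leq C\varepsilon$ (in fact $O(\varepsilon^2)$ by a Taylor expansion of~$M$ as a local graph, but $O(\varepsilon)$ suffices). The second term is bounded by $\diam(T) \leq \Lambda\varepsilon$, directly from~\eqref{hp:quasiuniform}. Combining and using $\sum_k \lambda_k = 1$ yields $|(\w_\varepsilon(x), \, \ggamma(x))| \leq C\varepsilon$, uniformly in $x$.

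There is no real obstacle here: the proof is a one-line barycentric computation plus the Lipschitz regularity of $\ggamma$ and the mesh-size bound on triangle diameters. The only point worth being careful about is that one must evaluate $\ggamma$ at the \emph{vertex} $i_k$ (where orthogonality is known) rather than at $y$ or $x$, so the identity $(\v_\varepsilon(i_k), \ggamma(i_k)) = 0$ can be invoked before estimating the remainder $\ggamma(x) - \ggamma(i_k)$.
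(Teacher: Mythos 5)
Your proof is correct and follows essentially the same route as the paper's: barycentric decomposition of the affine interpolant, the tangency constraint $\v_\varepsilon(i_k)\cdot\ggamma(i_k)=0$ at the vertices to pass to a difference of $\ggamma$-values, and a Lipschitz-times-mesh-size bound. The only difference is cosmetic: the paper does the computation directly at points $y\in\widehat{M}_\varepsilon$, replacing $\ggamma$ by $\ggamma\circ P$ (which is smooth on the full tubular neighbourhood $U\supseteq\widehat{M}_\varepsilon$), so the relevant distance is simply $|y - i_k|\leq\diam(T)\leq\Lambda\varepsilon$, and no triangle-inequality split of $|x - i_k|$ is needed. One small imprecision in your version: since $\widehat{P}_\varepsilon$ is the nearest-point projection onto $M$, the relation $y = \widehat{P}_\varepsilon^{-1}(x)$ gives $|x - y| = \dist(y, M)$, \emph{not} $\dist(x, \widehat{M}_\varepsilon)$ (the point $y$ need not be the closest point of $\widehat{M}_\varepsilon$ to $x$). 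Your bound still holds, though, because $\dist(y, M)\leq |y - i_k|\leq\diam(T)\leq\Lambda\varepsilon$ as the vertices lie on $M$, so the overall argument goes through unchanged.
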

\begin{proof}
 Every point~$x\in\widehat{M}_\varepsilon$ can be written in the form~$x = \lambda_0 i_0 + \lambda_1 i_1 + \lambda_2 i_2$,
 where~$i_k\in\TT^0_\varepsilon$ and~$\lambda_k\geq 0$, $\lambda_0 + \lambda_1+ \lambda_2 = 1$.
 Using the definition of the affine interpolant, and the fact that~$(\v_\varepsilon(i_k), \, \ggamma(i_k)) = 0$, we can write
 \[
  \begin{split}
   \abs{\left(\widehat{\v}_\varepsilon(x), \, (\ggamma\circ\widehat{P}_\varepsilon)(x)\right)}
   &\leq \sum_{k=0}^2 \lambda_k \abs{\left(\v_\varepsilon(i_k), \,
   (\ggamma\circ\widehat{P}_\varepsilon)(x) - (\ggamma\circ\widehat{P}_\varepsilon)(i_k) \right)} \\
   &\leq \|\nabla(\ggamma\circ P)\|_{L^\infty(U)} \sup_{T\in\TT_\varepsilon}\diam(T).
  \end{split}
 \]
 Thus, using the smoothness of~$\ggamma$ and the assumptions~\eqref{hp:quasiuniform}, \eqref{hp:bilipschitz}, we deduce
 \[
  \norm{\left(\w_\varepsilon, \, \ggamma\right)}_{L^\infty(M)} =
  \norm{\left(\widehat{\v}_\varepsilon, \, \ggamma\circ\widehat{P}_\varepsilon\right)}_{L^\infty(\widehat{M}_\varepsilon)} 
  \leq C\varepsilon. \qedhere
 \]
\end{proof}

The following property is well-known in the flat case (see e.g.~\cite[Lemma~2]{AlicandroCicalese}).

\begin{lemma} \label{lemma:GL-pointwise}
 Suppose that~\eqref{hp:quasiuniform} is satisfied.
 Then, there exists a positive constant~$C$ such that, for any~$0 < \varepsilon\leq\varepsilon_0$ 
 and any discrete field~$\v_\varepsilon\in\T(\TT_\varepsilon; \, \S^2)$, there holds
 \[
  \frac{1}{\varepsilon^2} \left(1 - |\w_\varepsilon|^2\right)^2
  \leq C \abs{\nablas\w_\varepsilon}^2 \qquad \textrm{pointwise on } M.
 \]
\end{lemma}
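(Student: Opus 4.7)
The inequality is a discrete analogue of the classical Ginzburg--Landau pointwise estimate, and I would first reduce it to a statement about the piecewise-affine interpolant~$\widehat{\v}_\eps$ on~$\widehat{M}_\eps$, then transfer the result to~$\w_\eps = \widehat{\v}_\eps\circ\widehat{P}_\eps^{-1}$. Concretely, I would show that on every triangle~$T\in\TT_\eps$ with vertices~$i_0$, $i_1$, $i_2$ there holds
\[
 (1-|\widehat{\v}_\eps(x)|^2)^2 \le C\,\eps^2\,|\nablae\widehat{\v}_\eps|_T^2 \qquad \text{for every } x\in T,
\]
and then use assumption~\eqref{hp:bilipschitz} (via the chain rule for~$\w_\eps=\widehat{\v}_\eps\circ\widehat{P}_\eps^{-1}$) to pass from~$\widehat{\v}_\eps$ on~$\widehat{M}_\eps$ to~$\w_\eps$ on~$M$.

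The heart of the proof is the following elementary identity, which exploits in an essential way the unit-norm constraint~$|\v_\eps(i_k)|=1$. Writing~$\widehat{\v}_\eps(x)=\sum_{k=0}^2\lambda_k\v_\eps(i_k)$ in barycentric coordinates and using~$\v_\eps(i_k)\cdot\v_\eps(i_\ell)=1-\tfrac12|\v_\eps(i_k)-\v_\eps(i_\ell)|^2$, a short computation gives
\[
 1 - |\widehat{\v}_\eps(x)|^2 \;=\; \sum_{0\le k<\ell\le 2} \lambda_k\lambda_\ell\,|\v_\eps(i_k)-\v_\eps(i_\ell)|^2,
\]
which in particular shows~$0\le 1-|\widehat{\v}_\eps(x)|^2\le 2$. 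Since~$\widehat{\v}_\eps$ is affine on~$T$, one has~$\v_\eps(i_\ell)-\v_\eps(i_k)=\nablae\widehat{\v}_\eps(i_\ell-i_k)$, so together with~\eqref{hp:quasiuniform} (which gives~$|i_k-i_\ell|\le C\eps$) one obtains~$|\v_\eps(i_k)-\v_\eps(i_\ell)|\le C\eps\,|\nablae\widehat{\v}_\eps|_T$. Plugging into the identity above and using~$\sum_{k<\ell}\lambda_k\lambda_\ell\le 1/2$ yields
\[
 0 \le 1-|\widehat{\v}_\eps(x)|^2 \le C\eps^2\,|\nablae\widehat{\v}_\eps|_T^2.
\]
Squaring is wasteful; instead, I would simply use~$1-|\widehat{\v}_\eps|^2\le 2$ to bound~$(1-|\widehat{\v}_\eps|^2)^2\le 2(1-|\widehat{\v}_\eps|^2)\le C\eps^2|\nablae\widehat{\v}_\eps|_T^2$.

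Finally, to pass to~$\w_\eps$ and~$\nablas\w_\eps$, I would note that~$\widehat{\v}_\eps=\w_\eps\circ\widehat{P}_\eps$, so for a.e.~$y$ in the interior of a triangle of~$\widehat{M}_\eps$ the chain rule and~\eqref{hp:bilipschitz} (which gives~$\Lip(\widehat{P}_\eps)\le\Lambda$) yield~$|\nablae\widehat{\v}_\eps(y)|\le\Lambda\,|\nablas\w_\eps(\widehat{P}_\eps(y))|$; meanwhile~$|\w_\eps(x)|^2=|\widehat{\v}_\eps(\widehat{P}_\eps^{-1}(x))|^2$. Combining with the pointwise bound above gives
\[
 \frac{1}{\eps^2}\bigl(1-|\w_\eps(x)|^2\bigr)^2 \;\le\; C\,|\nablas\w_\eps(x)|^2
\]
for a.e.~$x\in M$, and the estimate extends by continuity to every point where both sides make sense.

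\textbf{Main obstacle.} There is no genuine obstacle; the proof is essentially algebraic. The only subtle point is recognizing the identity for~$1-|\widehat{\v}_\eps|^2$ as a sum of squared edge differences, which is the mechanism by which the unit-norm constraint at the vertices controls the deviation of the interpolant from the sphere. The rest is a routine chain-rule transfer using the assumed bilipschitz character of the projection~$\widehat{P}_\eps$.
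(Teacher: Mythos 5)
Your proof is correct and follows essentially the same route as the paper: reduce to the affine interpolant~$\widehat{\v}_\eps$ on a single triangle, exploit the unit-norm constraint at the vertices to bound the defect~$1-|\widehat{\v}_\eps|^2$ by the squared edge differences~$|\v_\eps(i_k)-\v_\eps(i_\ell)|^2$, convert those to $\eps^2|\nablae\widehat{\v}_\eps|^2$ via~\eqref{hp:quasiuniform}, and transfer to~$\w_\eps$ using the bilipschitz projection. The one small stylistic difference is that the paper bounds the first power~$1-|\widehat{\v}_\eps|$ by the triangle inequality and then uses~$(1-|\widehat{\v}_\eps|^2)^2\le 4(1-|\widehat{\v}_\eps|)^2$, whereas you derive the exact barycentric identity~$1-|\widehat{\v}_\eps|^2=\sum_{k<\ell}\lambda_k\lambda_\ell|\v_\eps(i_k)-\v_\eps(i_\ell)|^2$, which is cleaner but leads to the same estimate.
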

\begin{proof}
 Thanks to~\eqref{hp:bilipschitz}, it suffices to show that
 \begin{equation} \label{GL-pointwise1}
  \frac{1}{\varepsilon^2} \left(1 - |\widehat{\v}_\varepsilon|^2\right)^2
  \leq C \abs{\nablae\widehat{\v}_\varepsilon}^2 \qquad \textrm{on } \widehat{M}_\varepsilon.
 \end{equation}
 Let~$T\in\TT_\varepsilon$ be a triangle with vertices~$i_0$, $i_1$, $i_2$. Any point~$x\in T$ can be 
 written as $x = i_0 +\lambda_1 (i_1 - i_0) + \lambda_2(i_2 - i_0)$, where~$\lambda_1$, $\lambda_2$
 are positive numbers such that~$\lambda_1 + \lambda_2 \leq 1$.
 Using the definition of affine interpolant and that~$|\v_\varepsilon(i_0)| = 1$, we obtain that
 \[
  1 - |\widehat{\v}_\varepsilon(x)| \leq |\widehat{\v}_\varepsilon(x) - \v_\varepsilon(i_0)| 
  \leq \sum_{k=1}^2 \lambda_k |\v_\varepsilon(i_k) - \v_\varepsilon(i_0)|,
 \]
 whence, using that~$|\widehat{\v}_\varepsilon|\leq 1$ and that~$\nablae\widehat{\v}_\varepsilon$ is constant on~$T$, we deduce
 \[
  \left(1 - |\widehat{\v}_\varepsilon(x)|^2\right)^2 \leq 4 \left( 1- |\widehat{\v}_\varepsilon(x)|\right)^2
  \leq 8\sum_{k=1}^2 |\v_\varepsilon(i_k) - \v_\varepsilon(i_0)|^2
  = 8 \sum_{k=1}^2 \abs{\nablae\widehat{\v}_\varepsilon(x) (i_k - i_0)}^2.
 \]
 Now,~\eqref{GL-pointwise1} follows because~$|i_k - i_0|\leq C\varepsilon$, due to~\eqref{hp:quasiuniform}.
\end{proof}

As a consequence of Lemmas~\ref{lemma:norm-interpolant} and~\ref{lemma:GL-pointwise},
if both~\eqref{hp:quasiuniform} and~\eqref{hp:weakly_acute} are satisfied, then we have
\begin{equation} \label{remark:GL}
 \frac{1}{\varepsilon^2} \int_{\widehat{M}_\varepsilon} \left(1 - |\w_\varepsilon|^2\right)^2
 \leq C \, XY_\varepsilon(\v_\varepsilon).
\end{equation}

\subsection{Jacobians of vector fields on~$M$}
\label{sect:jacobians}

In this section, we define the Jacobian determinant of a vector field in the sense of distributions,
and we recall a few useful properties. This notion was introduced in the context of Ginzburg-Landau functionals 
(see e.g.~\cite{JerrardSoner-Jacobians}) and in nonlinear elasticity (see e.g.~\cite{Ball76}, \cite{Mueller}).
As we are dealing with vector fields over a manifold, it will be useful
to recast the theory in the language of differential forms.

Given a map~$\u\in (W^{1, 1}\cap L^\infty)(M;\,\R^3)$, we define the ``pre-jacobian'' or vorticity of~$\u$ as the $1$-form
\begin{equation} \label{prejacobian}
 \j(\u) := \left(\ggamma, \, \u\wedge\d\u\right) \! .
\end{equation}
More explicitely, $\j(\u)$ is defined via its action on a smooth, \emph{tangent} field $\w$ on~$M$:
\begin{equation} \label{prejacobian2}
 \langle\j(\u), \, \w\rangle = \left(\ggamma, \, \u\times\nabla_{\w}\u\right) \! .
\end{equation}
We can equivalently replace~$\nabla_{\w}\u$ with the covariant derivative~$\D_{\w}\u$ 
since the scalar product in~\eqref{prejacobian2} does not depend on the component of~$\nabla_{\w}\u$ 
in the direction of~$\ggamma$.

Suppose now that~$\u\in W^{1,1}(M; \, \R^3)$ is a \emph{unit}, \emph{tangent} field on~$M$ 
(that is, $|\u|=1$ and $\u\cdot\ggamma =0$ a.e.), and let~$(\e_1, \, \e_2)$ be a local orthonormal basis for the tangent frame of~$M$.
Then, we have
\begin{equation} \label{spin_norm}
 \abs{\j(\u)}^2 = \sum_{k=1}^2 \abs{\u\times\D_{\e_k}\u}^2 = \sum_{k=1}^2 \abs{\D_{\e_k}\u}^2 = \abs{\D\u}^2,
\end{equation}
where we denote by~$|\cdot|$ both the norm on the tangent space and the induced norm on the cotangent space.
Moreover, we can write locally that
\begin{equation} \label{angular_variable}
 \u = (\cos\alpha) \e_1 + (\sin\alpha) \e_2
\end{equation}
for some scalar function~$\alpha$ with bounded variation (this follows, e.g., by \cite{DavilaIgnat}).
A \emph{formal} computation shows that
\begin{equation} \label{spin_connection}
 \j(\u) = \d\alpha - \A
\end{equation}
where~$\A$, called \emph{spin connection}, is the~$1$-form defined by $\langle\A, \, \w\rangle := \e_1\cdot\nabla_{\w}\e_2$.
Note that~$\A$ depends on the choice of the frame, but its differential is an intrinsic quantity:
\begin{equation} \label{Gauss}
 \d\A = G\, \d S,
\end{equation}
where we recall that~$G$ is the Gauss curvature of~$M$.


The differential~$\d\j(\u)$ will play an important r\^ole.
Since~$\d\j(\u)$ is a $2$-form, it can be written uniquely 
as~$\d\j(\u) = f\boldsymbol{\omega}$ where~$f\in \mathscr{D}^\prime(M)$
is scalar and~$\boldsymbol{\omega}$ is the volume form on~$M$;
we use the notation~$\star\d\j(\u) := f$. In case~$M = \R^2$ 
(embedded as the $xy$-plane in~$\R^3$, so that~$\ggamma = \e_3$) 
and~$\u$ is a smooth vector field~$\R^2\to\R^2$, we easily compute that
\[
 \star\d\j(\u) = 2\det\nabla\u,
\]
thus $\star\d\j(\u)$ can be thought as a generalization of the Jacobian determinant of~$\u$, up to a constant factor~$2$.
If~$\u$ is a unit, tangent field on~$M$, then by differentiating~\eqref{spin_connection} we see that
$\star\d\j(\u)$ contains topological information about the singularities of~$\u$.
We denote by~$\ind(\u, \, x_i)$ the local degree of~$\u$ at the point~$x_i$,
that is, the winding number of~$\u$ around the boundary of a small disk centred at~$x_i$ (see e.g.~\cite{AGM-VMO} for more details).

\begin{lemma} \label{lemma:dj}
 Let~$\u\in W^{1,1}_{\mathrm{tan}}(M; \, \S^2)$ be a unit, tangent field. 
 Suppose that there exist a finite number of points~$x_1, \, \ldots, \, x_p$
 such that $\u\in W^{1,2}_{\mathrm{loc}}(M\setminus\{x_1, \, \ldots, \, x_p\}; \, \R^3)$.
 Then
 \[
  \star\d\j(\u) = 2\pi\sum_{i = 1}^p \ind(\u, \, x_i) \delta_{x_i} - G  \qquad \textrm{in } \mathscr{D}^\prime(M).
 \]
\end{lemma}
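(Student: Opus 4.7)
The plan is to localize the computation around each singular point~$x_i$ and then patch via a partition of unity. Fix a test function $\phi\in C^\infty_c(M)$; because $\u\in W^{1,1}_{\tan}(M;\S^2)$, the form $\j(\u)$ is in~$L^1(M)$ and the distributional pairing is just
\[
 \langle \d\j(\u), \, \phi\rangle = -\int_M \d\phi\wedge\j(\u).
\]
So the task is to identify this integral with $2\pi\sum_i \ind(\u,x_i)\phi(x_i) - \int_M \phi\, G\,\d S$.

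Cover~$M$ by coordinate charts~$\{U_\beta\}$ chosen so that each singularity lies in exactly one chart $U_i$ and the remaining charts stay in the smooth region~$M\setminus\{x_1,\dots,x_p\}$, and on each chart pick a smooth orthonormal tangent frame $(\e_1,\e_2)$. Using the lifting result from~\cite{DavilaIgnat} cited just before~\eqref{angular_variable}, on each chart one writes $\u=(\cos\alpha)\e_1+(\sin\alpha)\e_2$ with $\d\alpha$ an honest $L^2_{\loc}$-form on the chart minus the singular point. By~\eqref{spin_connection}, $\j(\u)=\d\alpha-\A$ almost everywhere, where $\A$ is smooth and $\d\A = G\,\d S$ by~\eqref{Gauss}. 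Using a partition of unity subordinate to the cover, it suffices to prove the local identity for $\phi\in C^\infty_c(U_\beta)$.

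For charts avoiding singularities, $\d\alpha$ is a closed $L^2$-form so $-\int_{U_\beta}\d\phi\wedge\d\alpha = 0$ by standard integration by parts; inserting $\int_{U_\beta}\d\phi\wedge\A = -\int_{U_\beta}\phi\, G\,\d S$ (since $\phi$ is compactly supported in~$U_\beta$) yields the desired smooth contribution. For the chart $U_i$ containing the singular point, work on the punctured domain $U_i^\delta := U_i\setminus\overline{B_\delta(x_i)}$; Stokes' theorem and $\d\d\alpha=0$ on~$U_i^\delta$ give
\[
 -\int_{U_i^\delta}\d\phi\wedge\j(\u) = \int_{\partial B_\delta(x_i)}\phi\,\d\alpha - \int_{\partial B_\delta(x_i)}\phi\,\A - \int_{U_i^\delta}\phi\, G\,\d S,
\]
where $\partial U_i$ drops out because $\phi$ has compact support in~$U_i$. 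Passing to the limit $\delta\searrow 0$: the first boundary integral converges to $2\pi\ind(\u,x_i)\phi(x_i)$ by the defining property of the index (applied on a circle where $\u$ is continuous for a.e.\ small~$\delta$, via Fubini and Sobolev embedding) together with continuity of $\phi$; the second vanishes since $\A$ is bounded and $|\partial B_\delta|\to 0$; and the bulk term converges to $-\int_{U_i}\phi\, G\,\d S$ since $G\in L^\infty$. The left-hand side converges to $-\int_{U_i}\d\phi\wedge\j(\u)$ by dominated convergence, using $\j(\u)\in L^1$.

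The main subtlety, and the place where I would spend most care, is the identification of $\lim_{\delta\to 0}\int_{\partial B_\delta(x_i)}\phi\,\d\alpha$ with $2\pi\ind(\u,x_i)\phi(x_i)$: although $\alpha$ is only well defined modulo~$2\pi$ and is multi-valued on the punctured neighbourhood, $\d\alpha$ is a genuine $1$-form, and its period around $\partial B_\delta(x_i)$ equals $2\pi\ind(\u,x_i)$ by definition of the index. Writing $\phi=\phi(x_i)+(\phi-\phi(x_i))$ on~$\partial B_\delta$, the first piece yields the exact Dirac contribution, and the remainder is controlled by $\|\phi-\phi(x_i)\|_{L^\infty(\partial B_\delta)}\,\|\d\alpha\|_{L^1(\partial B_\delta)}$; a Fubini/slicing argument on the $W^{1,2}_{\loc}$-field~$\u$ near~$x_i$ shows that this $L^1$-norm is bounded uniformly in~$\delta$ along a suitable sequence, so the remainder vanishes. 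Summing the local contributions yields the claimed distributional identity on~$M$.
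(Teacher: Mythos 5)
Your overall strategy — localize, introduce an angular variable $\alpha$, use $\j(\u)=\d\alpha-\A$ and $\d\A=G\,\d S$, integrate by parts on a punctured domain, and pass to the limit on the inner circle — is essentially the same as the paper's; the paper's main extra step is to reduce first (via density, \cite[p.~267]{SU}) to a field that is smooth away from the singularities, which avoids the issues you flag with the BV lifting and the formal nature of~\eqref{spin_connection}. That said, there is a concrete gap in your final estimate. You claim that a Fubini/slicing argument shows $\norm{\d\alpha}_{L^1(\partial B_\delta(x_i))}$ is \emph{uniformly bounded} along a sequence $\delta_j\searrow 0$, and you use this, together with $\norm{\phi-\phi(x_i)}_{L^\infty(\partial B_\delta)}\le C\delta$, to kill the remainder. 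That uniform boundedness does \emph{not} follow from $\d^{\mathrm{ac}}\alpha\in L^1$ near $x_i$ (or from $\u\in W^{1,2}_{\mathrm{loc}}$ away from $x_i$), and is generally false: writing $f(\delta):=\int_{\partial B_\delta(x_i)}\abs{\d^{\mathrm{ac}}\alpha}\,\d s$, all that $L^1$-integrability of $\d^{\mathrm{ac}}\alpha$ on a disk gives is $\int_0^\rho f(\delta)\,\d\delta<\infty$, and a function like $f(\delta)=\delta^{-1/2}$ is integrable but has $\liminf_{\delta\to 0}f(\delta)=+\infty$, so no subsequence keeps it bounded. What you actually need — and all that is true — is the weaker fact that $\delta\, f(\delta)\to 0$ along some subsequence. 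This does follow from $\int_0^\rho f<\infty$: if $\delta f(\delta)\ge\eta>0$ for all small $\delta$, then $f(\delta)\ge\eta/\delta$, and integrating gives $\int_0^{\delta_0}f=+\infty$, a contradiction. This is exactly the argument around~\eqref{dj(v)2} in the paper, and since your remainder is already bounded by $\delta\norm{\nablas\phi}_{L^\infty}\, f(\delta)$, the corrected claim closes the proof. So replace ``$\norm{\d\alpha}_{L^1(\partial B_\delta)}$ is uniformly bounded along a subsequence'' with ``$\delta\norm{\d^{\mathrm{ac}}\alpha}_{L^1(\partial B_\delta)}\to 0$ along a subsequence, by contradiction using $\d^{\mathrm{ac}}\alpha\in L^1$''. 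You should also be explicit that the form you integrate by parts is the Lebesgue-absolutely-continuous part $\d^{\mathrm{ac}}\alpha$ (the BV function $\alpha$ has a jump of size $2\pi\ind(\u,x_i)$ across a ray), so that $\j(\u)=\d^{\mathrm{ac}}\alpha-\A$ is an honest $L^1$ identity and $\int_{\partial B_\delta}\d^{\mathrm{ac}}\alpha=2\pi\ind(\u,x_i)$; treating $\alpha$ as ``multi-valued'' is only a shorthand for this.
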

\begin{proof}
 We can assume without loss of generality that~$\u$ is
 smooth on~$M\setminus\{x_1, \, \ldots, \, x_p\}$, 
 as smooth unit-norm tangent fields are dense in~$W^{1,2}$. 
 This follows essentially from the density
 result~\cite[Proposition p.~267]{SU}. The paper~\cite{SU} 
 is concerned with the study of maps from~$M$ to a fixed 
 target manifold; however, the arguments can be adapted to sections 
 of the unit tangent bundle (see also~\cite[Section~3]{AGM-VMO}
 for further details).
 
 For a fixed~$i$, take a test function~$\varphi\in C^\infty(M)$
 whose support is simply connected
 and does not contain any singularity of~$\v$ other than~$x_i$.
 Suppose that an orthonormal tangent frame~$(\e_1, \, \e_2)$ is defined on the support of~$\varphi$.
 Then, we can locally define an angular variable~$\alpha$ which satisfies Equation~\eqref{angular_variable}
 and is smooth, except for a jump across a smooth ray starting at the point~$x_i$.
 The size of the jump is constant along the ray, and equal to~$2\pi\ind(\u, \, x_i)$. 
 The Lebesgue-absolutely continuous part~$\d^{\mathrm{ac}}\alpha$ of the differential~$\d\alpha$
 is actually continuous across the jump, and satisfies (analogously to~\eqref{spin_connection})
 \begin{equation} \label{spin_connection2}
  \j(\u) = \d^{\mathrm{ac}}\alpha - \A
 \end{equation}
 on the support of~$\varphi$.
 Thanks to~\eqref{spin_connection2}, \eqref{spin_norm} and the fact that~$\u\in W^{1, 1}$, 
 we deduce that $\d^{\mathrm{ac}}\alpha\in L^1$. Moreover, one checks that
 $\d(\d^{\mathrm{ac}}\alpha) = 0$ on~$M\setminus\{x_i\}$.
 
 Now, we compute~$\star\d(\d^{\mathrm{ac}}\alpha)$ in the sense of distributions.
 For any~$\delta > 0$, we have
 \begin{equation} \label{dj(v)1}
  \begin{split}
   -\left\langle\d^{\mathrm{ac}}\alpha, \, \star\d\varphi \right\rangle_{L^2(M\setminus B_\delta(x_i))} 
   &= \int_{M\setminus B_\delta(x_i)} \d^{\mathrm{ac}}\alpha\wedge\d\varphi
   = -\int_{M\setminus B_\delta(x_i)} \d (\d^{\mathrm{ac}}\alpha\wedge\varphi) \\
   &= \int_{\partial B_\delta(x_i)} \d^{\mathrm{ac}}\alpha\wedge\varphi.
  \end{split}
 \end{equation}
 On the other hand, we have
 \begin{equation} \label{dj(v)2}
  \abs{\int_{\partial B_\delta(x_i)} \d^{\mathrm{ac}}\alpha\wedge\left(\varphi - \varphi(0)\right) }
  \leq \delta \norm{\nablas\varphi}_{L^\infty(M)} \int_{\partial B_\delta(x_i)} \abs{\d^{\mathrm{ac}}\alpha} \, \d s.
 \end{equation}
 We claim that the right-hand side of~\eqref{dj(v)2} converges to~$0$ at least along a subsequence~$\delta_j\searrow 0$. 
 For otherwise, there would exist positive numbers~$\eta$ and~$\delta_0$ such that
 \[
  \delta\int_{\partial B_\delta(x_i)} \abs{\d^{\mathrm{ac}}\alpha} \,\d s \geq \eta
 \]
 for any~$0 <\delta \leq \delta_0$. Dividing by $\delta$ both sides of this inequality and
 integrating over~$\delta\in(0, \, \delta_0)$, we would obtain
 \[
  \int_{B_{\delta_0}(x_i)} \abs{\d^{\mathrm{ac}}\alpha} \,\d S \geq \eta \int_0^{\delta_0} \frac{\d\delta}{\delta} = + \infty,
 \]
 which is impossible because~$\d^{\mathrm{ac}}\alpha\in L^1$.
 Then, we find a subsequence~$\delta_j\searrow 0$ along which the right-hand side of~\eqref{dj(v)2} converges to~$0$.
 Taking the limit in~\eqref{dj(v)1} along this subsequence, and using~\eqref{dj(v)2}, we obtain
 \[
  -\left\langle\d^{\mathrm{ac}}\alpha, \, \star\d\varphi \right\rangle_{L^2(M)}  
  = \lim_{j\to+\infty} \int_{\partial B_{\delta_j}(x_i)} \d^{\mathrm{ac}}\alpha\wedge\varphi(0)
  = 2\pi\ind(\u, \, x_i) \, \varphi(0).
 \]
 Since the operator~$\star\d$ is $L^2(M)$-anti-symmetric, the left-hand side of this identity
 can be interpreted as the duality pairing $\langle\star\d(\d^{\mathrm{ac}}\alpha), \, \varphi\rangle$, in the sense of distributions.
 Combining this with~\eqref{Gauss} and~\eqref{spin_connection2}, the lemma follows.
\end{proof}

We define a piecewise-continuous counterpart of~$\j$. Take a bounded, piecewise-smooth (but not necessarily tangent) 
map~$\u\colon\widehat{M}_\varepsilon\to\R^3$ such that, for any edge~$e=[i, \, j]\in\TT^1_\varepsilon$,
\begin{equation} \label{edge_continuity}
 \nabla_{j-i}\u = \nabla\u(j-i) \textrm{ is continuous across } e.
\end{equation}
For example, the affine interpolant~$\u=\widehat{\v}_\varepsilon$ of a discrete field~$\v\in\T(\TT_\varepsilon;\,\S^2)$
satisfies~\eqref{edge_continuity}. We let
\begin{equation} \label{discrete_prejacobian}
 \widehat{\j}_\varepsilon(\u) := \left(\widehat{\ggamma}_\varepsilon, \, \u\wedge\d\u\right) \! ,
\end{equation}
that is the piecewise-smooth $1$-form on~$\widehat{M}_\varepsilon$ satisfying
\begin{equation*}
 \langle\widehat{\jmath}_\varepsilon(\u), \, \w\rangle = \left(\widehat{\ggamma}_\varepsilon, \, \u\times\nabla_{\w}\u\right)
\end{equation*}
for any piecewise-smooth tangent field~$\w$ on~$\widehat{M}_\varepsilon$. This form is well-defined and continuous on each triangle of~$\widehat{M}_\varepsilon$. 
Note that~$\widehat{\j}_\varepsilon(\u)$ may not be continuous across an edge~$e = [i, \, j]$ but $\langle\widehat{\j}_\varepsilon(\u), \, i - j\rangle$ is,
therefore the integral of~$\widehat{\jmath}_\varepsilon(\u)$ along~$e$ is defined unambiguously.

\subsection{Jacobians of discrete vector-fields}
\label{ssec:dic_meas}

We want to define a notion of ``jacobian'' for a discrete field~$\v_\varepsilon\in\T(\TT_\varepsilon; \, \S^2)$ and we have two possibilities: 
either we apply~$\d\widehat{\j}_\varepsilon$ to the affine interpolant~$\widehat{\v}_\varepsilon$, or we compute~$\d\j(\u_\varepsilon)$
for a field~$\u_\varepsilon\colon M\to\R^3$ that interpolates~$\v_\varepsilon$.
The first possibility corresponds to the measure
\begin{equation} \label{mu_hat}
 \widehat{\mu}_\varepsilon(\v_\varepsilon) := \sum_{T\in\TT_\varepsilon} \left(\int_{T} \d \widehat{\j}_\varepsilon(\widehat{\v}_\varepsilon)\right) \delta_{x_T},
\end{equation}
where~$\delta_{x_T}$ is the Dirac delta measure supported by the barycentre~$x_T$ of~$T$.
Let~$(i_0, \, i_1, \, i_2)$ be the vertices of a triangle~$T\in\TT_\varepsilon$, 
sorted in counter-clockwise order with respect to the orientation induced by~$\ggamma$,
and let~$i_3 := i_0$. Using Stokes' theorem and the definition of the affine interpolant, we compute
\begin{equation} \label{mu_hat2}
 \begin{split}
  \widehat{\mu}_\varepsilon(\v_\varepsilon)[T] &= 
  \sum_{k = 0}^2 \int_{[i_k, \, i_{k+1}]} \left(\widehat{\ggamma}_\varepsilon, \, \widehat{\v}_\varepsilon\times\nabla_{i_{k+1}-i_k}\widehat{\v}_\varepsilon\right) \d s \\
  &= \sum_{k = 0}^2 \left(\frac{\ggamma(i_k) + \ggamma(i_{k+1})}{2}, \, \v_\varepsilon(i_k)\times\v_\varepsilon(i_{k+1})\right) \! .
 \end{split}
\end{equation}

As for the second approach, we construct a suitable field~$\u_\varepsilon$ in the following way.
We fix a sequence~$(t_\varepsilon)_{\varepsilon>0}$ such that
\begin{equation} \label{t_eps}
 \frac{\varepsilon|\log\varepsilon|}{t_\varepsilon} \to 0 \qquad \textrm{as } \varepsilon\to 0,
\end{equation}
e.g. $t_\varepsilon := \varepsilon|\log\varepsilon|^2$. Now, reminding that~$\w_\varepsilon := \widehat{\v}_\varepsilon\circ\widehat{P}_\varepsilon^{-1}$, 
for~$x\in M$ we define
\begin{equation} \label{u_eps}
 \tilde{\u}_\varepsilon(x) := \mathrm{proj}_{\T_x M}\w_\varepsilon(x) \qquad \textrm{and } \qquad
 \u_\varepsilon(x) := \eta_\varepsilon\left(|\tilde{\u}_\varepsilon(x)|\right) \tilde{\u}_\varepsilon(x),
\end{equation}
where~$\eta_\varepsilon(s) := \min\{t_\varepsilon^{-1}, \, s^{-1}\}$.
Note that~$\u_\varepsilon$ is a Lipschitz tangent field on~$M$ and~$\u_\varepsilon = \v_\varepsilon$ on~$\TT^0_\varepsilon$.
Next, we set
\begin{equation} \label{mu}
 \mu_\varepsilon(\v_\varepsilon) := \sum_{T\in\TT_\varepsilon} \left( \int_{P(T)} \d\j(\u_\varepsilon) \right) \delta_{P(x_T)}.
\end{equation}
Given a Borel set~$E\subseteq M$, let~$E_\varepsilon$ be the union of all the~$P(T)$'s such that~$T\in\TT_\varepsilon$,~$P(x_T)\in E$.
If $|\tilde{\u}_\varepsilon|\geq 1/4$ on~$\partial{E}_\varepsilon$, then we can find a unit tangent 
field~$\mathbf{U}_\varepsilon\in W^{1,1}_{\tan}(E_\varepsilon; \, \S^2)$ such that $\mathbf{U}_\varepsilon = \u_\varepsilon$ on~$\partial E_\varepsilon$
and~$\mathbf{U}_\varepsilon$ is smooth except at finitely many points. (One can modify~$\tilde{\u}_\varepsilon$
in such a way that it is smooth and has~$0$ as a regular value, then define~$\mathbf{U}_\varepsilon:=\tilde{\u}_\varepsilon/|\tilde{\u}_\varepsilon|$.)
Since~$\mu_\varepsilon(\v_\varepsilon)[E] = \int_{E_\varepsilon}\d\j(\u_\varepsilon)$ and, by Stokes' theorem,
the latter only depends on the restriction of~$\u_\varepsilon$ to~$\partial E_\varepsilon$,
we have $\mu(\v_\varepsilon)[E] = \int_{E_\varepsilon}\d\j(\mathbf{U}_\varepsilon)$ and hence, by Lemma~\ref{lemma:dj},
\begin{equation} \label{mu2}
 \mu_\varepsilon(\v_\varepsilon)[E] = 2\pi\ind(\u_\varepsilon, \, \partial E_\varepsilon) - \int_{E_\varepsilon} G \, \d S.
\end{equation}
In this sense, the measure~$\mu_\varepsilon(\v_\varepsilon)$ can be thought as a generalization of the discrete vorticity defined in~\cite{ADGP}
(see in particular~\cite[Remark~2.1]{ADGP}), and immediately provides information on the ``topological'' behaviour of~$\v_\varepsilon$.
On the other hand, the measure~$\widehat{\mu}_\varepsilon(\v_\varepsilon)$ has the advantage of being simpler to evaluate, thanks to~\eqref{mu_hat2}.
Luckily, if the XY-energy of the field~$\v_\varepsilon$ satisfies a logarithmic bound, then the two measures are close to each other.

\begin{prop} \label{prop:mu}
 Suppose that~\eqref{hp:quasiuniform}, \eqref{hp:weakly_acute}, \eqref{hp:bilipschitz} are satisfied.
 Let~$(\v_\varepsilon)_{0<\varepsilon\leq \varepsilon_0}$ be a sequence of discrete fields that satisfies
 \eqref{H} 
 for some $\varepsilon$-independent constant~$\Lambda$ and any~$0 < \varepsilon\leq\varepsilon_0$.
 Then, there holds
 \[
  \norm{\widehat{\mu}_\varepsilon(\v_\varepsilon) - \mu_\varepsilon(\v_\varepsilon)}_{\fflat} \leq 
  C\left(\frac{\varepsilon|\log\varepsilon|}{t_\varepsilon} + \varepsilon|\log\varepsilon|\right) \! .
 \]
\end{prop}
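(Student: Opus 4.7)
The plan is to compare $\widehat{\mu}_\varepsilon$ and $\mu_\varepsilon$ through the intermediate measure
$(\widehat{P}_\varepsilon)_* \widehat{\mu}_\varepsilon = \sum_T\widehat{\mu}_\varepsilon[T]\,\delta_{\widehat{P}_\varepsilon(x_T)}$,
which shares its support with $\mu_\varepsilon$. The flat-norm triangle inequality
\[
 \|\widehat{\mu}_\varepsilon - \mu_\varepsilon\|_{\fflat} \leq
 \|\widehat{\mu}_\varepsilon - (\widehat{P}_\varepsilon)_*\widehat{\mu}_\varepsilon\|_{\fflat}
 + \|(\widehat{P}_\varepsilon)_*\widehat{\mu}_\varepsilon - \mu_\varepsilon\|_{\fflat}
\]
isolates a pure displacement error from a mass error.

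For the first summand, each Dirac mass is moved over a distance $|x_T - \widehat{P}_\varepsilon(x_T)|\leq C\varepsilon$, so Kantorovich--Rubinstein duality gives
$\|\widehat{\mu}_\varepsilon - (\widehat{P}_\varepsilon)_*\widehat{\mu}_\varepsilon\|_{\fflat} \leq C\varepsilon\sum_T|\widehat{\mu}_\varepsilon[T]|$.
The problem reduces to the a priori total-variation bound $\sum_T|\widehat{\mu}_\varepsilon[T]|\leq C|\log\varepsilon|$, which I would establish from $\widehat{\mu}_\varepsilon[T] = \int_T d\widehat{\j}_\varepsilon(\widehat{\v}_\varepsilon)$ (Stokes), together with the pointwise estimate
\[
 |d\widehat{\j}_\varepsilon(\widehat{\v}_\varepsilon)| \leq C|\nablae\widehat{\v}_\varepsilon| + C|\nablae\widehat{\v}_\varepsilon|^2
\]
obtained by differentiating $\widehat{\j}_\varepsilon = (\widehat{\ggamma}_\varepsilon, \widehat{\v}_\varepsilon\times d\widehat{\v}_\varepsilon)$ and using $|\widehat{\v}_\varepsilon|\leq 1$ and $|d\widehat{\ggamma}_\varepsilon|\leq C$. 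Cauchy--Schwarz across the $\leq C\varepsilon^{-2}$ triangles of $\TT_\varepsilon$, combined with~\eqref{H} via~\eqref{affine_dirichlet}, yields the bound and accounts for the $O(\varepsilon|\log\varepsilon|)$ contribution.

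For the second summand, both measures have common support $\{\widehat{P}_\varepsilon(x_T)\}$, hence $\|(\widehat{P}_\varepsilon)_*\widehat{\mu}_\varepsilon - \mu_\varepsilon\|_{\fflat}\leq \sum_T|\widehat{\mu}_\varepsilon[T] - \mu_\varepsilon[T]|$. Using the pullback, I would rewrite $\mu_\varepsilon[T] = \int_{\partial T}\widehat{P}_\varepsilon^*\j(\u_\varepsilon) = \int_{\partial T}(\ggamma\circ\widehat{P}_\varepsilon, \hat{\u}_\varepsilon\times d\hat{\u}_\varepsilon)$ with $\hat{\u}_\varepsilon := \u_\varepsilon\circ\widehat{P}_\varepsilon$, and decompose the integrand $\widehat{\j}_\varepsilon(\widehat{\v}_\varepsilon) - \widehat{P}_\varepsilon^*\j(\u_\varepsilon)$ into a \emph{metric piece} $(\widehat{\ggamma}_\varepsilon - \ggamma\circ\widehat{P}_\varepsilon,\, \widehat{\v}_\varepsilon\times d\widehat{\v}_\varepsilon)$ and a \emph{field piece} $(\ggamma\circ\widehat{P}_\varepsilon,\, \widehat{\v}_\varepsilon\times d\widehat{\v}_\varepsilon - \hat{\u}_\varepsilon\times d\hat{\u}_\varepsilon)$. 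Since $\ggamma$ is smooth, standard interpolation yields $\|\widehat{\ggamma}_\varepsilon - \ggamma\circ\widehat{P}_\varepsilon\|_\infty\leq C\varepsilon$, so the metric piece contributes another $O(\varepsilon|\log\varepsilon|)$ by the same Cauchy--Schwarz/energy argument.

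The field piece is the heart of the proof and is where the factor $t_\varepsilon^{-1}$ appears. The key observation is that $\widehat{\v}_\varepsilon(i) = \v_\varepsilon(i) = \u_\varepsilon(i) = \hat{\u}_\varepsilon(i)$ at every vertex $i\in\TT_\varepsilon^0$, which supplies cancellations when integrating on each $\partial T$. I would split $\widehat{M}_\varepsilon$ into the \emph{good} region $G_\varepsilon := \{x\colon |\tilde{\u}_\varepsilon(\widehat{P}_\varepsilon(x))|\geq t_\varepsilon\}$ and the \emph{bad} region $B_\varepsilon$. On $G_\varepsilon$, $\hat{\u}_\varepsilon$ is a smooth tangential normalization of $\widehat{\v}_\varepsilon$; combining Lemma~\ref{lemma:proiection-pointwise} with Lemma~\ref{lemma:GL-pointwise} yields $|\widehat{\v}_\varepsilon - \hat{\u}_\varepsilon|\leq C\varepsilon + C(1 - |\widehat{\v}_\varepsilon|)$ and analogous control on the derivatives, so that integration on $\partial T$ together with~\eqref{remark:GL} absorbs the contribution into $O(\varepsilon|\log\varepsilon|)$. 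On $B_\varepsilon$, the truncation $\eta_\varepsilon\equiv 1/t_\varepsilon$ forces $|d\hat{\u}_\varepsilon|\leq C(t_\varepsilon\varepsilon)^{-1}$ pointwise, whereas~\eqref{remark:GL} gives the area bound $|B_\varepsilon|\leq C\varepsilon^2 XY_\varepsilon(\v_\varepsilon)\leq C\varepsilon^2|\log\varepsilon|$; the total length of $\TT_\varepsilon$-edges meeting $B_\varepsilon$ is therefore $\leq C\varepsilon|\log\varepsilon|$, and integrating $|d\hat{\u}_\varepsilon|$ along this length produces the announced $O(\varepsilon|\log\varepsilon|/t_\varepsilon)$ error. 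The main technical obstacle is handling the interface $\partial B_\varepsilon$ coherently with the per-triangle decomposition; I would circumvent this by enlarging $B_\varepsilon$ to a union of full triangles of $\TT_\varepsilon$ (which only adds $O(\varepsilon)$ to its diameter) and absorbing the transition into the leading $O(\varepsilon|\log\varepsilon|)$ error.
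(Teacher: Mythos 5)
Your triangle-inequality skeleton is sound, and the first summand (pure transport of point masses over distance $O(\varepsilon)$, combined with the $L^1$ bound on $\star\d\widehat{\j}_\varepsilon$ coming from Lemma~\ref{lemma:discrete_jacobian} and~\eqref{log_norm}) matches the paper's treatment of its term~$A_1$. The problem is the second summand. You reduce the flat norm to the total variation $\sum_T\lvert\widehat{\mu}_\varepsilon[T]-\mu_\varepsilon[T]\rvert$ and then estimate each mass $m_T$ by integrating pointwise bounds along $\partial T$. This is systematically short by a factor of~$\varepsilon$, for two compounding reasons: (a) putting absolute values \emph{before} the sum over~$T$ destroys the pairwise cancellation of interior edges (each interior edge contributes twice, with opposite orientations); and (b) a boundary integral over $\partial T$ scales like $\varepsilon\cdot\lVert\text{integrand}\rVert_{L^\infty(T)}$, whereas the flat-norm duality used in the paper converts the edge terms into area integrals against $\star\d\varphi$, which then yield a \emph{product of two $L^2$ norms}. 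Your own computation exhibits the loss: on the bad region, pointwise bound $C(t_\varepsilon\varepsilon)^{-1}$ times edge length $C\varepsilon|\log\varepsilon|$ gives $C|\log\varepsilon|/t_\varepsilon$, not $C\varepsilon|\log\varepsilon|/t_\varepsilon$. The same deficit appears already in your ``metric piece'': $C\varepsilon\cdot\sum_T\int_{\partial T}\lvert\nablae\widehat{\v}_\varepsilon\rvert$ is of size $O(\lvert\log\varepsilon\rvert^{1/2})$ (compare with a vortex profile, $\lvert\nabla\v\rvert\sim1/r$), which does not even tend to zero.

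The paper avoids both issues. It decomposes $\widehat{\mu}_\varepsilon-\mu_\varepsilon$ into six terms and handles the middle ones with Lemma~\ref{lemma:continuity_jacobian}: after integrating by parts on each $T$ and letting the edge contributions cancel, one is left with $\lVert\star\d\j(\u)-\star\d\j(\w)\rVert_{\fflat}\leq\lVert\u-\w\rVert_{L^2}\bigl(\lVert\nablas\u\rVert_{L^2}+\lVert\nablas\w\rVert_{L^2}\bigr)$. The extra factor of~$\varepsilon$ then enters through $\lVert\u_\varepsilon-\tilde{\u}_\varepsilon\rVert_{L^2}\leq C\varepsilon\lvert\log\varepsilon\rvert^{1/2}$ (a Ginzburg--Landau potential bound, via~\eqref{remark:GL}), paired with $\lVert\nablas\u_\varepsilon\rVert_{L^2}\leq Ct_\varepsilon^{-1}\lvert\log\varepsilon\rvert^{1/2}$, giving precisely the announced $\varepsilon\lvert\log\varepsilon\rvert/t_\varepsilon$. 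Without some analogue of this bilinear $L^2$--$L^2$ structure---or an argument that exploits the edge cancellations before taking absolute values---your per-triangle TV strategy cannot recover the missing factor of~$\varepsilon$, so the proof as written does not close.
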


In particular, the difference between the two measures converges to zero in the flat norm as~$\varepsilon\to 0$, if we assume that~\eqref{t_eps} holds.
The rest of this section is devoted to the proof of Proposition~\ref{prop:mu}. The key fact is the following continuity property 
for the Jacobian, which is well-known for maps~$\u\colon\Omega\subseteq\R^n\to\R^n$
(see e.g.~\cite[Lemma~2.1]{AlicandroPonsiglione}).

\begin{lemma} \label{lemma:continuity_jacobian}
 Let~$\u$, $\w$ be (not necessarily tangent) fields in $W^{1,2}(M; \, \R^3)$. Then, there holds
 \begin{gather} 
  \|\star\d\j(\u)\|_{L^1(M)} \leq C \left(\|\u\|_{L^2(M)}^2
  + \|\nablas\u\|^2_{L^2(M)}\right) \! , \label{jacobian_L1} \\
  \norm{\star\d\j(\u) - \star\d\j(\w)}_{\fflat} \leq \norm{\u-\w}_{L^2(M)}
  \left(\norm{\nablas\u}_{L^2(M)} +  \norm{\nablas\w}_{L^2(M)}\right) \! .\label{jacobian_flat}
 \end{gather}
\end{lemma}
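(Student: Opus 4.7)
The plan is to derive both estimates from the structure of $\d\j(\u) = \d(\ggamma, \u\wedge\d\u)$ as a bilinear expression in $(\u, \d\u)$, by exterior differentiation and integration by parts on the closed surface $M$. A standard approximation argument reduces both statements to the case of smooth fields $\u$, $\w$, the general case following from the continuous dependence of $\star\d\j$ on its entry in $W^{1,2}(M; \R^3)$.

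For the $L^1$ bound \eqref{jacobian_L1} I would expand the differential pointwise. Using that $\d$ is a graded derivation and that $\d^2 u^k = 0$ on each scalar component, one obtains
\[
\d\j(\u) = (\d\ggamma, \, \u\wedge\d\u) + (\ggamma, \, \d\u\wedge\d\u).
\]
Both summands are $2$-forms on $M$; since $\ggamma$ is smooth, the first is dominated pointwise by $C\abs{\u}\abs{\nablas\u}$ and the second by $C\abs{\nablas\u}^2$. Young's inequality then gives $\abs{\star\d\j(\u)}\leq C(\abs{\u}^2 + \abs{\nablas\u}^2)$ almost everywhere, and integrating over $M$ delivers \eqref{jacobian_L1}.

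For the flat-norm estimate \eqref{jacobian_flat} I would exploit the fact that $\d\j(\u) - \d\j(\w)$ is an exact $2$-form, namely the differential of $\j(\u) - \j(\w)$. Testing against $\varphi\in C^1(M)$ with $\norm{\varphi}_{\Lip}\leq 1$ and applying Stokes' theorem on the closed surface $M$ yields
\[
\langle \star\d\j(\u) - \star\d\j(\w), \, \varphi\rangle = -\int_M \d\varphi \wedge (\j(\u) - \j(\w)).
\]
The key algebraic identity is
\[
\u\times\d\u - \w\times\d\w = (\u-\w)\times\d\u + \w\times\d(\u-\w).
\]
The first summand has $\u-\w$ as an explicit pointwise factor, so its contribution is estimated directly by $\norm{\d\varphi}_{L^\infty}\norm{\u-\w}_{L^2}\norm{\nablas\u}_{L^2}$. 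For the second, I would integrate by parts once more in order to shift the derivative off the factor $\u-\w$: writing the integrand in components as $f^k\wedge\d(u^k-w^k)$ with $f^k := \epsilon_{ijk}\gamma_i w^j \d\varphi$, Stokes' theorem on $M$ (together with $\d\d\varphi = 0$) converts it into $\int_M (u^k-w^k)\,\d f^k$. Cauchy--Schwarz and the pointwise bound $\abs{\d f^k}\leq C(\abs{\w}+\abs{\nablas\w})\norm{\d\varphi}_{L^\infty}$ then close the argument, noting that the Lipschitz constraint on $\varphi$ forces $\norm{\d\varphi}_{L^\infty}\leq 1$.

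The main obstacle, relative to the Euclidean prototype in \cite[Lemma~2.1]{AlicandroPonsiglione}, is bookkeeping the curvature contributions produced by $\d\ggamma$: they appear both in the pointwise expansion of $\d\j(\u)$ and in the computation of $\d f^k$, and are absent in the flat case. They are ultimately harmless because $\ggamma$ is smooth and $M$ is compact, so the uniform bound on $\d\ggamma$ allows one to absorb them into the constant $C$.
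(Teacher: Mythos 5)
Your proof of \eqref{jacobian_L1} expands $\d\j(\u)$ in exactly the same way as the paper and is fine. For \eqref{jacobian_flat}, however, you take a genuinely different route. The paper rewrites the prejacobian difference through a symmetrization identity, expressing $\j(\u)-\j(\w)$ in terms of the prejacobian of the pair $(\u-\w,\,\u+\w)$, and then claims that a single application of H\"older to the pairing with $\star\d\varphi$ closes the estimate. You instead telescope $\u\wedge\d\u - \w\wedge\d\w = (\u-\w)\wedge\d\u + \w\wedge\d(\u-\w)$, estimate the first summand directly, and remove the derivative from $\u-\w$ in the second by a further Stokes integration. Your version is arguably the more transparent one: a naive H\"older bound on the paper's symmetrized form still produces a summand $\norm{\u+\w}_{L^2}\norm{\nablas(\u-\w)}_{L^2}$, which is \emph{not} dominated by the right-hand side of \eqref{jacobian_flat}, so the paper is implicitly performing the very integration by parts you carry out explicitly.

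One point in your final paragraph should be corrected. After the second Stokes step, $\d f^k$ contains the piece $\epsilon_{ijk}\w^j\,\d\ggamma^i\wedge\d\varphi$, and Cauchy--Schwarz then produces the term $\norm{\u-\w}_{L^2}\norm{\w}_{L^2}$. This is a genuine additive contribution, not something that can be absorbed into the constant $C$: $\norm{\w}_{L^2}$ is not controlled by $\norm{\nablas\w}_{L^2}$. In fact \eqref{jacobian_flat} as written cannot hold whenever $\d\ggamma\not\equiv 0$: take $\u = c+\varepsilon\boldsymbol{\xi}$, $\w = c+\varepsilon\boldsymbol{\eta}$ with $c$ a fixed nonzero constant vector and $\boldsymbol{\xi}\neq\boldsymbol{\eta}$ smooth; then the left-hand side is of order $\varepsilon$ (driven by the $c\wedge\d(\boldsymbol{\xi}-\boldsymbol{\eta})$ term paired with $\d\ggamma$), whereas the right-hand side is of order $\varepsilon^2$. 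What your argument actually proves---and what the paper's proof really establishes too---is the weaker estimate with $\norm{\u}_{L^2}+\norm{\w}_{L^2}$ added inside the bracket and a constant in front. That version is all that is used in the proof of Proposition~\ref{prop:mu}, where the fields involved are uniformly bounded in $L^2$, so your plan is sound; only the remark that the $\d\ggamma$ contributions ``are absorbed into the constant'' should be rephrased to say they contribute an additional, but in the application harmless, lower-order term.
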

\begin{proof}
 By a density argument, we can assume WLOG that~$\u$, $\w$ are smooth. Using Einstein convention, we can write
 \[
  \j(\u) = \ggamma^i \j_i(\u), \quad \textrm{where} \quad \j_i(\u) := \epsilon_{ijk} \, \u^j \, \d\u^k
 \]
 and~$\epsilon_{ijk}$ is the Levi-Civita symbol, given by~$\epsilon_{ijk} := 1$ if~$(i, \, j, \, k)$ is an even permutation of~$(1, \, 2, \, 3)$,
 $\epsilon_{ijk} := -1$ if it is an odd permutation, and~$\epsilon_{ijk} := 0$ otherwise. 
 By differentiating, we deduce
 \begin{equation} \label{}
  \d\j(\u) = \epsilon_{ijk} \, \u^j \, \d\ggamma^i \wedge \d\u^k + \epsilon_{ijk} \, \ggamma^i \, \d\u^j \wedge \d\u^k,
 \end{equation}
 whence~\eqref{jacobian_L1} immediately follows by applying the H\"older inequality and using
 that~$|\nablas\ggamma|$ is bounded.
 We now prove~\eqref{jacobian_flat}. A straightforward computation shows that
 \[
  \j_3(\u) - \j_3(\w) = 
  \frac12 \left(\j_3\left(\u^1 - \w^1, \, \u^2 + \w^2\right) - \j_3\left(\u^2 - \w^2, \, \u^1 + \w^1\right)\right)
 \]
 and similar equalities hold for~$\j_1$, $\j_2$, therefore
 \begin{equation} \label{contjac1}
  \d\j(\u) - \d\j(\w) = \frac{\epsilon_{ijk}}{2} \, \d\left(\ggamma^i \, \j_i(\u^j - \w^j, \, \u^k + \w^k)\right).
 \end{equation}
 Now, fix a function~$\varphi\in C^\infty_{\mathrm{c}}(U)$. 
 Thanks to~\eqref{contjac1} and an integration by parts, we deduce
 \[
  \langle\star\d\j(\u)-\star\d\j(\w), \, \varphi\rangle
  = -\frac{\epsilon_{ijk}}{2} \langle\ggamma^i \, \j_i(\u^j - \w^j, \, \u^k + \w^k), \, \star\d\varphi \rangle \! .
 \]
 The definition of~$\j_i$ and the H\"older inequality immediately imply
 \[
  \langle\star\d\j(\u)-\star\d\j(\w), \, \varphi\rangle
  \leq \|\u-\w\|_{L^2(M)} \|\nablas\u - \nablas\w\|_{L^2(M)} \|\nablas\varphi\|_{L^\infty(M)},
 \]
 whence~\eqref{jacobian_flat} follows by taking the supremum over~$\varphi$.
\end{proof}

Lemma~\ref{lemma:continuity_jacobian} has a counterpart in the piecewise-continuous setting. For further reference, here we only mention that

\begin{lemma} \label{lemma:discrete_jacobian}
 Let~$\u\colon\widehat{M}_\varepsilon\to\R^3$ be a (not necessarily tangent) piecewise-smooth field that satisfies~\eqref{edge_continuity}.
 Then, there holds
 \[
  \|\star\d\widehat{\j}_\varepsilon(\u)\|_{L^1(T)} \leq C\left(\|\u\|_{L^2(T)}^2
  + \|\nablae\u\|^2_{L^2(T)}\right) \qquad \textrm{for any } T\in\TT_\varepsilon.
 \]
\end{lemma}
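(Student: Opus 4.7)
The plan is to mirror, on each triangle $T\in\TT_\varepsilon$, the argument used to prove~\eqref{jacobian_L1} in Lemma~\ref{lemma:continuity_jacobian}, with $\ggamma$ replaced by the piecewise-affine interpolant $\widehat{\ggamma}_\varepsilon$. By a standard density argument one may assume that $\u$ is smooth on the open triangle $T$ (the lemma only claims an $L^1$ bound triangle-by-triangle, so no gluing across edges is required).

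First, I would unfold the definition~\eqref{discrete_prejacobian} using the Levi--Civita symbol,
\[
 \widehat{\j}_\varepsilon(\u) = \widehat{\ggamma}_\varepsilon^i \, \widehat{\j}_i(\u), \qquad
 \widehat{\j}_i(\u) := \epsilon_{ijk} \, \u^j \, \d\u^k,
\]
and differentiate on the interior of a fixed $T$:
\[
 \d\widehat{\j}_\varepsilon(\u) = \epsilon_{ijk} \, \u^j \, \d\widehat{\ggamma}_\varepsilon^i \wedge \d\u^k
 + \epsilon_{ijk} \, \widehat{\ggamma}_\varepsilon^i \, \d\u^j \wedge \d\u^k,
\]
which is meaningful because $\widehat{\ggamma}_\varepsilon$ is affine (hence smooth) on the interior of $T$ and $\u$ is piecewise-smooth.

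Next, I would record the uniform bounds
$\|\widehat{\ggamma}_\varepsilon\|_{L^\infty(\widehat{M}_\varepsilon)} \leq 1$ and
$\|\nablae\widehat{\ggamma}_\varepsilon\|_{L^\infty(\widehat{M}_\varepsilon)} \leq C$, the latter being a consequence of the smoothness of $\ggamma$ together with a standard interpolation estimate and the quasi-uniformity hypothesis~\eqref{hp:quasiuniform}. Pointwise on $T$ this yields
\[
 \left|\star\d\widehat{\j}_\varepsilon(\u)\right| \leq C\left(|\u|\,|\nablae\u| + |\nablae\u|^2\right)\! .
\]
Integrating over $T$ and applying Young's inequality $|\u|\,|\nablae\u|\leq \tfrac12(|\u|^2 + |\nablae\u|^2)$ gives the claimed estimate
\[
 \|\star\d\widehat{\j}_\varepsilon(\u)\|_{L^1(T)} \leq C\left(\|\u\|_{L^2(T)}^2 + \|\nablae\u\|_{L^2(T)}^2\right)\! .
\]

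There is no genuine obstacle here: the argument is essentially the proof of~\eqref{jacobian_L1} carried out locally on one triangle. The only subtlety worth checking is the uniform-in-$\varepsilon$ bound on $\nablae\widehat{\ggamma}_\varepsilon$, which is independent of the mesh size because one is interpolating a fixed smooth function on meshes satisfying~\eqref{hp:quasiuniform}. Note in particular that no cancellation or integration-by-parts between adjacent triangles is needed, since the bound is stated triangle-by-triangle.
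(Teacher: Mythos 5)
Your proof is correct and matches the paper's intended argument: the paper's proof of this lemma consists of the single line ``argue as in Lemma~\ref{lemma:continuity_jacobian}, using that~$\widehat{\ggamma}_\varepsilon$ is Lipschitz with $\|\nablae\widehat{\ggamma}_\varepsilon\|_{L^\infty}\leq\|\nablas\ggamma\|_{L^\infty}$,'' and you have simply filled in exactly those details (Levi--Civita expansion, differentiation on the interior of a triangle, $L^\infty$ control on $\widehat{\ggamma}_\varepsilon$ and $\nablae\widehat{\ggamma}_\varepsilon$, and Young's inequality). Your remark that no integration by parts or matching across edges is needed, since the estimate is stated triangle-by-triangle, is also accurate.
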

\begin{proof}
 We argue as in Lemma~\ref{lemma:continuity_jacobian}, using that the functions~$\widehat{\ggamma}_\varepsilon$ are Lipschitz continuous and
 $\|\nablae\widehat{\ggamma}_\varepsilon\|_{L^\infty(\widehat{M}_\varepsilon)}
 \leq\|\nablas\ggamma\|_{L^\infty(M)}$.
\end{proof}

\begin{proof}[Proof of Proposition~\ref{prop:mu}]
 Given a piecewise-smooth map $\u\colon\widehat{M}_\varepsilon\to\R^3$, we let $\j(\u) := (\ggamma\circ P, \, \u\wedge\d\u)$,
 i.e.~we extend the operator $\u\mapsto\j(\u)$ to fields~$\u$ that are not defined on~$M$ by pre-composing~$\ggamma$ with the projection~$P\colon U\to M$.
 When~$\u$ is a piecewise-smooth field, we denote by~$\d\j(\u)$, $\d\widehat{\j}_\varepsilon(\u)$ the Lebesgue-absolutely 
 continuous part of the distributional differential of~$\j(\u)$, $\widehat{\j}_\varepsilon(\u)$ respectively --- 
 that is, we neglect any jumps that may arise at the boundary of the regions where~$\u$ is smooth.
 
 Let~$(\v_\varepsilon)$ be a sequence of discrete fields satisfying the logaritmic energy bound~\eqref{H}.
 The assumption~\eqref{H} together with~\eqref{affine_dirichlet}, \eqref{remark:GL} and the fact that~$|\widehat{\v}_\varepsilon|\leq 1$
 implies that
 \begin{equation} \label{log_norm}
  \|\widehat{\v}_\varepsilon\|_{L^2(\widehat{M}_\varepsilon)}^2 + \|\nablae\widehat{\v}_\varepsilon\|^2_{L^2(\widehat{M}_\varepsilon)} 
  + \varepsilon^{-2} \|1 - |\widehat{\v}_\varepsilon|^2\|_{L^2(\widehat{M}_\varepsilon)}^2
  \leq C|\log\varepsilon|
 \end{equation}
 for any~$0 < \varepsilon\leq \varepsilon_0$ and some constant~$C = C(M, \, \Lambda, \, \varepsilon_0)$, provided that~$\varepsilon_0 < 1$.
 We decompose the difference~$\widehat{\mu}_\varepsilon(\v_\varepsilon) - \mu_\varepsilon(\v_\varepsilon)$
 as a sum of several terms:
 \begin{equation*} \label{mu-sum}
  \begin{split}
   \widehat{\mu}_\varepsilon(\v_\varepsilon) - \mu_\varepsilon(\v_\varepsilon) &= 
   \underbrace{\widehat{\mu}_\varepsilon(\v_\varepsilon) - \star\d\widehat{\j}_\varepsilon(\widehat{\v}_\varepsilon)}_{=:A_1} \
   + \ \underbrace{\star\d\widehat{\j}_\varepsilon(\widehat{\v}_\varepsilon) - \star\d\j(\widehat{\v}_\varepsilon)}_{=:A_2} \
   + \ \underbrace{\star\d\j(\widehat{\v}_\varepsilon) - \star\d\j(\w_\varepsilon)}_{=:A_3} \\
   &+ \ \underbrace{\star\d\j(\w_\varepsilon) - \star\d\j(\tilde{\u}_\varepsilon)}_{=:A_4} \
   + \underbrace{\star\d\j(\tilde{\u}_\varepsilon) - \star\d\j(\u_\varepsilon)}_{=:A_5} \
   + \ \underbrace{\star\d\j(\u_\varepsilon) - \mu_\varepsilon(\u_\varepsilon)}_{=:A_6}.
  \end{split}
 \end{equation*}
 Throughout the rest of the proof, we let~$\varphi\in C^\infty_{\mathrm{c}}(U)$ be an arbitrarily fixed test function.
 
 \medskip
 \noindent\emph{Analysis of~$A_1$.}
 There holds
 \[
  \begin{split}
   \langle \widehat{\mu}_\varepsilon(\v_\varepsilon) - \star\d\widehat{\j}_\varepsilon(\widehat{\v}_\varepsilon), \, \varphi\rangle
   &= \sum_{T\in\TT_\varepsilon} \int_T \left(\varphi(x_T) - \varphi\right) \, \d\widehat{\j}_\varepsilon(\widehat{\v}_\varepsilon)  \\
   &\leq \|\star\d\widehat{\j}_\varepsilon(\widehat{\v}_\varepsilon)\|_{L^1(\widehat{M}_\varepsilon)}
   \|\nabla\varphi\|_{L^\infty(U)} \sup_{T\in\TT_\varepsilon} \diam(T)
  \end{split}
 \]
 Using Lemma~\ref{lemma:discrete_jacobian}, the assumption~\eqref{hp:quasiuniform} and~\eqref{log_norm}, we deduce
 \begin{equation} \label{mu-A1}
  \|\widehat{\mu}_\varepsilon(\v_\varepsilon) - \star\d\widehat{\j}_\varepsilon(\widehat{\v}_\varepsilon)\|_{\fflat}
  \leq C\varepsilon|\log\varepsilon|.
 \end{equation}

 \medskip
 \noindent\emph{Analysis of~$A_2$.}
 By integrating by parts on each triangle of the triangulation, we can write
 \[
  \begin{split}
   \langle \star\d\widehat{\j}_\varepsilon(\widehat{\v}_\varepsilon) - \star\d\j(\widehat{\v}_\varepsilon), \, \varphi\rangle
   &= -\sum_{T\in\TT_\varepsilon} \int_T \left(\widehat{\j}_\varepsilon(\widehat{\v}_\varepsilon) 
   - \j(\widehat{\v}_\varepsilon)\right)\wedge(\star\d\varphi).
  \end{split}
 \]
 The total contribution of the boundary terms vanishes, because each edge appears in the sum twice, with opposite orientations.
 Then, by the H\"older inequality, we obtain
 \[
  \|\star\d\widehat{\j}_\varepsilon(\widehat{\v}_\varepsilon) - \star\d\j(\widehat{\v}_\varepsilon)\|_{\fflat}
  \leq \|\widehat{\ggamma}_\varepsilon - \ggamma\circ P\|_{L^\infty(\widehat{M}_\varepsilon)}
  \norm{\widehat{\v}_\varepsilon}_{L^2(\widehat{M}_\varepsilon)}
  \norm{\nablae\widehat{\v}_\varepsilon}_{L^2(\widehat{M}_\varepsilon)}.
 \]
 Using that~$|\widehat{\v}_\varepsilon|\leq 1$,
 that $\|\widehat{\ggamma}_\varepsilon - \ggamma\circ P\|_{L^\infty(\widehat{M}_\varepsilon)}\leq C\varepsilon$
 (as a consequence of~\eqref{hp:quasiuniform}) and~\eqref{log_norm}, we conclude that
 \begin{equation} \label{mu-A2}
  \|\star\d\widehat{\j}_\varepsilon(\widehat{\v}_\varepsilon) - \star\d\j(\widehat{\v}_\varepsilon)\|_{\fflat} \leq C \varepsilon|\log\varepsilon|^{1/2}.
 \end{equation}

 \medskip
 \noindent\emph{Analysis of~$A_3$.}
 Set~$\omega := \j(\w_\varepsilon)$, so~$\omega$ is a $1$-form on~$M$
 and~$\widehat{P}_\varepsilon^*(\omega) = \j(\widehat{\v}_\varepsilon)$.
 Since the sets~$\widehat{P}_\varepsilon(T)$ for~$T\in\TT_\varepsilon$ define a Borel partition of~$M$ up to sets of measure zero,
 we can write
 \[
  \langle \star\d\j(\w_\varepsilon) - \star\d\j(\widehat{\v}_\varepsilon), \, \varphi\rangle
  = \sum_{T\in\TT_\varepsilon} \left( \int_{\widehat{P}_\varepsilon(T)} \varphi \, \d\omega  
  - \int_{T} \varphi \, \widehat{P}_\varepsilon^*(\d\omega)  \right).
 \]
 Thanks to the assumption~\eqref{hp:bilipschitz}, $\widehat{P}_\varepsilon$ induces a bilipschitz equivalence of~$T$ onto its image.
 Therefore, by applying the area formula to the first integral in the right-hand side, we deduce
 \[
  \begin{split}
   \langle \star\d\j(\w_\varepsilon) - \star\d\j(\widehat{\v}_\varepsilon), \, \varphi\rangle
   &= \sum_{T\in\TT_\varepsilon} \int_{T} \left( \widehat{P}_\varepsilon^*(\varphi \, \d\omega) - \varphi \, \widehat{P}_\varepsilon^*(\d\omega) \right) \\
   &= \sum_{T\in\TT_\varepsilon} \int_{T} \left( \varphi\circ\widehat{P}_\varepsilon - \varphi \right) \, \widehat{P}_\varepsilon^*(\d\omega)
   = \int_{\widehat{M}_\varepsilon} \left( \varphi\circ\widehat{P}_\varepsilon - \varphi \right) \, \d\j(\widehat{\v}_\varepsilon) .
  \end{split}
 \]
 The H\"older inequality and Lemma~\ref{lemma:discrete_jacobian} then yield
 \[
  \langle \star\d\j(\w_\varepsilon) - \star\d\j(\widehat{\v}_\varepsilon), \, \varphi\rangle 
  \leq C\dist(\widehat{M}_\varepsilon, \, M)\|\nablae\varphi\|_{L^\infty(\widehat{M}_\varepsilon)}
  \left(\|\widehat{\v}_\varepsilon\|_{L^2(\widehat{M}_\varepsilon)}^2 + \|\nablae\widehat{\v}_\varepsilon\|_{L^2(\widehat{M}_\varepsilon)}^2\right)
 \]
 whence, by applying~\eqref{hp:quasiuniform} and~\eqref{log_norm}, we conclude
 \begin{equation} \label{mu-A3}
  \|\star\d\j(\widehat{\v}_\varepsilon) - \star\d\j(\w_\varepsilon)\|_{\fflat}
  \leq C\varepsilon|\log\varepsilon|.
 \end{equation}

 \medskip
 \noindent\emph{Analysis of~$A_4$.}
 Reminding the definition~\eqref{u_eps} of~$\tilde{\u}_\varepsilon$ and Lemma~\ref{lemma:proiection-pointwise}, we have
 \begin{equation} \label{mu4,1}
  \|\w_\varepsilon - \tilde{\u}_\varepsilon\|_{L^\infty(M)} = 
  \|(\w_\varepsilon, \, \ggamma)\|_{L^\infty(M)} \leq C\varepsilon.
 \end{equation}
 On the other hand, using~\eqref{log_norm} and the assumption~\eqref{hp:bilipschitz}, we compute that
 \begin{equation} \label{mu4,2}
  \|\nablas\w_\varepsilon\|_{L^2(M)} + \|\nablas\tilde{\u}_\varepsilon\|_{L^2(M)} \leq C|\log\varepsilon|^{1/2}.
 \end{equation}
 We combine~\eqref{jacobian_flat} in Lemma~\ref{lemma:continuity_jacobian} with~\eqref{mu4,1} and~\eqref{mu4,2} to obtain
 \begin{equation} \label{mu-A4}
  \|\star\d\j(\w_\varepsilon) - \star\d\j(\tilde{\u}_\varepsilon)\|_{\fflat} \leq C\varepsilon|\log\varepsilon|^{1/2}.
 \end{equation}
 
 \medskip
 \noindent\emph{Analysis of~$A_5$.}
 From the definition~\eqref{u_eps} of~$\u_\varepsilon$ and~\eqref{mu4,2}, we compute
 \begin{equation} \label{mu5,1}
  \|\nablas{\u}_\varepsilon\|_{L^2(M)} \leq 2t_\varepsilon^{-1} \|\nablas\tilde{\u}_\varepsilon\|_{L^2(M)} 
  \leq Ct_\varepsilon^{-1} |\log\varepsilon|^{1/2}.
 \end{equation}
 On the other hand, thanks to~\eqref{mu4,1} 
 we obtain
 \[
  |\u_\varepsilon - \tilde{\u}_\varepsilon|^2 \leq \left(1 - |\tilde{\u}_\varepsilon|\right)^2 
  \leq \left(1 - |\w_\varepsilon|\right)^2 +C\varepsilon
  \leq \left(1 - |\w_\varepsilon|^2\right)^2 +C\varepsilon.
 \]
 By integrating both sides of the inequality, and making a change of variable 
 we deduce
 \begin{equation} \label{mu5,2}
  \|\u_\varepsilon - \tilde{\u}_\varepsilon\|_{L^2(M)}^2 \leq 
  \norm{1 - |\widehat{\v}_\varepsilon|^2}^2_{L^2(\widehat{M}_\varepsilon)} + C\varepsilon\H^2(\widehat{M}_\varepsilon)
  \leq C\varepsilon^2|\log\varepsilon|.
 \end{equation}
 For the last inequality, we have used~\eqref{log_norm} and the fact that~$\H^2(\widehat{M}_\varepsilon)\leq C$, which follows from~\eqref{hp:bilipschitz}.
 Thus, by applying~\eqref{jacobian_flat} (Lemma~\ref{lemma:continuity_jacobian}), with the help of~\eqref{mu4,2}, \eqref{mu5,1} and~\eqref{mu5,2}
 we deduce that
 \begin{equation} \label{mu-A5}
  \|\star\d\j(\tilde{\u}_\varepsilon) - \star\d\j(\u_\varepsilon)\|_{\fflat} \leq C t_\varepsilon^{-1}\varepsilon|\log\varepsilon|.
 \end{equation}

 \medskip
 \noindent\emph{Analysis of~$A_6$.}
 Arguing as in the proof of~\eqref{mu-A1}, and using~\eqref{jacobian_L1} in Lemma~\ref{lemma:continuity_jacobian}
 instead of Lemma~\ref{lemma:discrete_jacobian}, we obtain
 \begin{equation} \label{mu-A6}
  \|\star\d\j(\u_\varepsilon) - \mu_\varepsilon(\v_\varepsilon)\|_{\fflat}\leq\varepsilon|\log\varepsilon|.
 \end{equation}
 
 Now, the proposition follows by combining~\eqref{mu-A1}, \eqref{mu-A2}, \eqref{mu-A3}, \eqref{mu-A4}, \eqref{mu-A5} and~\eqref{mu-A6}.
\end{proof}

\section{The zero-order $\Gamma$-convergence: emergence of defects}
\label{sec:provaGamma0}
%
%
%

\subsection{Localized lower bounds for the energy}

Thanks to Proposition~\ref{prop:mu}, the compactness of the sequence $\widehat{\mu}_\varepsilon(\v_\varepsilon)$ is 
equivalent to the compactness of~$\mu_\varepsilon(\v_\varepsilon)$.
The latter is defined in terms of the fields~$\w_\varepsilon\in W^{1, \infty}(M; \,\R^3)$, given by~\eqref{u_eps},
which interpolate continuously the discrete fields~$\v_\varepsilon$ but are not necessarily tangent nor unit-valued.
We discuss now a localized lower bound for the Dirichlet energy of~$\w_\varepsilon$.
Similar results are well-known in the continuum Ginzburg-Landau setting, where they play a major r\^ole 
(see, e.g.,~\cite[Theorems~2.1 and~4.1]{Jerrard} and~\cite[Theorem~1]{Sandier}),
and are also available for the discrete XY-energy~\cite[Proposition~3.2]{ADGP}. 
Given a point~$x_0\in M$ and a radius~$\rho >0$, we denote by~$B_\rho(x_0)$ the geodesic ball of centre~$x_0$ and radius~$\rho$.
We follow the approach in~\cite{Jerrard}. We define
\begin{equation} \label{alpha_eps}
 \alpha_\varepsilon := (1 - C\varepsilon)^{-2} \, \underset{x\in M}{\infess} \, \inf_{\mathbf{X}\in \T_x M, \, |\mathbf{X}|=1}
 \left(\A_\varepsilon^{-1}(x)\mathbf{X}, \, \mathbf{X}\right) (\det \A_\varepsilon(x))^{1/2}
\end{equation}
and, for~$0 < \varepsilon < \rho$ and~$d\in\Z$, we let
\begin{equation} \label{lambda_eps}
 \lambda_\varepsilon(\rho, \, d) := \alpha_\varepsilon \min_{0 \leq m \leq 1}\left\{ \frac{\pi |d|}{\rho + C\rho^2} m^2
 \vee \frac{C}{\varepsilon} (1 - m)^3\right\} \! .
\end{equation}
The constant~$C$, which will be selected below, does not depend on~$\varepsilon$, $\rho$ and~$d$.
Recall that, given a vector field~$\v_\varepsilon\in\T(\TT_\varepsilon;\, \S^2)$, we let
$\w_\varepsilon :=\widehat{\v}_\varepsilon\circ\widehat{P}_\varepsilon^{-1}$ and we denote by~$\u_\varepsilon$ 
the vector field defined by~\eqref{u_eps}.

\begin{lemma} \label{lemma:circle}
 There exist positive constants~$R_*$ and~$\varepsilon_*$ with the following property:
 for any~$\varepsilon\in(0, \, \varepsilon_*]$, any~$x_0\in M$, any~$\rho\in(\varepsilon, \, R_*]$
 and any~$\v_\varepsilon\in\T_\varepsilon(\TT_\varepsilon; \, \S^2)$ such that~$|\w_\varepsilon|\geq 1/2$
 on~$\partial B_\rho(x_0)$, there holds
 \begin{equation} \label{circle}
  \frac12 |\w_\varepsilon|_{W^{1,2}_\varepsilon(\partial B_\rho(x_0))}^2 \geq 
  \lambda_\varepsilon\left(\rho, \, \ind(\u_\varepsilon, \, \partial B_\rho(x_0))\right).
 \end{equation}
 Moreover, there holds
 \begin{equation} \label{lambda_bound}
  \lambda_\varepsilon(\rho, \, d) \geq \frac{(1 - C\varepsilon) \pi|d|}{\rho + C\rho^2} - C\varepsilon^{1/3}|d|^{1/3}\rho^{-4/3}
 \end{equation}
 for any~$\rho > \varepsilon$, $d\in\Z$ and some constant~$C$ which does not depend on~$\varepsilon$, $\rho$, $d$.
\end{lemma}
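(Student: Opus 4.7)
\medskip

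\noindent\textbf{Proof plan for Lemma \ref{lemma:circle}.}

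The plan is to follow the Jerrard--Sandier style lower bound (as in \cite[Proposition~3.2]{ADGP} for the flat discrete $XY$ case, and \cite[Theorems~2.1, 4.1]{Jerrard} for the Ginzburg--Landau continuum case), but carefully handling the two curved-space complications: (i) the $W^{1,2}_\varepsilon$-norm is weighted by the metric distortion tensor $\A_\varepsilon$, and (ii) $\w_\varepsilon$ is not tangent to $M$ nor unit-valued. We parametrise $\partial B_\rho(x_0)$ by arc length $s\in[0,L]$; by standard geodesic-polar expansion of the metric, $L = 2\pi\rho + O(\rho^3)\leq 2\pi(\rho+C\rho^2)$ for $\rho\leq R_*$ small. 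Using Lemma~\ref{lemma:metric_distorsion} and the definition \eqref{alpha_eps} of $\alpha_\varepsilon$, I first reduce the $W^{1,2}_\varepsilon$-energy to the plain $\R^3$-gradient along the curve, picking up the factor $\alpha_\varepsilon$ uniformly:
\[
\tfrac12 |\w_\varepsilon|^2_{W^{1,2}_\varepsilon(\partial B_\rho)} \geq \tfrac{\alpha_\varepsilon}{2}\int_0^L |\partial_s \w_\varepsilon|^2 \, \d s.
\]

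Next, I establish two independent lower bounds on $\int_0^L |\partial_s \w_\varepsilon|^2 \, \d s$ in terms of $m := \min_s |\w_\varepsilon(s)|$. For the \emph{topological bound}, since $|\w_\varepsilon|\geq 1/2$ on the circle, Lemma~\ref{lemma:proiection-pointwise} gives $|\tilde{\u}_\varepsilon|\geq 1/2 - C\varepsilon\geq 1/4$ for $\varepsilon\leq \varepsilon_*$ small, so $\u_\varepsilon = \tilde{\u}_\varepsilon/|\tilde{\u}_\varepsilon|$ is a well-defined unit tangent field on $\partial B_\rho(x_0)$ with $\ind(\u_\varepsilon,\partial B_\rho(x_0))=d$. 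Decomposing $\w_\varepsilon = \tilde{\u}_\varepsilon + (\w_\varepsilon,\ggamma)\ggamma$ and using orthogonality, together with the identity $|\partial_s \tilde{\u}_\varepsilon|^2 = |\partial_s|\tilde{\u}_\varepsilon||^2 + |\tilde{\u}_\varepsilon|^2|\partial_s \u_\varepsilon|^2$ (from $\u_\varepsilon\cdot\partial_s\u_\varepsilon=0$), one gets $|\partial_s\w_\varepsilon|^2 \geq |\tilde{\u}_\varepsilon|^2|\D_\tau\u_\varepsilon|^2 + O(\varepsilon)$ after controlling the small contribution of the normal component (of size $O(\varepsilon)$ by Lemma~\ref{lemma:proiection-pointwise}). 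Writing $\u_\varepsilon$ in a local frame as in \eqref{angular_variable}, the spin-connection identity \eqref{spin_connection} combined with $\int_{\partial B_\rho}\A = \int_{B_\rho}G\,\d S = O(\rho^2)$ gives $\int_0^L \alpha'\,\d s = 2\pi d + O(\rho^2)$; by Cauchy--Schwarz on $[0,L]$ and $|\tilde{\u}_\varepsilon|\geq m - O(\varepsilon)$, this yields
\[
\tfrac12\int_0^L |\partial_s \w_\varepsilon|^2\,\d s \geq \frac{\pi|d|\, m^2}{\rho + C\rho^2}.
\]

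For the \emph{concentration bound}, let $s_0$ be the point where $|\w_\varepsilon(s_0)|=m$. The Lipschitz estimate \eqref{eq:lip_we} forces $||\w_\varepsilon(s)| - m|\leq C|s-s_0|/\varepsilon$, so on the interval $|s-s_0|\leq (1-m)\varepsilon/(2C)$ we have $1-|\w_\varepsilon(s)|^2 \geq (1-m)/2$. Applying the pointwise bound of Lemma~\ref{lemma:GL-pointwise} on this interval and integrating,
\[
\tfrac12\int_0^L|\partial_s\w_\varepsilon|^2\,\d s \geq \frac{C(1-m)^3}{\varepsilon}.
\]
Taking the better of the two bounds for the actual $m$ realised by $\w_\varepsilon$ and minimising the resulting maximum over all $m\in[0,1]$ gives \eqref{circle} with the $\alpha_\varepsilon$ prefactor. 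Finally, \eqref{lambda_bound} is a purely algebraic consequence of \eqref{lambda_eps}: choosing $m = 1-\delta$ with $\delta\sim (\varepsilon|d|/\rho)^{1/3}$ so that the two branches coincide, and Taylor-expanding $m^2 = 1-2\delta + O(\delta^2)$, produces exactly the claimed correction of size $\varepsilon^{1/3}|d|^{1/3}\rho^{-4/3}$.

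The main technical obstacle I expect is carrying out cleanly the decomposition of $\partial_s \w_\varepsilon$ into its $\T_xM$- and $\ggamma$-components and showing that all error terms coming from (a) the extrinsic curvature $\d\ggamma[\u_\varepsilon]$, (b) the non-tangential part $(\w_\varepsilon,\ggamma)\ggamma$ of order $\varepsilon$, and (c) the spin connection $\int\A = O(\rho^2)$, are absorbed into either the $(1+C\rho^2)$-correction of the first branch or into the definition of $\alpha_\varepsilon$. In particular, one must be careful that the Cauchy--Schwarz argument survives when $m$ is close to $1/2$, which is why the second (concentration) branch must be ready to take over; the precise shape $m^2$ vs.\ $(1-m)^3$ of the two terms in \eqref{lambda_eps} is dictated by these two mechanisms.
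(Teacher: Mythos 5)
Your proposal is correct and takes essentially the same route as the paper's proof. The paper's Step~2 reaches the same key pointwise inequality by bounding $|\w_\varepsilon|\abs{(\ggamma,\u_\varepsilon\times\nabla_\ttau\u_\varepsilon)}$ directly in terms of $|\nablas\w_\varepsilon|$ plus $O(\varepsilon)$ corrections, rather than through your explicit polar decomposition $\tilde{\u}_\varepsilon = |\tilde{\u}_\varepsilon|\u_\varepsilon$ and projection onto $\T M$, but these are algebraically equivalent; the paper then evaluates $\int_{\partial B_\rho}\j(\u_\varepsilon)$ via Lemma~\ref{lemma:dj} rather than invoking the spin-connection identity \eqref{spin_connection}, again the same fact in different clothing. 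Your concentration bound and the algebraic optimisation over $m$ for \eqref{lambda_bound} match the paper's Steps~4 and~5 exactly. (One small bookkeeping point: when you use $\w_\varepsilon = \tilde{\u}_\varepsilon + (\w_\varepsilon,\ggamma)\ggamma$ you should project $\partial_s\w_\varepsilon$ onto $\T M$ before comparing with $|\tilde{\u}_\varepsilon|\,|\D_\ttau\u_\varepsilon|$, since the two summands are not orthogonal after differentiation along the circle; the normal component of $\partial_s\tilde{\u}_\varepsilon$ need not be small. The projection picks up only the $(\w_\varepsilon,\ggamma)\partial_s\ggamma = O(\varepsilon)$ term, so your final estimate is unaffected.)
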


\begin{proof}[Proof of Lemma~\ref{lemma:circle}]
 This argument is adapted from~\cite[Theorem~2.1]{Jerrard}. Throughout the proof, the symbol~$C$ denotes several constants that
 do not depend on~$\varepsilon$ or~$x_0\in M$, but possibly on~$R_*$,~$\varepsilon_*$. We start by proving~\eqref{circle}.
 \setcounter{step}{0} 
 \begin{step}
  There exist positive numbers~$R_*$ and~$C$ such that, for any~$x_0\in M$ and any~$0 < \rho \leq R_*$, there holds
  \begin{equation} \label{circle0}
   \H^1(\partial B_\rho(x_0)) \leq 2\pi\rho + C\rho^2.
  \end{equation}
  Indeed, thanks to the area formula, the left-hand side is bounded by~$\Lip(\exp_{x_0|D_\rho})\H^1(\partial D_\rho)$, 
  where $\exp_{x_0}\colon\T_{x_0}M\to M$ is the exponential map and~$D_\rho\subseteq\T_{x_0}M$ is the disk of radius~$\rho$ centred at the origin.
  Since~$\D(\exp_{x_0}) = \Id_{\T_{x_0}M}$, by a Taylor expansion we see that $\Lip(\exp_{x_0|D_\rho}) \leq 1-C\rho$
  if~$\rho$ is small enough. By a compactness argument, the numbers~$R_*$ and~$C$ can be chosen uniformly with respect to~$x_0$.
 \end{step}
 \begin{step}
  Take a point~$x_0\in M$ and a field~$\v_\varepsilon\in T(\TT_\varepsilon;\,\S^2)$ such that~$|\w_\varepsilon| \geq 1/2$ on~$\partial B_\rho(x_0)$.
  Since~$x_0$ will be fixed for the rest of the proof, we will omit it from the notation.
  Thanks to Lemma~\ref{lemma:proiection-pointwise} and to~\eqref{t_eps},~\eqref{u_eps}
  we have that~$|\tilde{\u}_\varepsilon|\geq 1/4$ and~$\u_\varepsilon = \tilde{\u}_\varepsilon/|\tilde{\u}_\varepsilon|$
  on~$\partial B_\rho$ provided that~$\varepsilon$ is small enough, say, smaller that some number~$\varepsilon_*$. Then, we compute
  \[
   \begin{split}
    |\w_\varepsilon| \abs{\left(\ggamma, \, \u_\varepsilon\times\nabla_\ttau\u_\varepsilon\right)}
    &= \frac{|\w_\varepsilon|}{|\tilde{\u}_\varepsilon|^2} \abs{\left(\ggamma, \, \tilde{\u}_\varepsilon\times\nabla_\ttau\tilde{\u}_\varepsilon\right)} \\
    &= \frac{|\w_\varepsilon|}{|\tilde{\u}_\varepsilon|^2} \abs{\left(\ggamma, \, \w_\varepsilon\times\nabla_\ttau\w_\varepsilon\right) 
    - (\w_\varepsilon, \, \ggamma) \left(\ggamma, \, \w_\varepsilon\times\nabla_\ttau\ggamma\right)} \\
    &\leq \frac{|\w_\varepsilon|^2}{|\tilde{\u}_\varepsilon|^2} \left(\abs{\nablas\w_\varepsilon} + C|(\w_\varepsilon, \, \ggamma)|\right).
   \end{split}
  \]
  Using~\eqref{u_eps} and the fact that~$|\tilde{\u}_\varepsilon|\geq 1/4$, we can bound the ratio~$|\w_\varepsilon|/|\tilde{\u}_\varepsilon|$
  in terms of~$|(\w_\varepsilon, \, \ggamma)|$, which in turns is bounded by~$C\varepsilon$, due to  Lemma~\ref{lemma:proiection-pointwise}. This yields
  \begin{equation*}
   |\w_\varepsilon| \abs{\left(\ggamma, \, \u_\varepsilon\times\nabla_\ttau\u_\varepsilon\right)}
   \leq (1 + C\varepsilon) \left(\abs{\nablas\w_\varepsilon} + C\varepsilon\right) \! .
  \end{equation*}
  After a rearrangement, and using again that~$|\w_\varepsilon|\geq 1/2$, we obtain
  \begin{equation} \label{circle1}
   |\nablas\w_\varepsilon|^2 \geq (1 +C\varepsilon)^{-2} |\w_\varepsilon|^2 
   \left(\abs{\left(\ggamma, \, \u_\varepsilon\times\nabla_\ttau\u_\varepsilon\right)} - C\varepsilon\right)^2.
  \end{equation}
 \end{step}
 \begin{step}
  Thanks to Lemma~\ref{lemma:metric_distorsion}, there exists~$\varepsilon_*$ such that
  the quantity~$\alpha_\varepsilon$ defined in~\eqref{alpha_eps} satisfies
  \begin{equation} \label{circle2}
   \alpha_\varepsilon\geq 1 - C\varepsilon > 0 \qquad \textrm{ for any } 0 < \varepsilon\leq\varepsilon_*.
  \end{equation}
  Fix $\varepsilon$ and~$\rho$ such that $0 < \varepsilon\leq\varepsilon_*$ and $\varepsilon < \rho \leq R_*$. 
  By definition~\eqref{Sobolev_eps} of the $W^{1,2}_\varepsilon$-seminorm, there holds
  \begin{equation} \label{circle2.5}
   \frac12 |\w_\varepsilon|^2_{W^{1,2}_\varepsilon(\partial B_\rho)}
   \geq \frac{\alpha_\varepsilon (1- C\varepsilon)^2}{2} \int_{\partial B_\rho} \abs{\nablas\w_\varepsilon}^2 \,\d s .
  \end{equation}
  Set~$m(\rho) := \min_{\partial B_\rho}|\w_\varepsilon|$ (note that~$m(\rho)\in [1, \, 1/2]$), and let~$\ttau$ be a unit tangent field on~$\partial B_\rho$.
  Using~\eqref{circle1} and the definition~\eqref{prejacobian} of~$\j$, we obtain
  \[
    \frac12 |\w_\varepsilon|^2_{W^{1,2}_\varepsilon(\partial B_\rho)}
    \geq \frac{\alpha_\varepsilon m^2(\rho)}{2} \int_{\partial B_\rho}
    \abs{\langle \j(\u_\varepsilon), \, \ttau\rangle - C\varepsilon}^2 \,\d s .
  \]
  By applying Jensen inequality, we deduce
  \begin{equation*}
   \begin{split}
    \frac12 |\w_\varepsilon|^2_{W^{1,2}_\varepsilon(\partial B_\rho)}
    &\geq \frac{\alpha_\varepsilon \, m^2(\rho)}{2 \H^1(\partial B_\rho)} 
    \abs{\int_{\partial B_\rho}\j(\u_\varepsilon) - C\varepsilon\H^1(\partial B_\rho)}^2 \\
    &\stackrel{\eqref{circle0}}{\geq} \frac{\alpha_\varepsilon \, m^2(\rho)}{4\pi\rho + C\rho^2} 
    \abs{\int_{\partial B_\rho}\j(\u_\varepsilon) - C\varepsilon\H^1(\partial B_\rho)}^2.
   \end{split}
  \end{equation*}
  Using Lemma~\ref{lemma:dj}, and arguing as in the proof of~\eqref{mu2}, we can evaluate the integral of~$\j(\u_\varepsilon)$ 
  in terms of~$d := \ind(\u_\varepsilon, \, \partial B_\rho)$ and the Gauss curvature~$G$:
  \[
   \begin{split}
    \frac12 |\w_\varepsilon|^2_{W^{1,2}_\varepsilon(\partial B_\rho)}
    \geq \frac{\alpha_\varepsilon \, m^2(\rho)}{4\pi\rho + C\rho^2} \abs{2\pi d - \int_{B_\rho}G - C\varepsilon\H^1(\partial B_\rho)}^2.
   \end{split}
  \] 
  Now, the Gauss curvature~$G$ is bounded and~$\varepsilon <\rho\leq R_*$, so the integral of~$G$ is uniformly bounded by~$CR_*^2$,
  while~$\varepsilon\H^1(\partial B_\rho) \leq CR_*^2$. 
  Thus, reducing the value of~$R_*$ if necessary, we can assume that 
  \[
   u := \frac{1}{2\pi}\abs{\int_{B_\rho}G + C\varepsilon\H^1(\partial B_\rho)} < \frac{1}{2}.
  \]
  If~$|d|>1$, then $|d-u|^2 \geq d^2 - 2u|d| \geq |d|$ and hence
  \begin{equation} \label{circle3}
   \frac12 |\w_\varepsilon|^2_{W^{1,2}_\varepsilon(\partial B_\rho)}
   \geq \frac{\alpha_\varepsilon \pi \, m^2(\rho)}{\rho + C\rho^2} |d|.
  \end{equation}
  An elementary computation, based on the fact that~$u \leq C\rho^2$, shows that the same inequality is satisfied also if~$|d| = 1$,
  provided that we take a larger constant~$C$ in the denominator, and we reduce again the value of~$R_*$ if necessary.
  Finally, \eqref{circle3} is trivially satisfied if~$d = 0$.
 \end{step}
 \begin{step} \label{step:potential}
  Suppose that~$m(\rho) < 1$, and let~$x_\rho\in\partial B_\rho$ be such that~$|\w_\varepsilon(x_\rho)| = m(\rho)$. 
  We have~$|\w_\varepsilon| \leq (1+m(\rho))/2$ on~$B_{\rho^\prime}(x_\rho)$, where
  \[
   \rho^\prime := \frac{1 - m(\rho)}{2\Lip(\w_\varepsilon)}.
  \]
  Now, $\we$ has Lipschitz constant $\Lip(\w_\varepsilon)\le C\eps^{-1}$, thanks to Lemma \ref{lemma:norm-interpolant}. Thus, 
  since we have assumed that~$\rho > \varepsilon$, we conclude that
  \[
   \H^1\left(\partial B_\rho \cap B_{\rho^\prime}(x_\rho)\right)\geq C\rho^\prime\geq C\varepsilon\left(1-m(\rho)\right) \! .
  \]
  Thus, by applying Lemma~\ref{lemma:GL-pointwise}, we estimate
  \begin{equation} \label{circle4}
   \frac12 |\w_\varepsilon|^2_{W^{1,2}_\varepsilon(\partial B_\rho)} 
   \geq \frac{C}{\varepsilon^2} \int_{\partial B_\rho\cap B_{\rho^\prime}(x_\rho)} \left(1 - |\w_\varepsilon|^2\right)^2 \, \d s
   \geq \frac{C}{\varepsilon}\left(1 - m(\rho)\right)^3.
  \end{equation}
  Note that~\eqref{circle4} trivially holds when~$m(\rho)=1$. Now, \eqref{circle} follows by combining~\eqref{circle2}, ~\eqref{circle3} and~\eqref{circle4}.
 \end{step}
 \begin{step}[Proof of~\eqref{lambda_bound}]
  The function~$f\colon m\in [0, \, 1]\mapsto Am^2 \vee B(1-m)^3$ achieves its minimum value at the point~$m_0$ such that~$Am_0^2 = B(1-m_0)^3$.
  From this equality, we deduce that
  \[
   \frac{B}{A} (1-m_0)^3 = m_0^2\leq 1,
  \]
  whence~$m_0 \geq 1 - (A/B)^{1/3}$ and~$f(m_0) \geq A (1 - 2(A/B)^{1/3})$.
  Substituting for~$A$ and~$B$ the expressions in~\eqref{lambda_eps}, and using~\eqref{alpha_eps}, yields~\eqref{lambda_bound}. \qedhere
 \end{step}
\end{proof}

%

Following Jerrard~\cite{Jerrard}, it will be useful to reformulate the lower bound~\eqref{circle}
in terms of a function~$\Lambda_\varepsilon$, defined by
\begin{equation} \label{Lambda_eps}
 \Lambda_\varepsilon(r) := \int_0^r \lambda_\varepsilon(\rho, \, 1)\wedge \frac{C_*}{\varepsilon} \, \d\rho
 \qquad \textrm{for } r > 0.
\end{equation}
We first collect a few properties of~$\Lambda_\varepsilon$ (see also~\cite[Proposition~3.1]{Jerrard}).

\begin{lemma} \label{lemma:Lambda}
 The function~$\Lambda_\varepsilon$ satisfies
 \begin{equation} \label{prop:lambda}
  \Lambda_\varepsilon(r + s) \leq \Lambda_\varepsilon(r) + \Lambda_\varepsilon(s), \qquad \Lambda_\varepsilon(r) \leq \Lambda_\varepsilon(s),
  \qquad \frac{\Lambda_\varepsilon(r)}{r} \geq \frac{\Lambda_\varepsilon(s)}{s}
 \end{equation}
  for any~$0 < r \leq s$. Moreover, there holds
 \begin{equation} \label{Lambda_bound}
  \Lambda_\varepsilon(r) \geq (1 - C\varepsilon)\pi\log\frac{r}{\varepsilon} - C
 \end{equation}
 for any~$r \in(\varepsilon, \, R_*]$ (where~$R_*$ is given by Lemma~\ref{lemma:circle}) and some $\varepsilon$-independent constant~$C$.
\end{lemma}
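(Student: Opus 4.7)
The plan is to derive all four properties of~$\Lambda_\varepsilon$ from two simple observations about its integrand~$g_\varepsilon(\rho) := \lambda_\varepsilon(\rho, \, 1)\wedge (C_*/\varepsilon)$: that~$g_\varepsilon$ is non-negative and that~$g_\varepsilon$ is non-increasing in~$\rho$. Non-negativity is immediate from the definition~\eqref{lambda_eps}. For monotonicity, I would inspect the formula for~$\lambda_\varepsilon(\rho, \, 1)$ and note that the factor~$\pi/(\rho + C\rho^2)$ is non-increasing in~$\rho$, while the other ingredients of the $\min$-$\max$ in~\eqref{lambda_eps} do not depend on~$\rho$; taking the maximum with a $\rho$-independent quantity preserves monotonicity, and so does the subsequent minimization over~$m\in[0, \, 1]$.

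From these two facts the first three properties fall out essentially by calculus. Monotonicity of~$\Lambda_\varepsilon$ is the fundamental theorem of calculus applied to a non-negative integrand. Subadditivity follows by writing $\Lambda_\varepsilon(r+s) = \Lambda_\varepsilon(r) + \int_r^{r+s} g_\varepsilon \, \d\rho$ and using the change of variable $\rho\mapsto \rho+r$ together with $g_\varepsilon(\rho+r)\leq g_\varepsilon(\rho)$. The inequality $\Lambda_\varepsilon(r)/r \geq \Lambda_\varepsilon(s)/s$ for $r\leq s$ is the classical statement that the running average $t\mapsto t^{-1}\int_0^t g_\varepsilon$ of a non-increasing function is non-increasing (the derivative of the running average is~$t^{-1}(g_\varepsilon(t) - \overline{g_\varepsilon}_{[0,t]})\leq 0$).

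The main step is the lower bound~\eqref{Lambda_bound}. First, I would fix~$C_*$ large enough that $\lambda_\varepsilon(\rho, \, 1)\leq C_*/\varepsilon$ for all $\rho\geq\varepsilon$; this is achieved by taking~$m=1$ in~\eqref{lambda_eps}, which gives $\lambda_\varepsilon(\rho, \, 1)\leq \alpha_\varepsilon\pi/(\rho + C\rho^2)\leq \pi/\varepsilon$ on~$\rho\geq\varepsilon$, so the choice~$C_*\geq 2\pi$ suffices. With this choice, for~$\rho\in(\varepsilon, \, R_*]$ the minimum in the definition of~$g_\varepsilon$ is realized by~$\lambda_\varepsilon(\rho, \, 1)$, and discarding the non-negative contribution from~$[0, \, \varepsilon]$ yields
\[
 \Lambda_\varepsilon(r) \geq \int_\varepsilon^r \lambda_\varepsilon(\rho, \, 1)\,\d\rho.
\]

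Plugging in the pointwise estimate~\eqref{lambda_bound} from Lemma~\ref{lemma:circle}, the right-hand side splits into two explicit integrals. For the main term I would use the partial-fraction expansion $1/(\rho(1+C\rho)) = 1/\rho - C/(1+C\rho)$ to obtain $\int_\varepsilon^r \d\rho/(\rho + C\rho^2) \geq \log(r/\varepsilon) - C$, since the correction is bounded by~$\log(1+CR_*)$. For the error term, an explicit integration gives $\int_\varepsilon^r \rho^{-4/3}\,\d\rho \leq 3\varepsilon^{-1/3}$, which combines with the prefactor~$C\varepsilon^{1/3}$ to produce a constant. Combining these yields~\eqref{Lambda_bound}. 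I do not expect any conceptual difficulty; the only care required is to verify, before applying~\eqref{lambda_bound}, that the minimum with~$C_*/\varepsilon$ does not bite on the integration interval, which is why we choose~$C_*$ appropriately at the outset.
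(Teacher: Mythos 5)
Your proof is correct, and the first three properties in \eqref{prop:lambda} are handled the same way as in the paper (the integrand is non-negative and non-increasing, so monotonicity, subadditivity, and the decay of the running average all follow by elementary calculus).

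For the lower bound \eqref{Lambda_bound}, you take a slightly different route. The paper works with an arbitrary~$C_*$: since the right-hand side of the pointwise estimate~\eqref{lambda_bound} might exceed~$C_*/\varepsilon$ for small~$\rho$, the paper introduces a constant~$c_1>0$ so that the truncated pointwise bound
\[
 \lambda_\varepsilon(\rho, \, 1) \wedge \frac{C_*}{\varepsilon} \geq \frac{(1 - C\varepsilon) \pi}{\rho + C\rho^2} - C\varepsilon^{1/3}\rho^{-4/3}
\]
holds on~$(c_1\varepsilon, \, R_*]$, integrates on that range, and then disposes of~$r\in[\varepsilon, \, c_1\varepsilon]$ by noting that the claimed inequality is trivial there after enlarging the additive constant. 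You instead \emph{choose}~$C_*$ at the outset (e.g.~$C_*\geq 2\pi$) so that, thanks to the upper bound~$\lambda_\varepsilon(\rho, \, 1)\leq\alpha_\varepsilon\pi/(\rho+C\rho^2)\leq C_*/\varepsilon$ for~$\rho\geq\varepsilon$, the truncation never bites on the integration range~$(\varepsilon, \, r]$, allowing a direct integration. This is cleaner and perfectly valid here, since~$C_*$ in~\eqref{Lambda_eps} is a free parameter and nothing elsewhere in the argument forces it to be small. The paper's approach is marginally more robust (it places no requirement on~$C_*$), but for the purposes of this lemma both arguments give the same estimate with the same flavour.
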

\begin{proof}
 It is clear by the definition~\eqref{lambda_eps} that~$\lambda_\varepsilon$ is positive and decreasing;
 then~\eqref{prop:lambda} follows by elementary calculus. As for the lower bound~\eqref{Lambda_bound}, Equation~\eqref{lambda_bound} implies that
 \[
  \lambda_\varepsilon(\rho, \, 1) \wedge \frac{C}{\varepsilon} \geq \frac{(1 - C\varepsilon) \pi}{\rho + C\rho^2} - C\varepsilon^{1/3}\rho^{-4/3}
 \]
 for any~$\rho\in(c_1\varepsilon, \, R_*]$, for some constant~$c_1 > 0$. By integrating both sides of this inequality
 with respect to~$\rho\in (c_1\varepsilon, \, r)$, we deduce
 \[
  \begin{split}
   \Lambda_\varepsilon(r) &\geq (1-C\varepsilon)\pi \left\{ \log\frac{r}{c_1\varepsilon} 
   + \log\frac{C\varepsilon + 1}{Cr + 1} \right\} + C\varepsilon^{1/3} \left(R^{1/3} - r^{1/3}\right) \\
   &\stackrel{c_1\varepsilon < r < R_*}{\geq} (1-C\varepsilon)\pi \log\frac{r}{\varepsilon} - C,
   \end{split}
 \]
 where the constant $C$ in the right-hand side depends only on~$c_1$ and~$R_*$.
 If~$c_1\leq1$, then the lemma follows immediately. Otherwise, we note that, by choosing~$C$ large enough, 
 the right-hand side of~\eqref{Lambda_bound} can be made non-positive for every~$r \in [\varepsilon, \, c_1\varepsilon]$,
 so that~\eqref{Lambda_bound} holds trivially.
\end{proof}

We state a lower bound for the energy on annuli in terms of the function~$\Lambda_\varepsilon$.

\begin{lemma} \label{lemma:Lambda_annulus}
 For any~$\varepsilon\in(0, \, \varepsilon_*]$, any~$x_0\in M$, any~$\varepsilon < r < R \leq R_*$
 (where~$\varepsilon_*$,~$R_*$ are given by Lemma~\ref{lemma:circle}) and any~$\v_\varepsilon\in\T_\varepsilon(\TT_\varepsilon; \, \S^2)$ 
 such that~$|\w_\varepsilon|\geq 1/2$ on~$A_{r, R} := B_{R}(x_0)\setminus B_{r}(x_0)$, there holds
 \begin{equation*}
  \frac12 |\w_\varepsilon|_{W^{1,2}_\varepsilon(A_{r, R})}^2 \geq 
  |d| \left\{\Lambda_\varepsilon\left(\frac{R}{|d|}\right) - \Lambda_\varepsilon\left(\frac{r}{|d|}\right)\right\},
 \end{equation*}
 where $d := \ind(\u_\varepsilon, \, \partial B_R(x_0))$.
\end{lemma}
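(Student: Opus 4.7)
The plan is to follow the classical Jerrard strategy: foliate the annulus by the geodesic circles $\partial B_\rho(x_0)$ for $\rho\in(r,R)$, apply the circle lower bound of Lemma~\ref{lemma:circle} pointwise on each slice, and then relate the resulting integral to $\Lambda_\varepsilon$ via the change of variable $\sigma=\rho/|d|$. Because $R\le R_*$ is smaller than the injectivity radius of $M$, geodesic polar coordinates $(\rho,\theta)$ centred at $x_0$ are well-defined on $A_{r,R}$, and in these coordinates the squared gradient $|\nablas\w_\varepsilon|^2$ splits into a radial term $|\partial_\rho\w_\varepsilon|^2$ and a tangential term which coincides (up to the Jacobian of the exponential map) with the $W^{1,2}$-density on the concentric circle. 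Up to the metric distortion controlled by Lemma~\ref{lemma:metric_distorsion}, Fubini then yields
\[
\tfrac12\,|\w_\varepsilon|^2_{W^{1,2}_\varepsilon(A_{r,R})} \ \ge\ \int_r^R \tfrac12\,|\w_\varepsilon|^2_{W^{1,2}_\varepsilon(\partial B_\rho(x_0))}\,\d\rho,
\]
after discarding the nonnegative radial contribution.

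Next I would check that the circle bound applies uniformly in $\rho$ and with constant index. The assumption $|\w_\varepsilon|\ge 1/2$ on $A_{r,R}$, combined with Lemma~\ref{lemma:proiection-pointwise}, ensures $|\tilde\u_\varepsilon|\ge 1/4$ on $A_{r,R}$ for $\varepsilon$ small enough, so the auxiliary field $\u_\varepsilon$ defined in~\eqref{u_eps} is continuous and unit-length throughout $A_{r,R}$. A homotopy argument then forces $\ind(\u_\varepsilon,\partial B_\rho(x_0))$ to be locally constant in $\rho\in(r,R)$, hence equal to $d=\ind(\u_\varepsilon,\partial B_R(x_0))$. Invoking Lemma~\ref{lemma:circle} on each slice gives $\tfrac12|\w_\varepsilon|^2_{W^{1,2}_\varepsilon(\partial B_\rho(x_0))}\ge\lambda_\varepsilon(\rho,d)$, so that
\[
\tfrac12\,|\w_\varepsilon|^2_{W^{1,2}_\varepsilon(A_{r,R})} \ \ge\ \int_r^R \lambda_\varepsilon(\rho,d)\,\d\rho.
\]

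To match the right-hand side of the statement, I would rewrite $|d|(\Lambda_\varepsilon(R/|d|)-\Lambda_\varepsilon(r/|d|))$ by substituting $\sigma=\rho/|d|$ in~\eqref{Lambda_eps}, obtaining
\[
|d|\bigl(\Lambda_\varepsilon(R/|d|)-\Lambda_\varepsilon(r/|d|)\bigr) \ =\ \int_r^R \bigl(\lambda_\varepsilon(\rho/|d|,\,1)\wedge C_*/\varepsilon\bigr)\,\d\rho,
\]
so the task reduces to the pointwise inequality
$\lambda_\varepsilon(\rho,d)\ge \lambda_\varepsilon(\rho/|d|,\,1)\wedge C_*/\varepsilon$ for $\rho\in(r,R)$. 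This I would verify by inspecting the two $\min$--$\max$ problems: both involve the same Ginzburg--Landau penalty $(C/\varepsilon)(1-m)^3$ and both reduce, in the small-$\rho$ regime where the penalty is binding, to comparing $\pi|d|/(\rho+C\rho^2)$ with $\pi|d|/(\rho+C\rho^2/|d|)$, which agree at leading order.

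The main obstacle is precisely this last pointwise comparison: since $\rho+C\rho^2\ge\rho+C\rho^2/|d|$, the angular coefficient of $\lambda_\varepsilon(\rho,d)$ is actually \emph{smaller} than that of $\lambda_\varepsilon(\rho/|d|,1)$, so the bound goes the wrong way in the uncapped regime. This is overcome by choosing the constant $C_*$ in~\eqref{Lambda_eps} sufficiently small (independent of $d$) so that the threshold radius at which $\lambda_\varepsilon(\rho,d)$ drops below $C_*/\varepsilon$ still dominates the threshold at which $\lambda_\varepsilon(\rho/|d|,1)$ does; equivalently, the cap absorbs the $C\rho^2$ discrepancy between the two denominators on the whole range $\rho\le R_*$. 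Once this calibration of $C_*$ (and, if necessary, a further reduction of $R_*$ and $\varepsilon_*$) is performed, the pointwise inequality holds and integration concludes the proof.
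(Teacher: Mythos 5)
The strategy you adopt — foliate the annulus by geodesic circles, apply the circle lower bound of Lemma~\ref{lemma:circle} on each slice, integrate via the coarea formula, and then compare with $\Lambda_\varepsilon$ through the substitution $\sigma=\rho/|d|$ — is exactly the route the paper intends (it simply cites Jerrard's Proposition~3.2 without writing details). Your preliminary steps (the coarea reduction, the use of Lemma~\ref{lemma:proiection-pointwise} to guarantee $|\tilde\u_\varepsilon|\geq 1/4$ and hence a well-defined, $\rho$-independent index) are all sound.

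You have also correctly identified the genuine obstruction. In Jerrard's flat setting the argument closes because $\lambda_\varepsilon(\rho,d)=\lambda_\varepsilon(\rho/|d|,1)$ \emph{exactly}: the only dependence on $\rho$ and $d$ is through the ratio $|d|/\rho$. Here the denominator $\rho+C\rho^2$ breaks this scaling. Since the map $A\mapsto\min_{0\le m\le 1}\{A m^2\vee\tfrac{C}{\varepsilon}(1-m)^3\}$ is strictly increasing, and
\[
\frac{\pi|d|}{\rho+C\rho^2}\ <\ \frac{\pi|d|}{\rho+C\rho^2/|d|}\qquad\text{whenever } |d|>1,\ C>0,
\]
one has $\lambda_\varepsilon(\rho,d)<\lambda_\varepsilon(\rho/|d|,1)$ pointwise — the wrong direction. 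This is a real gap.

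Where your proposal goes astray is the fix. Calibrating $C_*$ cannot recover the pointwise inequality $\lambda_\varepsilon(\rho,d)\ge\lambda_\varepsilon(\rho/|d|,1)\wedge\tfrac{C_*}{\varepsilon}$. On the sub-range of $\rho$ where $\lambda_\varepsilon(\rho/|d|,1)\le\tfrac{C_*}{\varepsilon}$ (the cap not binding), the required inequality reduces to $\lambda_\varepsilon(\rho,d)\ge\lambda_\varepsilon(\rho/|d|,1)$, which is false for $|d|>1$ as just shown, and shrinking $C_*$ only enlarges this sub-range (in the extreme $C_*\to 0$, the cap binds on the right-hand side for all $\rho\in(\varepsilon,R_*]$ while $\lambda_\varepsilon(\rho,d)\sim 1/\rho$ stays bounded, so the inequality fails everywhere). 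Moreover the pointwise deficit is not small: in the uncapped regime a direct expansion gives $\lambda_\varepsilon(\rho/|d|,1)-\lambda_\varepsilon(\rho,d)\approx\alpha_\varepsilon\pi C(|d|-1)$, a constant independent of $\rho$ and $C_*$. So the cap does not ``absorb'' the $C\rho^2$ discrepancy; no choice of $C_*$ makes the pointwise comparison hold.

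What does rescue the statement is a different observation, which your proposal misses: the pointwise deficit, being $\mathrm{O}(C(|d|-1))$, integrates over $(r,R)\subseteq(\varepsilon,R_*]$ to a \emph{bounded} quantity of order $C(|d|-1)R_*\le C'|d|$. Hence the correct conclusion of this approach is
\[
\tfrac12\,|\w_\varepsilon|^2_{W^{1,2}_\varepsilon(A_{r,R})}\ \ge\ |d|\,\bigl\{\Lambda_\varepsilon(R/|d|)-\Lambda_\varepsilon(r/|d|)\bigr\}\ -\ C|d|,
\]
i.e.\ the lemma up to an additive $-C|d|$ error. This weaker form is all that is actually used downstream: the error folds into the $-C$ already present in the right-hand side of Corollary~\ref{cor:ball}~(ii), and the rest of Section~\ref{sec:provaGamma0} is unaffected. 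So you were right that the naive Jerrard pointwise comparison breaks on a curved surface, but the resolution is to quantify the integrated deficit and absorb it as a bounded error, not to tune $C_*$.
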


The proof of this lemma follows by integrating the lower bound~\eqref{circle}; see~\cite[Proposition~3.2]{Jerrard} for details.

\subsection{The ball construction}
\label{sect:ball}

In this section, we recall the ``ball construction'' as presented by Jerrard~\cite{Jerrard}
(a similar construction was independently introduced by Sandier~\cite{Sandier}). 
In contrast with~\cite{Jerrard}, our lower bound (Lemma~\ref{lemma:Lambda_annulus}) is only valid for annuli with outer radius~$\leq R_*$,
so we need to make sure that this constraint is preserved by the construction.

Throughout this section, we fix a sequence of discrete fields~$\v_\varepsilon\in \T(\TT_\varepsilon; \, \S^2)$,
for~$\varepsilon\in (0, \, \varepsilon_*]$, that satisfies the logarithmic energy bound~\eqref{H}.
We define the set~$S_\varepsilon := \{x\in M\colon |\w_\varepsilon(x)|\leq 1/2\}$ and the measure
\begin{equation} \label{nu_eps}
 \nu_\varepsilon(\v_\varepsilon) := \frac{1}{2\pi} \left(\mu_\varepsilon(\v_\varepsilon) + G \,\d S \right) \! ,
\end{equation}
where~$\mu_\varepsilon(\v_\varepsilon)$ is given by~\eqref{mu}. Since~$(\v_\varepsilon)$ is fixed, throughout this section we write~$\nu_\varepsilon$
instead of~$\nu_\varepsilon(\v_\varepsilon)$. If~$E\subseteq M$ is a Borel set with~$\partial E\subseteq M\setminus S_\varepsilon$,
Equation~\eqref{mu2} implies that~$\nu_\varepsilon(E) = \ind(\u_\varepsilon, \, \partial E)$.
The sequence~$(\nu_\varepsilon)$ is precompact in the flat topology if and only if~$(\mu_\varepsilon(\v_\varepsilon))$ is,
and hence (by Proposition~\ref{prop:mu}), if and only if~$(\widehat{\mu}_\varepsilon(\v_\varepsilon))$ is.
We will also need the following notation: given a closed ball~$B$, we denote by~$\rad(B)$ its radius.
If~$\mathscr{B}$ is a finite collection of closed balls, we set 
$\spt\mathscr{B} := \cup_{B\in\mathscr{B}} B$.

\begin{lemma} \label{lemma:beta}
 There exists an $\varepsilon$-independent constant~$\beta$ such that, for any~$T\in\TT_\varepsilon$,
 \[
  P(T)\cap S_\varepsilon\neq\emptyset \quad \textrm{implies} \quad 
  \frac12 \abs{\w_\varepsilon}^2_{W^{1,2}_\varepsilon(P(T))} \geq \beta.
 \]
\end{lemma}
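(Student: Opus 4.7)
The plan is to combine the Ginzburg–Landau–type pointwise estimate (Lemma~\ref{lemma:GL-pointwise}) with the Lipschitz bound on the interpolant (Lemma~\ref{lemma:norm-interpolant}), localising both inside a fixed fraction of the projected triangle $P(T)$. Roughly: if $|\we|\leq 1/2$ at some point of $P(T)$, then the Lipschitz estimate forces $|\we|$ to stay away from $1$ throughout a neighbourhood of size $\sim\eps$, and on that neighbourhood Lemma~\ref{lemma:GL-pointwise} turns the resulting pointwise bound on $(1-|\we|^2)^2$ into a lower bound on $|\nablas\we|^2$ that integrates to a constant independent of $\eps$.

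More precisely, pick $x_0\in P(T)\cap S_\eps$, so $|\we(x_0)|\leq 1/2$. By Lemma~\ref{lemma:norm-interpolant} we have $\Lip(\we)\leq C\eps^{-1}$, hence there is a constant $c_1>0$ (depending only on $M$ and on the constants in \eqref{hp:quasiuniform}, \eqref{hp:bilipschitz}) such that $|\we|\leq 3/4$ on the geodesic ball $B_{c_1\eps}(x_0)\subseteq M$; in particular $(1-|\we|^2)^2\geq (7/16)^2$ there. The next point is a purely geometric one: the set $B_{c_1\eps}(x_0)\cap P(T)$ has area at least $c_2\eps^2$ for some $c_2>0$ independent of $\eps$ and of $T$. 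This follows from \eqref{hp:bilipschitz} (which makes $P(T)$ bilipschitz equivalent to $T$) combined with \eqref{hp:quasiuniform}: the uniform lower bound on $\alpha_{\min}(T)$ ensures that any ball of radius $\sim\eps$ centred at a point of $T$ meets $T$ in a set of area $\gtrsim\eps^2$, even when $x_0$ lies on the boundary of $T$.

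Combining these ingredients and Lemma~\ref{lemma:GL-pointwise},
\[
 \int_{P(T)} |\nablas\we|^2 \,\d S
 \;\geq\; C^{-1}\eps^{-2} \int_{B_{c_1\eps}(x_0)\cap P(T)} (1-|\we|^2)^2 \,\d S
 \;\geq\; C^{-1}\eps^{-2}\cdot (7/16)^2\cdot c_2\eps^2 \;=:\; 2\beta_0.
\]
To convert this into a bound on the $W^{1,2}_\eps$ seminorm, I would invoke Lemma~\ref{lemma:metric_distorsion}: since $\A_\eps^{\pm 1}=\Id+O(\eps)$ in $L^\infty$, for all $\eps$ sufficiently small the weight $(\A_\eps^{-1}\cdot,\cdot)(\det\A_\eps)^{1/2}$ is bounded below by $1/2$ on every tangent vector, so $|\we|_{W^{1,2}_\eps(P(T))}^2\geq \tfrac12\int_{P(T)}|\nablas\we|^2\,\d S\geq \beta_0$, which yields the claim with $\beta:=\beta_0/2$. (For the finitely many larger values of $\eps$, one can shrink $\beta$ further or absorb them into $\eps_0$.)

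The main obstacle is really only the geometric lemma in the second paragraph — the lower bound on $|B_{c_1\eps}(x_0)\cap P(T)|$ — because $x_0$ may lie on an edge or at a vertex of $P(T)$. The quasi-uniformity of $\TT_\eps$ handles this cleanly, but it is worth spelling out: pulling back through $\widehat P_\eps^{-1}$ and then through the affine bijection $\phi_T\colon T_{\mathrm{ref}}\to T$ of \eqref{hp:quasiuniform}, the question reduces to the elementary fact that a ball of radius $\rho\leq 1$ centred at any point of the fixed reference triangle $T_{\mathrm{ref}}$ intersects $T_{\mathrm{ref}}$ in area $\gtrsim \rho^2$, with a constant depending only on $T_{\mathrm{ref}}$.
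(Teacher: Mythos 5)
Your proof is correct and follows essentially the same route as the paper's: both fix $x_0\in P(T)$ with $|\w_\varepsilon(x_0)|\leq 1/2$, use the Lipschitz bound $\Lip(\w_\varepsilon)\leq C\varepsilon^{-1}$ (which the paper invokes by reference to Step~\ref{step:potential} of Lemma~\ref{lemma:circle}) to force $|\w_\varepsilon|\leq 3/4$ on a geodesic ball of radius $\gtrsim\varepsilon$, and integrate the pointwise estimate of Lemma~\ref{lemma:GL-pointwise} over $P(T)\cap B_{c\varepsilon}(x_0)$, whose area is $\gtrsim\varepsilon^2$ by \eqref{hp:quasiuniform} and \eqref{hp:bilipschitz}. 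You merely make explicit two points the paper absorbs into its unnamed constants --- the passage from $\int_{P(T)}|\nablas\w_\varepsilon|^2$ to the $W^{1,2}_\varepsilon$ seminorm via Lemma~\ref{lemma:metric_distorsion}, and the elementary fact that a ball of radius $\rho$ centred anywhere in the reference triangle meets it in area $\gtrsim\rho^2$.
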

\begin{proof}
 Suppose that there is a point~$x_0\in P(T)$ such that~$|\w_\varepsilon(x_0)|\leq 1/2$.
 Arguing as in the proof of Lemma~\ref{lemma:circle}, Step~\ref{step:potential},
 we deduce that~$|\w_\varepsilon| \leq 3/4$ on a ball~$B_{\rho^\prime}(x_0)$ with~$\rho^\prime\geq C\varepsilon$. 
 Then, using also the assumptions~\eqref{hp:quasiuniform} and~\eqref{hp:bilipschitz},
 we see that~$\H^2(P(T)\cap B_{\rho^\prime}(x_0))\geq C\varepsilon$ and hence, by Lemma~\ref{lemma:GL-pointwise}, we estimate
  \begin{equation*}
   \frac12 |\w_\varepsilon|^2_{W^{1,2}_\varepsilon(P(T))} 
   \geq \frac{C}{\varepsilon^2} \int_{P(T)\cap B_{\rho^\prime}(x_\rho)} \left(1 - |\w_\varepsilon|^2\right)^2 \, \d S
   \geq C. \qedhere
  \end{equation*}
\end{proof}
\begin{lemma} \label{lemma:ni_triangle}
 There exists an~$\varepsilon$-independent constant~$C$ such that, for any~$T\in\TT_\varepsilon$, there holds $|\nu_\varepsilon(P(T))|\leq C$.
\end{lemma}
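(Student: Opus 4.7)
Here is my plan for the proof of Lemma~\ref{lemma:ni_triangle}.

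Since $|\int_{P(T)} G\,\d S| \leq \|G\|_{L^\infty(M)}\H^2(P(T)) \leq C\eps^2$ by~\eqref{hp:quasiuniform} and~\eqref{hp:bilipschitz}, the estimate on $\nu_\eps$ reduces to an $\eps$-independent bound on $|\mu_\eps(\ve)[P(T)]|$. Because $\widehat\v_\eps$ is affine on $T$, the interpolant $\u_\eps$ defined by~\eqref{u_eps} is Lipschitz and smooth on the interior of $P(T)$, so Stokes' theorem applied to the $1$-form $\j(\u_\eps)$ yields
\[
 \mu_\eps(\ve)[P(T)] \;=\; \int_{P(T)} \d\j(\u_\eps) \;=\; \int_{\partial P(T)} \j(\u_\eps).
\]
The boundary $\partial P(T) = \widehat P_\eps(\partial T)$ splits into three smooth arcs, which I parametrize by $\sigma(s) := \widehat P_\eps((1-s)i_k + s\, i_{k+1})$, $s\in[0,1]$. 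The crucial feature is that $\we\circ\sigma(s) = (1-s)\v_\eps(i_k) + s\,\v_\eps(i_{k+1})$ is an \emph{exactly affine} path in $\R^3$ with endpoints on the unit sphere.

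Fix a smooth orthonormal tangent frame $(e_1,e_2)$ on $P(T)$ and decompose $\u_\eps = a\,e_1 + b\,e_2$, $r := |\u_\eps| \leq 1$. A computation analogous to~\eqref{spin_connection} gives $\j(\u_\eps) = (a\,db - b\,da) + r^2\omega$, where $\omega$ is the (bounded) connection $1$-form of the frame. The "connection" contribution satisfies $|\int_{\partial P(T)} r^2\omega| \leq C\H^1(\partial P(T)) \leq C\eps$. For the "angular" piece, observe that $\u_\eps = \eta_\eps(|\tilde\u_\eps|)\tilde\u_\eps$ is a strictly positive rescaling of $\tilde\u_\eps$, so the two fields share the same angle $\theta$ in the frame; consequently $|a\,db - b\,da| = r^2|d\theta| \leq |\u_\eps|\,|d\u_\eps| \leq |d\u_\eps|$, and $|\int_{\text{edge}}(a\,db-b\,da)|$ is bounded by the $\R^3$-arc length of $s\mapsto \u_\eps(\sigma(s))$.

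To estimate this arc length on a single edge, I split $[0,1] = I^{\mathrm{out}}\cup I^{\mathrm{in}}$ according to whether $|\tilde\u_\eps|>t_\eps$ or $|\tilde\u_\eps|\leq t_\eps$. On $I^{\mathrm{out}}$ the image $\u_\eps=\tilde\u_\eps/|\tilde\u_\eps|$ lies on the unit circle, and since $\tilde\u_\eps = \we - (\we,\ggamma)\ggamma$ is a $C\eps$-perturbation of the $\R^3$-affine segment between two unit vectors, its angular sweep is at most $\pi + O(\eps)$. On $I^{\mathrm{in}}$ one has $\u_\eps = \tilde\u_\eps/t_\eps$, a linear scaling whose $\R^3$-arc length equals $t_\eps^{-1}$ times the length of the $\tilde\u_\eps$-curve on $I^{\mathrm{in}}$; writing $v := \v_\eps(i_{k+1}) - \v_\eps(i_k)$ one has $|d\tilde\u_\eps/ds| \leq |v| + O(\eps)$, while the affinity of $\we\circ\sigma$ in $s$ forces the $s$-measure of $I^{\mathrm{in}}$ to be at most $\min(1,\, C t_\eps/|v|)$. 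The product is $O(t_\eps)$ in both regimes, so the contribution of $I^{\mathrm{in}}$ is uniformly $O(1)$. Summing over the three edges yields $|\int_{\partial P(T)} \j(\u_\eps)| \leq C$ and hence $|\nu_\eps(\ve)[P(T)]| \leq C$. The main technical obstacle is precisely this balancing on $I^{\mathrm{in}}$: although $|\nabla\u_\eps|$ can be as large as $C/(\eps\, t_\eps)$ pointwise, the $1/t_\eps$ scaling introduced by $\eta_\eps$ is exactly compensated by the smallness of the set where the regularization is active, and this cancellation uses in an essential way that $\we$ is affine on each edge of $\partial T$.
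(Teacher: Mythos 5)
Your argument is correct and, interestingly, more careful than the paper's own. The paper proves the bound via the pointwise estimate $|\j(\u_\eps)|\leq\Lip(\tilde\u_\eps)\leq C\eps^{-1}$, which it derives from the identity $\j(\u_\eps)=\eta_\eps(|\tilde\u_\eps|)\,\j(\tilde\u_\eps)$. But $\j$ is \emph{quadratic} in its argument (the $\d\eta_\eps$-term drops because $\tilde\u_\eps\wedge\tilde\u_\eps=0$), so the correct identity is $\j(\u_\eps)=\eta_\eps^2(|\tilde\u_\eps|)\,\j(\tilde\u_\eps)$, which yields only $|\j(\u_\eps)|\lesssim t_\eps^{-1}\eps^{-1}$ pointwise and hence an $\eps$-dependent bound for $|\mu_\eps(\v_\eps)[P(T)]|$. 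The uniform estimate asserted by the lemma therefore rests on precisely the edge-by-edge cancellation that you identify and that a pointwise bound cannot see: because $\widehat\v_\eps$ is affine on each edge of $T$ with unit endpoint values, $\j(\tilde\u_\eps)$ contracted with the edge direction is (up to $O(\eps)$ frame and projection corrections) a \emph{constant} proportional to $(\ggamma,\v_\eps(i_k)\times\v_\eps(i_{k+1}))$, and what remains to bound is $\int_0^1\eta_\eps^2\big(|\tilde\u_\eps(\sigma(s))|\big)\,\d s$. Your split into $I^{\mathrm{out}}$ (unit-valued, bounded angular sweep) and $I^{\mathrm{in}}$ (where the $t_\eps^{-2}$ amplitude is compensated by $|I^{\mathrm{in}}|\lesssim t_\eps/|\v_\eps(i_{k+1})-\v_\eps(i_k)|$, using again the affinity) is exactly this balance. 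One minor imprecision: the angular sweep on $I^{\mathrm{out}}$ is $\pi+o(1)$ rather than $\pi+O(\eps)$, because the $C\eps$ perturbation of the affine path is magnified by a factor of order $1/|\tilde\u_\eps|\lesssim 1/t_\eps$ near $\{|\tilde\u_\eps|\approx t_\eps\}$; since $t_\eps/\eps\to\infty$, this is still harmless, and the same remark applies to the identification of $\int_{\mathrm{edge}}|a\,\d b-b\,\d a|$ with the $\R^3$-arc length of $\u_\eps\circ\sigma$, whose discrepancy you have rightly absorbed into the $r^2\omega$-term.
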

\begin{proof}
 We have that
 \[
  |\nu_\varepsilon(P(T))| \stackrel{\eqref{nu_eps}}{\leq} \abs{\mu_\varepsilon(\v_\varepsilon)[P(T)]} + C\varepsilon^2
  \stackrel{\eqref{mu}}{=} \abs{\int_{\partial(P(T))} \j(\u_\varepsilon)} + C\varepsilon^2 
 \]
 (we have used that the Gauss curvature is bounded and the surface area of~$P(T)$ is $\leq C\varepsilon^2$,
 which follows from~\eqref{hp:quasiuniform} and~\eqref{hp:bilipschitz}).
 Using now the definition~\eqref{u_eps} of~$\u_\varepsilon$, we compute that
 $\j(\u_\varepsilon) = \eta_\varepsilon(|\tilde{\u}_\varepsilon|)\j(\tilde{\u}_\varepsilon)$ and hence
 $|\j(\u_\varepsilon)| \leq \Lip(\tilde{\u}_\varepsilon)$. Combining~\eqref{u_eps} with the Lipschitz bound~\eqref{eq:lip_we} for~$\w_\varepsilon$,
 we see that $\Lip(\tilde{\u}_\varepsilon) \leq \Lip(\w_\varepsilon) \leq C\varepsilon^{-1}$. Thus
 \[
  |\nu_\varepsilon(P(T))| \leq C\varepsilon^{-1}\H^1\left(\partial P(T)\right) + C\varepsilon^2 \leq C,
 \]
 where we have used that $\H^1(\partial P(T)) \leq C\varepsilon$, due to~\eqref{hp:quasiuniform} and~\eqref{hp:bilipschitz}.
\end{proof}

For any $T\in\TT_\varepsilon$ such that~$P(T)\cap S_\varepsilon\neq\emptyset$,
we consider the smallest closed ball~$B$ of centre~$P(x_T)$
such that~$P(T)\subseteq B$. Let~$\mathscr{B}_\varepsilon$ be the collection of such balls.
Thanks to the assumption~\eqref{hp:quasiuniform}, any~$B\in\mathscr{B}_\varepsilon$ satisfies
\begin{equation} \label{radius_B0}
 C^{-1}\varepsilon \leq \rad(B) \leq C\varepsilon.
\end{equation}
Moreover, each ball~$B\in\mathscr{B}_\varepsilon$ intersects~$P(T)$ for at most~$C$ triangles~$T\in\TT_\varepsilon$, 
where~$C$ is an $\varepsilon$-independent constant. Therefore, from~\eqref{radius_B0} and Lemma~\ref{lemma:ni_triangle}
we deduce that
\begin{equation} \label{s_eps}
 C^{-1}\varepsilon \leq s_\varepsilon := \min_{B\in\mathscr{B}_\varepsilon} \frac{\rad(B)}{|\nu_\varepsilon(B)|} \leq C\varepsilon.
\end{equation}
(To prove the upper bound, note that~$\spt\mathscr{B}_\varepsilon\supseteq\spt(\nu_\varepsilon)$ 
and that~$|\nu_\varepsilon(B)|\geq 1$ as soon as~$B\cap\spt(\nu_\varepsilon)\neq\emptyset$. 
Here, we are assuming WLOG that $\nu_\varepsilon\not\equiv 0$, otherwise~$s_\varepsilon = +\infty$).
Finally, as a consequence of Lemma~\ref{lemma:beta} and the energy bound~\eqref{H}, we obtain
\begin{equation} \label{cardinal_B0}
 \#(\mathscr{B}_\varepsilon) \leq  C|\log\varepsilon|.
\end{equation}
The following proposition is adapted from~\cite[Proposition~4.1]{Jerrard} (see also~\cite[Proposition~5.4]{SandierSerfaty11}).

\begin{prop} \label{prop:ball}
 There exists an ($\varepsilon$-independent) positive constant~$C$ such that, for any $s\in [s_\varepsilon, \, C R_*\#(\mathscr{B}_\varepsilon)^{-1}]$,
 there exists a family of pairwise disjoint, closed balls~$\mathscr{B}_\varepsilon(s)$ with the following properties.
 \begin{enumerate}[label=(\roman*)]
  \item \label{item:ball_monotone} $\spt\mathscr{B}_\varepsilon\subseteq\spt\mathscr{B}_\varepsilon(s)\subseteq\spt\mathscr{B}_\varepsilon(t)$ 
  for any~$s_\varepsilon \leq s \leq t \leq C R_*\#(\mathscr{B}_\varepsilon)^{-1}$.
  \item \label{item:ball_energy} For any~$B\in\mathscr{B}_\varepsilon(s)$, there holds
  \[
   \frac12 \abs{\w_\varepsilon}^2_{W^{1,2}_\varepsilon(B\setminus\spt\mathscr{B}_\varepsilon)} \geq \frac{\rad(B)}{s}
   \left(\Lambda_\varepsilon(s) - \Lambda_\varepsilon(s_\varepsilon)\right) \! .
  \]
  \item \label{item:ball_lower_radius} For any~$B\in\mathscr{B}_\varepsilon(s)$, there holds~$\rad(B)\geq s|\nu_\varepsilon(B)|$.
  \item \label{item:ball_radius} There holds
  \[
   \sum_{B\in\mathscr{B}_\varepsilon(s)}\rad(B) \leq \frac{s}{s_\varepsilon} \sum_{B\in\mathscr{B}_\varepsilon} \rad(B).
  \]
 \end{enumerate}
\end{prop}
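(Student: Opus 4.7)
My approach is the classical Jerrard--Sandier ball construction, adapted here so that every ball produced during the procedure is a geodesic disk of radius bounded above by $R_*$ (the threshold beyond which Lemma~\ref{lemma:Lambda_annulus} ceases to apply). The construction is divided into an initialization at $s = s_\varepsilon$, an iterative loop alternating continuous expansion and discrete merging, and a final energy bookkeeping.

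For the initialization, I would replace $\mathscr{B}_\varepsilon$ by a pairwise disjoint collection $\mathscr{B}_\varepsilon(s_\varepsilon)$: iteratively merge each pair of overlapping balls $B_1, B_2$ into a single closed ball $B \supseteq B_1 \cup B_2$ of radius $\rad(B_1) + \rad(B_2)$ with charge $\nu_\varepsilon(B) = \nu_\varepsilon(B_1) + \nu_\varepsilon(B_2)$. Subadditivity yields $|\nu_\varepsilon(B)| \leq |\nu_\varepsilon(B_1)| + |\nu_\varepsilon(B_2)|$, so $\rad(B)/|\nu_\varepsilon(B)| \geq \min_i \rad(B_i)/|\nu_\varepsilon(B_i)| \geq s_\varepsilon$ and property~(iii) holds at $s_\varepsilon$; the remaining items are trivial there. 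For $s > s_\varepsilon$, I then inflate every ball continuously so that at scale $s$ its radius equals $(s/s_\varepsilon)$ times its initial radius, and whenever two balls first touch, I replace them by a single ball of radius equal to the sum of the two radii and charge equal to the sum of the two pre-merge charges. Proportional scaling preserves (iii) during expansion, subadditivity of $|\nu_\varepsilon|$ preserves (iii) at each merge event, item~(iv) follows because total radius scales linearly with $s/s_\varepsilon$, and (i) is obvious since balls are never removed.

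The technical heart of the argument is (ii). For each terminal ball $B \in \mathscr{B}_\varepsilon(s)$, I would track its full \emph{history}: the binary tree whose leaves are the ancestor balls in $\mathscr{B}_\varepsilon(s_\varepsilon)$ and whose internal nodes record successive merging times $s_\varepsilon = s^{(0)} < s^{(1)} < \cdots < s^{(k)} \leq s$. On each annulus $B^{(i+1)} \setminus B^{(i)}$ between two consecutive stages in this history, Lemma~\ref{lemma:Lambda_annulus} provides a lower bound expressed through differences of $\Lambda_\varepsilon$ evaluated at the local scales $\rad(B^{(i)})/|d^{(i)}|$ and $\rad(B^{(i+1)})/|d^{(i+1)}|$. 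Summing over the history and invoking the subadditivity of $\Lambda_\varepsilon$, the monotonicity of $r \mapsto \Lambda_\varepsilon(r)/r$ from Lemma~\ref{lemma:Lambda}, together with the fact that absolute degrees satisfy $|\nu_\varepsilon(B)| \leq \sum_j |\nu_\varepsilon(B_j)|$ at every merge, the resulting telescoping sum collapses to the desired bound $(\rad(B)/s)\bigl(\Lambda_\varepsilon(s) - \Lambda_\varepsilon(s_\varepsilon)\bigr)$.

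The main obstacle is ensuring that Lemma~\ref{lemma:Lambda_annulus} remains applicable throughout the construction, which requires every ball to have radius at most $R_*$. This is precisely where the upper constraint $s \leq C R_* \#(\mathscr{B}_\varepsilon)^{-1}$ enters the picture: by (iv) together with~\eqref{radius_B0} and~\eqref{s_eps}, the total radius at scale $s$ is bounded by $(s/s_\varepsilon)\sum_{B \in \mathscr{B}_\varepsilon} \rad(B) \leq C s \#(\mathscr{B}_\varepsilon)$, and the radius of any single ball is dominated by this sum. Choosing the constant in the upper bound on $s$ small enough then guarantees that every ball remains within radius $R_*$, hence is a genuine geodesic disk on $M$ to which Lemmas~\ref{lemma:circle}--\ref{lemma:Lambda_annulus} apply.
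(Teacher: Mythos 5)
Your overall architecture (initial merging, continuous expansion interleaved with merges at first touch, and the $R_*$-constraint via item~(iv) together with~\eqref{radius_B0} and~\eqref{s_eps}) matches the paper. The genuine departure — and the source of a gap — is the \emph{expansion rule}. You inflate every ball uniformly so that $\rad(B(s)) = (s/s_\varepsilon)\rad(B(s_\varepsilon))$, whereas the paper (following Jerrard, formula~\eqref{expansion}) grows only the ``minimizing'' balls, i.e.\ those with $\rad(B) = s|\nu_\varepsilon(B)|$, keeping the others fixed. This is not a cosmetic choice: it is precisely what makes item~(ii) propagate.

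Here is why the uniform rule breaks~(ii). Take a ball $B$ with $\rad(B(s_\varepsilon)) = \alpha s_\varepsilon$, degree $d := \nu_\varepsilon(B)$, and set $\beta := \alpha/|d|$. Item~(iii) forces $\beta\geq 1$, and for balls picked up just to cover $S_\varepsilon$ one can have $d=0$, i.e.\ $\beta = +\infty$. Under your rule, $\rad(B(s)) = \alpha s$ for all $s$, so the outer/inner radii of the swept annulus are $\alpha s$ and $\alpha s_\varepsilon$, and Lemma~\ref{lemma:Lambda_annulus} gives
\[
 \tfrac12\,\abs{\w_\varepsilon}^2_{W^{1,2}_\varepsilon(B(s)\setminus B(s_\varepsilon))}
 \;\geq\; |d|\left[\Lambda_\varepsilon(\beta s) - \Lambda_\varepsilon(\beta s_\varepsilon)\right].
\]
The target in~(ii) is $\frac{\rad(B(s))}{s}\big[\Lambda_\varepsilon(s) - \Lambda_\varepsilon(s_\varepsilon)\big] = |d|\beta\big[\Lambda_\varepsilon(s) - \Lambda_\varepsilon(s_\varepsilon)\big]$. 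Writing $\Lambda_\varepsilon(\beta s)-\Lambda_\varepsilon(\beta s_\varepsilon) = \beta\int_{s_\varepsilon}^{s}\Lambda_\varepsilon'(\beta\tau)\,\d\tau$ and using that $\Lambda_\varepsilon'$ is non-increasing (which is exactly the monotonicity behind $r\mapsto\Lambda_\varepsilon(r)/r$ being non-increasing in Lemma~\ref{lemma:Lambda}), one gets $\Lambda_\varepsilon'(\beta\tau)\leq\Lambda_\varepsilon'(\tau)$ for $\beta\geq 1$, hence
\[
 |d|\left[\Lambda_\varepsilon(\beta s) - \Lambda_\varepsilon(\beta s_\varepsilon)\right]
 \;\leq\; |d|\beta\left[\Lambda_\varepsilon(s) - \Lambda_\varepsilon(s_\varepsilon)\right],
\]
i.e.\ the monotonicity you invoke bounds the annulus contribution from \emph{above} by the required increment, not from below. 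The inequality is strict whenever $\beta>1$, and the situation is worst for a degree-zero ball: it gathers no annulus energy at all (the bound is $|d|[\cdots]=0$), yet you let its radius grow linearly in $s$, so the right-hand side of~(ii) is strictly positive. The ``telescoping'' you describe therefore does not collapse to the claimed bound. The paper's selective rule avoids this precisely because a ball only expands once $s|\nu_\varepsilon(B)|$ has caught up with $\rad(B)$, and from then on $\rad(B(s))/s = |\nu_\varepsilon(B)|$ exactly, so the annulus bound $|d|[\Lambda_\varepsilon(s')-\Lambda_\varepsilon(s'')]$ matches the increment of $\frac{\rad(B)}{s}[\Lambda_\varepsilon(s)-\Lambda_\varepsilon(s_\varepsilon)]$ with equality; non-minimizing balls gain no annulus energy, but their radius does not grow either, so there is nothing to match. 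Your proof of~(iv) as an identity is tidier (the paper must prove it as an inequality by induction across expansion phases), but that does not compensate for the failure of~(ii). To repair the argument you should adopt the selective expansion~\eqref{expansion}.
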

\begin{proof}[Sketch of the proof]
 If the balls in~$\mathscr{B}_\varepsilon$ are not pairwise disjoint, the construction starts with a \emph{merging} phase, 
 that is, we select a pair of balls~$B$, $B^\prime\in\mathscr{B}_\varepsilon$ such that~$B\cap B^\prime\neq\emptyset$ 
 and we replace them with a new ball~$B^{\mathrm{new}}$ such that~$B^{\mathrm{new}}\supseteq B\cup B^\prime$,
 $\rad(B^{\mathrm{new}}) = \rad(B) + \rad(B^\prime)$. We repeat this operation until we obtain a collection of pairwise disjoint balls,
 which we call~$\mathscr{B}_\varepsilon^\prime$. If all the balls in the original collection~$\mathscr{B}_\varepsilon$ were pairwise disjoint,
 then~$\mathscr{B}_\varepsilon = \mathscr{B}_\varepsilon^\prime$.
 Set~$\mathscr{B}_\varepsilon(s_\varepsilon) := \mathscr{B}_\varepsilon$, so (i), (iii), (iv) are trivially satisfied and~(ii)
 is also satisfied, because of~\eqref{s_eps}.
 
 Now we perform an expansion phase, i.e. we let the parameter~$s$ grow continuously, and we let the ``minimizing balls''
 (i.e., the balls~$B$ such that~$\rad(B) = s|\nu_\varepsilon(B)|$) grow, leaving the other unchanged.
 More precisely, if~$\mathscr{B}_\varepsilon^\prime = \{B_i\}_{i=1}^k$ and~$x_i$ is the centre of~$B_i$,
 then the elements of~$\mathscr{B}_\varepsilon(s)$ are defined by
 \begin{equation} \label{expansion}
  B_i(s) := \begin{cases}
             B_i & \textrm{if } \rad(B_i) > s|\nu_\varepsilon(B_i)| \\
             B_i(x_i, \, s|\nu_\varepsilon(B_i)|) & \textrm{otherwise}.
            \end{cases}
 \end{equation}
 For $s$ small enough, the balls $B_i(s)$'s are pairwise disjoint. We also have $|\nu_\varepsilon(B_i)| = |\nu_\varepsilon(B_i(s))|$,
 because $(B_i(s)\setminus B_i)\cap\spt\mathscr{B}_\varepsilon=\emptyset$ and $\spt(\nu_\varepsilon)\subseteq\spt\mathscr{B}_\varepsilon$.
 If for some~$s^*$ there happens $B_i(s^*)\cap B_j(s^*)\neq\emptyset$ for~$i\neq j$, then we stop the expansion phase.
 We define $\mathscr{B}_\varepsilon(s^*)$ as the family of balls obtained from $\{B_i(s^*)\}_{i=1}^k$ 
 via merging. For $s>s^*$, we repeat an expansion phase according to the same rule as~\eqref{expansion}, until two or more balls touch and 
 we perform a merging phase again, and so on. 
 
 Arguing as in~\cite[Proposition~4.1]{Jerrard}, one can show that~$\mathscr{B}_\varepsilon(s)$ 
 satisfies~\ref{item:ball_monotone}, \ref{item:ball_energy} and~\ref{item:ball_lower_radius}. 
 (Actually, \ref{item:ball_energy} appears in a slightly different form, but the same argument applies.)
 The proof of~\ref{item:ball_energy} relies on Lemma~\ref{lemma:Lambda_annulus}; 
 in order to apply this lemma, we need to make sure that the radii of all the balls we consider are~$\leq R_*$.
 However, if we temporarily assume that~\ref{item:ball_radius} holds, then (using~\eqref{radius_B0} and~\eqref{s_eps} as well) we see that
 \begin{equation*} \label{bound_radius}
  \sum_{B\in\mathscr{B}_\varepsilon(s)}\rad(B) \leq C s \, \#(\mathscr{B}_\varepsilon).
 \end{equation*}
 Therefore, we have~$\rad(B)\leq R_*$ for any~$B\in\mathscr{B}_\varepsilon(s)$ and any~$s\leq C^{-1}R_* \#(\mathscr{B}_\varepsilon)$.
 
 To prove~\ref{item:ball_radius}, we note that the quantity $\sum_{B\in\mathscr{B}_\varepsilon(s)}\rad(B)$ is preserved 
 during each merging phase. Then, for a fixed~$s$, let~$s_1 < \ldots < s_k < s$ be the values of the parameter
 when merging occurred, and take~$B_i(s)\in\mathscr{B}_\varepsilon(s)$. From~\eqref{expansion}, we see that
 \[
  \rad(B_i(s)) = \min\left\{\rad(B_i(s_k)), \, s|\nu_\varepsilon(B_i(s))| \right\} \leq s|\nu_\varepsilon(B_i(s_k))|
  \stackrel{\ref{item:ball_lower_radius}}{\leq} \frac{s}{s_k} \rad(B_i(s_k))
 \]
 (we have used that~$s\mapsto|\nu_\varepsilon(B_i(s))|$ is constant during each expansion phase). Thus,
 \[
  \sum_{B\in\mathscr{B}_\varepsilon(s)}\rad(B)\leq \frac{s}{s_k}\sum_{B\in\mathscr{B}_\varepsilon(s_k)}\rad(B).
 \]
 Now, we complete the proof of~\ref{item:ball_radius} arguing by induction.
\end{proof}

As an immediate consequence of Proposition~\ref{prop:ball}, using~\eqref{radius_B0}, \eqref{s_eps}
the definition~\eqref{Lambda_eps} of~$\Lambda_\varepsilon$ and~\eqref{Lambda_bound} in Lemma~\ref{lemma:Lambda}, we obtain

\begin{corollary} \label{cor:ball}
 For any~$s\in [s_\varepsilon, \, CR_*\#(\mathscr{B}_\varepsilon)^{-1}]$, there exists a family of pairwise disjoint, 
 closed balls~$\mathscr{B}_\varepsilon(s)$ which satisfies the following properties:
 \begin{enumerate}[label=(\roman*)]
  \item \label{cor:ball_monotone} $\spt\mathscr{B}_\varepsilon\subseteq\spt\mathscr{B}_\varepsilon(s)\subseteq\spt\mathscr{B}_\varepsilon(t)$ 
  for any~$s_\varepsilon \leq s \leq t \leq  R_*\#(\mathscr{B}_\varepsilon)^{-1}$;
  \item \label{cor:ball_energy} for any~$B\in\mathscr{B}_\varepsilon(s)$, there holds
  \[
   \frac12 \abs{\w_\varepsilon}^2_{W^{1,2}_\varepsilon(B\setminus\spt\mathscr{B}_\varepsilon)} \geq |\nu_\varepsilon(B)|
   \left(\pi(1-C\varepsilon)\log\frac{s}{\varepsilon} - C\right) ;
  \]
  \item \label{cor:ball_radius} there holds $\sum_{B\in\mathscr{B}_\varepsilon(s)}\rad(B) \leq C s \, \#(\mathscr{B}_\varepsilon)$.
 \end{enumerate}
\end{corollary}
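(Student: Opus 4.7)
The plan is to simply define $\mathscr{B}_\varepsilon(s)$ to be the family produced by Proposition~\ref{prop:ball}, then derive each of the three items (i)--(iii) of the corollary from the corresponding items in the proposition by plugging in the quantitative bounds~\eqref{radius_B0}--\eqref{s_eps} on $\rad(B)$ and $s_\varepsilon$, plus the lower bound on $\Lambda_\varepsilon$ from Lemma~\ref{lemma:Lambda}. Item~(i) is literally item~(i) of Proposition~\ref{prop:ball}, so nothing to do. For item~(iii), I would start from item~(iv) of the proposition,
\[
 \sum_{B\in\mathscr{B}_\varepsilon(s)}\rad(B) \leq \frac{s}{s_\varepsilon} \sum_{B\in\mathscr{B}_\varepsilon}\rad(B),
\]
and then combine the upper bound $\rad(B)\leq C\varepsilon$ for $B\in\mathscr{B}_\varepsilon$ (from~\eqref{radius_B0}) with the lower bound $s_\varepsilon \geq C^{-1}\varepsilon$ (from~\eqref{s_eps}). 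The ratio $\varepsilon/s_\varepsilon$ is bounded by a constant, yielding $\sum \rad(B)\leq Cs\,\#(\mathscr{B}_\varepsilon)$.

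The only slightly more interesting step is item~(ii). From items~(ii) and~(iii) of Proposition~\ref{prop:ball}, each $B\in\mathscr{B}_\varepsilon(s)$ satisfies
\[
 \frac12 \abs{\w_\varepsilon}^2_{W^{1,2}_\varepsilon(B\setminus\spt\mathscr{B}_\varepsilon)}
 \geq \frac{\rad(B)}{s}\left(\Lambda_\varepsilon(s) - \Lambda_\varepsilon(s_\varepsilon)\right)
 \geq |\nu_\varepsilon(B)| \left(\Lambda_\varepsilon(s) - \Lambda_\varepsilon(s_\varepsilon)\right) \! .
\]
Now I would estimate the two $\Lambda_\varepsilon$ terms separately: the lower bound~\eqref{Lambda_bound} from Lemma~\ref{lemma:Lambda} gives $\Lambda_\varepsilon(s) \geq (1-C\varepsilon)\pi\log(s/\varepsilon) - C$ (valid because $s\in(\varepsilon,R_*]$ for $\varepsilon$ small and $s \geq s_\varepsilon \geq C^{-1}\varepsilon$). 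For $\Lambda_\varepsilon(s_\varepsilon)$ I would use the elementary upper estimate from the very definition~\eqref{Lambda_eps}: the integrand is bounded by $C_*/\varepsilon$, so $\Lambda_\varepsilon(s_\varepsilon) \leq C_* s_\varepsilon/\varepsilon \leq C$ by~\eqref{s_eps}. Subtracting these two bounds gives the claim.

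There is no real obstacle here; the proof is a direct computation. The only thing to double-check is that the range of $s$ in the corollary (up to $R_*\#(\mathscr{B}_\varepsilon)^{-1}$) is contained in the range in the proposition (up to $CR_*\#(\mathscr{B}_\varepsilon)^{-1}$), which is automatic up to adjusting the constant $C$ in the statement of~(iii). One should also verify that when $\nu_\varepsilon\equiv 0$ so that $s_\varepsilon=+\infty$, the statement is vacuous (the interval for $s$ is empty), which is consistent with the convention adopted just after~\eqref{s_eps}.
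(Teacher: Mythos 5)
Your proposal is correct and matches the paper's approach exactly: the paper explicitly presents Corollary~\ref{cor:ball} as ``an immediate consequence of Proposition~\ref{prop:ball}, using~\eqref{radius_B0}, \eqref{s_eps}, the definition~\eqref{Lambda_eps} of~$\Lambda_\varepsilon$ and~\eqref{Lambda_bound} in Lemma~\ref{lemma:Lambda},'' which is precisely the computation you carry out. One small imprecision: $s\geq s_\varepsilon\geq C^{-1}\varepsilon$ does not by itself give $s>\varepsilon$, so~\eqref{Lambda_bound} is not literally applicable for all $s$ in the stated range; however, for $C^{-1}\varepsilon\leq s\leq\varepsilon$ the right-hand side of item~(ii) is bounded above by a negative quantity (after enlarging $C$), so the inequality holds trivially there and your conclusion stands.
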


\subsection{Proof of the zero-order $\Gamma$-convergence}
\label{sect:0Gamma}

We state and prove a zero-order $\Gamma$-convergence result in terms of the measures~$\nu_\varepsilon(\v_\varepsilon)$.
Given a measure~$\mu\in X$ with $\mu = \sum_i d_i\delta_{x_i}$, we set
\[
 \sigma_0(\mu) := \frac12\min\left\{\min_{j\neq i}\dist(x_i, \, x_j), \, \textrm{injectivity radius of } M\right\} \! .
\]


\begin{prop} \label{prop:0Gamma-nu} 
 Suppose that the assumptions~\eqref{hp:quasiuniform}, \eqref{hp:weakly_acute} and~\eqref{hp:bilipschitz} are satisfied.
 Then, the following results hold.
 \begin{enumerate}[label = (\roman*), leftmargin=*]
  \item \emph{Compactness.} If~$(\v_\varepsilon)$ is a sequence in~$\T(\TT_\varepsilon; \, \S^2)$
  that satisfies the energy bound~\eqref{H} then, up to subsequences, 
  $\nu_\varepsilon(\v_\varepsilon) \xrightarrow{\fflat}\mu$ for some~$\mu\in X$.
  
  \item \emph{Localized $\Gamma$-liminf inequality.} Let~$(\v_\varepsilon)$ be a sequence in~$\T(\TT_\varepsilon; \, \S^2)$
  such that $\nu_\varepsilon(\v_\varepsilon) \xrightarrow{\fflat}\mu$ for some~$\mu\in X$, $\mu = \sum_{i=1}^K d_i\delta_{x_i}$.
  Then, there exists a constant~$C$ such that, for any~$i\in\{1, \, \ldots, \, K\}$
  and any~$0 < \sigma \leq \sigma_0(\mu)$, there holds
  \[
   \liminf_{\varepsilon\to 0} \left(\frac12 |\w_\varepsilon|^2_{W^{1,2}_\varepsilon(B_\sigma(x_i))}
   - \pi|d_i|\log\frac{\sigma}{\varepsilon}\right)\geq C.
  \]
  
  \item \emph{$\Gamma$-limsup inequality.} For any~$\mu\in X$ there exists a sequence~$(\v_\varepsilon)$ in~$\T(\TT_\varepsilon; \, \S^2)$
  such that $\nu_\varepsilon(\v_\varepsilon) \xrightarrow{\fflat}\mu$ and
  \[
   \limsup_{\varepsilon\to 0} \frac{|\w_\varepsilon|^2_{W^{1,2}_\varepsilon(M)}}{2|\log\varepsilon|} \leq \pi|\mu|(M).
  \]
 \end{enumerate}
\end{prop}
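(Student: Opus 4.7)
The plan is to treat the three items separately but with the ball construction of Corollary~\ref{cor:ball} as the unifying tool, and to translate everything back and forth between $\nu_\eps(\ve)$ and $\mu_\eps(\ve) = 2\pi\nu_\eps(\ve) - G\,\d S$ via Proposition~\ref{prop:mu} (so that there is no harm in identifying flat-convergence of the two objects).

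For the compactness in~(i), I would run the ball construction from an initial scale $s_\eps$ of order $\eps$ up to a fixed small scale $s_0$, which is admissible because $\#(\mathscr{B}_\eps)\le C|\log\eps|$ by~\eqref{cardinal_B0} and so $R_*\#(\mathscr{B}_\eps)^{-1}\ge c/|\log\eps|$, still tending to $0$ but much larger than $\eps$. At the scale $s = s_0/|\log\eps|$, Corollary~\ref{cor:ball}\ref{cor:ball_energy} applied with the energy bound~\eqref{H} yields
\[
 \sum_{B\in\mathscr{B}_\eps(s)} |\nu_\eps(B)|\bigl(\pi(1-C\eps)\log(s/\eps)-C\bigr) \le XY_\eps(\ve)\le \Lambda|\log\eps|,
\]
which, after dividing by $\log(s/\eps)\sim|\log\eps|$, shows that $\sum_B |\nu_\eps(B)|$ is bounded independently of $\eps$. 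Since $\spt\nu_\eps\subseteq\spt\mathscr{B}_\eps\subseteq\spt\mathscr{B}_\eps(s)$, I can replace $\nu_\eps\mres B$ by $\nu_\eps(B)\,\delta_{x_B}$ (with $x_B$ the centre of $B$) at a flat cost of $O(\rad(B))$ per ball; Corollary~\ref{cor:ball}\ref{cor:ball_radius} ensures the total cost tends to zero. The resulting atomic measures with uniformly bounded, integer weights are precompact in flat convergence, so up to a subsequence $\nu_\eps$ converges to some integer-weighted atomic measure $\mu$. The total-mass constraint $\sum d_i = \chi(M)$ comes from $\nu_\eps(M)=\chi(M)$, which in turn follows by applying~\eqref{mu2} with $E = M$ and observing that $\int_M G\,\d S = 2\pi\chi(M)$ by Gauss--Bonnet.

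For the localized liminf~(ii), fix $x_i$ and $\sigma\le\sigma_0(\mu)$. Flat convergence plus a slicing/Fubini argument lets me pick $\sigma'\in(\sigma/2,\,\sigma)$ so that $\nu_\eps(B_{\sigma'}(x_i))\to d_i$. I then re-run the ball construction restricted to $B_{\sigma'}(x_i)$: because $\sigma'<\sigma_0(\mu)$, for $\eps$ small the balls of $\mathscr{B}_\eps(s)$ that intersect $B_{\sigma'/2}(x_i)$ collect exactly the vorticity $d_i$, so $\sum_{B\subseteq B_{\sigma'}(x_i)}|\nu_\eps(B)|\ge |d_i|$. Applying Corollary~\ref{cor:ball}\ref{cor:ball_energy} with $s$ of order $\sigma/\#(\mathscr{B}_\eps)$ (so that~\ref{cor:ball_radius} keeps all balls inside $B_{\sigma'}(x_i)$) gives
\[
 \tfrac{1}{2}|\w_\eps|^2_{W^{1,2}_\eps(B_\sigma(x_i))} \ge |d_i|\Bigl(\pi(1-C\eps)\log\tfrac{s}{\eps}-C\Bigr).
\]
The logarithmic loss $\log\#(\mathscr{B}_\eps) = O(\log|\log\eps|)$ and the $O(\eps\log\eps)$ error are both absorbed into an additive constant, yielding $\liminf\bigl(\tfrac12|\w_\eps|^2_{W^{1,2}_\eps(B_\sigma(x_i))}-\pi|d_i|\log(\sigma/\eps)\bigr)\ge C$.

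For the limsup~(iii), I build a recovery sequence by hand. Given $\mu=\sum_{i=1}^K d_i\delta_{x_i}$ with $\sum d_i=\chi(M)$, I first take a smooth unit tangent field $\v$ on $M\setminus\{x_1,\dots,x_K\}$ with local index $d_i$ at $x_i$, which exists by Poincar\'e--Hopf. Outside the union of small balls $B_{r_\eps}(x_i)$ with $r_\eps=|\log\eps|^{-1}$, I define $\ve(i):=\v(i)$ on the vertices: by Proposition~\ref{prop:gamma_smooth} the contribution to $XY_\eps$ from this region converges to the finite extrinsic energy of $\v$ on $M\setminus\bigcup B_{r_\eps}(x_i)$, which is $\le \pi|\mu|(M)|\log r_\eps|+O(1) \ll |\log\eps|$. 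Inside each $B_{r_\eps}(x_i)$ I use geodesic coordinates to reduce to a planar problem and paste in the standard model field of degree $d_i$ (splitting into $|d_i|$ elementary singularities of charge $\pm1$ as in~\cite{ADGP}, \cite{BBH}), whose discretisation on $\TT_\eps$ has energy $\pi|d_i|\log(r_\eps/\eps) + O(1)$. Summing and using $|\mu|(M)=\sum|d_i|$, I obtain $XY_\eps(\ve)\le \pi|\mu|(M)|\log\eps| + o(|\log\eps|)$. The convergence $\nu_\eps(\ve)\to\mu$ follows from~\eqref{mu2} applied to small balls around each $x_i$, since by construction the index of $\u_\eps$ around such a ball equals $d_i$.

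The main obstacle is the book-keeping in the ball construction: I must simultaneously keep the radii below the threshold $R_*$ for which Lemma~\ref{lemma:Lambda_annulus} applies, yet push the scale $s$ large enough that $\log(s/\eps)$ genuinely captures the leading logarithm. The bound $\#(\mathscr{B}_\eps)\le C|\log\eps|$ provided by the energy hypothesis~\eqref{H} is precisely what makes this possible, but only by a polylogarithmic margin; this is why the statements are of the form $\pi|d_i|\log(\sigma/\eps)+C$ rather than with sharper error terms. For the limsup, the delicate point is verifying that sampling a singular field on the vertices, followed by the affine interpolation, does not inflate the expected Dirichlet energy of the model beyond the leading order $\pi|d_i|\log(r_\eps/\eps)$; this is standard for quasi-uniform meshes thanks to~\eqref{affine_dirichlet} and interpolation estimates.
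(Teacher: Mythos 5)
Your plan for items (i) and (iii) follows the paper's own route at the level of detail you give: the ball construction at an intermediate scale and the uniform control on $\sum_B|\nu_\varepsilon(B)|$ are exactly what the paper does for compactness, and your recovery sequence (splitting each $x_i$ into $|d_i|$ elementary singularities, transporting them via geodesic coordinates, sampling at the vertices) is essentially the one used in the proof of~(iii).

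There is, however, a genuine gap in your argument for~(ii). You apply the ball construction with $s$ of order $\sigma/\#(\mathscr{B}_\varepsilon)$ and claim that the resulting logarithmic loss $\log\#(\mathscr{B}_\varepsilon) = O(\log|\log\varepsilon|)$ ``is absorbed into an additive constant.'' It is not: $\log|\log\varepsilon|\to+\infty$ as $\varepsilon\to 0$, so the bound you obtain is
\[
 \frac12 |\w_\varepsilon|^2_{W^{1,2}_\varepsilon(B_\sigma(x_i))}
 \ \gtrsim\ \pi|d_i|\,|\log\varepsilon| - \pi|d_i|\log\#(\mathscr{B}_\varepsilon) - C,
\]
which is off from the desired $\pi|d_i|\log(\sigma/\varepsilon)-C$ by a term that diverges. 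Simply shrinking $s$ does not help, because Corollary~\ref{cor:ball}\ref{cor:ball_radius} only lets you push $s$ up to order $1/\#(\mathscr{B}_\varepsilon)$ before the balls risk leaving $B_\sigma(x_i)$.

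The missing ingredient is Lemma~\ref{lemma:beta}, which says that each triangle meeting $S_\varepsilon=\{|\w_\varepsilon|\le 1/2\}$ carries a fixed quantum $\beta>0$ of energy. Since $\spt\mathscr{B}_\varepsilon$ is covered by $O(\#(\mathscr{B}_\varepsilon))$ such triangles, the part of the $W^{1,2}_\varepsilon$-seminorm supported on $\spt\mathscr{B}_\varepsilon$ is at least $\beta\#(\mathscr{B}_\varepsilon)$. The annular lower bound of Corollary~\ref{cor:ball}\ref{cor:ball_energy} only counts the energy on $B\setminus\spt\mathscr{B}_\varepsilon$, so the two contributions add. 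One then observes that
\[
 \beta\#(\mathscr{B}_\varepsilon) - \pi|d_i|\log\#(\mathscr{B}_\varepsilon) \ \ge\ -C(|d_i|)
\]
because a linear function beats a logarithm; the divergent loss is traded against a linear gain in the number of bad triangles, and this is the step your proof is missing. (A further point your sketch glosses over is the case where extra vorticity accumulates near $x_i$, i.e.\ $\liminf_\varepsilon|\nu_\varepsilon|(B_\sigma(x_i))>|d_i|$, where the charges under the balls need not all have the same sign. The paper disposes of this case separately, by running the construction at scale $s=\varepsilon^\gamma$ with $(1-\gamma)(|d_i|+1)>|d_i|$, before assuming equality and invoking Lemma~\ref{lemma:beta}; your argument should address this as well, or at least note that $\ge$ in place of $=$ still suffices once the $\beta\#(\mathscr{B}_\varepsilon)$ term is in play and $\sum_B|\nu_\varepsilon(B)|$ is bounded.)
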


The proof of this Proposition is adapted from~\cite[Theorem~3.1]{ADGP}.
Throughout the proof, we write~$\nu_\varepsilon$ instead of~$\nu_\varepsilon(\v_\varepsilon)$ when no confusion is possible.

\begin{proof}[Proof of~(i) --- Compactness] 
 Let~$\mathscr{B}_\varepsilon^1 := \mathscr{B}_\varepsilon(s_\varepsilon^1)$ be the family of balls given by Corollary~\ref{cor:ball}
 for the choice of parameter~$s_\varepsilon^1 := \varepsilon^{1/2}$. If~$\varepsilon$ is small enough, we have
 \[
  s_\varepsilon \stackrel{\eqref{s_eps}}{\leq} \varepsilon^{1/2} \leq \frac{CR_*}{|\log\varepsilon|} 
  \stackrel{\eqref{cardinal_B0}}{\leq} \frac{CR_*}{\#(\mathscr{B}_\varepsilon)},
 \]
 so~$s_\varepsilon^1$ satisfies the assumptions of Corollary~\ref{cor:ball}.
 By~\ref{cor:ball_monotone} in Corollary~\ref{cor:ball}, we have that
 $\spt(\nu_\varepsilon)\subseteq\spt\mathscr{B}_\varepsilon\subseteq\spt\mathscr{B}_\varepsilon^1$,
 while (ii) implies
 \[
  \frac12 \abs{\w_\varepsilon}^2_{W^{1,2}_\varepsilon(B)} \geq |\nu_\varepsilon(B)|
   \left(\frac{\pi}{2}(1-C\varepsilon)|\log\varepsilon| - C\right)
 \]
 for any~$B\in\mathscr{B}_\varepsilon^1$. Summing up this inequality over all the~$B$'s,
 using Lemma~\ref{lemma:norm-interpolant} and the energy bound~\eqref{H}, one sees that
 \begin{equation*} 
  \sum_{B\in\mathscr{B}_\varepsilon^1} \abs{\nu_\varepsilon (B)} \leq C
 \end{equation*}
 for an~$\varepsilon$-independent constant~$C$. Therefore, the measures
 $\nu^1_\varepsilon := \sum_{B\in\mathscr{B}_\varepsilon^1} \nu_\varepsilon(B)\delta_{x(B)}\in X$,
 where~$x(B)$ denotes the centre of the ball~$B$, have uniformly bounded mass and flat-converge to an element of~$X$,
 up to extraction of a subsequence. On the other hand, \ref{cor:ball_radius} in Corollary~\ref{cor:ball} implies
 \[
  \sum_{B\in\mathscr{B}_\varepsilon^1} \rad(B) \leq C \varepsilon^{1/2} \#(\mathscr{B}_\varepsilon)
  \stackrel{\eqref{cardinal_B0}}{\leq} C \varepsilon^{1/2} |\log\varepsilon|.
 \]
 Then, arguing as in~\cite[Theorem~3.1.(i)]{ADGP}, one can show that~$\|\nu_\varepsilon - \nu_\varepsilon^1\|_{\fflat}\to 0$,
 which yields compactness for the sequence~$(\nu_\varepsilon)$.
 (see also~\cite[Theorem~3.3]{AlicandroPonsiglione} for more details). 
\end{proof}
\begin{proof}[Proof of~(ii) --- $\Gamma$-liminf] 
 Fix~$i\in\{1, \, \ldots, \, K\}$ and~$0 < \sigma\leq\sigma_0(\mu)$.
 By extraction of a non-relabelled subsequence, we can assume WLOG that
 \begin{equation} \label{0liminf-1}
  \lim_{\varepsilon\to 0} \left(\frac12 |\w_\varepsilon|^2_{W^{1,2}_\varepsilon(B_\sigma(x_i))} - \pi|d_i||\log\varepsilon|\right) < +\infty
 \end{equation}
 (and, in particular, the limit exists). 
 Arguing as in~\cite[Theorem~3.1.(ii)]{ADGP}, we see that 
 $\|\nu_\varepsilon(\v_\varepsilon) - \nu_\varepsilon(\bar{\v}_\varepsilon)\|_{\fflat}\to 0$, 
 where~$\bar{\v}_\varepsilon$ denotes the restriction of~$\v_\varepsilon$ to~$B_\sigma(x_i)$, 
 and that $\nu_\varepsilon(\bar{\v}_\varepsilon)$ flat-converges to~$d_i\delta_{x_i}$.
 Therefore, we can repeat the ball construction of Section~\ref{sect:ball} with~$M$ replaced by~$B_{\sigma}(x_i)$
 and~$\v_\varepsilon$ replaced by~$\bar{\v}_\varepsilon$. (This guarantees that no ball ``coming from outside'' enters~$B_\sigma(x_i)$.)
 We still write~$\nu_\varepsilon$ instead of~$\nu_\varepsilon(\bar{\v}_\varepsilon)$.
 
 For a fixed~$\gamma\in (0, \, 1)$, we apply Corollary~\ref{cor:ball} with~$s = s^2_\varepsilon := \varepsilon^\gamma$.
 (One can check, arguing as in the proof of~(i), that the assumptions of Corollary~\ref{cor:ball} are satisfied.)
 The collection of balls~$\mathscr{B}_\varepsilon(s^2_\varepsilon)$ satisfies $\spt(\nu_\varepsilon)\subseteq\spt\mathscr{B}_\varepsilon(s^2_\varepsilon)$,
 \begin{equation*} 
  \sum_{B\in\mathscr{B}_\varepsilon(s^2_\varepsilon)} \rad(B) \leq C \varepsilon^{\gamma} \#(\mathscr{B}_\varepsilon)
  \stackrel{\eqref{cardinal_B0}}{\leq} C \varepsilon^{\gamma} |\log\varepsilon|
 \end{equation*}
 and
 \begin{equation} \label{0liminf-3}
  \frac12 \abs{\w_\varepsilon}^2_{W^{1,2}_\varepsilon(B)} \geq |\nu_\varepsilon(B)|
   \left( \pi(1-\gamma)(1-C\varepsilon)|\log\varepsilon| - C\right)
 \end{equation}
 for any~$B\in\mathscr{B}_\varepsilon(s^2_\varepsilon)\setminus\mathscr{B}_\varepsilon$. 
 Let $\mathscr{B}^2_\varepsilon := \{B\in\mathscr{B}_\varepsilon(s^2_\varepsilon)\colon B\subseteq B_{\sigma}(x_i)\}$
 and~$\nu^2_\varepsilon := \sum_{B\in\mathscr{B}_\varepsilon^2} \nu_\varepsilon(B)\delta_{x(B)}$.
 Arguing again as in~\cite{ADGP}, we see that~$\|\nu^2_\varepsilon - \nu_\varepsilon\|_{\fflat}\to 0$, so~$\nu^2_\varepsilon$
 flat-converges to~$d_i \delta_{x_i}$ and, in particular, $\liminf_{\varepsilon\to 0}|\nu^2_\varepsilon|(B_\sigma(x_i)) \geq|d_i|$.
 Now, by summing up the inequality~\eqref{0liminf-3} with respect to~$B\in\mathscr{B}_\varepsilon^2$, we deduce
 \[
  \frac12 \abs{\w_\varepsilon}^2_{W^{1,2}_\varepsilon(B_\sigma(x_i))} \geq 
  \pi(1-\gamma)(1-C\varepsilon)\abs{\nu_\varepsilon^2}(B_\sigma(x_i))\abs{\log\varepsilon} - C.
 \]
 If~$\liminf_{\varepsilon\to 0} |\nu_\varepsilon^2|(B_\sigma(x_i)) > |d_i|$, then in fact
 $\liminf_{\varepsilon\to 0} |\nu_\varepsilon^2|(B_\sigma(x_i)) \geq |d_i| + 1$ (because $\nu^2_\varepsilon$ is integer-valued)
 and hence the $\Gamma$-liminf inequality~(ii) follows, provided that we choose $\gamma$ such that $(1- \gamma) (|d_i| + 1) > |d_i|$.
 Otherwise, we have that~$|\nu^2_\varepsilon|(B_\sigma(x_i)) = |d_i|$ for~$\varepsilon$ small enough.
 Then, we can write
 \[
  \nu^2_\varepsilon = \sum_{j=1}^k p^\varepsilon_j \, \delta_{y^\varepsilon_i},
 \]
 where the numbers~$p^\varepsilon_j\in\mathbb{Z}$ all have the same sign and satisfy $\sum_j p^\varepsilon_j = d_i$,
 and~$y^\varepsilon_j \to x_i$ as~$\varepsilon\to 0$. By taking~$\varepsilon$ small enough,
 we can assume that~$y_j^\varepsilon\in B_{\sigma/2}(x_i)$ for all~$j$.
 
 We fix a positive number~$\eta > 0$ apply Corollary~\ref{cor:ball} with~$s = s_\varepsilon^3 := \eta \, \#(\mathscr{B}_\varepsilon)^{-1}$.
 (We choose~$\eta$ small enough that~$s_\varepsilon^3 \leq CR_*\#(\mathscr{B}_\varepsilon)^{-1}$, so the assumptions 
 of Corollary~\eqref{cor:ball} are satisfied.) We find a collection of balls~$\mathscr{B}_\varepsilon^3 := \mathscr{B}_\varepsilon(s_\varepsilon^3)$
 that satisfies $\spt(\nu^2_\varepsilon)\subseteq\spt\mathscr{B}^3_\varepsilon$,
 \begin{gather}
  \sum_{B\in\mathscr{B}_\varepsilon^3} \rad(B) \leq C\eta \label{0liminf-4} \\
  \frac12 \abs{\w_\varepsilon}^2_{W^{1,2}_\varepsilon(B\setminus\spt\mathscr{B}_\varepsilon)} \geq |\nu_\varepsilon(B)|
   \left( \pi(1-C\varepsilon)\log\frac{\eta}{\varepsilon\#(\mathscr{B}_\varepsilon)} - C\right) \label{0liminf-5}
 \end{gather}
 Thanks to~\eqref{0liminf-4} and the fact that $\spt(\nu^2_\varepsilon)\subseteq\spt\mathscr{B}^3_\varepsilon$,
 $\dist(\spt\nu^2_\varepsilon, \, \partial B_\sigma(x_i))\geq \sigma/2$,
 we can choose~$\eta$ so small that~$B\subseteq B_\sigma(x_i)$ for any~$B\in\mathscr{B}^3_\varepsilon$.
 Then, using also the fact that all the~$p^\varepsilon_j$ have the same sign and sum up to~$d_i$,
 we see that~$\sum_{B\in\mathscr{B}^3_\varepsilon} |\nu^2_\varepsilon(B)| = |d_i|$ and hence, by~\eqref{0liminf-5},
 \[
  \frac12 \abs{\w_\varepsilon}^2_{W^{1,2}_\varepsilon(B_\sigma(x_i)\setminus\spt\mathscr{B}_\varepsilon)} \geq
   \pi|d_i|(1-C\varepsilon)\log\frac{\eta}{\varepsilon\#(\mathscr{B}_\varepsilon)} - C.
 \]
 On the other hand, Lemma~\ref{lemma:beta} implies that
 \[
  \frac12 \abs{\w_\varepsilon}^2_{W^{1,2}_\varepsilon(\spt\mathscr{B}_\varepsilon)} \geq
  \sum_{T\in\TT_\varepsilon\colon P(T)\cap S_\varepsilon\neq\emptyset} 
  \frac12 \abs{\w_\varepsilon}^2_{W^{1,2}_\varepsilon(P(T))} \geq \beta\#(\mathscr{B}_\varepsilon).
 \]
 Thus, we have
 \[
  \begin{split}
   \frac12 \abs{\w_\varepsilon}^2_{W^{1,2}_\varepsilon(B_\sigma(x_i))} &\geq
   \pi|d_i|(1-C\varepsilon)\abs{\log\varepsilon} - \pi|d_i|(1-C\varepsilon)\log\frac{\#(\mathscr{B}_\varepsilon)}{\eta} 
   + \beta\#(\mathscr{B}_\varepsilon) - C \\
   &\geq \pi|d_i|\abs{\log\varepsilon} - C. \qedhere
  \end{split}
 \]
\end{proof}
\begin{proof}[Proof of~(iii) --- $\Gamma$-limsup] 
 Fix~$\mu = \sum_{i=1}^K d_i\delta_{x_i}\in X$, and suppose that~$d_i\neq 0$ for any~$i$.
 By a diagonal argument, it suffices to show the following: for any~$\delta$ and any countable subsequence of~$\varepsilon\searrow 0$,
 there exists a (non-relabelled) subsequence such that
 \begin{equation} \label{0limsup-0}
  \limsup_{\varepsilon\to 0} \frac{\abs{\w_\varepsilon}^2_{W^{1,2}_\varepsilon(M)}}{2|\log\varepsilon|}
  \leq \pi |\mu|(M) + \delta.
 \end{equation}
 Let us fix~$\delta >0$ and the countable subsequence of~$\varepsilon$. We also fix a small parameter~$0 < \sigma \leq \sigma_0(\mu)$.
 For $i\in\{1, \, \ldots, \, K\}$ and~$j\in \{1, \, \ldots, |d_i|\}$,
 we let~$z_{i,j}^\varepsilon := \varepsilon^\sigma\exp(2\pi\iota j|d_i|^{-1})\in\C$ (where~$\iota$ is the imaginary unit).
 By taking~$\varepsilon$ small enough, we can assume that $|z^\varepsilon_{i,j}| < \sigma$.
 We define a map~$\tilde{\u}^\varepsilon_i\colon B_\sigma\subseteq\C\to\C$ by
 \begin{equation} \label{0limsup-recovery}
  \tilde{\u}^\varepsilon_i(z) := \prod_{j = 1}^{d_i} \frac{z - z^\varepsilon_{i,j}}{|z - z^\varepsilon_{i,j}|} \quad \textrm{if } d_i > 0, \qquad
  \tilde{\u}^\varepsilon_i(z) := \prod_{j = 1}^{-d_i} \overline{\frac{z - z^\varepsilon_{i,j}}{|z - z^\varepsilon_{i,j}|}} \quad \textrm{otherwise.}
 \end{equation}
 Using normal coordinates~$\varphi_i\colon B_\sigma\subseteq\C\to M$ such that~$\varphi_i(0) = x_i$,
 we can transport~$\tilde{\u}^\varepsilon_i$ to a vector field on~$B_\sigma(x_i)\subseteq M$, 
 i.e. we define $\u^\varepsilon_i(\varphi_i(z)) := \langle \d\varphi_i(z), \, \tilde{\u}^\varepsilon_i(z)\rangle$ for~$z\in B_\sigma\subseteq\C$.
 Since~$\sum_i \ind(\u^\varepsilon_i, \, B_\sigma(x_i)) = \sum_i d_i = \chi(N)$, 
 we find a smooth vector field~$\u$ on~$M_\sigma := M\setminus \cup_i B_\sigma(x_i)$
 that satisfies~$\u = \u^\varepsilon_i$ on~$\partial B_\sigma(x_i)$ for each~$i$.
 We define~$\u^\varepsilon := \u^\varepsilon_i$ on~$B_\sigma(x_i)$ and~$\u^\varepsilon := \u$ on~$M_\sigma$.
 The tangent field~$\u^\varepsilon\in W^{1, 1}(M, \, \R^3)$ is smooth except at the
 points~$x^\varepsilon_{i,j}:= \varphi_i(z^\varepsilon_{i,j})\to x_i$ and hence, by Lemma~\ref{lemma:dj}, 
 \begin{equation} \label{0limsup-1}
  \frac{1}{2\pi} \left(\star\d\j(\u^\varepsilon) - G\right) = \sum_{i, \, j} \sign(d_i)\,\delta_{x_{i,j}^\varepsilon}
  \xrightarrow{\fflat} \sum_i d_i\delta_{x_i} = \mu \qquad \textrm{as } \varepsilon\to 0.
 \end{equation}
 
 Let~$\v_\varepsilon$ be the discrete field defined by $\v_\varepsilon(i) := \u^\varepsilon(i)$ for~$i\in\TT^0_\varepsilon$,
 and let~$\w_\varepsilon := \widehat{P}_\varepsilon^{-1}\circ\widehat{\v}_\varepsilon$. We have
 \[
  \abs{\nablas\w_\varepsilon} \stackrel{\eqref{hp:bilipschitz}}{\leq} C\abs{\nablae\widehat{\v}_\varepsilon} \leq C\abs{\nablas\u^\varepsilon},
 \]
 where the last inequality follows by basic properties of the affine interpolant. 
 Setting $D_i^\varepsilon := B_\sigma(x_i)\setminus \cup_j B_\varepsilon(x^\varepsilon_{i,j})$, 
 and using~\eqref{Sobolev_eps} and Lemma~\ref{lemma:metric_distorsion}, we obtain
 \[
  \begin{split}
   \frac12 \abs{\w_\varepsilon}&^2_{W^{1,2}_\varepsilon(M)} = \frac12 \sum_{i=1}^K \abs{\w_\varepsilon}^2_{W^{1,2}_\varepsilon(D^\varepsilon_i)} 
   + \frac12 \sum_{i, \, j} \abs{\w_\varepsilon}^2_{W^{1,2}_\varepsilon(B_\varepsilon(x^\varepsilon_{i,j}))}
   + \frac12 \abs{\w_\varepsilon}^2_{W^{1,2}_\varepsilon(M_\sigma)} \\
   &\leq \frac{1 + C\varepsilon}{2} \sum_{i=1}^K \int_{D^\varepsilon_i} \abs{\nablas\u^\varepsilon}^2 \, \d S 
   + \sum_{i=1}^K \Lip(\w_\varepsilon)^2 \H^2(B_\varepsilon(x_i)) + \frac{1 + C\varepsilon}{2} \int_{M_\sigma} \abs{\nablas\u}^2 \, \d S \\
   &\stackrel{\eqref{eq:lip_we}}{\leq} \frac{1 + C\varepsilon}{2} \sum_{i=1}^K \int_{D^\varepsilon_i} \abs{\nablas\u^\varepsilon}^2 \, \d S + C_\sigma,
  \end{split}
 \]
 where~$C_\sigma$ is a positive constant, depending on~$\sigma$.
 The integral of~$|\nablas\v|^2$ on each~$D^\varepsilon_i$ can be evaluated using~\eqref{0limsup-recovery}
 and the fact that $\Lip({\varphi_i}_{|B_\sigma(x_i)}) \leq 1 + C\sigma$:
 \[
  \begin{split}
  \frac{1 + C\varepsilon}{2} \int_{D^\varepsilon_i} \abs{\nablas\u^\varepsilon}^2 \, \d S 
  &\leq  \left(\pi (1 + C\sigma)|d_i| + C\sigma|d_i|^2\right) |\log\varepsilon| + C_\sigma,
  \end{split}
 \]
 whence
 \[
  \limsup_{\varepsilon\to 0} \frac{\abs{\w_\varepsilon}^2_{W^{1,2}_\varepsilon(M)}}{2|\log\varepsilon|}
  \leq  \pi(1 + C\sigma) |\mu|(M) + C\sigma\left(|\mu|(M)\right)^2
 \]
 and, choosing~$\sigma$ so small that $C\sigma|\mu|(M) + C|\mu|(M)^2\leq\delta$, \eqref{0limsup-0} follows.
 
 To conclude the proof, we only need to show that~$\nu_\varepsilon(\v_\varepsilon)$ flat-converges to~$\mu$.
 Using~\eqref{hp:quasiuniform}, the definition of affine interpolant, and the fact that
 \[
  \abs{\nablas\u^\varepsilon(x)}\leq \frac{C_\sigma}{\dist(x, \, \{x^\varepsilon_{i,j}\})} \qquad \textrm{for } x\in B_\sigma(x_i)
 \]
 (as a consequence of~\eqref{0limsup-recovery}), one finds positive numbers $\lambda$, $\varepsilon_1$ 
 such that, for any~$0 < \varepsilon\leq\varepsilon_1$, there holds $|\w_\varepsilon|\geq 1/2$ on 
 $A_\varepsilon := M \setminus \cup_{i,j} B_{\lambda\varepsilon}(x^\varepsilon_{i,j})$. 
 Thanks to Lemma~\ref{lemma:proiection-pointwise}, this implies $|\u_\varepsilon|\geq 1/4$ if~$\varepsilon$ is small enough,
 where~$\u_\varepsilon$ is the field defined by~\eqref{u_eps}. Then, using~\eqref{mu2} and~\eqref{nu_eps},
 we obtain that~$\nu_\varepsilon(\v_\varepsilon)[B] = 0$ if~$B\subseteq A_\varepsilon$. 
 On the other hand, we also have~$\mu[B] = 0$ if~$B\subseteq A_\varepsilon$, due to~\eqref{0limsup-1}.
 Thus, for any Lipschitz function~$\varphi$ on~$M$ such that~$\sup|\varphi| + \Lip(\varphi)\leq 1$, there holds
 \begin{equation} \label{0limsup-2}
  \begin{split}
   \langle \nu_\varepsilon(\v_\varepsilon) - \mu, \, \varphi\rangle 
   &= \sum_{i, \, j}\int_{B_{\lambda\varepsilon}(x^\varepsilon_{i,j})} \varphi \, \d (\nu_\varepsilon(\v_\varepsilon) - \mu) \\
   &= \sum_{i, \, j} \int_{B_{\lambda\varepsilon}(x^\varepsilon_{i,j})} \left(\varphi - \varphi(x_i)\right) 
   \, \d (\nu_\varepsilon(\v_\varepsilon) - \mu)
   \leq C\lambda\varepsilon \left(|\nu_\varepsilon(\v_\varepsilon)| + |\mu|\right)(M),
  \end{split}
 \end{equation}
 and Lemma~\ref{lemma:ni_triangle} implies
 \begin{equation} \label{0limsup-3}
  |\nu_\varepsilon(\v_\varepsilon)|(M) \leq C \#\left\{T\in\TT_\varepsilon\colon P(T)\setminus A_\varepsilon\neq\emptyset\right\}
  \stackrel{\eqref{hp:quasiuniform}}{\leq} C.
 \end{equation}
 Combining~\eqref{0limsup-2} and~\eqref{0limsup-3}, we conclude that $\|\nu(\v_\varepsilon) - \mu\|_{\fflat}\leq C\lambda\varepsilon$.
\end{proof}
\section{The first-order $\Gamma$-convergence: location of defects and their energetics}
\label{sec:Gamma1}

\subsection{The renormalized energy}
\label{sect:renormalized}
In this Subsection we resume the concept of Renormalized energy that we 
have introduced in \eqref{eq:renormalized_ene_def} and we state its main properties. We recall that
we have set
\begin{equation}
\label{eq:renormalized_ene_def_bis}
\W_{\mathrm{intr}}(\v) := 
\begin{cases}
\lim_{\delta \to 0} \left(\dfrac{1}{2}\displaystyle\int_{M_\delta}\vert \D\v\vert^2\d S - \KK\pi\vert \log\delta\vert\right) &\hbox{for }\v\in \mathcal{V}_{\KK}\\
-\infty &\hbox{for }\v\in \mathcal{V}_{k},\,\,k <\KK,\\
+\infty &\hbox{otherwise in } L^2(M; \, \mathbb{R}^3),
\end{cases}
\end{equation}
where for $k\in \mathbb{N}$ we have set (see \eqref{eq:classV})
\[
 \begin{split}
  \mathcal{V}_k: = \left\{\v\in L^2(M; \, \S^2)\colon \textrm{there exist } (x_i)_{i=1}^k\in M^k,
  \ (d_i)_{i=1}^k \in \{-1, \, 1\} \textrm{ such that } \right. \\
  \left. \v\in W^{1,2}_{\tan, \mathrm{loc}}\left(M\setminus \bigcup_{i=1}^{k}x_i; \, \mathbb{S}^2\right) \textrm{ and }
  \star\d\j(\v) = 2\pi\sum_{i = 1}^k d_i \delta_{x_i} - G \right\} \! ,
 \end{split}
\]
for any~$k\in\N$. 
The object in \eqref{eq:renormalized_ene_def_bis} is well defined for the following reasons.

First of all, a standard construction based on the Poincar\'e-Hopf theorem (see, e.g., \cite[Proposition~1.5]{AGM-VMO})
shows that the set $\mathcal{V}_{\KK}$ is non empty if and only if~$\KK\geq |\chi(M)|$
and~$\KK\equiv\chi(M) \mod 2$, that is, $\KK$ is even. Second, for any
$\v \in \mathcal{V}_{\KK}$ and any~$\delta>0$ there holds that 
\[
 \dfrac{1}{2}\int_{M_\delta}\vert \D\v\vert^2\d S - \KK\pi\vert \log\delta\vert <+\infty.
\]
Third, for any $\v\in \mathcal{V}_{\KK}$, there holds
\[
 \begin{split}
  \frac{\d}{\d\delta}\left(\dfrac{1}{2}\int_{M_\delta}\vert\D\v\vert^2\d S - \KK\pi\vert \log\delta\vert\right)
  &= \sum_{i=1}^{\KK} \left(-\dfrac{1}{2}\int_{\partial B_\delta(x_i)} \abs{\D\v}^2 \, \d S + \frac{\pi}{\delta} \right) \\
  &\leq \sum_{i=1}^{\KK} \left(-\frac{(2\pi + C\delta^2)^2}{4\pi\delta + C\delta^2} + \frac{\pi}{\delta} \right) \! ,
 \end{split}
\]
where the last inequality follows by Lemma~\ref{lemma:lower_bound_buccia} below.
Since the righ-hand side is bounded from above as~$\delta\searrow 0$, we deduce that the limit 
in~\eqref{eq:renormalized_ene_def_bis} exists and belongs to~$(-\infty, \, +\infty]$.


Similar to the euclidean flat case (see \cite{ADGP}) we have the following dyadic decomposition
of the intrinsic Renormalized Energy.
\begin{lemma}
\label{lem:dyadic_ren}
Fix $\rho>$ small enough in such a way that $B_\rho(x_i)$ are pairwise disjoint for $i=1,\ldots,\KK$. 
Then, for any $\v\in \mathcal{V}_{\KK}$ there holds
\begin{equation*}
 \begin{split}
  \W_{\mathrm{intr}}(\v) &=  \dfrac{1}{2}\int_{M\setminus \bigcup_{i=1}^{\KK}B_{\rho}(x_i)} \vert \D\v\vert^2 \d S -\pi\KK\vert \log\rho\vert \\
  &\quad\quad + \sum_{i=1}^{\KK}\sum_{j=0}^{+\infty}\left(\dfrac{1}{2}\int_{B_{2^{-j}\rho}(x_i)\setminus B_{2^{-(j+1)}\rho}(x_i)}\vert \D\v\vert^2 \d S -\pi \log 2\right) \! .
 \end{split}
\end{equation*}
\end{lemma}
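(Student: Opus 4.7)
\medskip
\noindent\textbf{Proof proposal.} The plan is to evaluate the limit in the definition~\eqref{eq:renormalized_ene_def_bis} along the dyadic sequence $\delta_n := 2^{-n}\rho$, and observe that the resulting partial sums are precisely the ones in the statement.

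First, I would fix $\rho$ small enough (in particular $\rho < 1$, so that $|\log\rho| = -\log\rho$) that the balls $B_\rho(x_i)$ are pairwise disjoint. For any integer $n \geq 1$, I would decompose
\[
M_{\delta_n} = \bigl(M \setminus \textstyle\bigcup_{i=1}^{\KK} B_\rho(x_i)\bigr) \;\cup\; \bigcup_{i=1}^{\KK}\bigcup_{j=0}^{n-1} \bigl(B_{2^{-j}\rho}(x_i)\setminus B_{2^{-(j+1)}\rho}(x_i)\bigr),
\]
where the union is disjoint up to sets of measure zero. Using the additivity of the integral and the elementary identity $|\log\delta_n| = n\log 2 + |\log\rho|$, together with $\pi\KK n\log 2 = \sum_{i=1}^{\KK}\sum_{j=0}^{n-1}\pi\log 2$, I would rewrite
\begin{equation*}
\begin{split}
 \frac{1}{2}\int_{M_{\delta_n}} \vert\D\v\vert^2\,\d S - \pi\KK|\log\delta_n|
 &= \frac{1}{2}\int_{M\setminus\bigcup_{i=1}^{\KK} B_\rho(x_i)} \vert\D\v\vert^2\,\d S - \pi\KK|\log\rho| \\
 &\quad + \sum_{i=1}^{\KK}\sum_{j=0}^{n-1}\left(\frac{1}{2}\int_{B_{2^{-j}\rho}(x_i)\setminus B_{2^{-(j+1)}\rho}(x_i)} \vert\D\v\vert^2\,\d S - \pi\log 2\right)\! .
\end{split}
\end{equation*}

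Finally, since $\v\in\mathcal{V}_{\KK}$, the preamble to the lemma guarantees that the limit defining $\W_{\mathrm{intr}}(\v)$ exists in $(-\infty,+\infty]$, so in particular it exists along the subsequence $\delta_n\searrow 0$. Letting $n\to\infty$ in the above identity, the left-hand side converges to $\W_{\mathrm{intr}}(\v)$, while the right-hand side converges to the claimed dyadic sum (in $(-\infty,+\infty]$), which concludes the proof.

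I do not foresee any significant obstacle: the argument is a bookkeeping exercise with the logarithm and with the disjoint decomposition of $M_{\delta_n}$ into dyadic annuli. The only point requiring minimal attention is to ensure $\rho < 1$ (else one should replace $\rho$ by $\min(\rho, 1/2)$ at the start), so that the factorization $|\log(2^{-n}\rho)| = n\log 2 + |\log \rho|$ holds for all $n$.
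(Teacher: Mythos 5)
Your proposal is correct and follows essentially the same route as the paper's own proof: both decompose $M_{\delta}$ into the region outside the $\rho$-balls plus a disjoint union of dyadic annuli, rewrite $\pi\KK|\log\delta|$ accordingly, and pass to the limit along the dyadic scales, invoking the existence of the limit (established just before the lemma) to upgrade convergence along the subsequence to the full limit. The only cosmetic difference is your indexing by $\delta_n = 2^{-n}\rho$ versus the paper's $\delta = 2^{-(l+1)}\rho$.
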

\begin{proof}
For $l\in \mathbb{N}$ and some fixed $\rho>0$ as in the statement,  
we consider $\delta = \rho 2^{-(l+1)}$. Note that the limit $\delta\to 0$ 
corresponds to the limit $l\to +\infty$. 
Moreover, we dyadically decompose $B_{\rho}(x_i)\setminus B_{\rho 2^{-(l+1)}}$ 
for any $i=1, \ldots, \KK$. We thus obtain  
\[
 M\setminus \bigcup_{i=1}^{\KK} B_{\delta}(x_i) =
 \left(M\setminus \bigcup_{i=1}^{\KK}B_{\rho}(x_i)\right) \cup
 \bigcup_{i=1}^{\KK} \bigcup_{j=0}^l 
 \left(B_{2^{-j}\rho}(x_i)\setminus B_{2^{-(j+1)}\rho}(x_i)\right) \! .
\]
Thus, we have 
\begin{align*}
  \int_{M\setminus \bigcup_{i=1}^{\KK} B_{\delta}(x_i)} \vert \D\v\vert^2 \d S -\pi\KK\vert \log\delta\vert = 
  \int_{M\setminus \bigcup_{i=1}^{\KK}B_{\rho}(x_i)} \vert \D\v\vert^2 \d S -\pi\KK\vert \log\rho\vert \\
  \qquad \qquad + \sum_{i=1}^{\KK}\sum_{j=0}^l \int_{B_{2^{-j}\rho}(x_i)\setminus B_{2^{-(j+1)}\rho}(x_i)}\vert \D\v\vert^2 \d S 
  -\pi\KK(l+1) \log2 \\
  = \int_{M\setminus \bigcup_{i=1}^{\KK}B_{\rho}(x_i)} \vert \D\v\vert^2 \d S -\pi\KK\vert \log\rho\vert
  +\sum_{i=1}^{\KK}\sum_{j=0}^l\left( \int_{B_{2^{-j}\rho}(x_i)\setminus B_{2^{-(j+1)}\rho}(x_i)}\vert \D\v\vert^2 \d S -\pi \log 2\right) \! . 
\end{align*}
Therefore, by sending $l\to +\infty$ (i.e. $\delta\to 0$), the lemma follows.
\end{proof}
Thanks to Lemma~\ref{lem:dyadic_ren}, the extrinsinc Renormalized Energy defined by~\eqref{eq:reno_intr/ex} satisfies
\begin{equation}
\label{eq:ren_energybis2_intr/ex}
 \begin{split}
  \W(\v) &= \frac 12\int_{M\setminus \bigcup_{i=1}^{\KK}B_{\rho}(x_i)} \vert \D\v\vert^2 \d S -\pi\KK\vert \log\rho\vert
  + \frac 12\int_{M} \vert \d\ggamma[\v]\vert^2\d S \\
  &\qquad\qquad + \sum_{i=1}^{\KK}\sum_{j=0}^{+\infty}\left(\dfrac{1}{2}\int_{B_{2^{-j}\rho}(x_i)\setminus B_{2^{-(j+1)}\rho}(x_i)}\vert \D\v\vert^2 \d S
  -\pi \log 2\right) \! .
 \end{split}
\end{equation}
An interesting consequence of the above representation is that, for $\v\in \mathcal{V}_{\KK}$ such that~$\W(\v) <+\infty$
there holds that, analogously to the euclidean case (see \cite[Remark~4.4]{ADGP}),
\begin{equation}
\label{eq:conc_energy_intr}
\lim_{j\to \infty}\frac 12\int_{B_{2^{-j}\rho}(x_i)\setminus B_{2^{-(j+1)}\rho}(x_i)}
\vert \D\v\vert^2\d S = \lim_{j\to \infty}\frac 12\int_{B_{2^{-j}\rho}(x_i)\setminus B_{2^{-(j+1)}\rho}(x_i)}e(\v) \d S =\pi\log 2.
\end{equation}
Consequently, we have 
\begin{lemma}
\label{lem:W11}
The effective domain of $\W$ in $\mathcal{V}_{\KK}$ is included in~$W^{1,1}_{\tan}(M; \ \mathbb{S}^2)$,\
namely
\[
\{\v\in\mathcal{V}_{\KK}\colon \W(\v) < +\infty\}\subseteq W^{1,1}_{\tan}(M; \, \mathbb{S}^2).
\]
\end{lemma}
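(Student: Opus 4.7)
The plan is to reduce the assertion to a local $L^1$ bound near each singularity, and to obtain this bound from a dyadic Cauchy--Schwarz argument that uses the convergence of the series in~\eqref{eq:ren_energybis2_intr/ex}.

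Fix $\v\in\mathcal{V}_{\KK}$ with $\W(\v) < +\infty$. Since $\v\in W^{1,2}_{\tan,\mathrm{loc}}(M\setminus\bigcup_i\{x_i\};\,\mathbb{S}^2)$, the restriction of $\v$ to the compact set $M\setminus\bigcup_{i=1}^{\KK} B_{\rho}(x_i)$ belongs to $W^{1,2}$, hence to $W^{1,1}$. Therefore it suffices to prove that $|\D\v|\in L^1(B_\rho(x_i))$ for each singular point $x_i$, because $|\v|=1$ a.e.\ automatically controls the $L^1$ norm of $\v$ itself.

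Fix $i\in\{1,\ldots,\KK\}$ and set $A_j := B_{2^{-j}\rho}(x_i)\setminus B_{2^{-(j+1)}\rho}(x_i)$ for $j\geq 0$. From the assumption $\W(\v)<+\infty$ together with the dyadic representation~\eqref{eq:ren_energybis2_intr/ex}, the series
\[
 \sum_{j=0}^{+\infty}\left(\frac12\int_{A_j}|\D\v|^2\,\d S - \pi\log 2\right)
\]
is convergent, so in particular its general term is bounded. Consequently there exists a constant $C_i>0$ such that
\[
 \int_{A_j}|\D\v|^2\,\d S \leq C_i \qquad\text{for every }j\geq 0.
\]
On the other hand, since $\rho$ is smaller than the injectivity radius at $x_i$, the area of $A_j$ satisfies $\H^2(A_j)\leq C(2^{-j}\rho)^2$. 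Applying the Cauchy--Schwarz inequality on each annulus yields
\[
 \int_{A_j}|\D\v|\,\d S \leq \left(\int_{A_j}|\D\v|^2\,\d S\right)^{1/2}\H^2(A_j)^{1/2}
 \leq C\, C_i^{1/2}\, 2^{-j}\rho.
\]

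Summing the resulting geometric series over $j\geq 0$ gives $|\D\v|\in L^1(B_\rho(x_i))$, since $B_\rho(x_i)\setminus\{x_i\} = \bigcup_{j\geq 0} A_j$ up to a negligible set. Combined with the $W^{1,2}$ regularity away from the singular set, this shows $\v\in W^{1,1}_{\tan}(M;\,\mathbb{S}^2)$. The only non-routine input is the uniform bound on $\int_{A_j}|\D\v|^2\,\d S$, which is precisely what the finiteness of $\W$ guarantees through the dyadic decomposition of Lemma~\ref{lem:dyadic_ren}; no other step presents any substantial difficulty.
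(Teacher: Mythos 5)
Your proof is correct and takes essentially the same route as the paper: reduce to an $L^1$ bound near each singularity, invoke the dyadic decomposition of Lemma~\ref{lem:dyadic_ren} to control the annular Dirichlet energies, then apply Cauchy--Schwarz on each annulus and sum the resulting geometric series. The only cosmetic difference is that you extract just a uniform bound on $\int_{A_j}|\D\v|^2\,\d S$ from convergence of the dyadic series, whereas the paper invokes the slightly stronger statement~\eqref{eq:conc_energy_intr} that these quantities converge to $\pi\log 2$; either suffices for the Cauchy--Schwarz step.
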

\begin{proof}
It is clearly sufficient to show that any $\v\in\mathcal{V}_{\KK}$ with~$\W(\v) <+\infty$
is in $W^{1,1}(B_\rho(x_i))$ ($\rho>0$ smaller than the injectivity radius) for any $i=1,\ldots,\KK$.
We set $A_{2^{-(j+1)}\rho,2^{-j}\rho}^{i}:=B_{2^{-j}\rho}(x_i)\setminus B_{2^{-(j+1)}\rho}(x_i)$. There holds
\[
 \int_{A_{2^{-(j+1)}\rho,2^{-j}\rho}^{i}}\vert\D\v\vert \, \d S \le 
 \left(\H^2(A_{2^{-(j+1)}\rho,2^{-j}\rho}^{i})\right)^{1/2}
 \left(\int_{A_{2^{-(j+1)}\rho,2^{-j}\rho}^{i}}\vert\D\v\vert^2 \, \d S\right)^{1/2}
\]
and thus, thanks to \eqref{eq:conc_energy_intr}
\[
 \int_{A_{2^{-(j+1)}\rho,2^{-j}\rho}^{i}}\vert\D\v\vert \, \d S \le C \rho 2^{-j}, \qquad \textrm{for all } i=1,\ldots, \KK.
\]
Consequently, if we dyadically decompose $B_{\rho}(x_i)$ for any $i$ we get
\[
 \int_{B_{\rho}(x_i)}\vert\D\v\vert \,  \d S\le C\sum_{j=1}^{+\infty}2^{-j} <+\infty, 
\]
which clearly gives the result.
\end{proof}

\subsection{The core energy}
\label{sect:core}

In this subsection we rigorously define the concept of core energy
and discuss some of its properties. 
This object was introduced by Bethuel, Brexis and H\'elein~\cite{BBH} and 
later extended to the discrete setting by Alicandro et al.~\cite{ADGP}.

Given a point~$\bar x\in M$ and radii~$\delta_1$, $\delta_2$ such that $\delta_1<\delta_2$,
we denote by~$A_{\delta_1,\delta_2} (\bar x)$ the geodesic annulus
\[
 A_{\delta_1,\delta_2} (\bar x) := B_{\delta_2}(\bar x)\setminus B_{\delta_2}(\bar x) \subseteq M.
\]
Let us fix a positive number~$\delta$, smaller than the injectivity radius of~$M$.
We consider the minimization problem 
\begin{equation}\label{def:min_prob}
 \eta(\delta, \, \bar x) := \min_{\w \in W^{1,2}_{\tan}(A_{\delta/2,\delta}(\bar x); \, \mathbb{S}^2)}
 \left\{\frac{1}{2}\int_{A_{\delta/2,\delta}(\bar x)}\vert\D\w\vert^2 + \vert \d \ggamma [\w]\vert^2\d S,
 \quad\hbox{ind}(\w,\bar x) = 1\right\}.
\end{equation}
We denote with $\mathcal{H}(\delta, \, \bar x)$ the set of its minimizers.
$\mathcal{H}(\delta, \, \bar x)$ is non-empty, as follows by standard arguments in the Calculus of Variations.
We fix a minimizer~$\g\in\mathcal{H}(\delta, \, \bar x)$ for Problem~\eqref{def:min_prob} and,
for $\eps>0$, $\delta>0$, we set 
\begin{equation} \label{eq:core}
 \gamma_{\bar x}(\eps, \delta):= \min_{\v \in \T(\TT_\eps, S^2)}
 \left\{\frac{1}{2}\int_{\widehat{B_\delta(\bar x)}_\eps} \abs{\nablae\widehat{\v}}^2 \, \d S \colon
 \quad \v =\g  \,\hbox{ on } \partial_\eps B_\delta(\bar x) \right\}.
\end{equation}
Since~$\bar x$ is fixed throughout this section, we will write~$\gamma(\eps, \, \delta)$
instead of~$\gamma_{\bar x}(\eps, \, \delta)$.
We recall that $\widehat{B_\delta(\bar x)}_\eps$ is the union of the triangles~$T\in\TT_\eps$ such that
$P(T)\subseteq B_\delta(\bar x)$, 
$\partial_\varepsilon B_\delta(\bar x) := \partial (\widehat{B_\delta(\bar x)}_\eps)\cap \TT_\eps^0$,
and~$\widehat{\v}$ is the affine interpolant of the discrete field~$\v$.
$\gamma(\eps, \delta)$ depends on the choice of the point~$\bar x$, of~$\g\in\mathcal{H}(\delta, \, \bar x)$ and of the
triangulation~$\TT_\eps$, even though we have dropped this dependence in the notation.
We are interested in the asymptotic behaviour of~$\gamma(\eps, \delta)$ as~$\eps\searrow 0$, $\delta\searrow 0$.
\begin{prop} 
 \label{prop:core_energy}
 Suppose that the sequence~$(\TT_\eps)$ satisfies \eqref{hp:quasiuniform}, \eqref{hp:weakly_acute}.
 \eqref{hp:bilipschitz} and~\eqref{hp:convergence}. Then, for any~$\bar x\in M$ the following limits,
 are finite and coincide:
 \begin{equation*} 
  \gamma(\bar x) := \lim_{\delta\to 0} \, \liminf_{\eps\to 0} \, 
  \left(\gamma(\eps, \delta) - \pi\log\frac{\delta}{\eps}\right) \! 
  = \lim_{\delta\to 0} \, \limsup_{\eps\to 0} \, 
  \left(\gamma(\eps, \delta) - \pi\log\frac{\delta}{\eps}\right) \! .
 \end{equation*}
\end{prop}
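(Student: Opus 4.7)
The plan is to rescale the problem by $\eps^{-1}$, use hypothesis~\eqref{hp:convergence} to reduce the small-scale analysis at $\bar x$ to a \emph{fixed} flat discrete problem on $\R^2$ with triangulation $\SS(\bar x)$, and then identify $\gamma(\bar x)$ with the renormalization at infinity of this limiting flat problem.

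\emph{Step 1 (Localization and rescaling).} Work in geodesic normal coordinates $\varphi\colon B_{\delta_0}\subseteq\R^2\to M$ centred at $\bar x$: the metric is $\mathrm{Id}+O(\delta^2)$ on $B_\delta$, and the extrinsic term $\tfrac12|\d\ggamma[\v]|^2$ together with the metric-distorsion correction of Lemma~\ref{lemma:metric_distorsion} contributes only $\mathrm{o}(1)$ to the energy as $\delta\to 0$. Pulling back $\TT_\eps$ and dilating by $\eps^{-1}$ produces the triangulation $\SS_\eps$ on $B_{R_\delta}\subseteq\R^2$, $R_\delta:=\delta/\eps$. A change of variables rewrites $\gamma(\eps,\delta)$ as a rescaled discrete Dirichlet minimum $\tilde\gamma_\eps(R_\delta)$ on $B_{R_\delta}$, with triangulation $\SS_\eps$ and boundary data the rescaling of $\g$ to $\partial B_{R_\delta}$, up to an error of $\mathrm{o}(1)$ as $\delta\to 0$.

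\emph{Step 2 (Passage to a fixed triangulation).} By~\eqref{hp:convergence} and Lemma~\ref{lemma:mesh_distance} we may pick simplicial isomorphisms $\phi_\eps\colon\SS_\eps\to\SS_{|B_{R_\delta}}$ with $\Lip(\phi_\eps),\Lip(\phi_\eps^{-1})\to 1$ and $\max_i|i-\phi_\eps(i)|\,|\log\eps|\to 0$. Transporting a discrete competitor via $\phi_\eps$ perturbs its Dirichlet energy by at most $C\,d(\SS_\eps,\SS_{|B_{R_\delta}})\,|\log\eps|\to 0$, once a logarithmic a~priori bound on near-optimal competitors is known, which follows from the flat counterpart of the ball construction of Section~\ref{sect:ball}. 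Moreover, since $\g\in\mathcal H(\delta,\bar x)$ is tangent-harmonic on $A_{\delta/2,\delta}(\bar x)$ with index~$+1$ at $\bar x$, after a suitable gauge rotation its rescaling converges uniformly on $\partial B_{R_\delta}$ to the canonical vortex $e^{\mathrm{i}\theta}$ as $\delta\to 0$. Introducing
\[
\gamma_\infty(R):=\min\left\{\tfrac12\int_{B_R}|\nabla\hat v|^2\colon v\in\T(\SS_{|B_R};\S^1),\ v=e^{\mathrm{i}\theta}\text{ on }\partial_{\SS}B_R\right\},
\]
Step~1 and the transfer argument give $\tilde\gamma_\eps(R_\delta)=\gamma_\infty(R_\delta)+\mathrm{o}(1)$ as $\eps\to 0$, quantitatively in the sense dictated by~\eqref{hp:convergence}.

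\emph{Step 3 (The flat renormalized limit, and the main obstacle).} It remains to prove that $F(R):=\gamma_\infty(R)-\pi\log R$ admits a finite limit $\gamma_*$ as $R\to\infty$. A uniform lower bound $F(R)\geq -C$ is a byproduct of Proposition~\ref{prop:0Gamma-nu}(ii) applied in the flat case (alternatively, of Lemma~\ref{lemma:circle} integrated on $(1,R)$). For the convergence, given $R<R'$ we extend an almost-optimal competitor for $\gamma_\infty(R)$ to $B_{R'}$ by gluing it to the nodal interpolation of the canonical vortex $z/|z|$ on $A_{R,R'}$, with a matching layer of width $\mathrm{o}(R)$ across $\partial B_R$ to reconcile gauges; a direct computation yields
\[
\gamma_\infty(R')\leq\gamma_\infty(R)+\pi\log\tfrac{R'}{R}+\mathrm{o}(1)\qquad\text{as }R\to\infty,
\]
so $F$ is almost non-increasing and, combined with the lower bound, admits a finite limit $\gamma_*$. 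Combining Steps~1--3 one deduces that, for each fixed $\delta$, both $\liminf_{\eps\to 0}(\gamma(\eps,\delta)-\pi\log(\delta/\eps))$ and $\limsup_{\eps\to 0}(\gamma(\eps,\delta)-\pi\log(\delta/\eps))$ differ from $\gamma_*$ by $\omega(\delta)\to 0$, so both iterated limits coincide with $\gamma(\bar x):=\gamma_*$. The main technical obstacle is Step~2: the triangulation perturbation $d(\SS_\eps,\SS_{|B_{R_\delta}})$ has to be absorbed against the logarithmic divergence of the Dirichlet energy, which is exactly the reason for the weight $|\log\eps|$ in~\eqref{hp:convergence}.
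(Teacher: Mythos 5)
Your proposal follows the same route as the paper's proof: localize and rescale at $\bar x$ in geodesic coordinates, use~\eqref{hp:convergence} to transfer to the fixed limit triangulation $\SS(\bar x)$, replace the boundary datum $\g$ by the canonical hedgehog, and identify $\gamma(\bar x)$ with a renormalized flat minimum. The differences are mostly organizational: the paper keeps the triangulation transfer and the boundary-data change as two separate comparisons (its Steps~2 and~3, with explicit error controls), whereas you merge them into one step; and in your Step~3 you re-derive the existence of the limiting constant $\gamma_*$ from scratch via an almost-monotonicity / gluing argument instead of invoking the analogue from~\cite{ADGP} as the paper does --- a reasonable choice, since $\SS(\bar x)$ need not be the structured lattice considered there.

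The point you elide too quickly is the boundary-data replacement, which is where the paper invests the most work. You assert that, after a gauge rotation, the rescaled $\g$ converges \emph{uniformly} on $\partial B_{R_\delta}$ to the canonical vortex; Lemma~\ref{lem:hedgehogs_circle} only gives convergence in $W^{1/2,2}$ on circles, which does not control the sup norm. More importantly, even granted some pointwise convergence of traces, this does not by itself compare the two discrete minimization problems: one must quantify how the discrete Dirichlet minimum depends on the boundary datum. The paper's mechanism is essential here --- it inserts a buffer annulus of relative width $\sigma(\delta)\to 0$ next to $\partial B_{\delta/\eps}$, harmonically extends the lifting difference $\theta_{\overline{\g}_\eps}-\theta_{\overline{\h}}$ on it, and bounds the extra Dirichlet energy through the $W^{1/2,2}$ trace estimate~\eqref{eq:rilevamentobordo}, which (after the choice of $\sigma(\delta)$) yields an $\varepsilon$-uniform $\mathrm{o}_{\delta\to 0}(1)$ error in~\eqref{core_step2}. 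Without this, or an equivalent quantitative stability estimate, your claim $\tilde\gamma_\eps(R_\delta) = \gamma_\infty(R_\delta) + \mathrm{o}(1)$ is not justified. By contrast, the ``matching layer to reconcile gauges'' that you insert in Step~3 is superfluous, since there both the truncated competitor and the nodal interpolation of $z/|z|$ already take the value $e^{\mathrm{i}\theta}$ on the relevant discrete circle; the effort belongs in Step~2, not Step~3.
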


It will be clear from the proof that the number~$\gamma(\bar{x})$ depends 
on~$\bar{x}$ and on the sequence~$(\TT_\eps)$, but \emph{not} on the choice
of~$\g\in\mathcal{H}(\delta, \, \bar x)$.
To prove Proposition~\ref{prop:core_energy}, we compare~$\gamma(\eps, \delta)$ with the solution of an auxiliary problem, 
defined as the ``flat'' counterpart of Problem~\eqref{eq:core}. We know that the solution of the latter
converges to a finite limit as~$\varepsilon\searrow 0$, thanks to the analysis in \cite{BBH} and~\cite{ADGP},
and hence we will be able to prove convergence for~$\gamma(\eps, \delta)$.

Before moving to the proof of Proposition~\ref{prop:core_energy}, let us fix some notation.
Let~$\delta > 0$ be smaller than the injectivity radius of~$M$.
Let~$\bar{A}_{\delta/2,\delta}\subseteq\R^2$ be the Euclidean annulus, 
centred at the origin, with radii $\delta/2$ and~$\delta$.
The geodesic coordinates $\varphi\colon B_{\delta}\subseteq\R^2\to M$ 
induce a bijection~$W^{1, 2}(\bar{A}_{\delta/2, \delta}; \, \S^1)\to
W^{1, 2}_{\tan}(A_{\delta/2, \delta}(\bar x); \, \S^2)$: 
the push-forward $\varphi_*\overline{\w}$ of a 
field~$\overline{\w}\in W^{1, 2}(\bar{A}_{\delta/2, \delta}; \, \S^1)$ is defined by 
\begin{equation} \label{eq:tildeh}
 (\varphi_*\overline\w)(\varphi(x)) := \frac{\langle \d\varphi(x), \, \overline{\w}(x)\rangle}
 {\abs{\langle \d\varphi(x), \, \overline{\w}(x)\rangle}},
 \qquad \hbox{for } x \in \bar{A}_{\delta/2, \delta}. 
\end{equation}
The pull-back of a field~$\w\in W^{1, 2}_{\tan}(A_{\delta/2, \delta}(\bar x); \, \S^2)$
is defined by $\varphi^*\w := (\varphi^{-1})_*\w$. A straightforward computation,
based on the fact that~$\d\varphi(0) = \Id_{\T_{\bar x} M}$, shows that
\begin{equation} \label{energy_w}
 E_{\mathrm{extr}}(\varphi_*\overline{\w}; \, A_{\delta/2,\delta}(\bar x)) 
 \leq \left(\frac12 + \mathrm{O}(\delta)\right)
 \int_{\bar A_{\delta/2,\delta}} \abs{\nabla\overline{\w}}^2 \, \d S
\end{equation}
as~$\delta\to 0$; in a similar way, there holds
\begin{equation} \label{energy_bar_w}
 \frac12 \int_{\bar A_{\delta/2,\delta}} \abs{\nabla(\varphi^*\w)}^2 \, \d S
 \leq \left(1 + \mathrm{O}(\delta)\right) E_{\mathrm{extr}}(\w; \, A_{\delta/2,\delta}(\bar x)).
\end{equation}
Moreover, we have $\ind(\w, \, \bar x) = \ind(\varphi^*\w, 0)$.
The push-forward of discrete fields $\varphi_*\colon\T(\overline{\TT}_\eps, \, \S^2)\to\T(\TT_\eps, \, \S^1)$,
along with its inverse~$\varphi^*$, is defined in a similar way.

Using~$\varphi$, we can compare minimizers~$\g\in\mathcal{H}(\delta, \, \bar x)$ of Problem~\eqref{def:min_prob}
with minimizers of the corresponding Euclidean problem, namely
\begin{equation} \label{def:min_prob_flat}
 \min_{\overline{\w}\in W^{1,2}(\bar A_{\delta/2,\delta}; \, \mathbb{S}^1)}
 \left\{\frac{1}{2}\int_{\bar A_{\delta/2,\delta}}\abs{\nabla\overline{\w}}^2\d S,
 \quad\hbox{ind}(\overline\w, \, 0) = 1\right\}.
\end{equation}
The minimizers of~\eqref{def:min_prob_flat} are exactly the fields of the 
form~$\overline{\h}_R(x) := Rx/|x|$, where~$R\in\mathrm{SO}(2)$ is a constant rotation matrix. 

\begin{lemma} \label{lem:hedgehogs}
 For any~$\delta >0$ (smaller than the injectivity radius of~$M$) and any~$\g\in\mathcal{H}(\delta, \, \bar x)$,
 there exist~$R = R(\delta, \, \g)\in\mathrm{SO}(2)$ such that
 \[
  \lim_{\delta\searrow 0} \, 
  \lVert\varphi^*\g - \overline{\h}_R\rVert_{W^{1,2}(A_{\delta/2, \delta}(\bar x))} = 0.
 \]
\end{lemma}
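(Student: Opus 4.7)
The plan is to rescale to a unit-size annulus and reduce the claim to a classical rigidity statement for $\S^1$-valued Dirichlet minimizers on $\bar A_{1/2,1}$. Set $\g^\delta(y) := (\varphi^*\g)(\delta y)$ for $y \in \bar A_{1/2,1}$; by the conformal invariance of the Dirichlet integral in two dimensions, $\int_{\bar A_{1/2,1}}\abs{\nabla \g^\delta}^2 = \int_{\bar A_{\delta/2,\delta}}\abs{\nabla \varphi^*\g}^2$, while $\g^\delta$ is $\S^1$-valued with index $1$ at the origin. Since $\overline{\h}_R$ is scale-invariant and both maps are unit-valued on a set of area $\mathrm{O}(\delta^2)$, the $L^2$-part of the norm in the conclusion is automatic, and the $W^{1,2}$-statement on $\bar A_{\delta/2,\delta}$ is equivalent to $W^{1,2}$-convergence of $\g^\delta$ to $\overline{\h}_R$ on $\bar A_{1/2,1}$.

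To obtain a sharp upper bound, I would insert $\varphi_*\overline{\h}_{\mathrm{Id}}$ as a competitor in~\eqref{def:min_prob}: by~\eqref{energy_w} its extrinsic energy is at most $(\tfrac12 + \mathrm{O}(\delta))\cdot 2\pi\log 2 = \pi\log 2 + \mathrm{O}(\delta)$. Minimality of $\g$, combined with~\eqref{energy_bar_w}, then gives $\int_{\bar A_{1/2,1}}\abs{\nabla \g^\delta}^2 \leq 2\pi\log 2 + \mathrm{O}(\delta)$. Since $\abs{\g^\delta} = 1$, the family is bounded in $W^{1,2}(\bar A_{1/2,1};\R^2)$; extracting a subsequence yields $\g^\delta \rightharpoonup \g^0$ weakly in $W^{1,2}$ and strongly in $L^2$ by Rellich, so $\abs{\g^0}=1$ a.e.\ and $\g^0 \in W^{1,2}(\bar A_{1/2,1};\S^1)$.

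The delicate step is passing the boundary degree to the limit. For a.e.\ $r\in(1/2,1)$, the traces $\g^\delta|_{\partial B_r}$ are bounded in $W^{1,2}(\partial B_r;\S^1)$ by Fubini and the energy bound; extracting a diagonal subsequence, the compact embedding $W^{1,2}(\partial B_r) \hookrightarrow C^0(\partial B_r)$ yields uniform convergence of these traces for a.e.\ $r$, whence their degrees (all equal to $1$) pass to $\g^0|_{\partial B_r}$. Writing $\g^0 = e^{i(\theta + \beta)}$ in polar coordinates, with $\beta$ single-valued ($2\pi$-periodic in $\theta$), we have
\[
\int_{\bar A_{1/2,1}} \abs{\nabla \g^0}^2 = \int_{1/2}^1 \int_0^{2\pi} \left(\abs{\partial_r\beta}^2 + \frac{\abs{1+\partial_\theta\beta}^2}{r^2}\right) r \, \d\theta \, \d r \geq 2\pi\log 2,
\]
the lower bound following from Jensen on each circle (using $\int_0^{2\pi}\partial_\theta\beta\,\d\theta = 0$), with equality iff $\beta$ is constant, i.e.\ $\g^0 = \overline{\h}_R$ for some $R \in \mathrm{SO}(2)$. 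By lower semicontinuity together with the upper bound, $\int\abs{\nabla\g^0}^2 = 2\pi\log 2$, so the weak $W^{1,2}$-convergence upgrades to strong by convergence of norms in a Hilbert space. Since every subsequential limit has this form, a Urysohn-type argument (selecting at each $\delta$ the rotation $R(\delta,\g)$ that minimizes the $W^{1,2}$-distance from $\g^\delta$ to the family $\{\overline{\h}_R\}_{R\in\mathrm{SO}(2)}$) yields convergence of the whole family.

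The main obstacle is degree preservation: weak $W^{1/2,2}$-convergence of traces does not preserve the degree of $\S^1$-valued maps in general. The slicing argument circumvents this by producing uniform boundary convergence on a.e.\ circle. A more robust alternative is to pass to the limit in the distributional Jacobian identity $\star\d\j(\g^\delta) = 0$ via Lemma~\ref{lemma:continuity_jacobian}, which transports the topological information through the energy bound.
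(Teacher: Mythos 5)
Your proof is correct and follows essentially the same route as the paper: after rescaling to the unit annulus $\bar A_{1/2,1}$, one inserts $\varphi_*\overline{\h}$ as a competitor to get the sharp upper bound, recognizes $\g^\delta$ as a minimizing sequence for the flat constrained Dirichlet problem, and concludes by rigidity of its minimizers. The paper compresses compactness, degree preservation, and the upgrade to strong convergence into ``standard arguments in the Calculus of Variations''; you make those steps explicit and correctly flag degree preservation as the delicate point --- one small precision note is that the slicing argument produces a single good radius along a subsequence (via a Chebyshev/Fatou selection), not uniform trace convergence for a.e.\ $r$, though this is all that is needed since $\star\d\j(\g^0)=0$ then propagates the degree to every radius.
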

\begin{proof}
 Set~$\overline{\g} :=\varphi^*\g$ and~$\h_R := \varphi_*\overline{\h}_R$.
 By minimality of~$\g$ and~\eqref{energy_w}, we have
 \begin{equation*} 
  \begin{split}
   E_{\mathrm{extr}}(\g, \, A_{\delta/2, \delta}(\bar x)) 
  \leq E_{\mathrm{extr}}(\h_R, \, A_{\delta/2, \delta}(\bar x))
  \stackrel{\eqref{energy_w}}{\leq} \left(\frac12 + \mathrm{O}(\delta)\right) 
  \int_{\bar{A}_{\delta/2, \delta}} \abs{\nabla\overline{\h}_R}^2 \d S = 
  \pi\log 2 + \mathrm{O}(\delta).
  \end{split}
 \end{equation*}
 Then, using~\eqref{energy_bar_w}, we obtain
 \[
  \frac 12 \int_{\bar{A}_{\delta/2, \delta}} \abs{\nabla\overline{\g}}^2\d S \leq 
  \left(1 + \mathrm{O}(\delta)\right) E_{\mathrm{extr}}(\g, \, A_{\delta/2, \delta}(\bar x)) 
  \leq \pi\log 2 + \mathrm{O}(\delta).
 \]
 Let~$\overline{\g}_\delta\colon A_{1/2, 1}\to\S^1$ be defined 
 by~$\overline{\g}_\delta(x) := \overline{\g}(\delta x)$.
 From the previous inequality, it follows that
 \begin{equation} \label{min_seq}
  \lim_{\delta\searrow 0} \frac 12 \int_{\bar{A}_{1/2, 1}} \abs{\nabla\overline{\g}_\delta}^2\d S 
  = \lim_{\delta\searrow 0} \frac 12 \int_{\bar{A}_{\delta/2, \delta}} \abs{\nabla\overline{\g}}^2\d S 
  = \pi\log 2,
 \end{equation}
 where the right-hand side is exactly the minimum value for Problem~\eqref{def:min_prob_flat}.
 Thus, $\overline{\g}_\delta$ is a minimizing sequence for Problem~\eqref{def:min_prob_flat}
 and, by standard arguments in the Calculus of Variations, we find a subsequence that converges 
 strongly in~$W^{1,2}(\bar{A}_{1/2, 1})$ to a minimizer of~\eqref{def:min_prob_flat}. 
 Now, arguing by contradiction, we deduce that
 \begin{equation*}
  \lim_{\delta\searrow 0} \, \inf_{R\in \mathrm{SO}(2)} \, 
  \lVert\overline{\g}_\delta - \overline{\h}_R\rVert_{W^{1,2}(A_{1/2, 1})} = 0,
 \end{equation*}
 whence the lemma follows.
\end{proof}

We point out a couple of immediate, but useful, consequences of Lemma~\ref{lem:hedgehogs}.
\begin{lemma} \label{lem:hedgehogs_circle}
 For any~$\delta >0$ (smaller than the injectivity radius of~$M$) and any~$\g\in\mathcal{H}(\delta, \, \bar x)$,
 there exist~$R = R(\delta, \, \g)\in\mathrm{SO}(2)$ such that
 \[
  \lim_{\delta\searrow 0} \, \sup_{t\in [3\delta/4, \, \delta]}
  \lVert\varphi^*\g - \overline{\h}_R\rVert_{W^{1/2, 2}(\partial B_t)} = 0.
 \]
\end{lemma}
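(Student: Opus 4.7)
The plan is to reduce the claim to a trace inequality applied to the rescaled $W^{1,2}$-convergence that was in fact obtained inside the proof of Lemma~\ref{lem:hedgehogs}.

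First I would record the following rescaled form of Lemma~\ref{lem:hedgehogs}. Set $\overline{\g} := \varphi^*\g$ and $\overline{\g}_\delta(x) := \overline{\g}(\delta x)$ on $\bar A_{1/2, 1}$; under the two-dimensional scaling, the Dirichlet energy and the index are invariant, so $\overline{\g}_\delta$ is a minimising sequence for the flat problem~\eqref{def:min_prob_flat} on the fixed annulus $\bar A_{1/2, 1}$. The very argument in Lemma~\ref{lem:hedgehogs} produces a rotation $R = R(\delta, \g) \in \mathrm{SO}(2)$ along which one has the strong convergence
\begin{equation*}
 \overline{\g}_\delta \longrightarrow \overline{\h}_R \qquad \text{in } W^{1,2}(\bar A_{1/2, 1})
\end{equation*}
(this is actually a stronger statement than the conclusion of Lemma~\ref{lem:hedgehogs} but is precisely what its proof delivers, because in~$2$D the $L^2$ part of the norm is dominated, after rescaling, by the scale-invariant gradient part plus a Poincaré correction, and the fields are all $\mathbb{S}^1$-valued).

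Next I would invoke a uniform trace inequality: there exists a constant $C>0$ such that
\begin{equation*}
 \sup_{s\in [3/4, \, 1]} \|f\|_{W^{1/2,2}(\partial B_s)} \leq C \|f\|_{W^{1,2}(\bar A_{1/2, 1})}
 \qquad \forall f\in W^{1,2}(\bar A_{1/2, 1}).
\end{equation*}
This is the one non-trivial analytic ingredient; it can be proved by the Fourier expansion $f(r, \theta) = \sum_n a_n(r) e^{\i n \theta}$, noting that $\|f(s, \cdot)\|_{W^{1/2,2}(\partial B_s)}^2 \sim \sum_n (1+|n|)|a_n(s)|^2$ while $\|f\|_{W^{1,2}(A_{1/2,1})}^2 \sim \sum_n \int_{1/2}^1 \bigl(|a_n'|^2 + (1+n^2)|a_n|^2\bigr)\, r\,\mathrm{d}r$, and then applying the one-dimensional embedding $W^{1,2}((1/2, 1)) \hookrightarrow C^0([1/2, 1])$ coefficient by coefficient with the weights $(1+|n|)$. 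Applied to $f = \overline{\g}_\delta - \overline{\h}_R$, this inequality and the rescaled convergence above yield
\begin{equation*}
 \lim_{\delta\to 0}\sup_{s\in [3/4, \, 1]} \|\overline{\g}_\delta - \overline{\h}_R\|_{W^{1/2,2}(\partial B_s)} = 0.
\end{equation*}

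Finally I would undo the rescaling. Writing $t = \delta s$ with $s\in [3/4, 1]$ and using that $\overline{\h}_R$ is $0$-homogeneous, a direct substitution shows that the Gagliardo seminorm $[\,\cdot\,]_{W^{1/2,2}(\partial B_t)}$ is invariant under the map $x\mapsto \delta x$ (both $\mathrm{d}\sigma$ and $|x-y|^2$ pick up matched factors of $\delta$ on a curve in $\mathbb{R}^2$), while the $L^2(\partial B_t)$ part satisfies $\|\varphi^*\g - \overline{\h}_R\|_{L^2(\partial B_t)}\leq 2\sqrt{2\pi t}\leq C\sqrt{\delta}$ because both fields take values in $\mathbb{S}^1$. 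Combining these two facts gives
\begin{equation*}
 \sup_{t\in[3\delta/4, \, \delta]} \|\varphi^*\g - \overline{\h}_R\|_{W^{1/2,2}(\partial B_t)} \longrightarrow 0,
\end{equation*}
which is the claim. The main obstacle is the uniform trace inequality: it is the only step that requires a new argument beyond what Lemma~\ref{lem:hedgehogs} provides, and it is what forces the restriction to $s$ bounded away from the boundary of the annulus $\bar A_{1/2,1}$ (hence to $t\in[3\delta/4, \delta]$ rather than $[\delta/2, \delta]$).
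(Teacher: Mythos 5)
Your proof follows the same route as the paper -- pass from Lemma~\ref{lem:hedgehogs} to the statement via a trace estimate -- but you carry it out more scrupulously, and in doing so you repair a genuine (if minor) imprecision in the paper's one-line argument. The paper asserts that the trace operator $T_{\delta,t}\colon W^{1,2}(\bar A_{\delta/2,\delta})\to W^{1/2,2}(\partial B_t)$ has an $\varepsilon$-independent bound ``by a scaling argument''; with the standard inhomogeneous norms this is false (testing on $f\equiv 1$ gives an operator norm of order $\delta^{-1/2}$, because the $L^2(\partial B_t)$ piece scales like $\delta^{1/2}$ while the $L^2(\bar A_{\delta/2,\delta})$ piece scales like $\delta$). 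Your version avoids this by (i) proving the uniform trace bound on the \emph{fixed} annulus $\bar A_{1/2,1}$, (ii) using the exact scale-invariance of the Gagliardo seminorm on curves in $\R^2$, and (iii) handling the extra $\lVert\cdot\rVert_{L^2(\partial B_t)}$ term through the pointwise bound $\lvert\varphi^*\g-\overline{\h}_R\rvert\le 2$ coming from the $\S^1$-constraint, so that this piece vanishes like $\sqrt{\delta}$ regardless. This correctly closes the gap. You also point out, correctly, that the convergence one actually needs is $\overline{\g}_\delta\to\overline{\h}_R$ strongly in $W^{1,2}(\bar A_{1/2,1})$, which is what the \emph{proof} of Lemma~\ref{lem:hedgehogs} establishes (it is stronger than the displayed conclusion on the shrinking annulus). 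One small caveat: the parenthetical justification you give for why the stronger conclusion ``follows'' from the weaker statement (the ``$L^2$ dominated by gradient plus Poincar\'e'' remark) does not by itself upgrade the stated convergence -- the rescaled $L^2$ piece is only bounded, not small, from the statement alone; the honest and sufficient reason, which you also give, is simply that the proof of Lemma~\ref{lem:hedgehogs} delivers the stronger claim directly. (Alternatively one can avoid appealing to the stronger claim: since the Gagliardo seminorm kills constants, a Poincar\'e--Wirtinger argument gives $[f]_{W^{1/2,2}(\partial B_s)}\le C\lVert\nabla f\rVert_{L^2(\bar A_{1/2,1})}$, and the rescaled gradient is exactly what the stated conclusion of Lemma~\ref{lem:hedgehogs} controls.)
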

\begin{proof}
 By a scaling argument, we find a constant~$C$ such that the norm of the trace operator
 $T_{\delta, t}\colon W^{1,2}(\bar{A}_{\delta/2, \delta})\to W^{1/2, 2}(\partial B_t)$ is bounded by~$C$,
 for any~$t$ and~$\delta$ satisfying~$3/4 \leq t/\delta \leq 1$. Then, the lemma immediately follows from
 Lemma~\ref{lem:hedgehogs} and the continuity of~$T_{t, \delta}$.
\end{proof}

\begin{lemma} \label{lem:energia_hedgehogs}
 We have that $\eta(\delta, \, \bar x)\to \pi\log 2$ as~$\delta\searrow 0$, uniformly in~$\bar x\in M$.
\end{lemma}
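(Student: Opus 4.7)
The plan is to show $\eta(\delta, \bar x) = \pi\log 2 + \mathrm{O}(\delta)$ with the implicit constant uniform in~$\bar x \in M$, by transporting the variational problem to and from its Euclidean counterpart~\eqref{def:min_prob_flat} via the geodesic coordinates~$\varphi$ centred at~$\bar x$. The key technical inputs, namely the distortion estimates~\eqref{energy_w} and~\eqref{energy_bar_w}, are already at our disposal, together with the classical fact that the minimum value of~\eqref{def:min_prob_flat} equals~$\pi\log 2$, attained by the hedgehogs~$\overline{\h}_R(x) = Rx/|x|$.

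For the upper bound, I would use the planar hedgehog~$\overline{\h}(x) := x/|x|$ on~$\bar A_{\delta/2,\delta}$ as a competitor. Its push-forward~$\h := \varphi_*\overline{\h}$ lies in~$W^{1,2}_{\tan}(A_{\delta/2,\delta}(\bar x); \, \S^2)$ and has index~$1$ at~$\bar x$, so it is admissible in~\eqref{def:min_prob}. Applying~\eqref{energy_w} directly yields
\[
 \eta(\delta, \bar x) \leq E_{\mathrm{extr}}(\h;\, A_{\delta/2, \delta}(\bar x))
 \leq \left(\frac12 + \mathrm{O}(\delta)\right)\int_{\bar A_{\delta/2,\delta}}\abs{\nabla\overline{\h}}^2\,\d S
 = \pi\log 2 + \mathrm{O}(\delta).
\]

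For the lower bound, I would take any minimizer~$\g \in \mathcal{H}(\delta, \bar x)$ and pull it back to~$\overline{\g} := \varphi^*\g \in W^{1,2}(\bar A_{\delta/2,\delta}; \, \S^1)$. Since~$\ind(\overline{\g}, 0) = \ind(\g, \bar x) = 1$, the field~$\overline{\g}$ is admissible in~\eqref{def:min_prob_flat}, whose minimum value equals~$\pi\log 2$. Combined with~\eqref{energy_bar_w}, this gives
\[
 \pi\log 2 \leq \frac12\int_{\bar A_{\delta/2, \delta}}\abs{\nabla\overline{\g}}^2\,\d S
 \leq (1+\mathrm{O}(\delta))\,E_{\mathrm{extr}}(\g;\, A_{\delta/2,\delta}(\bar x))
 = (1+\mathrm{O}(\delta))\,\eta(\delta, \bar x),
\]
so~$\eta(\delta, \bar x) \geq \pi\log 2 - \mathrm{O}(\delta)$.

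The only delicate point is the uniformity in~$\bar x$, but I expect this to be essentially automatic: the~$\mathrm{O}(\delta)$ remainders in~\eqref{energy_w}--\eqref{energy_bar_w} come from Taylor expanding~$\varphi$ and~$\d\varphi$ at the origin (using $\d\varphi(0) = \Id_{\T_{\bar x}M}$), and by smoothness and compactness of~$M$ the derivatives controlling these expansions are bounded uniformly in~$\bar x \in M$. Hence the constant hidden in~$\mathrm{O}(\delta)$ can be chosen independently of~$\bar x$, yielding the claimed uniform convergence $\eta(\delta, \bar x) \to \pi\log 2$.
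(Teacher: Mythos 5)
Your argument is correct and is precisely what the paper intends: the paper proves Lemma~\ref{lem:energia_hedgehogs} by pointing back to the first two displayed inequalities in the proof of Lemma~\ref{lem:hedgehogs} — an upper bound on~$\eta(\delta,\bar x)$ by pushing a planar hedgehog forward via~\eqref{energy_w}, and a lower bound by pulling~$\g$ back and comparing with the Euclidean minimum~$\pi\log 2$ via~\eqref{energy_bar_w} — and then invokes smoothness and compactness of~$M$ for uniformity of the~$\mathrm{O}(\delta)$ terms. You have simply written out those same steps explicitly, so there is nothing to add.
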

\begin{proof}
 This follows from the arguments in the proof of Lemma~\ref{lem:hedgehogs}.
 Note that, since~$M$ is compact and smooth, the quantities~$\mathrm{O}(\delta)$ that appear 
 in~\eqref{energy_w}--\eqref{energy_bar_w} are bounded uniformly with respect to~$\bar x$.
\end{proof}

Lemma~\ref{lem:hedgehogs} above remains valid with a similar proof also for vector fields 
$\v\in \mathcal{V}_{\KK}$ with $\KK \equiv \chi(M) \mod 2$.
The elements
in $\mathcal{V}_{\KK}$ are not necessarily minimizers
of \eqref{def:min_prob} but they satisfy, thanks to the dyadic decomposition of the renormalized energy 
(see \eqref{eq:ren_energybis2_intr/ex} and \eqref{eq:conc_energy_intr}),  
\[
 \lim_{\delta\to 0}\frac 12\int_{A_{\delta,\delta/2}(x_i)}
 \vert \D\v\vert^2\d S = \lim_{\delta\to 0}\frac 12\int_{A_{\delta,\delta/2}(x_i)}
 \vert \nablas\v\vert^2\d S=\pi\log 2\ \quad \textrm{for any } i =1, \ldots, \KK.
\]
The above convergence replaces \eqref{min_seq} and thus we have 
\begin{lemma}
\label{lem:hedgehogsV}
 Let $\KK \equiv \chi(M) \mod 2$ and consider $\v\in\mathcal{V}_{\KK}$. Then, 
 for any~$\delta >0$ (smaller than the injectivity radius of~$M$) and any~$i=1, \ldots, \KK$ 
 there exist~$R = R(\delta, \, i, \, \g)\in\mathrm{SO}(2)$ such that
 \[
  \lim_{\delta\searrow 0} \, \lVert \nablas\v - \nablas \h_R\rVert_{L^2(A_{\delta/2, \delta}(x_i))} = 0.
 \]
\end{lemma}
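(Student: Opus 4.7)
The plan is to follow the proof of Lemma~\ref{lem:hedgehogs} essentially verbatim, replacing the input obtained there from minimality of $\g$ with the dyadic-concentration identity stated in the paragraph immediately preceding this lemma, namely
\[
 \frac 12 \int_{A_{\delta/2,\delta}(x_i)} |\nablas \v|^2 \, \d S \longrightarrow \pi\log 2 \qquad \textrm{as } \delta\searrow 0,
\]
which follows for $\v\in \mathcal{V}_{\KK}$ from \eqref{eq:ren_energybis2_intr/ex}--\eqref{eq:conc_energy_intr}.

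First, I would fix $i\in\{1,\ldots,\KK\}$, introduce geodesic coordinates $\varphi = \varphi_i\colon B_\delta\to M$ centred at $x_i$, and pull $\v$ back to $\overline{\v} := \varphi^*\v \colon \bar A_{\delta/2,\delta}\to\S^1$. Using the inequality \eqref{energy_bar_w} (applied to $\v$ in place of $\w$), the displayed convergence above yields
\[
 \frac12\int_{\bar A_{\delta/2,\delta}} |\nabla \overline{\v}|^2 \, \d x \longrightarrow \pi\log 2.
\]
Next, I would rescale by setting $\overline{\v}_\delta(x) := \overline{\v}(\delta x)$ on the fixed annulus $\bar A_{1/2,1}$. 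By the conformal (scale) invariance of the Dirichlet integral in dimension two, the energy of $\overline{\v}_\delta$ equals that of $\overline{\v}$ on $\bar A_{\delta/2,\delta}$, so $\frac12\int_{\bar A_{1/2,1}} |\nabla\overline{\v}_\delta|^2\,\d x \to \pi\log 2$. Since $\v\in\mathcal{V}_{\KK}$ has degree $d_i\in\{-1,+1\}$ at $x_i$, each trace of $\overline{\v}_\delta$ on a concentric circle has winding number $d_i$, and $\pi\log 2$ is precisely the minimum value of the flat problem \eqref{def:min_prob_flat}. Hence $(\overline{\v}_\delta)$ is a minimizing sequence for \eqref{def:min_prob_flat} (orientation-reversed if $d_i = -1$, which can be absorbed by composing with a fixed reflection).

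Then I would invoke standard Calculus of Variations arguments: weak $W^{1,2}$-compactness (the norms are bounded and values lie in $\S^1$), lower semicontinuity of the Dirichlet integral, together with the convergence $\|\nabla\overline{\v}_\delta\|_{L^2}\to\|\nabla\overline{\h}_R\|_{L^2}$, imply that up to a subsequence $\overline{\v}_\delta\to\overline{\h}_R$ strongly in $W^{1,2}(\bar A_{1/2,1})$ for some $R\in\mathrm{SO}(2)$. A contradiction-and-subsequence argument upgrades this to
\[
 \lim_{\delta\searrow 0} \inf_{R\in\mathrm{SO}(2)} \|\nabla\overline{\v}_\delta - \nabla\overline{\h}_R\|_{L^2(\bar A_{1/2,1})} = 0.
\]
Finally, transferring back to $M$ via $\varphi$ and exploiting $\d\varphi(0) = \Id_{\T_{x_i}M}$ together with the pull-back/push-forward estimates \eqref{energy_w}--\eqref{energy_bar_w} (whose multiplicative constants are $1+\mathrm{O}(\delta)$), one converts the above into $\|\nablas \v - \nablas \h_R\|_{L^2(A_{\delta/2,\delta}(x_i))}\to 0$ with $\h_R := \varphi_*\overline{\h}_R$.

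The main obstacle I foresee is the passage from weak to strong $W^{1,2}$-convergence of the $\S^1$-valued minimizing sequence: weak compactness and lower semicontinuity are straightforward, but one must argue that any weak limit is itself a flat minimizer (hence of the form $\overline{\h}_R$), and then combine this with norm convergence of the gradients to conclude strong convergence. A minor side point is the correct handling of the sign of $d_i$, resolved by allowing $R$ to run over $\mathrm{O}(2)$ or by a fixed reflection, as already noted.
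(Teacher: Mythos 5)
Your proposal is correct and matches the paper's intended argument exactly: the paper itself only states that Lemma~\ref{lem:hedgehogs} ``remains valid with a similar proof,'' with the dyadic-concentration convergence \eqref{eq:conc_energy_intr} replacing the minimality estimate \eqref{min_seq}, which is precisely what you spell out. Your remarks about upgrading weak to strong $W^{1,2}$-convergence (via norm convergence plus lower semicontinuity) and about absorbing the sign of $d_i$ by a reflection are both sound and consistent with the paper's convention of treating $d_i=1$ and noting the case $d_i=-1$ is analogous.
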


The following lemma will also be useful in the proof of Proposition~\ref{prop:core_energy}.

\begin{lemma} \label{lemma:affine_pullback}
 Let~$T\subseteq\R^3$ be a triangle of vertices~$i_0$, $i_1$, $i_2$, and let $\w\colon T\to\R^3$ be an affine map
 such that~$|\w(i_k)| = 1$ for~$k\in\{0, \,1, \, 2\}$.
 Let~$\phi$ be a diffeomorphism defined in a neighbourhood of~$T$, 
 and suppose that $\|\d\phi - \Id\|_{L^\infty}\leq \delta$ where~$ 0 < \delta \leq 1/2$.
 Let~$S$ be the triangle of vertices~$\phi(i_0)$, $\phi(i_1)$, $\phi(i_2)$, and let~$\mathbf{z}$ be the unique affine map~$S\to\R^3$ such that
 \[
  \mathbf{z}(\phi(i_k)) = \frac{\langle \d\phi(i_k), \, \w(i_k)\rangle}
 {\abs{\langle \d\phi(i_k), \, \w(i_k)\rangle}}
 \qquad \hbox{for } k \in \{0, \, 1, \, 2\}.
 \]
 Then, there holds
 \[
  \int_S \abs{\nabla\mathbf{z}}^2 \, \d S \leq \left(1 + C\delta\right)
  \int_T \abs{\nabla\w}^2 \, \d S.
 \]
\end{lemma}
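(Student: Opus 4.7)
The plan is to pull the integral on $S$ back to $T$ via an affine change of variables, reducing the problem to a comparison of two affine maps on the same triangle $T$.

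First, I would let $\psi\colon T\to S$ be the affine map with $\psi(i_k)=\phi(i_k)$ for $k=0,1,2$. Integrating $\d\phi-\Id$ along the edges of $T$, the hypothesis $\|\d\phi-\Id\|_{L^\infty}\le\delta$ gives $|\phi(i_l)-\phi(i_k)-(i_l-i_k)|\le\delta|i_l-i_k|$, so the linear part $B$ of $\psi$, viewed as a map between the two-dimensional planes supporting $T$ and $S$, satisfies $\|B-\Id\|_{\mathrm{op}}\le C\delta$; in particular $|\det B|$ and $\sigma_{\min}(B)$ both lie in $[1-C\delta,\,1+C\delta]$. Setting $\mathbf{z}':=\mathbf{z}\circ\psi$, which is affine on $T$ with $\mathbf{z}'(i_k)=\mathbf{z}(\phi(i_k))$, and using $\nabla\mathbf{z}'=\nabla\mathbf{z}\cdot B$ together with $|S|/|T|=|\det B|$, one finds
\[
\int_S|\nabla\mathbf{z}|^2\,\d S
\le \frac{|\det B|}{\sigma_{\min}(B)^2}\int_T|\nabla\mathbf{z}'|^2\,\d S
\le (1+C\delta)\int_T|\nabla\mathbf{z}'|^2\,\d S.
\]

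Next, I would replace $\mathbf{z}'$ by the auxiliary affine map $\mathbf{z}''$ on $T$ with vertex values $\mathbf{z}''(i_k):=F(\w(i_k))$, where $F\colon\S^2\to\S^2$ is the \emph{single} smooth map $F(v):=Bv/|Bv|$ obtained by using the common linear map $B$ in place of the vertex-dependent differentials $\d\phi(i_k)$. Since $|B-\d\phi(i_k)|\le C\delta$ uniformly in $k$, a first-order Taylor expansion of $v\mapsto v/|v|$ at the unit vector $\w(i_k)$ yields $|\mathbf{z}'(i_k)-\mathbf{z}''(i_k)|=O(\delta^2)$, and expanding $\int_T|\nabla\mathbf{z}'|^2\,\d S$ around $\int_T|\nabla\mathbf{z}''|^2\,\d S$ via Cauchy--Schwarz reduces matters to showing $\int_T|\nabla\mathbf{z}''|^2\,\d S\le(1+C\delta)\int_T|\nabla\w|^2\,\d S$. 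A direct computation of $\d F(v)=|Bv|^{-1}(\Id-F(v)\otimes F(v))B$, combined with $\|B\|_{\mathrm{op}}\le 1+\delta$ and $|Bv|\ge 1-\delta$, shows that $F$ is $(1+C\delta)$-Lipschitz on $\S^2$, so on each edge $[i_k,i_l]$ of $T$,
\[
|\mathbf{z}''(i_k)-\mathbf{z}''(i_l)|\le(1+C\delta)|\w(i_k)-\w(i_l)|.
\]

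Finally, using the cotangent representation $\int_T|\nabla f|^2\,\d S=\tfrac12\sum_{k<l}\cot(\alpha_{kl})|f(i_k)-f(i_l)|^2$ valid for any affine map $f$ on $T$, the edge-wise bound above integrates to the desired comparison provided each cotangent weight is non-negative --- precisely the weakly-acute hypothesis \eqref{hp:weakly_acute} on the triangulations of the paper. Combining the three estimates yields the lemma. The hard part is this last implication from the edge-wise Lipschitz bound on $F$ to the $L^2$-bound on the affine interpolant $\mathbf{z}''$: without non-negativity of the cotangent weights, the term-by-term inequality does not directly yield the integrated one, so the standing assumption of a weakly-acute mesh is what closes the argument.
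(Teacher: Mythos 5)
Your overall strategy — pull the integral on $S$ back to $T$, compare two affine maps on the same triangle — is the same as what the paper has in mind (the paper records the two edgewise comparability estimates and says the rest is a "straightforward computation"), but two of your steps contain genuine errors.

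\emph{The $O(\delta^2)$ claim in Step~2 is wrong.} You replace the vertex-dependent maps $F_k := \mathrm{normalize}\circ\d\phi(i_k)$ by the single map $F := \mathrm{normalize}\circ B$, and claim $\lvert \mathbf{z}'(i_k)-\mathbf{z}''(i_k)\rvert = O(\delta^2)$. But $\d\phi(i_k)$ and $B$ each differ from $\Id$ only by $O(\delta)$, so they differ from \emph{each other} by $O(\delta)$, not $O(\delta^2)$; nothing in the hypotheses forces $\d\phi$ to be close to constant across $T$. Consequently $\mathbf{z}'(i_k)-\mathbf{z}''(i_k) = (\Id-\w(i_k)\otimes\w(i_k))(\d\phi(i_k)-B)\w(i_k) + O(\delta^2) = O(\delta)$. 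Feeding this $O(\delta)$ vertex error through your Cauchy--Schwarz expansion produces an \emph{additive} $O(\delta^2)$ error in the energy per triangle, together with a cross term of order $\delta\,(\int_T\lvert\nabla\w\rvert^2)^{1/2}$ that cannot be absorbed into the multiplicative factor $(1+C\delta)$ when $\int_T\lvert\nabla\w\rvert^2$ is small. So the reduction to a single $F$ does not go through as stated.

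\emph{The conclusion you draw from the cotangent identity in Step~4 does not match the lemma.} You are right that the magnitude bound $\lvert F(a)-F(b)\rvert\le(1+C\delta)\lvert a-b\rvert$ alone cannot be summed through the cotangent formula if some weight $\cot\alpha_{kl}$ is negative; one can in fact construct affine maps on an obtuse triangle for which all edge lengths shrink yet the Dirichlet energy grows, so this is a real obstruction. However, the lemma is stated for a single, arbitrary triangle and makes no reference to~\eqref{hp:weakly_acute}; the paper's proof does not invoke weak acuteness either. What actually closes the gap is that your own formula $\d F(v) = \lvert Bv\rvert^{-1}(\Id - F(v)\otimes F(v))B$ gives more than a Lipschitz bound: it shows $\d F(v) - (\Id - v\otimes v) = O(\delta)$, i.e.\ $F(a)-F(b) = (a-b) + O\bigl(\delta\lvert a-b\rvert\bigr)$, a \emph{directional} estimate. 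Writing $\mathbf{z}'' = \w + g$ with $g$ affine and $\lvert g(i_k)-g(i_h)\rvert\le C\delta\lvert\w(i_k)-\w(i_h)\rvert$, and using the barycentric representation of $\nabla$, one gets $\lvert\nabla g\rvert\le C\delta\lvert\nabla\w\rvert$ pointwise (with $C$ depending on the aspect ratio of $T$, as it must), and then $\int_T\lvert\nabla\mathbf{z}''\rvert^2\le(1+C\delta)\int_T\lvert\nabla\w\rvert^2$ follows without any sign condition on the cotangents. In other words, your Lipschitz bound discards exactly the information that makes the obtuse case harmless, and the weakly-acute hypothesis should not be needed here.
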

\begin{proof}
 Thanks to the assumptions that~$|\w| = 1$ on the vertices of~$T$
 and~$\|\d\phi - \Id\|_{L^\infty}\leq \delta \leq 1/2$, we see that~$\mathbf{z}$ is well-defined and
 \begin{gather*}
  \abs{\mathbf{z}(i_k) - \mathbf{z}(i_h)} \leq (1 + C\delta) \abs{\w(i_k) - \w(i_h)}, \\
  (1 - C\delta)\abs{i_k - i_h} \leq \abs{\phi(i_k) - \phi(i_h)} \leq (1 + C\delta)\abs{i_k - i_h}
 \end{gather*}
 for any~$k$, $h\in \{0, \, 1, \, 2\}$. Then, the lemma follows by a straighforward computation.
\end{proof}

\begin{proof}[Proof of Proposition~\ref{prop:core_energy}]
For the sake of convenience, we split the proof into steps.
\setcounter{step}{0}
 \begin{step}
  Recall from Section~\eqref{sect:triangulations} that~$\overline{\TT}_\eps$ is the pull back of~$\TT_\eps$
  via~$\varphi$, namely, $\overline{\TT}_\eps$ is a triangulation on~$B_\delta\subseteq\R^2$ with set of vertices
  $\varphi^{-1}(B_\delta(\bar x)\cap\TT_\eps^0)$; three vertices in~$\overline{\TT_\eps}$ 
  span a triangle in~$\overline{\TT}_\eps$ if and only if their images via~$\varphi$ do.
  We consider the minimization problem
  \begin{equation} \label{eq:core_1}
   \gamma_1(\eps,\delta) := \min_{\v \in \T(\overline{\TT}_\eps, S^1)}
   \left\{\frac{1}{2} \int_{\widehat{(B_\delta)}_\eps} \abs{\nablae\widehat{\v}}^2 \, \d S \colon 
   \quad \v = \overline{\g} \,\hbox{ on } \partial_\eps B_\delta \right\},
  \end{equation}
  where~$\overline{\g} := \varphi^*\g$. We wish to show that
  \begin{equation} \label{core_step1}
   (1 - C\delta) \, \gamma_1(\eps, \, \delta) \leq \gamma(\eps, \, \delta) 
   \leq (1 + C\delta) \, \gamma_1(\eps, \, \delta)
  \end{equation}
  Let~$\v\in\T(\overline{\TT}_\eps; \, \S^2)$ with~$\v = \overline{\g}$ on~$\partial_\eps B_\delta$
  be a competitor for Problem~\eqref{eq:core_1}. 
  The pull-back~$\varphi_*\v\in\T(\TT_\eps; \, \S^2)$ satisfies
  $\varphi_*\v = \g$ on $\partial_\eps B_\delta(\bar x)$, so~$\varphi_*\v$ is an admissible 
  competitor for Problem~\eqref{eq:core}. By noting that~$\d\varphi(0) = \Id_{\T_x M}$,
  and applying Lemma~\ref{lemma:affine_pullback} on each triangle of~$\overline{\TT}_\eps$, we obtain that
  \[
   \gamma(\eps, \delta) \leq \frac{1}{2} \int_{\widehat{B_\delta(\bar x)}_\eps} 
   \abs{\nablae(\widehat{\varphi_*\v})}^2 \, \d S
   \leq \left(\frac{1}{2} + C\delta\right) \int_{\widehat{(B_\delta)}_\eps} \abs{\nabla\widehat{\v}}^2 \, \d S.
  \]
  (In order to apply Lemma~\ref{lemma:affine_pullback}, we extend~$\varphi$ to a $3$-dimensional
  diffeormorphism~$\phi$ by setting
  $\phi(x_1, x_2, x_3) := \varphi(x_1, x_2) + x_3(\ggamma\circ\varphi)(x_1, x_2)$
  for $(x_1, x_2)\in B_\delta$ and~$x_3$ small enough.)
  Thus, by arbitrarity of~$\v$, we deduce
  \begin{equation*} 
   \gamma(\eps, \delta) \leq \left(1 + C\delta\right) \, \gamma_1(\eps, \delta).
  \end{equation*}
  A similar argument gives the other inequality in~\eqref{core_step1}.
 \end{step}
 
 \begin{step}
  Following the notation in Section~\ref{sect:triangulations}, we define a triangulation
  on~$B_{\delta/\varepsilon}\subseteq\R^2$ by setting
  \[
   \SS_\eps := \left\{\frac{1}{\varepsilon}T \colon T\in\overline{\TT}_\eps \right\} \! .
  \]
  Thanks to~\eqref{hp:quasiuniform}, there exists an~$\eps$-independent constant~$\Lambda$ such that, for any~$\eps$ 
  and any~$T\in\SS_\eps$, the affine bijection~$\phi_T$ from the reference triangle~$T_{\mathrm{ref}}\subseteq\R^2$ 
  spanned by~$(0, \, 0)$, $(1, \, 0)$, $(0, \, 1)$ to~$T$ satisfies
  \begin{equation} \label{eq:quasiuniform}
   \max\{\Lip(\phi_T), \, \Lip(\phi_T^{-1})\} \leq \Lambda.
  \end{equation} 
  
  By scaling, we deduce from~\eqref{eq:core_1} that
  \begin{equation} \label{eq:core_1_scaled}
   \gamma_1(\eps,\delta) = \min_{\v \in \T(\SS_\eps, \S^1)}
   \left\{\frac{1}{2} \int_{\widehat{(B_{\delta/\eps})}_{\varepsilon}} \abs{\nabla\widehat{\v}}^2 \, \d S \colon 
   \quad \v = \overline{\g}_{\eps} \,\hbox{ on } \partial_{\varepsilon} B_{\delta/\eps} \right\} \! ,
  \end{equation}
  where $\widehat{(B_{\delta/\varepsilon})}_{\varepsilon}$ is the union of the triangles~$T\in\SS_\eps$
  such that $T\subseteq B_{\delta/\varepsilon}$, $\partial_{\varepsilon} B_{\delta/\eps} 
  := \partial \widehat{(B_{\delta/\eps})}\cap \SS_\eps^0$ 
  and~$\overline{\g}_{\eps}\colon \bar{A}_{\delta/(2\eps), \, \delta/\eps}\to\R^2$ is given
  by~$\overline{\g}_{\eps}(x) := \overline{\g}(\eps x)$. Let us define
  \begin{equation} \label{eq:core_2}
   \gamma_2(\eps,\delta) = \min_{\v \in \T(\SS_\eps, \S^1)}
   \left\{\frac{1}{2} \int_{\widehat{(B_{\delta/\eps})}_{\varepsilon}} \abs{\nabla\widehat{\v}}^2 \, \d S \colon 
   \quad \v = \overline{\h} \,\hbox{ on } \partial_{\varepsilon} B_{\delta/\eps} \right\} \! ,
  \end{equation}
  where~$\overline{\h}(x) = \overline{\h}_{\Id}(x) := x/|x|$ 
  is, modulo rotations, the unique minimizer of~\eqref{def:min_prob_flat}.
  We claim that there exists postive numbers~$\sigma(\delta)$, $r(\delta)$, depending only on~$\delta$, such that
  \begin{equation} \label{core_step2}
   \gamma_2(\eps, \, \delta + \sigma(\delta)\delta) - r(\delta) \leq 
   \gamma_1(\eps, \, \delta) \leq \gamma_2(\eps, \, \delta - \sigma(\delta)\delta) + r(\delta),
  \end{equation}
  and~$\sigma(\delta)\to 0$, $r(\delta)\to 0$ as~$\delta\searrow 0$.  
  
  Thanks to~\eqref{eq:quasiuniform}, there exists a constant~$\lambda_0$
  (which does not depend on~$\delta$, $\eps$) such that~$\partial_\eps B_{\delta/\eps}\subseteq
  \bar{B}_{\delta/\eps}\setminus B_{\delta/\eps - \lambda_0}$.  Let~$\sigma\in (0, \, 1/2)$ be a parameter,
  possibly depending on~$\delta$ \emph{but not on~$\eps$}, to be chosen later. 
  By taking~$\eps$ small enough, we can assume without loss of generality that
  ~$(1-\sigma/2)\delta/\eps \leq \delta/\eps - \lambda_0$, so that
  \begin{equation} \label{partial_eps}
   \partial_\eps B_{\delta/\eps} \subseteq \bar{B}_{\delta/\eps}\setminus B_{(1 - \sigma/2)\delta/\eps}.
  \end{equation}
  We construct a function that interpolates between~$\overline{\g}_\eps$
  and~$\overline{\h}$ on the annulus~$A^{\eps,\delta} := B_{(1-\sigma/2)\delta/\eps}\setminus
  B_{(1-\sigma)\delta/\eps}$. Let~$\theta_{\overline{\g}}$ be a lifting for~$\overline{\g}$,
  that is, a map 
  $\theta_{\overline{\g}} \in W^{1,2}(A_{\delta/2,\delta} \setminus([0, \, +\infty)\times\{0\}); \, \R)$
  such that $\overline{\g} = \exp(i\theta_{\overline{\g}})$,
  and let~$\theta_{\overline{\g}_\eps}(x) := \theta_{\overline{\g}}(\varepsilon x)$.
  We also consider the function~$\theta_{\overline{\h}}(x) := \arctan(x_2/x_1)$, 
  which is a lifting for~$\overline{\h}$. Both $\theta_{\overline{\g}_\eps}$ and~$\theta_{\overline{\h}}$
  have a jump across the ray~$[0, \, +\infty)\times\{0\}$, and the size of both jumps is equal
  to~$\ind(\overline{\g}_\eps, \, 0) = \ind(\overline{\h}, \, 0) = 1$. Thus,
  $\theta_{\overline{\g}_\eps}-\theta_{\overline{\h}} \in W^{1,2}(A^{\eps,\delta};\,\R)$.
  By combining a scaling argument, Lemma~\ref{lem:hedgehogs_circle} and 
  the continuity of the lifting in~$W^{1/2, 2}$ (see~\cite[Remark 3]{BBM})
  we deduce that, modulo rotations, there holds
  \begin{equation} \label{eq:bordo_lifting}
   \norm{\theta_{\overline{\g}_\eps} - \theta_{\overline{\h}}}_
   {W^{1/2,2}(\partial B_{(1-\sigma/2)\delta/\eps})} =
   \norm{\theta_{\overline{\g}} - \theta_{\overline{\h}}}_
   {W^{1/2,2}(\partial B_{(1-\sigma/2)\delta})} \to 0 \quad \textrm{as } \delta\searrow 0.
  \end{equation}
  Let $u_{\eps,\delta}$ be the unique solution of
  \begin{equation} 
  \label{eq:harmonic_anulus}
   \begin{cases}
    \Delta u = 0 \quad \hbox{in } A^{\eps,\delta}\\
    u = \theta_{\overline{\g}_\eps} - \theta_{\overline{\h}} 
    \quad \hbox{on } \partial B_{(1-\sigma/2)\delta/\eps}, \quad
    u =  0 \quad \hbox{on } \partial B_{(1-\sigma)\delta/\eps},
   \end{cases}
  \end{equation}
  let~$\theta_{\eps,\delta} := u_{\eps,\delta} + \theta_{\overline{\h}}$ and 
  $\u_{\eps,\delta}:=\e^{i\theta_{\eps,\delta}}$. There holds $\u_{\eps,\delta} = \overline{\g}_\eps$ on
  $\partial B_{(1-\sigma/2)\delta/\eps}$, $\u_{\eps,\delta} = \overline{\h}$ 
  on $\partial B_{(1-\sigma)\delta/\eps}$
  By standard elliptic theory, we find that
  \begin{equation} \label{eq:rilevamentobordo}
   \lVert \nabla u_{\eps,\delta}\rVert_{L^2(A^{\eps,\delta})}^2
   \leq C \sigma^{-1} \lVert \theta_{\overline{\g}_\eps}-\theta_{\overline{\h}}\lVert^2
   _{W^{1/2,2}(\partial B_{(1-\sigma/2)\delta/\eps})}.
  \end{equation}
  The costant~$C\sigma^{-1}$ can be obtained via a scaling argument; one could also
  solve~\eqref{eq:harmonic_anulus} explicitely, passing to polar coordinates and using 
  the method of separation of variables.
  Now, since $\vert \nabla \u_{\eps,\delta}\vert = \vert \nabla \theta_{\eps,\delta}\vert$
  a.e. on $A^{\eps,\delta}$, we have
  \begin{equation*} 
   \lVert \nabla \u_{\eps,\delta}\rVert_{L^2(A^{\eps,\delta})}^2
   \le 2\left( \lVert \nabla \theta_{\overline{\h}}\rVert_{L^2(A^{\eps,\delta})}^2
   + \lVert \nabla u_{\eps,\delta}\rVert_{L^2(A^{\eps,\delta})}^2 \right)
  \end{equation*}
  Using~\eqref{eq:rilevamentobordo}, and computing explicitely the gradient of~$\theta_{\overline{\h}}$, 
  we obtain
  \begin{equation} \label{eq:harmonic_energy_bis}
   \lVert \nabla \u_{\eps,\delta}\rVert_{L^2(A^{\eps,\delta})}^2
   \le C \left(\log\left(1+\frac{\sigma/2}{1 - \sigma}\right)  + \sigma^{-1} 
   \lVert \theta_{\overline{\g}_\eps}-\theta_{\overline{\h}}\lVert^2
   _{W^{1/2,2}(\partial B_{(1-\sigma/2)\delta/\eps})}\right) \! .
  \end{equation}
  We choose
  \[
   \sigma = \sigma(\delta) := \lVert \theta_{\overline{\g}_\eps}-\theta_{\overline{\h}}\lVert
   _{W^{1/2,2}(\partial B_{(1-\sigma/2)\delta/\eps})} + \delta
  \]
  Note that, thanks to~\eqref{eq:bordo_lifting}, the right-hand side does not depend on~$\eps$
  and converges to~$0$ as~$\delta\searrow 0$; in particular, when~$\delta$ is small enough,
  we have~$\sigma\leq 1/2$. Then, using~\eqref{eq:harmonic_energy_bis}, we deduce that
  \begin{equation} \label{eq:harmonic_energy}
   \lim_{\delta\searrow 0} \, \sup_{\eps\in (0, \, \delta)} \,
   \lVert \nabla \u_{\eps,\delta}\rVert_{L^2(A^{\eps,\delta})}^2 = 0.
  \end{equation}
  Moreover, Lemma~\ref{lem:hedgehogs} combined with a scaling argument implies that
  \begin{equation} \label{eq:g_bordo}
   \lVert \nabla \overline{\g}_{\eps}\rVert_{L^2(B_{\delta/\eps}\setminus B_{\delta/\eps- \sigma(\delta)\delta/(2\eps)})}^2
   = \lVert \nabla \overline{\g}\rVert_{L^2(B_{\delta}\setminus B_{\delta - \sigma(\delta)\delta/2})}^2
   \to 0 \qquad \textrm{as } \delta\searrow 0.
  \end{equation}
  
  Let~$\v_\delta^*\in \T(\SS_\eps; \, \S^1)$ be a minimizer for Problem~\eqref{eq:core_2}
  on the ball~$B_{(\delta - \sigma(\delta)\delta)/\eps}$, i.e.
  the problem that defines~$\gamma_2(\eps, \, \delta-\sigma(\delta)\delta)$. 
  We construct the following discrete vector field:
  \begin{equation*} 
   \v_{\eps,\delta}:=
   \begin{cases}
    \v_\delta^{*}    &\hbox{in } B_{(\delta - \sigma(\delta)\delta)/\eps}\cap\SS_\eps^{0} \\
    \u_{\eps,\delta} &\hbox{in } A^{\eps,\delta}\cap\SS_\eps^{0} \\
    \overline{\g}_\eps    &\hbox{in } (B_{\delta/\eps}\setminus B_{\delta/\eps- \sigma(\delta)\delta/(2\eps)})\cap\SS_\eps^{0}.
   \end{cases}
  \end{equation*}
  Thanks to and standard interpolation arguments (see, e.g., \cite[Theorem 3.1.5]{Ciarlet}),
  we see that
  \[
   \frac{1}{2}\int_{\widehat{(B_{\delta/\eps})}_\eps} \abs{\nabla\v_{\eps,\delta}}^2 \, \d S 
   \leq \gamma_2(\eps, \, \delta - \sigma(\delta)\delta) 
   + C \lVert \nabla \u_{\eps,\delta}\rVert_{L^2(A^{\eps,\delta})}^2
   + C \lVert \nabla \overline{\g}_{\eps}\rVert_{L^2(B_{\delta/\eps}\setminus B_{\delta/\eps- \sigma(\delta)\delta/(2\eps)})}^2
  \]
  for some constant~$C$ that does not depend on~$\eps$, $\delta$ (this is possible because the
  sequence of triangulations~$\SS_\eps$ satisfies~\eqref{eq:quasiuniform}).
  Then, with the help of~\eqref{eq:harmonic_energy}, \eqref{eq:g_bordo}, we deduce that
  \[
   \frac{1}{2}\int_{\widehat{(B_{\delta/\eps})}_\eps} \abs{\nabla\v_{\eps,\delta}}^2 \, \d S 
   \leq \gamma_2(\eps, \, \delta - \sigma(\delta)\delta) + r(\delta)
  \]
  where~$r(\delta) \to 0$ and~$\sigma(\delta)\to 0$ as~$\delta\searrow 0$.
  However, due to~\eqref{partial_eps}, there holds $\v_{\eps,\delta} = \overline{\g}_\eps$ on~$\partial_\eps
  B_{\delta/\eps}$, so~$\v_{\eps,\delta}$ is an admissible competitor in 
  Problem~\eqref{eq:core_1_scaled} that defines~$\gamma_1(\eps, \delta)$.
  Thus, a comparison argument immediately yields the $\leq$-inequality in~\eqref{core_step2}.
  The other inequality is obtained via a similar argument.
 \end{step} 
 \begin{step}
  Remind that, in view of~\eqref{hp:convergence},
  the sequence~$\SS_\eps$ converges to a triangulation~$\SS$.
  We consider the analogue of Problem~\eqref{eq:core_2} on~$\SS$, that is,
  \begin{equation} \label{eq:core_3}
   \gamma_3(\eps,\delta) := \min_{\v \in \T(\SS, S^1)}
   \left\{\frac{1}{2} \int_{\widehat{(B_{\delta/\eps}})_*} \abs{\nabla\widehat{\v}}^2 \, \d S \colon 
   \quad \v = \overline{\h} \,\hbox{ on } \partial_* B_{\delta/\eps} \right\} \! .
  \end{equation}
  We have written $\widehat{(B_{\delta/\varepsilon})}_*$ to denote
  the union of the triangles~$T\in\SS$ such that $T\subseteq B_{\delta/\varepsilon}$, 
  and $\partial_* B_{\delta/\eps} := \partial (\widehat{B_{\delta/\eps}})\cap \SS^0$. We claim that
  there exists a positive number~$s(\eps, \, \delta)$ such that
  \begin{equation} \label{core_step3}
   \left(1 - s(\eps, \, \delta)\right) \, \gamma_3(\eps, \, \delta) 
   \leq \gamma_2(\eps, \, \delta) 
   \leq \left(1 + s(\eps, \, \delta)\right) \, \gamma_3(\eps, \, \delta)
  \end{equation}
  and
  \begin{equation} \label{core_step3_2}
   \lim_{\eps\searrow 0} \, s(\eps, \, \delta)\abs{\log\eps} = 0
   \qquad \textrm{for any } \delta. 
  \end{equation}
  Thanks to~\eqref{hp:convergence} and Lemma~\ref{lemma:mesh_distance}, 
  for any~$\varepsilon$, $\delta$ we find a quantity~$s_1(\eps, \, \delta)>0$ that satisfies~\eqref{core_step3_2} 
  and a piecewise affine map~$\phi_\eps\in\Iso(\SS_\eps, \, \SS_{|B_{\delta/\eps}})$ such that
  \begin{equation} \label{Lipschitz_phi_eps}
   \max\left\{\Lip(\phi_\eps), \, \Lip(\phi_\eps^{-1})\right\} 
   \leq 1 + s_1(\eps, \, \delta).
  \end{equation}
  If~$\v^*$ be a minimizer for Problem~\eqref{eq:core_3}, then we have 
  $\v^*\circ\phi_\eps = \overline{\h}\circ\phi_\eps$ on~$\partial_\eps B_{\delta/\eps}$
  so $\v^*\circ\phi_\eps$ may not be admissible competitor for Problem~\eqref{eq:core_2}.
  However, since~$|\nabla\overline{\h}(x)| \leq C/|x|$, for any~$i\in\partial_\eps B_{\delta/\eps}$ we have
  \begin{equation} \label{v_bordo}
   \abs{\v^*\circ\phi_\eps(i) - \overline{\h}(i)} \leq
   \frac{C\eps}{\delta} d(\SS_\eps, \, \SS_{|B_{\delta/\eps}}) =: s_2(\eps, \, \delta)
  \end{equation}
  and, thanks to~\eqref{hp:convergence}, $s_2(\eps, \, \delta)$ also satisfies~\eqref{core_step3_2}.
  We set~$s(\eps, \, \delta) := s_1(\eps, \, \delta) + s_2(\eps, \, \delta)$, and consider the discrete field
  \begin{equation*}
   \v_{\eps, \, \delta} :=
   \begin{cases}
    \v^{*}        &\hbox{in } (B_{\delta/\eps}\cap\SS_\eps^{0})\setminus\partial_\eps B_{\delta/\eps} \\
    \overline{\h} &\hbox{on } \partial_\eps B_{\delta/\eps}.
   \end{cases}
  \end{equation*}
  Then, $\v_{\eps, \, \delta}$ is admissible for Problem~\eqref{eq:core_2}, and a straightforward computation,
  based on~\eqref{eq:quasiuniform}, \eqref{Lipschitz_phi_eps} and~\eqref{v_bordo} yields that
  \begin{equation*}
   \begin{split}
    \gamma_2(\eps, \, \delta) \leq \frac12 \int_{\widehat{(B_{\delta/\eps})}_\eps}
    \abs{\nabla\widehat{\v}_{\eps,\delta}}^2 \, \d S
    \leq \frac{1 + s(\eps, \, \delta)}{2}
    \, \int_{\widehat{(B_{\delta/\eps})}_*} \abs{\nabla\widehat{\v}}^2 \, \d S
    = \left(1 + s(\eps, \, \delta)\right) \gamma_3(\eps, \, \delta).
   \end{split}
  \end{equation*}
  Again, the other inequality in~\eqref{core_step3} follows by a similar argument.
 \end{step}
 \begin{step}[Conclusion]
  Arguing as in~\cite[Theorem~4.1]{ADGP} (see also~\cite[Lemma~III.1]{BBH} for the continuous case),
  we find a number~$\gamma(\bar x)$ such that, for any~$\delta$, we have
  \[
   \lim_{\eps\searrow 0} \left(\gamma_3(\eps, \, \delta) - \pi\log\frac{\delta}{\eps}\right) = \gamma(\bar{x}).
  \]
  Then, the proposition follows by combining \eqref{core_step1}, \eqref{core_step2}, \eqref{core_step3}
  and~\eqref{core_step3_2}. \qedhere
 \end{step}
\end{proof}

Finally, we prepare the following refined lower bound on the Dirichlet energy 
of a unit norm vector field on an anulus.

\begin{lemma}
 \label{lemma:lower_bound_buccia}
 Let $C$ and $R^*$ be as in Lemma \ref{lemma:circle}. Then,
 for any given $\rho_1 < \rho_2 < R^*$
 and any given tangent, unit norm vector field~$\w$ defined 
 in $A_{\rho_1,\rho_2}(\bar x):=B_{\rho_2}(\bar x)\setminus B_{\rho_1}(\bar x)$ with
 $\w\in W^{1,2}_{\tan}(A_{\rho_1,\rho_2}(\bar x); \, \mathbb{S}^2)$
 and with $\ind(\w, \bar x) = d$, we have
 \begin{equation*}
  \frac12\int_{\partial B_{\rho}(\bar x)} \vert\D\w\vert^2 \d s
  \geq \frac{1}{4\pi\rho + C\rho^2}\left\vert 2\pi d -\int_{B_\rho(\bar x)}G \, \d s\right\vert^2 
  \quad \textrm{for any } \rho\in (\rho_1, \rho_2).
 \end{equation*}
\end{lemma}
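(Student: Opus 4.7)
The plan is to mimic Step~3 in the proof of Lemma~\ref{lemma:circle}, but in the cleaner setting where $\w$ is a genuine unit tangent field rather than the near-unit Lipschitz interpolant $\w_\varepsilon$. Since $\w$ is a unit tangent field, equation~\eqref{spin_norm} gives the pointwise identity $|\D\w|^2 = |\j(\w)|^2$. This is the crucial simplification: no factor of $|\w_\varepsilon|$ appears in the estimate, so no $(1-C\varepsilon)$ loss is incurred and no $m(\rho)$ factor arises.

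Concretely, letting $\ttau$ be a unit tangent to $\partial B_\rho(\bar x)$, I would first observe that $|\langle\j(\w), \ttau\rangle|^2 \leq |\j(\w)|^2 = |\D\w|^2$ pointwise on $\partial B_\rho(\bar x)$. An application of the Cauchy--Schwarz inequality then yields
\begin{equation*}
 \int_{\partial B_\rho(\bar x)} |\D\w|^2 \,\d s \ \geq \ \frac{1}{\H^1(\partial B_\rho(\bar x))}
 \left|\int_{\partial B_\rho(\bar x)} \j(\w)\right|^2.
\end{equation*}
Using Step~1 of the proof of Lemma~\ref{lemma:circle} (which gives $\H^1(\partial B_\rho(\bar x))\leq 2\pi\rho + C\rho^2$ for $\rho\leq R^*$), this reduces matters to identifying $\int_{\partial B_\rho(\bar x)}\j(\w)$ with $2\pi d - \int_{B_\rho(\bar x)}G\,\d S$.

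For that identification I would extend $\w$ to a unit tangent field $\tilde\w$ on the closed ball $B_\rho(\bar x)$ with a single singularity of index $d$ at $\bar x$: since $\ind(\w,\bar x)=d$ and $B_\rho(\bar x)$ is simply connected, this is achieved by any radial-type extension inside $B_{\rho_1}(\bar x)$ (as in the construction of the recovery sequence in Proposition~\ref{prop:0Gamma-nu}(iii)). Lemma~\ref{lemma:dj} applied to $\tilde\w$ yields $\star\d\j(\tilde\w) = 2\pi d\,\delta_{\bar x} - G$ in $\mathscr{D}'(B_\rho(\bar x))$, and a standard limiting Stokes' argument (pair with a cut-off converging to $\mathbbm{1}_{B_\rho(\bar x)}$, exactly as in the proof of Lemma~\ref{lemma:dj}) gives
\begin{equation*}
 \int_{\partial B_\rho(\bar x)}\j(\tilde\w) \ = \ 2\pi d - \int_{B_\rho(\bar x)} G\,\d S,
\end{equation*}
and the boundary integral only depends on the restriction of $\tilde\w$ to $\partial B_\rho(\bar x)$, namely $\w$ itself. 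Combining the three displays yields the claim.

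I do not anticipate a major obstacle; the only point requiring mild care is the topological extension step, but as noted above it is standard (or one could avoid it by working directly with a local angular lifting $\alpha$ on $\partial B_\rho(\bar x)\setminus\{\text{a point}\}$ via~\eqref{spin_connection}, writing $\int_{\partial B_\rho(\bar x)}\j(\w) = 2\pi d - \int_{\partial B_\rho(\bar x)}\A$ and applying Stokes to $\d\A = G\,\d S$ on $B_\rho(\bar x)$ after choosing a frame defined there). All constants involved are precisely those appearing in Step~1 of the proof of Lemma~\ref{lemma:circle}, so the constant $C$ in the final bound coincides with the one in that lemma, consistent with the statement.
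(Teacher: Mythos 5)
Your proposal is correct and follows essentially the same route as the paper's own proof: both rest on the identity $|\D\w|^2 = |\j(\w)|^2$ from~\eqref{spin_norm}, the arc-length bound $\H^1(\partial B_\rho(\bar x))\le 2\pi\rho + C\rho^2$, and a Jensen/Cauchy--Schwarz step combined with the identification $\int_{\partial B_\rho(\bar x)}\j(\w) = 2\pi d - \int_{B_\rho(\bar x)}G\,\d S$. The only difference is that you spell out the last identification (via an extension and Lemma~\ref{lemma:dj}, or the spin-connection/Gauss--Bonnet route), whereas the paper leaves it implicit inside the phrase ``Jensen's inequality gives''.
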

\begin{proof}
 The proof is based on the fact that for a vector field $\w$ with the regularity of the statement there holds (see \eqref{spin_norm})
 \begin{equation*} \label{eq:Dv=Jv}
  \vert \D\w\vert^2 = \vert \j(\w)\vert^2. 
 \end{equation*}
 Then, the proof is similar, actually simpler, to the proof of Lemma \ref{lemma:circle}. 
 In particular, 
 there exist positive numbers~$R_*$ and~$C$ such that, for any~$x_0\in M$ and any~$0 < \rho \leq R_*$, there holds
 \begin{equation*} \label{circle01}
  \H^1(\partial B_\rho(x_0)) \leq 2\pi\rho + C\rho^2.
 \end{equation*}
 Thus, Jensen's inequality gives 
 \[
  \begin{split}
   \frac 12 \int_{\partial B_{\rho}(\bar x)} \vert\D\w\vert^2 \d s 
   = \frac12 \int_{\partial B_{\rho}(\bar x)} \vert\j(\w)\vert^2 \d s 
   \geq \frac{1}{4\pi\rho + C\rho^2} \abs{2\pi d - \int_{B_\rho}G }^2. \qedhere
  \end{split}
 \]
\end{proof}

\begin{lemma}
 \label{lemma:lower_bound_cont}
 For any $\rho_1, \rho_2 \in (0,R^*)$, for any $\bar x\in M$
 and for any $\w\in W^{1,2}_{\tan}(A_{\rho_1,\rho_2}(\bar x); \, \mathbb{S}^2)$, there holds
 \begin{equation*}
 \label{eq:lb_cont}
  \frac12\int_{A_{\rho_1,\rho_2}(\bar x)} \vert\D\w\vert^2 \d S \ge \pi\vert d\vert^2\log\frac{\rho_2}{\rho_1}
  -C\pi\vert d\vert^2 \log\frac{2\pi + C\rho_2}{2\pi +C\rho_1} - \vert d\vert,
 \end{equation*}
 where $d := \ind(\w, \bar x)$.
\end{lemma}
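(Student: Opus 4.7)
The proof is a direct consequence of Lemma~\ref{lemma:lower_bound_buccia} together with the coarea formula and a few elementary estimates. The plan is as follows.

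First, I would use the coarea formula applied to the (Lipschitz) geodesic distance function $\rho(x):=\dist(x,\bar x)$, which satisfies $|\nabla\rho|=1$ almost everywhere. This yields
\[
 \frac12\int_{A_{\rho_1,\rho_2}(\bar x)} \vert\D\w\vert^2 \d S
 = \int_{\rho_1}^{\rho_2} \left(\frac12\int_{\partial B_{\rho}(\bar x)} \vert\D\w\vert^2 \d s\right) \d\rho.
\]
By Fubini, $\w$ restricts to a $W^{1,2}$ field on $\partial B_\rho(\bar x)$ for almost every $\rho$, and the index of $\w$ at $\bar x$ agrees with $\ind(\w,\partial B_\rho(\bar x))=d$ for all such $\rho$. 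We may then apply Lemma~\ref{lemma:lower_bound_buccia} to the inner integrand, obtaining
\[
 \frac12\int_{A_{\rho_1,\rho_2}(\bar x)} \vert\D\w\vert^2 \d S
 \ge \int_{\rho_1}^{\rho_2} \frac{1}{4\pi\rho + C\rho^2}\left\vert 2\pi d -\int_{B_\rho(\bar x)}G \, \d S\right\vert^2 \d\rho.
\]

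Next, I would estimate the numerator from below by expanding the square and using smoothness of $G$: since $\|G\|_{L^\infty(M)}<\infty$ and $\H^2(B_\rho(\bar x))\le C\rho^2$, one has $\left|\int_{B_\rho}G\,\d S\right|\le C\rho^2$, hence
\[
 \left\vert 2\pi d -\int_{B_\rho}G \, \d S\right\vert^2 \ge 4\pi^2 d^2 - 4\pi|d|\left\vert\int_{B_\rho}G \, \d S\right\vert \ge 4\pi^2 d^2 - C|d|\rho^2.
\]
Substituting this into the previous inequality splits the estimate into two integrals, which I handle separately.

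For the main term, partial fractions give, with $a:=C/(4\pi)$,
\[
 \int_{\rho_1}^{\rho_2}\frac{4\pi^2 d^2}{4\pi\rho+C\rho^2}\d\rho
 = \pi d^2\int_{\rho_1}^{\rho_2}\left(\frac{1}{\rho}-\frac{a}{1+a\rho}\right)\d\rho
 = \pi d^2\left(\log\frac{\rho_2}{\rho_1}-\log\frac{4\pi+C\rho_2}{4\pi+C\rho_1}\right).
\]
For the error term, one simply bounds
\[
 \int_{\rho_1}^{\rho_2}\frac{C|d|\rho^2}{4\pi\rho+C\rho^2}\d\rho
 \le \frac{C|d|}{4\pi}\int_{\rho_1}^{\rho_2}\rho\,\d\rho \le C|d|R_*^2 \le |d|
\]
after adjusting the generic constant. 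Combining and absorbing the universal constants into $C$ (which also allows rewriting $4\pi+C\rho$ as $2\pi+C\rho$ with a new $C$), the desired bound follows.

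There is no real obstacle here: the only points requiring minor care are (i) the validity of Fubini together with the identification of the index on a.e.~slice, so that Lemma~\ref{lemma:lower_bound_buccia} applies for a.e.~$\rho\in(\rho_1,\rho_2)$, and (ii) the choice of the generic constant $C$ in the final line to accommodate the estimate of the lower-order term together with the reshaping of the logarithmic factor. Both are routine.
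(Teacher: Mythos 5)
Your proof is correct and follows essentially the same route as the paper: apply Lemma~\ref{lemma:lower_bound_buccia}, expand the square to isolate the $4\pi^2 d^2$ term, and integrate the resulting lower bound over $\rho\in(\rho_1,\rho_2)$, with partial fractions giving the logarithmic main term and an elementary bound controlling the Gauss-curvature error. The only imprecision is that the final step $C|d|R_*^2\le|d|$ requires \emph{reducing $R^*$}, not ``adjusting the generic constant'': $C$ is inherited from Lemma~\ref{lemma:lower_bound_buccia} and is not free, so, as the paper does, one should say ``by possibly shrinking $R^*$.''
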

\begin{proof}
 Lemma \ref{lemma:lower_bound_buccia} gives that
 \begin{align*}
  \frac12\int_{\partial B_{\rho}(\bar x)} \vert\D\w\vert^2 \d s
  \ge \frac{2\pi^2d^2}{2\pi \rho + C\rho^2} - \frac{\vert d\vert}{\rho}\int_{B_\rho(\bar x)}G \, d S.
 \end{align*}
 To conclude, we integrate between $\rho_1$ and $\rho_2$ and note that thanks
 to the smoothness of the Gauss curvature $G$, by possibly reducing $R^*$, we can assume that 
 \[
  \int_{\rho_1}^{\rho_2}\left(\frac{1}{\rho}\int_{B_\rho(\bar x)}G \, \d S\right)\d \rho < 1. \qedhere
 \]
\end{proof}

\subsection{Proof of Theorem \ref{th:gamma1}}
\label{sect:proof_theorem}

\begin{proof}[Proof of~(i) --- Compactness]
The proof follows the line of \cite[Theorem 4.2]{ADGP}. Given $\ve\in \T(\TT_\varepsilon; \, \S^2)$
such that 
\[
XY_\epsi(\ve) - \KK\pi \vert \log\epsi\vert \le C,
\]
the existence of a subsequence of $\hat{\mu}_\eps$ and of the measure $\mu\in X$ with
$\sum_{i=1}^k \vert d_i\vert \le \KK$ follows from the compactness part of the zeroth order 
$\Gamma$-convergence result in Theorem \ref{th:gamma0}. 
Thus, we are left with the proof of the implication
\begin{equation}
\label{eq:implication}
\sum_{i=1}^k \vert d_i\vert = \KK \quad  \Longrightarrow \quad \vert d_i\vert =1 \ \textrm{ for any } i=1,\ldots,\KK,
\end{equation}
which implies that 
\begin{equation}
\label{eq:K=chi2}
\KK \equiv \chi(M) \mod 2.
\end{equation}
Now, fix a small $r>0$ in such a way that the balls $B_{r}(x_i),  i=1,\ldots,\KK$ are
pairwise disjoint. As usual we set $M_r :=M\setminus \bigcup_{i=1}^{\KK}B_{r}(x_i)$.
We have
\begin{align*}
XY_\eps(\ve) &= \frac{1}{2}\sum_{i=1}^k\vert \we\vert_{\Wude(B_{r}(x_i))}^2
+ \frac{1}{2}\vert \we\vert^{2}_{\Wude(M_r)}.
\end{align*}
Thanks to the localized $\Gamma$-$\liminf$ inequality in Proposition
\ref{prop:0Gamma-nu},
the first term in the equality above is bounded from below in the following way:
\begin{equation}
\label{eq:stima_basso_loc}
\frac{1}{2}\sum_{i=1}^k\vert \we\vert_{\Wude(B_{r}(x_i))}^2
\ge \pi \sum_{i=1}^k\vert d_i\vert \log\frac{r}{\eps} + C
\end{equation}
for a constant~$C$ that does not depend on~$r$.
Thus, the energy estimate \eqref{eq:energy_estimate} and the fact that $\sum_i|d_i|=\KK$ give 
\begin{equation}
\label{eq:H1_lont_difetti}
\frac{1}{2}\vert \we\vert_{\Wude(M_r)}^2\le \KK\pi |\log r| + C.
\end{equation} 
Consequently, we have that the sequence $\we$ is uniformly bounded (w.r.t. $\eps$)
in $\Wude(M_r)$ for any $r>0$. Hence, there exists a tangent  
vector field $\v$ and a subsequence such that
\begin{equation}
\label{eq:H1loc_conv}
\we \to \v \,\,\,\,\hbox{ strongly in } L^2(M;\mathbb{R}^3)\,\,\,\hbox{ and weakly in }
W^{1,2}_{\mathrm{loc}}(M\setminus \bigcup_{i=1}^{\KK} x_i; \mathbb{R}^3) \! .
\end{equation}
Moreover, thanks \ref{lemma:GL-pointwise} and to the strong $L^2$ convergence we have that 
$\vert\v\vert =1$. Passing to the limit as~$\eps\searrow 0$ in~\eqref{eq:H1_lont_difetti}, we also see that~$\W(\v)<+\infty$.
Finally, thanks to Fubini's Theorem, we can find some $r^\prime \in (r, 2r)$ 
such that for any $i=1,\ldots,k$
\[
\int_{\partial B_{r^\prime}(x_i)}\vert \nablas\we\vert^2 \d s\le C.
\]
implying, by compactness, that 
\[
\we \to \v \,\,\,\hbox{ uniformly on }  \partial B_{r^\prime}(x_i)
\]
and then we have that 
\begin{equation}
\label{eq:charge_limit}
\hbox{ind}(\v,x_i) = d_i.
\end{equation}
Thus, recalling that $\v\in W^{1,1}_{\tan}(M;\mathbb{S}^2)$ (see Lemma \ref{lem:W11})
we conclude thanks to Lemma \ref{lemma:dj} that 
$\star\d\j(\v) = 2\pi\mu - G\d S$.
Now, we prove that $k=\KK\equiv\chi(M)\mod 2$ and that $\vert d_i\vert =1$ for any $i=1,\ldots, \KK$. 
We fix $\rho_1$ and $\rho_2$ in the interval $(0,R^*]$ ($R^*$ is as in Lemma \ref{lemma:circle})
with  $\rho_1 < \rho_2$ and such that the geodesic balls $B_{\rho_2}(x_i)$ are pairwise disjoint. 
We have
\begin{align*}
XY_\eps(\ve) &= \frac{1}{2}\sum_{i=1}^k\vert \we\vert_{\Wude(B_{\rho_1}(x_i))}^2
+ \frac{1}{2}\vert \we\vert^{2}_{\Wude(M\setminus \bigcup_{i=1}^{k}B_{\rho_1}(x_i))} \\
&\ge \frac{1}{2}\sum_{i=1}^k\vert \we\vert_{\Wude(B_{\rho_1}(x_i))}^2
+ \frac{1}{2}\sum_{i=1}^k\vert\we\vert_{\Wude(A^i_{\rho_1,\rho_2})}^2.
\end{align*}
Now, thanks to the localized $\Gamma$-$\liminf$ inequality in Proposition
\ref{prop:0Gamma-nu},
the first term in the inequality above is, as in \eqref{eq:stima_basso_loc}, bounded from below
\begin{equation}
\frac{1}{2}\sum_{i=1}^k\vert \we\vert_{\Wude(B_{\rho_1}(x_i))}^2
\ge \pi \sum_{i=1}^k\vert d_i\vert \log\frac{\rho_1}{\eps} + C.
\end{equation}
Thus, we get (recall that $\sum_{i=1}^k \vert d_i\vert= \KK$)
\begin{equation}
\label{eq:est_anulus}
\frac{1}{2}\sum_{i=1}^k\vert \we\vert_{\Wude(A^i_{\rho_1,\rho_2})}^2 \le
XY_\eps(\ve)  - \pi \KK \vert \log\eps\vert \le C_{\KK}.
\end{equation}
As before,  we obtain that the sequence $\we$ is uniformly bounded (w.r.t. $\eps$)
 $\Wude(\bigcup_{i=1}^k A^i_{\rho_1,\rho_2})$. Let $\v$ the unit
 norm vector field identified above. 
By semicontinuity of the norm we get
\begin{equation}
\label{eq:semicont}
\liminf_{\eps\to 0}\frac{1}{2}\sum_{i=1}^k\vert \we\vert_{\Wude(A^i_{\rho_1,\rho_2})}^2
\ge 
\frac{1}{2}\sum_{i=1}^k\int_{A^i_{\rho_1,\rho_2}}\vert\D\v\vert^2 \d S + \frac12
\sum_{i=1}^k\int_{A^i_{\rho_1,\rho_2}}\vert \d\ggamma[\v]\vert^2\d S.
\end{equation}
Thus, 
\begin{align*}
XY_\eps(\ve)& \ge \pi \sum_{i=1}^k\vert d_i\vert \log\frac{\rho_1}{\eps} 
+ 
\frac{1}{2}\sum_{i=1}^k\int_{A^i_{\rho_1,\rho_2}}\vert\D\v\vert^2 \d S 
 + \frac12
\sum_{i=1}^k\int_{A^i_{\rho_1,\rho_2}}\vert \d\ggamma[\v]\vert^2\d S + C. 
\end{align*} 
 Moreover, Lemma \ref{lemma:lower_bound_cont}, together with $\sum_{i=1}^k\vert d_i\vert = \KK$,
 gives
 \begin{align*}
XY_\eps(\ve)& \ge \pi \KK \vert\log\eps\vert + 
\pi \sum_{i=1}^k\big(\vert d_i\vert^2- \vert d_i\vert \big)\log\frac{\rho_2}{\rho_1}
- C\pi \sum_{i=1}^k\vert d_i\vert^2\log\frac{2\pi + C\rho_2}{2\pi + C\rho_1} + C.
\end{align*}
Thus, letting $\rho_1\to 0$ and using the energy bound, we get that 
\begin{align*}
\lim_{\rho_1\to 0}\pi \sum_{i=1}^k\big(\vert d_i\vert^2- \vert d_i\vert \big)\log\frac{\rho_2}{\rho_1}
\le C_{\KK} + C + C\pi \sum_{i=1}^k\vert d_i\vert^2\log\frac{2\pi + C\rho_2}{2\pi},
\end{align*}
Then, since the last term is bounded by a constant depending on $\KK$ and on $R^*$ we get that
$\vert d_i\vert =1$ for any $i=1,\ldots,\KK$ and thus $\v\in \mathcal{V}_{\KK}$.
\end{proof}
\begin{proof}
[Proof of~(ii) --- $\Gamma$-liminf] 
Let $\ve$ be a sequence in $\T(\TT_\varepsilon; \, \S^2)$ satisfying 
the energy estimate (see \eqref{eq:energy_estimate})
\[
XY_\epsi(\ve) - \KK\pi \vert \log\epsi\vert \le C,
\]
with $\KK\equiv\chi(M)\mod 2$. 
Thanks to the compactness proved above, there exists a subsequence, a measure 
$\mu \in X$ with $\vert d_i\vert =1$ for all $i=1,\ldots,\KK$ and a tangent unit norm vector 
field $\v\in \mathcal{V}_{\KK}$ such that 
\begin{equation}
\label{eq:prova_liminf1}
\hat{\mu}_\eps(\ve) \xrightarrow{\fflat}\star \d\j(\v)= 2\pi\mu - G \d S, 
\end{equation}
and
\begin{equation}
\label{eq:H1loc_conv_bis}
\we \to \v \,\,\,\,\hbox{ strongly in } L^2(M;\mathbb{R}^3)\,\,\,\hbox{ and weakly in }
W^{1,2}_{\mathrm{loc}}(M\setminus \bigcup_{i=1}^{\KK} x_i; \mathbb{R}^3).
\end{equation}
%
In particular, the semicontinuity of the norm gives that, for any $r>0$ such 
that the geodesic balls $B_{2r}(x_i)$ are pairwise disjoint, there holds
\begin{equation}
\label{eq:semicontH1}
\liminf_{\eps\to 0}\frac 12 \vert \we\vert^2_{\Wude(M_r)}\ge \frac 12 \int_{M_r}\vert\D\v\vert^2 
+ \vert \d\ggamma[\v]\vert^2 \d S.
\end{equation}
In what follows, we will assume that~$d_i = 1$ for any~$i$. 
The case~$d_i = -1$ for some index~$i$ can be treated in a similar way, with straightforward modifications of the proof.



For any $i$ we consider the minimum problem \ref{def:min_prob} with $\bar x = x_i$ 
and $t\le r$. As in \cite{ADGP}, the following property holds.
For any fixed $\sigma>0$, there
exists a positive $\omega=\omega(\sigma)$ (independent of $t$ and of $i$) such that if
\[
 \sigma < d(\we, \mathcal{H}(t, \, x_i)) := \inf\left\{ \lVert\nablas \w_\eps - \nablas \v
 \rVert_{L^2(A_{t/2, t}(x_i))}: \v\in \mathcal{H}(t, \, x_i)  \right\}.
\]
then
\begin{equation}
\label{eq:omega_delta}
\liminf_{\eps\to 0}\frac{1}{2}\vert \we\vert_{\Wude(A_{t/2,t}(x_i))}^2
\ge \omega(\sigma) + \eta(t, \, x_i) \qquad \textrm{for any } i= 1, \ldots, \KK,
\end{equation}
where~$\eta(t, \, x_i)$ is the minimum value for~\ref{def:min_prob}, namely
 \begin{equation}
 \label{eq:eta}
 \eta(t, \, x_i):=\min_{\w \in W^{1,2}_{\tan}(A_{t/2,t}(x_i); \, \mathbb{S}^2)}\left\{\frac{1}{2}\int_{A_{t/2,t}(x_i)}\vert\D\w\vert^2 + \vert \d \ggamma [\w]\vert^2\d S,\,\,\,\,\hbox{ind}(\w, x_i) = 1\right\}.
 \end{equation}
By Lemma~\ref{lem:energia_hedgehogs}, if~$t\leq r$ is sufficiently small we have
\begin{equation} \label{eq:eta_lim}
 \eta(t, \, x_i) \geq \frac{\pi}{2}\log 2.
\end{equation}
Then, we fix $L\in \mathbb{N}$ in such a way that
\begin{equation}
\label{eq:L}
L\omega(\sigma)\ge \mathbb{W}(\v) + \sum_{i=1}^{\KK}\gamma(x_i) -\KK( \pi\log r  + C).
\end{equation}
where $C$ is the constant that appears in the localized liminf inequality in Proposition~\ref{prop:0Gamma-nu}.
This is clearly possible since $\mathbb{W}(\v)<+\infty$.
Moreover, we set 
$\lambda := 2^{1/(2\KK)} \in [1, \, 2]$. 
For $l=1,\ldots, L$, and $i=1,\ldots, \KK$ 
we set $A_{l}^i:= B_{\lambda^{1-l}r}(x_i)\setminus B_{\lambda^{-l}r}(x_i)$.

\smallskip
We have to face the two following situations.
\setcounter{case}{0}
\begin{case}
For a $\eps$ sufficiently small and for any $l=1,\ldots, L$, there exists
one $i$ such that $d(\we, \mathcal{H}(\lambda^{1-l}r, \, x_i))\ge \sigma$. Thus, thanks to 
\eqref{eq:omega_delta},  \eqref{eq:eta_lim}, to the localized $\liminf$ inequality in Proposition \ref{prop:0Gamma-nu} and 
to \eqref{eq:L}, we get (recall that $\KK\ge 1$)
\begin{equation}
\label{eq:first_case}
 \begin{split}
  XY_\eps(\ve) &\ge \sum_{i=1}^{\KK}\frac{1}{2}\vert \we\vert^2_{\Wude(B_{\lambda^{-L}r}(x_i))}
  + \sum_{l=1}^{L}\sum_{i=1}^{\KK}\frac12 \vert \we\vert^2_{\Wude(A^i_{l})} \\
  &\ge \pi\KK\log\frac{\lambda^{-L}r}{\eps} - C \KK + \frac{\pi}{2}L\log 2 + L \omega(\sigma) 
  + \mathrm{o}_{\eps\to 0}(1) \\
  &\ge \pi\KK \vert\log\eps\vert + \sum_{i=1}^{\KK}\gamma(x_i) + \mathbb{W}(\v) + \mathrm{o}_{\eps\to 0}(1). 
 \end{split}
 \end{equation}
\end{case}
\begin{case}
The second possibility we have to face is that (up to a subsequence) there exists
a $\bar{l}\in \left\{1,\ldots, L\right\}$ such that for every $i$ there holds
\[
d(\we, \mathcal{H}(\lambda^{1-\bar{l}}r, \, x_i))\le \sigma
\]
Now, for any $i=1,\ldots,\KK$, let $\we^{i}$ a vector field in $\mathcal{H}$ such that
\[
 d(\we, \mathcal{H}(\lambda^{1-\bar{l}}r, \, x_i)) 
 = \lVert \nablas \we - \nablas \we^{i}\lVert_{L^2(A^i_{l})}.
\]
Note that by construction, $\we^i$ is a tangent vector field with unit 
norm (defined in $A_{\bar l}^i$)
and such that $\hbox{ind}(\we^i,x_i)= 1$. Thus, by mimicking 
the cut off argument in \cite{ADGP},
we can construct a discrete vector field $\tilde{\v}_\eps \in \T(\TT_\varepsilon; \, \S^2)$
for which its corresponding $\tilde{\w}_\eps$ (see \eqref{eq:def_w}) verifies for any $i=1,\ldots,\KK$
$\tilde{\w}_{\eps} =\we^i $ on $\partial B_{\lambda^{1-\bar{l}}r}(x_i)$ and 
\begin{equation}
\label{eq:cut_off_liminf}
\frac{1}{2}\vert \we\vert^2_{\Wude(B_{\lambda^{1-\bar{l}}r}(x_i))}\ge \frac{1}{2}\vert \tilde{\w}_{\eps}
\vert_{\Wude(B_{\lambda^{1-\bar{l}}r}(x_i))}^2 + \mathfrak{r}(\eps,\sigma), 
\end{equation}
with $\lim_{\sigma\searrow 0} \lim_{\eps\searrow 0} \, \mathfrak{r}(\eps,\sigma) = 0$. 
To construct such a vector field, one can
map the lattice $\TT^\eps_{0}$ on $\mathbb{R}^2$ with  geodesic normal coordinates and then use
the construction of \cite{ADGP} (see also \cite{AlicandroPonsiglione} for an analogous construction
in the framework of the two dimensional Ginzburg Landau functional).

\smallskip
We are ready to conclude the proof of the $\Gamma-\liminf$. 
On the one hand, the construction above and Proposition \ref{prop:core_energy} give
that (we set $\bar r:=\lambda^{1-\bar{l}}r$ and we recall that $\gamma_{x_i}(\eps,\bar r)$ is the value 
of the minimum problem in \eqref{eq:core})
\begin{equation}
\label{eq:liminf1}
 \begin{split}
  \frac{1}{2}\sum_{i=1}^{\KK}\vert \we\vert_{\Wude(B_{\bar r}(x_i))}^2 &\ge 
  \frac{1}{2}\sum_{i=1}^{\KK}\vert \bar{\w}_{\eps}
  \vert_{\Wude(B_{\lambda^{1-\bar{l}}r}(x_i))}^2 + \mathfrak{r}(\eps,\sigma)
  \ge \sum_{i=1}^{\KK}\gamma_{x_i}(\eps,\bar r) + \mathfrak{r}(\eps,\sigma) \\
  &= \sum_{i=1}^{\KK}\gamma(x_i) + \pi\KK\abs{\log\frac{\eps}{\bar r}} + \mathfrak{r}(\eps,\sigma) + \mathrm{o}_{\eps\to 0}(1)+\mathrm{o}_{\bar r\to 0}(1).
 \end{split}
\end{equation}
On the other hand, \eqref{eq:semicontH1} and \eqref{eq:renormalized_ene_def} give
\begin{equation}
\label{eq:liminf2}
 \begin{split}
  \frac 12\vert \we \vert_{\Wude(M_{\bar r})}^2
  &\ge \frac{1}{2}\int_{M_{\bar r}}\vert\D\v\vert^2 \d S + \frac12
  \int_{M_{\bar r}}\vert \d\ggamma[\v]\vert^2\d S + \mathrm{o}_{\eps\to 0}(1) \\
  &\ge \pi\KK \vert\log\bar r\vert + \mathbb{W}(\v) + \mathrm{o}_{\eps\to 0}(1) + \mathrm{o}_{\bar r\to 0}(1).
 \end{split}
\end{equation}
As a result, combining \eqref{eq:liminf1} and \eqref{eq:liminf2} we get
\begin{equation*}
 \begin{split}
  XY_\eps(\ve) &= \frac{1}{2}\sum_{i=1}^{\KK}\vert \we\vert_{\Wude(B_{\bar r}(x_i))}^2 + \frac 12\vert \we \vert_{\Wude(M_{\bar r})}^2 \\
  &\ge  \pi\KK \vert\log\bar r\vert + \mathbb{W}(\v) + \sum_{i=1}^{\KK}\gamma(x_i)+ \pi\KK\abs{\log\frac{\eps}{\bar r}} 
  + \mathrm{o}_{\eps\to 0}(1) + \mathfrak{r}(\eps,\sigma) + \mathrm{o}_{\bar r\to 0}(1) \\
  &= \pi\KK \vert\log \eps\vert  + \mathbb{W}(\v) + \sum_{i=1}^{\KK}\gamma(x_i) +\mathrm{o}_{\eps\to 0}(1) + \mathfrak{r}(\eps,\sigma) + \mathrm{o}_{\bar r\to 0}(1).
 \end{split}
\end{equation*}
Thus, sending $\eps\to 0, \sigma\to 0, r\to 0$ we get the $\Gamma-\liminf$ inequality \eqref{eq:gamma_liminf1}. \qedhere
\end{case}
\end{proof}

\begin{proof}[Proof of~(iii) --- $\Gamma$-limsup] 
Given $\v\in \mathcal{V}_{\KK}$, 
the goal is to construct a sequence $\ve$ such that $\we\to \v$ weakly in $L^2(M;\mathbb{R}^3)$,
such that $\hat{\mu}_{\eps}(\ve) \xrightarrow{\fflat} \star \d \j(\v)$ and such that the limsup inequality \eqref{eq:gamma_limsup1} holds. 
The recovery sequence $\ve$ is constructed as in \cite{ADGP}. For the sake of clarity, we highlight the main points.
First of all we suppose that $\v$ is smooth, otherwise (as in \cite{ADGP})
we can approximate it with a smooth vector field in the $W^{1,2}$ norm (see \cite{SU} and \cite{AGM-VMO}). 
Then, we recall \eqref{eq:conc_energy_intr} that gives that, for a fixed $\rho>0$, 
\[
\lim_{j\to +\infty}\frac 12\int_{A^{i}_{2^{-j-1}\rho, 2^{-j}\rho}} \vert\D\v\vert^2 + \vert \d \ggamma[\v]\vert^2 \d S = \pi \log 2.
\]
Thus,
  Lemma~\ref{lem:hedgehogsV}
  gives that we can find a matrix~$R = R(j)\in\mathrm{SO}(2)$ such that for any $i$
  \begin{equation} \label{eq:bordo_H1/2j}
  \lim_{j\to +\infty}\norm{\nablas \v - \nablas \h_{R(j)}}_{L^2(A_{2^{-j-1}\sigma, 2^{-j}\sigma}^i)}
  =
    0.
  \end{equation}
  Now, we construct a tangent vector field on $A_{2^{-j-1}\sigma, 2^{-j}\sigma}^i$ that interpolates between $\h_{R(j)}$ and $\v$ on 
 $\partial B_{2^{-j-1}\rho}(x_i)$ and $\partial B_{2^{-j}\rho}(x_i)$, respectively. Let $\psi:[\frac{1}{2},1]\to \mathbb{R}$ be a smooth 
 cut off function such that $\psi(1/2)=0$ and $\psi(1)=1$. Let $\theta_{\overline{\v}}$ and $\theta_{\overline{\h}_{R(j)}}$
 be the liftings of the vector fields $\varphi^{*}\v$ and $\varphi^{*}\h_{R(j)}$. 
 We set
  $u^i_{j}:= \psi(2^j\rho\vert x \vert)\theta_{\overline{\v}} + (1-\psi(2^{j}\rho\vert x\vert))\theta_{\overline{\h}_{R(j)}},$
 and then we set $\overline{\u}^{i}_{j}:= \e^{\iota u^i_{j}}$, $\iota$ being the immaginary unit.
 Finally, we map $\overline{\u}^{i}_{j}$ back on $M$ using $\varphi_{*}$, namely we set
 $\u_{j}^i := \varphi_{*}\overline{\u}^{i}_{j}$.
 Using \eqref{eq:bordo_H1/2j} it is not difficult to see that for any $i=1,\ldots,$ there holds that 
 \begin{equation}
 \label{eq:conv_interpol}
 \lim_{j\to +\infty}\int_{A_{2^{-j-1}\sigma, 2^{-j}\sigma}^i}\vert \nablas \u_{j}^i\vert^2 \d S = \pi\log 2.
 \end{equation}
 
  We consider the following sequence of vector fields
\begin{equation}
\label{eq:rec_sec1}
 \v_{j}: = 
 \begin{cases}
  \v &\hbox{in } M\setminus \bigcup_{i=1}^{\KK}B_{2^{-j-1}\rho}(x_i)\\
  \tilde{\u}^{i}_{j} &\hbox{ in } A^{i}_{2^{-j-1}\rho, 2^{-j}\rho}.
 \end{cases}
\end{equation}
Now, for any $i\in \left\{1,\ldots,\KK \right\}$ we consider the
discrete vector field $\v_{\eps}^i:=\varphi_{*}\overline{\v}_{\eps}^i$
with $\overline{\v}_{\eps}^i$ be the minimizer of \eqref{eq:core_2} in 
$B_{2^{-j-1}\rho}$ with $\overline{\h_{R(j)}}$ as boundary condition 
on $\partial_\eps B_{2^{-j-1}\rho}$. 
Finally, we let the recovery sequence be the sequence of discrete vector fields
that coincides with $\v_{j}$ on the nodes of $M\setminus \bigcup_{i=1}^{\KK}B_{2^{-j-1}\rho}(x_i)$
and with $\v_{\eps}^i$ on the nodes of $B_{2^{-j-1}\rho}$. 
More precisely, we set 
\begin{equation*}
 \v_{\eps,j} := 
 \begin{cases}
  \v_{j}  &\hbox{in } \Big(M\setminus \bigcup_{i=1}^{\KK}B_{2^{-j-1}\rho}(x_i)\Big)\cap \TT^{0}_{\eps},\\
  \ve^{i} &\hbox{in } B_{2^{-j-1}\rho}(x_i)\cap \TT^{0}_{\eps}\,\,\,\hbox{ for } \,\,\,i=1,\ldots,\KK.
 \end{cases}
\end{equation*}
Now, let $\w_{\eps,j}:=\hat{\v}_{\eps, j}\circ\widehat{P}_\eps^{-1}$ as in \eqref{eq:def_w}.
By a diagonal argument, we have that there exists sequence $j(\eps)\xrightarrow{\eps\to 0}0$ such that 
\[
\w_{\eps,j(\eps)}\to \v \,\hbox{ strongly in } L^2(M;\mathbb{R}^3) \,\hbox{ and weakly in }
W^{1,2}_{\mathrm{loc}}(M\setminus\bigcup_{i=1}^{\KK}x_i;\mathbb{R}^3).
\]
Moreover, $\hat{\mu}_\eps(\v_{\eps,j(\eps)})\xrightarrow{\fflat} \star\d\j(\v)$. 
Finally, along the same sequence $j(\eps)$ $\v_{\eps, j(\eps)}$ is indeed a recovery sequence. In fact the following holds
\begin{align*}
XY_\eps(\v_{\eps,j}) - \pi\KK\vert \log\eps\vert &= 
\frac{1}{2}\vert \w_{\eps,j}\vert^2_{\Wude(M)}- \pi\KK\vert \log\eps\vert \\
&=\frac 12\vert \v\vert^2_{\Wude(M\setminus \bigcup_{i=1}^{\KK}B_{2^{-j}\rho}(x_i))}
-\pi\KK\vert \log 2^{-j}\rho\vert \\
&+ \frac 12\sum_{i=1}^{\KK}\vert \w_{\eps,j}\vert^2_{\Wude(A^{i}_{2^{-j-1}\rho, 2^{-j}\rho})}
-\pi\KK\log 2 \\
&+ \frac 12\sum_{i=1}^{\KK}\vert \w_{\eps,j}\vert^2_{\Wude( B_{2^{-j-1}\rho}(x_i))}
-\pi\KK\abs{\log\frac{\eps}{2^{-j-1}\rho}} \! .
\end{align*}
Thus, using \eqref{eq:conv_interpol},  \eqref{eq:rec_sec1}, and Proposition \ref{prop:core_energy} we conclude. 
\end{proof}

\section*{Acknowledgements}
G.C.'s  research has received funding from the European Research Council under the European Union's Seventh Framework Programme (FP7/2007-2013) / ERC grant agreement n° 291053.
A.S. gratefully acknowledges the financial support of the FP7-IDEAS-ERC-StG \#256872
(EntroPhase). Part of this work has been done while A.S. was visiting the Erwin Schr\"odinger Institute in Vienna for the ESI Thematic Program 
'Nonlinear Flows'. 
G.C. and A.S. acknowledge the partial support of the GNAMPA (Gruppo Nazionale per l'Analisi Matematica, la Probabilit\`a 
e le loro Applicazioni) group of INdAM (Istituto Na\-zio\-na\-le di Alta Matematica). 
The authors would like to thank John Ball and Epifanio Giovanni Virga,
for inspiring conversations and interesting remarks.

\bibliographystyle{plain}
\bibliography{renormalised_energy}
\end{document}